\numberwithin{equation}{section}  
\newtheorem{theorem}{Theorem}[section]
\newtheorem{remark}{Remark}[section]
\newtheorem{lemma}[theorem]{Lemma}
\newtheorem{sublemma}[theorem]{Sublemma}
\newtheorem{proposition}[theorem]{Proposition}
\newtheorem{corollary}[theorem]{Corollary}
\newtheorem*{notation}{Notation}
\newcommand{\real}{\mathds{R}}
\newcommand{\entiers}{{\mathds{N}}}
\newcommand{\integers}{\mathds{Z}_+}
\newcommand{\dintegers}{\mathds{Z}^d_+}
\newcommand{\domaine}{\dintegers\backslash\{\vecz\}}
\newcommand{\zentiers}{{\mathds{Z}}}
\newcommand{\rpd}{\mathds{R}_{+}^{d}} 
\newcommand{\qsd}{\mathfrak{m}}  
\newcommand{\proba}{{\mathds{P}}}
\newcommand{\qroba}{{\mathds{Q}}}
\newcommand{\ii}{{\mathrm i}}
\newcommand{\tv}{\mathrm{{\scriptscriptstyle TV}}}
\newcommand{\sK}{{\scriptscriptstyle K}}
\newcommand{\plat}{{\mathfrak N}}
\newcommand{\Oun}{\mathcal{O}(1)} 
\newcommand{\gen}{\EuScript{L}}
\newcommand{\sg}{\EuScript{P}}
\newcommand{\e}{\operatorname{e}}
\newcommand{\un}{\mathds{1}}
\newcommand{\esperance}{{\mathds{E}}}
\newcommand{\vecn}{\ushort{n}}
\newcommand{\vecm}{\ushort{m}}
\newcommand{\vecnt}{{\ushort{n}}_{t}}
\newcommand{\vecu}{\ushort{u}}
\newcommand{\vectheta}{\ushort{\theta}}
\newcommand{\vecv}{\ushort{v}}
\newcommand{\vecw}{\ushort{w}}
\newcommand{\vecz}{{\ushort{0}}}
\newcommand{\vecnf}{\ushort{n}^{*}}
\newcommand{\vecX}{\ushort{F}}         
\newcommand{\vecB}{\ushort{B}}
\newcommand{\vecD}{\ushort{D}}
\newcommand{\vecN}{\ushort{N}}
\newcommand{\vecx}{\ushort{x}}
\newcommand{\vecy}{\ushort{y}}
\newcommand{\vecxf}{\ushort{x}^{*}}
\newcommand{\vecun}{\ushort{1}}
\newcommand{\vece}[1]{\ushort{e}^{(#1)}}
\newcommand{\lyap}{\varphi}
\newcommand{\dd}{{\mathrm d}}
\newcommand{\cour}{\mathcal{H}}
\newcommand{\vg}{\ushort\gamma}
\newcommand{\mb}{\mathfrak{B}}
\newcommand{\md}{\mathfrak{D}}
\newcommand{\cte}[1]{C_{\mathrm{(\ref{#1})}}}
\newcommand{\pcte}[1]{c_{\mathrm{(\ref{#1})}}}
\newcommand{\alcte}[1]{\alpha_{\mathrm{(\ref{#1})}}}
\newcommand{\bcte}[1]{b_{\mathrm{(\ref{#1})}}}
\newcommand{\ceta}[1]{\eta_{\mathrm{(\ref{#1})}}}
\newcommand{\crho}[1]{\rho_{\mathrm{(\ref{#1})}}}
\newcommand{\tcte}[1]{t_{\mathrm{(\ref{#1})}}}
\newcommand{\ttc}{{ t}_{\Delta}}
\newcommand{\rangun}{\EuScript{R}}
\newcommand{\lun}{\ell^{1}(\domaine)}
\newcommand{\linf}{\ell^{\infty}(\domaine)}
\newcommand{\fun}{\mathbf{1}}
\newcommand{\vp}{\mathfrak{u}}
\begin{document}

\title{On time scales and quasi-stationary distributions for multitype birth-and-death processes}

\author[1]{J.-R. Chazottes
\thanks{Email: \texttt{chazottes@cpht.polytechnique.fr}}}

\author[1]{P. Collet
\thanks{Email: \texttt{collet@cpht.polytechnique.fr}}}

\author[2]{S. M\'el\'eard
\thanks{Email: \texttt{sylvie.meleard@polytechnique.edu}}}

\affil[1]{Centre de Physique Th\'eorique, CNRS UMR 7644, F-91128 Palaiseau Cedex (France)}

\affil[2]{Centre de Math\'ematiques Appliqu\'ees, CNRS UMR 7641, F-91128 Palaiseau Cedex (France)}

\date{Dated: \today}

\maketitle

\begin{abstract}
We consider a class of birth-and-death processes describing a population made of 
$d$ sub-populations of different types which interact with one another. The state space is $\dintegers$ (unbounded). We assume that the population goes almost surely to extinction, so that the unique stationary distribution is the Dirac measure at the origin. 
These processes are parametrized by a scaling parameter $K$ which can be thought as the order of magnitude of the total size of the population at time $0$.  For any fixed finite time span, it is well-known that such processes, when renormalized by $K$, are close, in the limit $K\to+\infty$, to the solutions of a certain differential equation in $\rpd$ whose vector field is determined by the birth and death rates.
We consider the case where there is a unique attractive fixed point (off the boundary of the positive orthant) for the vector field (while the origin is repulsive). What is expected is that, for $K$ large, the process will stay in the vicinity of the fixed point for a very
long time before being absorbed at the origin. To precisely describe this behavior, we prove the existence of a quasi-stationary distribution (qsd, for short). In fact, we establish a bound for the total variation distance between the process conditioned to non-extinction before time $t$ and the qsd. This bound is exponentially small in $t$, for $t\gg \log K$. As a by-product, we obtain
an estimate for the mean time to extinction in the qsd. We also quantify how close is the law of the process (not conditioned to non-extinction) either to the Dirac measure at the origin or to the qsd, for times much larger than $\log K$ and much smaller than the mean time to extinction, which is exponentially large as a function of $K$. Let us stress that we are interested in what happens for finite $K$.  We obtain results much beyond what large deviation techniques could provide.

\medskip

\noindent \textbf{Keywords}: Markov jump process, differential equations, competition models, population ecology, mean time to extinction, Lyapunov functions.

\end{abstract}

\newpage

\tableofcontents

\newpage

\section{Introduction}

A fundamental question in population ecology concerns the risk of extinction of populations \cite{OM}.
Stochastic models are well suited to account for the inherently discrete nature of individuals, especially when populations are ``small''. Such models are often referred to as ``individual-based models''.
In contrast, ``large populations'' are traditionally modelled by ordinary differential equations, when the spatial structure, the age-structure, the fluctuations of the environment, etc, are ignored. These ``population-level'' models are supposed to account for the deterministic trends of large populations (the macro\-scale), and are inherently incapable of describing extinction phenomena. 

In the present work we consider birth-and-death processes $(\vecN^{\sK}(t), t\geq 0)$ describing a population made of a finite number of sub-populations of $d$ different types which interact with one another.
At each time $t$, the state of the process is thus given by a vector $\vecn=(n_1,\ldots,n_d)\in \dintegers$, where $n_i$ is the number of individuals of the $i$th sub-population. We assume that these processes depend on a scaling parameter $K>0$ which 
can be thought as the order of magnitude of the total size of the population at time $0$.
More precisely, if  at some time $t$, $\vecN^{\sK}(t)=\vecn$, the rate at which the population is increased (respectively decreased) by one individual of type $j\in\{1,\ldots,d\}$ is $KB_j(\vecn/K)$ (respectively $KD_j(\vecn/K)$).

On the one hand, keeping $K$ fixed and letting $t$ go to $+\infty$, we will show that, under appropriate assumptions, 
the total population goes extinct with probability one. In the context of population ecology, this is a natural assumption to model the truism that ``nothing last forever'', due to the finiteness of ressources. In the terminology of Markov chains, there is an absorbing state, so the stationary distribution (the Dirac measure sitting at this state) is irrelevant as it describes only the state where the population is extinct.

On the other hand,  one can prove that the probability that $\vecN^{\sK}(t)/K$ deviates, over any fixed finite time span, from the solution of the differential equation  
\begin{equation}\label{the-ode}
\frac{\dd \vecx}{\dd t}=\vecB(\vecx)-\vecD(\vecx)
\end{equation}
by more than some prescribed quantity, goes to zero, as $K$ goes to $+\infty$. In the previous equation
$\vecx=(x_1,\ldots,x_d)\in \rpd$, $\vecB(\vecx)=(B_1(\vecx),\ldots,B_d(\vecx))$ and $\vecD(\vecx)=(D_1(\vecx),\ldots,D_d(\vecx))$. Basically, our aim is to describe what happens ``in between'' these two limiting regimes.

Given a differential equation as above, {\em e.g.}, a Lotka-Volterra type equation, one can have repelling fixed points, attracting fixed points (each one with its basin of attraction), limit cycles, ``strange attractors'', etc, see for instance \cite{takeuchi}. In this work we restrict to a simple situation where there is a unique attracting fixed point $\vecxf$ in the interior of $\rpd$ and the origin is a repelling fixed point. The big picture is then intuitively clear: for large (but finite) values of the parameter $K$, one expects that the process will ``feel'' the presence of the deterministic fixed point $\vecxf$ and will stay in the vicinity of the state $\lfloor K\vecxf\rfloor$ for a very long time (``quasi-stationary'' regime), until it is finally absorbed. 
 
Let us informally describe the main results that we obtain.
We firt prove the existence of a unique quasi-stationary distribution (qsd, for short). In fact, we prove a stronger result since
we establish a bound for the total variation distance between the process conditioned to non-extinction before time $t$ and the qsd. This bound is exponentially small in $t$, for $t$ much larger than $\log K$ (see Theorem \ref{Champ-Vill}).
Our second result is an upper bound and a lower bound for the mean time to extinction in the qsd. This mean time is exponential in $K$ (ee Theorem \ref{pertemasse}).
Our third result quantifies how close, in total variation distance, the law of the process not conditioned to non-extinction, is to a convex combination of the Dirac measure at the origin and the qsd (see Theorem \ref{mainincon}).
For $t$ much larger than $\log K$ and much smaller than the mean time to extinction, this distance is very small. Then, for $t$
much larger than $\exp(\Oun K)$, the law of the process not conditioned to non-extinction is very close to the Dirac measure at 
the origin. Our fourth main result show that the spectral gap of this semigroup is larger than $\Oun/\log K$, see
Theorem \ref{ancien}.

We emphasize that we perform a rather fine pathwise analysis of the process. Roughly speaking, we also prove that it takes a time of order one for the process to ``come down from infinity'' and to arrive in a ball of radius of order $K$ and center $\lfloor K \vecxf\rfloor$. This is contained in Sublemma \ref{descente1}.
Afterwards, it takes a time of order $\log K$ to arrive in a ball of radius of order $\sqrt{K}$
and center $\lfloor K \vecxf\rfloor$ (see Lemma \ref{entrec}). Then the process fluctuates around $\lfloor K \vecxf\rfloor$
for a very long time, and is almost distributed according to the qsd.

This work is the natural extension of our work \cite{ccm} on  monotype ({\em i.e.}, $d=1$) birth-and-death processes. 
Therein, we used a precise spectral analysis of a certain self-adjoint operator acting on a suitable ``weighted'' Hilbert space.
We obtained precise estimates, notably for the mean time to extinction, as well as the approximate behavior of the process in terms of a Gaussian distribution. These spectral techniques in Hilbert spaces are lost when $d\geq 2$ since in general the generator cannot be made self-adjoint, as explained in Appendix \ref{selfadjoint}. 
Hence we are forced to follow a different route: we will exploit a theorem proved in \cite{cv}. This abstract theorem gives a necessary and sufficient condition for the exponential convergence, in total variation distance, of the process conditioned on non-extinction toward the quasi-stationary distribution. These conditions are of Doeblin type for submarkovian semigroups. 
In our setting, we have to verify these conditions and a substancial work we have to do is 
to obtain the precise dependence on $K$ of the involved constants.

Let us mention the survey article \cite{AM} which describes how the so-called WKB method can be used to evaluate the mean time and/or probability of population extinction, fixation and switches resulting from either intrinsic (demographic) noise, etc. That article
deals with much more general situations than the one we consider here, but the approach is ``semi-rigorous'' from the mathematical viewpoint.
Let us also mention that there are other papers dealing with quantitative estimates of quasi-stationary distributions
in contexts which are different from ours, namely \cite{bianchi} and \cite{diaconis1,diaconis2}. In particular, the state space is finite in those papers, and different methods are developed. We emphasize that, in the context of stochastic models in population ecology, taking a finite state space is not natural. Indeed, large fluctuations can arise in such a way that we ``go out'' of the state space.

The paper is structured as follows. In Section \ref{setting} we state the hypotheses we make on the vector field $\vecB(\vecx)-\vecD(\vecx)$ and on the birth and death rates.
Section \ref{results} contains our four main results.
In Section \ref{preparatory}, we  construct a Lyapunov function for the generator of the process. We also prove a result (Lemma \ref{disque} ) giving quantitative bounds on the probability of the time the process takes to come down from one level set of the Lyapunov function to a lower one. We expect this quantitative result to be useful in more general situations. 
Section \ref{sec:qsd} is devoted to the proof of the necessary and sufficient conditions required in \cite{cv}. 
More precisely, we prove that the process comes down from infinity and enters a ball centered at $\vecnf$ with a radius of order $\sqrt{K}$. Then we compare the process in this ball with an auxiliary symmetric random walk.
In Section \ref{Champ-Vill} we bound from above and below the parameter of the exponential law of the extinction time
under the qsd.
Section \ref{sec:mixtureandspectrum} is devoted to the proof of a lower bound of the spectal gap of the semigroup associated to the process.

\section{Setting and standing assumptions}\label{setting}

Throughout the paper, we will use the following notations. Elements of $\rpd$ will be denoted
by $\vecx=(x_1,\ldots,x_d)$, and those of $\dintegers$ by $\vecn=(n_1,\ldots,n_d)$. 
For $\vecx\in\rpd$, we will denote by $\|\vecx\|$ its Euclidean norm, by $|\vecx|$ its $\ell^1$-norm, and
by $d(\vecx,\vecy)=\|\vecx-\vecy\|$ the Euclidean distance between $\vecx$ and $\vecy$.
The scalar product in $\real^d$ is denoted by $\langle \cdot\,, \cdot\rangle$. Given $\vecx\in\rpd$ and $r>0$, the 
Euclidean ball of radius $r$ and center $\vecx$ is denoted by $\mathcal{B}(\vecx,r)$.

\subsection{A class of vector fields}

Since we want the process to stay in the positive orthant, we naturally assume the normal component of $\vecD$ of $\rpd$ is zero on the boundary.
We make the following hypotheses on the vector fields $\vecB$, $\vecD$ and  $\vecB-\vecD$.
\begin{itemize}
\item  
The vector fields $\vecB$ and $\vecD$ are locally Lipschitz functions on $\rpd$, and
\begin{equation}
\label{lip}\tag{H0} 
B_j(\vecx)\geq 0, \;  D_j(\vecx)\geq 0, \;\forall j\in\{1,\ldots,d\}, \forall \vecx \in \rpd\,.
\end{equation}
\item
The vector fields $\vecB$ and $\vecD$ vanish only at the origin:
\begin{equation}\label{cond:vanish}\tag{H1}
\vecB(\vecx)= 0 \ \Longleftrightarrow\  \vecD(\vecx)=0 \ \Longleftrightarrow\ \vecx=\vecz.
\end{equation}
The fixed point $\vecz$ of the vector field $\vecB-\vecD$ is linearly unstable.
\item
There exists $\vecxf\in\text{int}({\rpd})$ such that
\begin{equation}\label{cond:fixedpt}\tag{H2}
\vecB(\vecxf)-\vecD(\vecxf)=\vecz.
\end{equation}
\item
There exist $\beta>0$ and $R>L>0$ such that  
\begin{itemize}
\item[(i)]
$\|\vecxf\|<R$ and for all $\vecx\in \rpd$ such that $\|\vecx\|<R$
\begin{equation}\label{cond:lyap}\tag{H3}  
\langle \vecB(\vecx)-\vecD(\vecx),\vecx-\vecxf\rangle \le -\beta\|\vecx\|\|\vecx-\vecxf\|^{2}\, .
\end{equation}
\item[(ii)] $\sum_{j=1}^{d}x^{*}_{j}<L$ and
\begin{equation}\label{cond:hyperplan}\tag{H4}
\mathcal{B}\left(\vecxf, \frac12 \min_{1\leq j\leq d} x^{*}_{j}\right)
\subset
\big\{\vecy\in\rpd : |\vecy|\le L\big\} \subset \mathcal{B}(\vecz, R)\, .
\end{equation}
\end{itemize}
We will denote by $P_L$ the hyperplane defined by 
\begin{equation}\label{def:PL}
\sum_{j=1}^{d}x_{j}=L.
\end{equation}
We refer to Figure \ref{figH3-4} to help the reader visualizing how the different domains defined in Hypotheses
\eqref{cond:lyap} and \eqref{cond:hyperplan} are organized.
\item
Moreover we assume that $L$ is such that
\begin{equation}\label{cond:descente}\tag{H5}
\sup_{s>L}\, \frac{B_{\mathrm{max}}(s)}{D_{\mathrm{min}}(s)}<\frac{1}{2}
\end{equation}
where 
\begin{equation}
\label{domination}
D_{\mathrm{min}}(s) = \inf_{|\vecx|=s} \sum_{j=1}^d D_{j}(\vecx)\quad\text{and}\quad
B_{\mathrm{max}}(s) =\sup_{|\vecx|=s} \sum_{j=1}^d  B_{j}(\vecx).
\end{equation}
\item
We assume that $D_{\mathrm{min}}$ is an eventually monotone function such that 
\begin{equation}\label{cond:integ}\tag{H6}
\int_{1}^{\infty} \frac{\dd s}{D_{\mathrm{min}}(s)} <+\infty\,.
\end{equation}
\item 
There exists $\xi>0$ such that
\begin{equation}\label{cond:mort}\tag{H7}
\inf_{\vecx\in \rpd} \inf_{1\leq j\leq d}\, \frac{D_{j}(\vecx)}{\sup_{1\leq \ell \leq d} x_{\ell}} >\xi>0.
\end{equation}
\item
Finally, we assume that
\begin{equation}\label{cond:nais}\tag{H8}
\inf_{1\leq j\leq d}\partial_{x_{j}} B_{j}(\vecz) > 0.
\end{equation}
(By $\partial_{x_{j}}$ we mean $\frac{\partial}{\partial_{x_{j}}}$.)
\end{itemize}

\begin{figure}[h]
\begin{center}
\ifpdf
\includegraphics[scale=.6,clip]{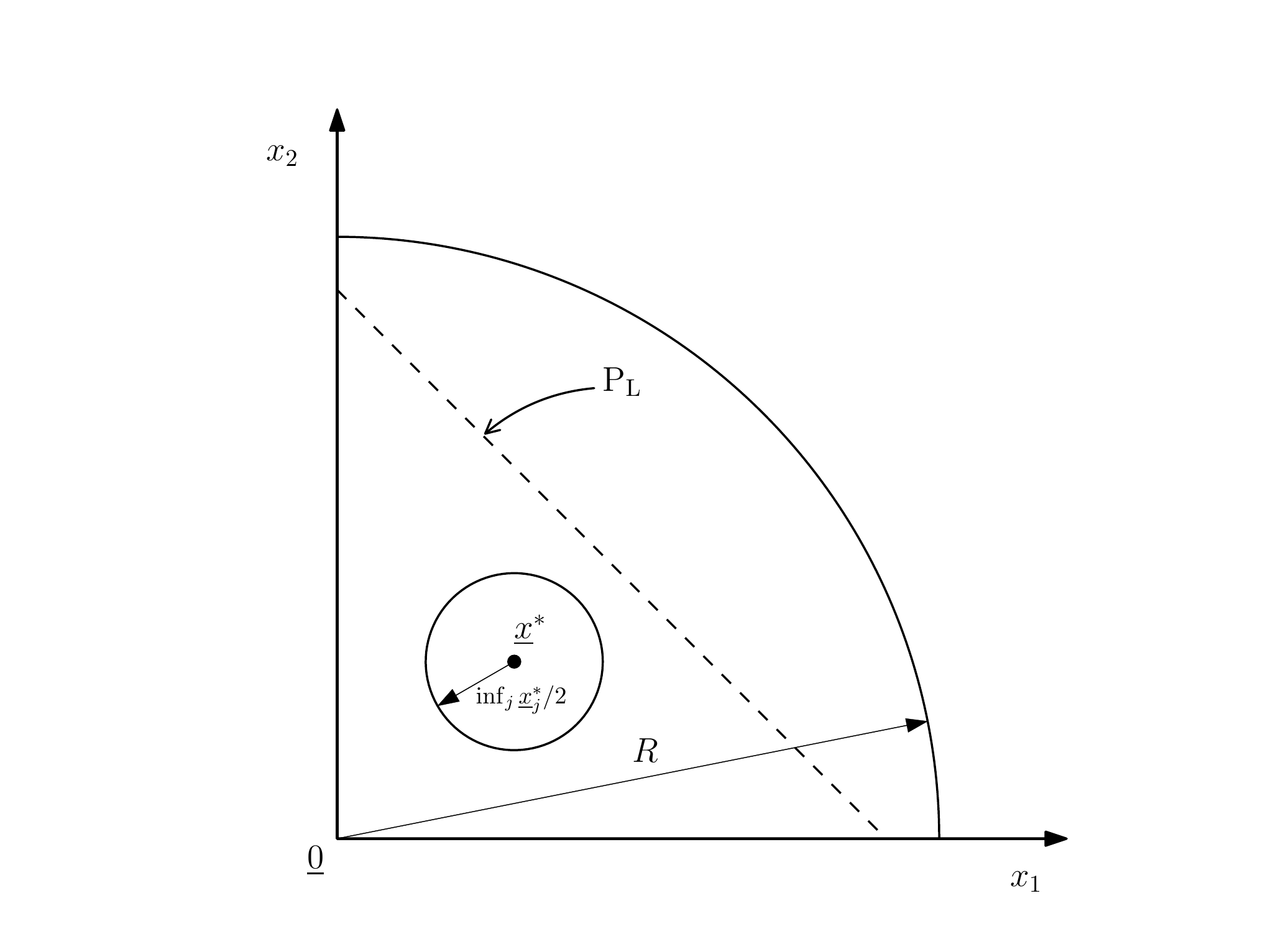}
\else
\includegraphics[scale=.6,clip]{geom_noncrop.eps}
\fi 
\caption{Illustration of Hypotheses \eqref{cond:lyap} and \eqref{cond:hyperplan}}\label{figH3-4}
\end{center}
\end{figure}

We now comment on the different hypotheses.
Notice that, because of the Lipschitz property of the vector field, the polynomial on the right-hand side in \eqref{cond:lyap}
is natural locally around $\vecz$ and $\vecxf$.
Hypothesis \eqref{cond:lyap} implies that the fixed point of $\vecB-\vecD$ is unique in
$\rpd \cap \mathcal{B}(\vecz, R) \backslash \{\vecz\}$.
Any trajectory starting in $\rpd \cap \mathcal{B}(\vecz, R) \backslash \{\vecz\}$ converges to $\vecxf$. The fixed
point $\vecz$ is unstable. In particular, this implies that the faces of $\rpd$ are not globally invariant by the flow.
Notice also that Hypothesis \eqref{cond:descente} implies that there is no fixed point in $\rpd \backslash \mathcal{B}(\vecz, R)$. This hypothesis means that for large populations the death rates dominate the birth rates, this will be used together with Hypothesis \eqref{cond:integ} to prove that the process ``comes down from infinity''. \newline
We will see that Hypothesis \eqref{cond:mort} implies that the jump rate of the process is bounded below away from zero.\newline
 Hypothesis \eqref{cond:nais}  guarantees that the birth rate of the stochastic process is not identically $0$ near the origin.\newline
Finally, notice that Hypotheses \eqref{cond:fixedpt}, \eqref{cond:lyap} (i), \eqref{cond:nais} are open conditions in the $C^{2}$-topology of vector fields. Colloquially, this means that if we slightly perturb the vector field, these hypotheses remain valid with slightly modified constants.

\subsection{An example}

We define $S(\vecx)= \sum_{j=1}^d x_{j}$ and for every $j\in\{1,\ldots,d\}$
\[
B_{j} = \lambda S\,,\;  \ D_{j}= x_{j}(\mu + \kappa S)
\]
where $\lambda>\mu/d >0$ and $\kappa>0$. The non trivial fixed point $\vecxf$ is given by
$x_{j}^* = S^*/d$ where $S^* = (\lambda d - \mu)/\kappa$. We have
\begin{align*}
\langle \vecx - \vecxf, \vecB - \vecD\rangle & = \lambda S (S-S^*)
 -(\mu+\kappa S) \Big(\| \vecx - \vecxf\|^2 +  (S-S^*) \frac{S^*}{d}\Big)\\
&=- \frac{\kappa}{d}\, S (S-S^*)^2 -
(\mu+\kappa S) \left(\| \vecx - \vecxf\|^2-\frac{(S-S^{*})^{2}}{d}\right).
\end{align*}
It is now convenient to use the decomposition
\[
\vecx=\frac{S}{d}\,\vecun+\vecy
\]
where $\vecun$ is the vector with all components equal to $1$,
and $\vecy$ is orthogonal to $\vecun$. We obtain (since $\vecxf=S^{*}\,\vecun/d$)
\[
\langle \vecx - \vecxf, \vecB - \vecD\rangle =
- \frac{\kappa}{d}\, S (S-S^*)^2 -(\mu+\kappa S)\|\vecy\|^{2}.
\]
For $\vecx$ in the positive quadrant we have $\|\vecx\|\le S$, hence
\[
\|\vecy\|\le S.
\]
It is easy to verify that there exists a constant $\Gamma>0$ such that 
for all $S\ge 0$ and all $\|\vecy\|\le S$
\begin{align*}
\|\vecx\| \|\vecx-\vecxf\|^{2} &=\sqrt{\|\vecy\|^{2}+\frac{S^{2}}{d}}
\left(\frac{(S-S^{*})^{2}}{d}+ \|\vecy\|^{2}\right)\\
& \le \Gamma
\left(\,\frac{\kappa}{d}\, S(S-S^*)^2 +(\mu+\kappa S)\|\vecy\|^{2}\right)
\end{align*}
which implies Hypothesis \eqref{cond:lyap} (i) with $\beta=1/\Gamma$. 
Checking the other hypotheses is left to the reader.

Notice that  one can construct many more examples by perturbating (in the $C^2$ sense) this example.

\subsection{The stochastic process and its basic properties}

We consider a birth-and-death process $(\vecN^{\sK}(t),t\geq 0)$ on the $d$-dimensional integer lattice $\dintegers$. 
So, for each $t\geq 0$, $\vecN^{\sK}(t)$ is a vector with $d$ components, that is,
$\vecN^{\sK}(t)=\big((\vecN^{\sK})_1(t),\ldots,(\vecN^{\sK})_d(t)\big)$. 
The birth and death rates of this process are given by  $K B_{j}\left(\frac{\vecn}{K}\right)$ and  $K D_{j}\left(\frac{\vecn}{K}\right)$, $j=1,\ldots,d$.
Given $f:\integers^d\to\real$ with finite support, the generator of the process is given by
\begin{align}
\label{gegene}
\MoveEqLeft \left(\gen\!_{\sK} f\right)(\vecn)= \\
\nonumber
& K \sum_{j=1}^{d} \left[ B_{j}\left(\frac{\vecn}{K}\right)\big(f(\vecn+\vece{j})-f(\vecn)\big)+
D_{j}\left(\frac{\vecn}{K}\right)\big(f(\vecn-\vece{j})-f(\vecn)\big)\right],
\end{align}
where $\vece{j}=(0,\ldots,0,1,0,\ldots,0)$, the $1$ being at the $j$-th position.

\begin{proposition}\label{prop-capitaine-Flam}
For each $K>0$, the process $(\vecN^{\sK}(t), t\geq 0)$ goes to $\vecz$ with probability one.
\end{proposition}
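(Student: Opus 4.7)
The statement means that the hitting time $\tau_\vecz := \inf\{t\ge 0 : \vecN^{\sK}(t) = \vecz\}$ is almost surely finite. Note first that $\vecz$ is absorbing, since by \eqref{cond:vanish} all the jump rates of the generator \eqref{gegene} vanish there. My plan is to combine a Foster--Lyapunov drift estimate based on the linear function $V(\vecn)=|\vecn|$ (forcing the process into a finite set $A$) with a ``crash-to-zero'' probability bound (guaranteeing a positive chance of absorption from $A$), and to conclude by the strong Markov property.

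Setting $V(\vecn) = |\vecn| = \sum_j n_j$, a direct computation from \eqref{gegene} together with \eqref{domination} gives
\[
\gen_{\sK} V(\vecn) = K\sum_{j=1}^d \bigl(B_j(\vecn/K)-D_j(\vecn/K)\bigr) \le K\bigl(B_{\max}(|\vecn|/K)-D_{\min}(|\vecn|/K)\bigr),
\]
and by \eqref{cond:descente} the right-hand side is at most $-\tfrac12 KD_{\min}(|\vecn|/K)$ as soon as $|\vecn|>KL$. Since \eqref{cond:mort} implies $D_{\min}(s)\ge \xi s$, this is in turn bounded above by $-\tfrac12 K\xi L < 0$ outside the \emph{finite} set $A := \{\vecn\in\dintegers : 1 \le |\vecn|\le \lfloor KL\rfloor\}$. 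Meanwhile $\gen_{\sK} V$ is bounded from above on all of $\dintegers$ (continuous, hence bounded, on the compact region $|\vecn|\le KL$, and non-positive elsewhere). A standard Dynkin localization then yields non-explosion, and the Foster--Lyapunov theorem yields that the hitting time of $A\cup\{\vecz\}$ is almost surely finite from any starting state.

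For the crash-to-zero step, fix $\vecn\in A$. By the boundary convention combined with \eqref{cond:mort}, at every state $\vecm$ with $m_j>0$ one has $KD_j(\vecm/K)\ge K\xi\max_\ell(m_\ell/K)>0$; consequently $\vecz$ is reachable from $\vecn$ by a sequence of exactly $|\vecn|$ consecutive death jumps, and the probability that the process actually performs such a cascade before time $1$ is strictly positive. Since $A$ is finite, the infimum $p(K):=\inf_{\vecn\in A}\proba_\vecn\bigl(\vecN^{\sK}(1)=\vecz\bigr)$ is attained and positive. Define stopping times $\tau_1<\tau_2<\cdots$ inductively by $\tau_1 = $ first hit of $A\cup\{\vecz\}$ and $\tau_{k+1} = $ first hit of $A\cup\{\vecz\}$ after time $\tau_k+1$ (each a.s.\ finite by the previous step). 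The strong Markov property at each $\tau_k$ then yields $\proba(\tau_\vecz>\tau_k+1)\le (1-p(K))^k$, which tends to $0$, proving $\tau_\vecz<\infty$ a.s.

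The only mildly technical point is the uniform lower bound $p(K)>0$, which boils down to checking pointwise positivity at the finitely many states of $A$; no step presents a serious obstacle once the Lyapunov inequality is in hand.
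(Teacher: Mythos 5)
Your proof is correct, but it follows a genuinely different route from the paper's. The paper disposes of this proposition in three lines: it stochastically dominates the total-population process $\sum_j \langle\vecN^{\sK}(t),\vece{j}\rangle$ by a monotype birth-and-death process with birth rate $KB_{\mathrm{max}}$ and death rate $KD_{\mathrm{min}}$, and then invokes a known almost-sure-extinction criterion for one-dimensional birth-and-death chains under \eqref{cond:descente} and \eqref{cond:mort}. You instead give a self-contained argument: a Foster--Lyapunov drift bound for $V(\vecn)=|\vecn|$ (using \eqref{cond:descente} to get strictly negative drift outside the finite set $A=\{1\le|\vecn|\le\lfloor KL\rfloor\}$, and \eqref{cond:mort} to make the bound quantitative), plus a positive-probability death cascade from $A$ to $\vecz$, iterated via the strong Markov property. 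The ingredients are the same hypotheses the paper uses, but your version avoids citing an external one-dimensional theorem, handles non-explosion explicitly (a point the paper leaves implicit in the domination), and yields quantitative by-products (a bound $\esperance_{\vecn}[T_{A\cup\{\vecz\}}]\le V(\vecn)/\varepsilon$ and geometric decay of the survival probability along the regeneration times) that the domination argument does not directly provide; the price is length. One cosmetic remark: your appeal to \eqref{cond:mort} in the cascade step should be restricted, as you in fact do, to coordinates $j$ with $m_j>0$, since the process must not leave $\dintegers$; with that reading both steps are sound.
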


\begin{proof}
For a fixed $K$, the process $\big(\sum_{j=1}^d \langle\vecN^{\sK}(t),\vece{j}\rangle, t\geq 0\big)$ can be stochastically dominated by a monotype birth-and-death process with birth rate $K B_{\mathrm{max}}(m)$ and death rate $K D_{\mathrm{min}}(m)$ with $m\in \integers$ (see \eqref{domination}). 
Hypotheses \eqref{cond:descente} and \eqref{cond:mort} imply that  the process $(\vecN^{\sK}(t), t\geq 0)$ goes almost surely to $\vecz$ (see \cite[Theorem 5.5.5]{m}).
\end{proof}

Under mild assumptions, one-parameter families of pure jump Markov processes can be approximated, in every finite time interval, by the solutions of a differential equation whose vector field is determined by the infinitesimal transition rates.
This is referred to as Kurtz's theorem. In our framework, this result takes the following form.

\begin{proposition}[\cite{EK,K1}]\label{prop-K}
\label{conv-dyn} Let $E\subseteq \rpd$ be  an open bounded subset of $\rpd$. 
Fix a bounded time interval $\left[0,\overline{t}\,\right]$ with $\overline{t}>0$. Let  $\vecx_0\in E$ be such that the trajectory of the solution $\vecx(t)$ of the differential equation
\begin{equation}\label{eq-edo-K}
\frac{\dd \vecx}{\dd t}=\vecB(\vecx)-\vecD(\vecx)
\end{equation}
with initial condition $\vecx_0$ belongs to $E$ for all $t\in [0,\overline{t}\, ]$. If
\[
\lim_{K\to+\infty} \frac{\vecN^{\sK}(0)}{K}=\vecx_0
\]
then, for every $\varepsilon>0$, 
\[
\lim_{K\to+\infty} \mathds{P}\left(\, \sup_{t\leq \overline{t}} \left| \frac{\vecN^{\sK}(t)}{K}-\vecx(t)\right| > \varepsilon\right)=0.
\]
\end{proposition}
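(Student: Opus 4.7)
The plan is to follow the classical martingale approach of Kurtz. First I would represent the jump process pathwise via independent standard (rate one) Poisson processes $Y_j^\pm$:
\[
\vecN^{\sK}(t) = \vecN^{\sK}(0) + \sum_{j=1}^d \vece{j}\!\left[Y_j^+\!\left(K\!\int_0^t B_j\!\left(\tfrac{\vecN^{\sK}(s)}{K}\right) ds\right) - Y_j^-\!\left(K\!\int_0^t D_j\!\left(\tfrac{\vecN^{\sK}(s)}{K}\right) ds\right)\right].
\]
Dividing by $K$ and subtracting the Poisson compensators yields the semimartingale decomposition
\[
\frac{\vecN^{\sK}(t)}{K} = \frac{\vecN^{\sK}(0)}{K} + \int_0^t \bigl(\vecB-\vecD\bigr)\!\left(\tfrac{\vecN^{\sK}(s)}{K}\right) ds + M^{\sK}(t),
\]
where $M^{\sK}$ is a mean-zero martingale obtained by centering the two families of Poisson terms.

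Since Hypothesis \eqref{lip} provides only local Lipschitz bounds, a localization step is needed. I would fix a compact neighborhood $\widetilde E$ of the deterministic trajectory $\{\vecx(t): t\in [0,\overline{t}\,]\}$ whose closure lies inside $E$ and such that $\vecx([0,\overline{t}\,])$ sits at positive distance $\delta>0$ from $\partial\widetilde E$. On $\widetilde E$ the vector fields $\vecB,\vecD$ are bounded and share a common Lipschitz constant $C$. Introduce the stopping time $\tau_{\sK} = \inf\{ t \ge 0 : \vecN^{\sK}(t)/K \notin \widetilde E\}$. The predictable quadratic variation of each component of $M^{\sK}(\cdot\wedge\tau_{\sK})$ is of order $1/K$, so Doob's maximal inequality (equivalently, the law of large numbers $\sup_{u\le T}|Y(Ku)/K - u|\to 0$ in probability) yields
\[
\sup_{0 \le t \le \overline{t} \wedge \tau_{\sK}} \|M^{\sK}(t)\| \xrightarrow[K\to\infty]{} 0 \quad \text{in probability}.
\]

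Subtracting the integral form of the ODE from the semimartingale decomposition and using the uniform Lipschitz property of $\vecB-\vecD$ on $\widetilde E$, Gronwall's inequality gives, on the event $\{t \le \overline{t} \wedge \tau_{\sK}\}$,
\[
\left\|\frac{\vecN^{\sK}(t)}{K} - \vecx(t)\right\| \le \left(\left\|\frac{\vecN^{\sK}(0)}{K} - \vecx_0\right\| + \sup_{0 \le s \le \overline{t}\wedge\tau_{\sK}} \|M^{\sK}(s)\|\right) e^{C\overline{t}}.
\]
The right-hand side tends to $0$ in probability by the assumption on the initial condition and the martingale bound above. A bootstrap then closes the loop: choosing any $\varepsilon<\delta$ in the desired conclusion forces $\proba(\tau_{\sK}\le\overline{t})\to 0$, so the estimate extends to the full interval $[0,\overline{t}\,]$ and the proposition follows.

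The only genuine subtlety, and therefore the main obstacle, is precisely this localization: one needs to keep the process inside a bounded region in order to exploit the Lipschitz property, yet the very statement one is trying to prove is what rules out large excursions. The stopping-time plus bootstrap device above resolves this circularity, after which everything reduces to a routine application of Doob's inequality and Gronwall.
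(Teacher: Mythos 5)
Your proof is correct, and it is precisely the classical random-time-change/martingale argument (Poisson representation, Doob plus Gronwall, localization and bootstrap) that the cited references \cite{EK,K1} contain; the paper itself offers no proof of this proposition but simply quotes it as Kurtz's theorem. The one cosmetic point is that the statement uses the $\ell^1$-norm $|\cdot|$ while you work with the Euclidean norm, which is immaterial by equivalence of norms on $\real^d$.
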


According to Propositions \ref{prop-capitaine-Flam} and \ref{prop-K}, we thus have the following picture.
On the one hand, for $K$ fixed, the (total) population dies out with probability one in the limit $t\to+\infty$.
On the other hand, for a fixed finite time span, the number of individuals in the population, when rescaled by $K$,
is very close to the solution of the differential equation \eqref{eq-edo-K} in the limit $K\to+\infty$. The purpose of the present work 
is to describe the process for finite times and for finite $K$. 

\section{Statements of the main results}\label{results}

The hypotheses of Section \ref{setting} are in force in the following four theorems.

We will use the following notations throughout the article.
\begin{notation}
The first entrance time of the process $(\vecN^{\sK}(t),t\geq 0)$ in a subset $A$ of
$\dintegers$ is defined by 
\[
T_A=\inf\{t>0: \vecN^{\sK}(t)\in A\}.
\] 
When $A$ is a singleton, say $\{\vecn\}$, we shall simply write $T_{\vecn}$.
\end{notation}

As usual, $\proba\!_{\vecn}$ will denote the law of the process given that $\vecN^{\sK}(0)=\vecn$, and, for a probability measure $\mu$ on $\dintegers$ and a subset $A$ of
$\dintegers$,
\[
\proba\!_\mu(A)=\sum_{\vecm\in\dintegers} \mu(\vecm)\, \proba\!_{\vecm}(A).
\]

Our first main result is about quantifying the closeness, in total variation distance, of the process condioned to not being
extinct before time $t$, and the quasi-stationary distribution.
Recall that the total variation distance between two probability measures $\mu$ and $\nu$ on $\dintegers$ is
\[
\|\mu -\nu\|_{\tv}
= 
\sup_{A\in \mathscr{P}(\dintegers)} |\mu(A)-\nu(A)|
\]
where $\mathscr{P}(\dintegers)$ is the powerset of $\dintegers$. 

\begin{theorem}
\label{Champ-Vill}
There exist $K_{0} >1$, $0<c<1$ and $0<a<b<+\infty$ such that the following result holds.  For all $K\geq K_{0}$, there exist $t_{0}(K)\in (a \log K, b \log K)$ and a unique probability measure $\qsd_{\sK}$ on $\domaine$ such that for every probability measure $\mu$ on $\domaine$, and for all $t\geq 0$, we have
\[
\|\proba\!_{\mu}\big(\vecN^{\sK}(t)\in \cdot \,|\, t< T_{\vecz}\big) - \qsd_{\sK}(\cdot)\|_{\tv}
\leq 2(1-c)^{\lfloor t/t_{0}(K)\rfloor}.
\]
\end{theorem}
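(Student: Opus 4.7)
The strategy is to apply the Champagnat--Villemonais criterion from \cite{cv}: the conclusion of the theorem follows (with $c$ and $t_0(K)$ given by an explicit formula in the constants below) as soon as one can exhibit a probability measure $\nu_{\sK}$ on $\domaine$, a time $t_1(K)>0$, and constants $c_1,c_2\in(0,1]$ satisfying the Doeblin-type conditions
\begin{enumerate}
\item[(A1)] $\proba\!_{\vecn}\bigl(\vecN^{\sK}(t_1(K))\in\cdot \,\bigm|\, t_1(K)<T_{\vecz}\bigr) \geq c_1\,\nu_{\sK}(\cdot)$ for every $\vecn\in\domaine$;
\item[(A2)] $\proba\!_{\nu_{\sK}}(t<T_{\vecz})\geq c_2\,\proba\!_{\vecn}(t<T_{\vecz})$ for every $\vecn\in\domaine$ and every $t\geq 0$.
\end{enumerate}
To match the announced estimate, one needs $t_1(K)=O(\log K)$ with $c_1,c_2$ bounded away from $0$ uniformly in $K$. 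I would take $\nu_{\sK}=\delta_{\vecnf}$ and prove (A1) by decomposing $[0,t_1(K)]$ into three phases, each contributing a uniform lower bound on its success.

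Phase~1, of duration $O(1)$: using the Lyapunov function constructed in Section~\ref{preparatory} together with Hypotheses~\eqref{cond:descente} and~\eqref{cond:integ}, the process ``comes down from infinity'' and enters, with probability bounded below uniformly in the initial condition, a ball of radius of order $K$ around $\vecnf$; this is the content of Sublemma~\ref{descente1}, whose proof relies on the level-set estimate of Lemma~\ref{disque}. Phase~2, of duration $O(\log K)$: starting from this large ball, I would exploit the quadratic contraction in~\eqref{cond:lyap} together with Kurtz's approximation (Proposition~\ref{prop-K}) and a second-moment control on the martingale part of $\vecN^{\sK}$ to show that $\vecN^{\sK}$ enters $\mathcal{B}(\vecnf,\sqrt{K})$ with probability bounded below uniformly in $K$ (this is Lemma~\ref{entrec}). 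Phase~3, of duration $O(1)$: inside $\mathcal{B}(\vecnf,\sqrt{K})$, I would compare $\vecN^{\sK}$ with an auxiliary symmetric random walk whose jump rates are bounded below uniformly in $K$ thanks to \eqref{cond:mort}, and use an explicit combinatorial/local limit estimate to obtain a constant $c_1'>0$ independent of $K$ such that $\proba\!_{\vecm}\bigl(\vecN^{\sK}(t_3)=\vecnf\bigr)\geq c_1'$ for every $\vecm\in\mathcal{B}(\vecnf,\sqrt{K})$. Concatenating the three phases by the Markov property and dividing by $\proba\!_{\vecn}(t_1(K)<T_{\vecz})\leq 1$ yields (A1) with $\nu_{\sK}=\delta_{\vecnf}$.

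Condition (A2) is then a consequence of (A1). The strong Markov property at $t_1(K)$, combined with the Phase~3 minorization applied in reverse, gives
\[
\proba\!_{\vecn}(t+t_1(K)<T_{\vecz}) \geq c_1'\,\proba\!_{\vecn}(t_1(K)<T_{\vecz})\,\proba\!_{\vecnf}(t<T_{\vecz}),
\]
so that a uniform lower bound on $\proba\!_{\vecn}(t_1(K)<T_{\vecz})$, itself obtained from the positivity of the birth rates near the origin encoded in~\eqref{cond:nais} (preventing instantaneous extinction from any non-absorbing state), gives (A2) with $c_2$ independent of $K$. The chief difficulty is Phase~2: the drift contraction of size $\beta\|\vecx\|\|\vecx-\vecxf\|^{2}$ provided by~\eqref{cond:lyap} must overpower the intrinsic $\sqrt{K}$-fluctuations of the jump process, and the probability of reaching $\mathcal{B}(\vecnf,\sqrt{K})$ in exactly logarithmic time must not degrade as $K\to\infty$. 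This forces the quantitative level-set bounds of Section~\ref{preparatory} to be used in a sharp way; everything else in the plan is then essentially a concatenation of Markov steps.
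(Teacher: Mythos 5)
Your overall strategy (verify the Champagnat--Villemonais conditions (A1)--(A2) with $t_1(K)=O(\log K)$ and $c_1,c_2$ uniform in $K$, via a descent-from-infinity phase, a logarithmic contraction phase, and a local minorization phase) is the same as the paper's. But there are two genuine gaps.

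First, the choice $\nu_{\sK}=\delta_{\vecnf}$ cannot work. In Phase 3 the process makes $O(K)$ jumps per unit time, so starting from any $\vecm\in\mathcal{B}(\vecnf,\sqrt{K})$ its law at time $t_3=O(1)$ is spread over $\sim K^{d/2}$ lattice sites; by the local limit theorem $\proba_{\vecm}\bigl(\vecN^{\sK}(t_3)=\vecnf\bigr)$ is of order $K^{-d/2}$, not bounded below by a constant $c_1'$ independent of $K$. With a Dirac reference measure the constant $c_1$ in (A1) therefore degrades like $K^{-d/2}$, and the resulting rate $c=c_1c_2$ is no longer uniform in $K$, which defeats the statement of the theorem. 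This is exactly why the paper takes $\nu$ to be the \emph{uniform} measure on the ball $\Delta$ of radius $2\crho{tondeuse}\sqrt{K}$: each of its $\sim K^{d/2}$ atoms has mass $\sim K^{-d/2}$, matching the pointwise lower bound $\proba_{\vecn}(\vecN^{\sK}(1)=\vecm)\geq (1-\ceta{uni})\nu(\{\vecm\})$ of Lemma \ref{uni}, whose proof (Girsanov-type change of measure to a symmetric random walk, Jensen, and a local limit theorem) is the technical heart of (A1).

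Second, your derivation of (A2) goes in the wrong direction. The displayed inequality $\proba_{\vecn}(t+t_1<T_{\vecz})\geq c_1'\,\proba_{\vecn}(t_1<T_{\vecz})\,\proba_{\vecnf}(t<T_{\vecz})$ bounds the survival probability from $\vecn$ \emph{below} by that from $\vecnf$, whereas (A2) requires the opposite: $\proba_{\nu}(t<T_{\vecz})\geq c_2\,\proba_{\vecn}(t<T_{\vecz})$, i.e.\ no initial state survives appreciably better than $\nu$ does, uniformly in $t$. This is not a formal consequence of (A1); in the paper it occupies a separate chain of four lemmas, showing via the functions $f(t)=\sup_{\vecn\in\Delta^c}\proba_{\vecn}(T_{\vecz}>t)$ and $g(t)=\sup_{\vecn\in\Delta}\proba_{\vecn}(T_{\vecz}>t)$ that from outside $\Delta$ one either dies or enters $\Delta$ on a time scale $t_*$ with probability $\geq 1-\delta$, that $g(t-t_*)\leq\eta g(t)$ with $\eta\delta<1$, and finally a Harnack-type estimate (Lemma \ref{Harnack}) comparing $\inf_{\vecn\in\Delta}$ and $\sup_{\vecn\in\Delta}$ of the survival probability, itself requiring the coupon-collector-style hitting estimate \eqref{vapartout} on the time scale $K^{d/2}\log K$. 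None of this is supplied by your sketch. (A minor further point: a uniform-in-$\vecn$ lower bound on $\proba_{\vecn}(t_1(K)<T_{\vecz})$ is true but is itself a nontrivial statement, proved in the paper as \eqref{Tg} by first descending to $\Delta$ and then invoking Sublemma \ref{b}.)
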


This theorem tells us that for $t\gg \log K$, the process condioned to not being
extinct before time $t$ is very close to the quasi-stationary distribution $\qsd_{\sK}$. As $t$ tends to $+\infty$, we get a convergence of the process conditioned to non-extinction towards the quasi-stationary distribution.

By a general result on quasi-stationary distributions (see for instance \cite{cmm}), one has
\begin{equation}
\label{lambo}
\proba_{\qsd_{\sK}}\big(T_{\vecz}>t\big)=\e^{-\lambda_{0}(K) t},\; t\geq 0,
\end{equation}
where $\lambda_{0}(K)$ is a positive real number called the exponential rate of extinction.
In particular, the mean time to extinction, starting from the quasi-stationary distribution is 
\begin{equation}\label{eq-mte}
\esperance_{\qsd_{\sK}}[T_{\vecz}]=\frac{1}{\lambda_{0}(K)}\,.
\end{equation}

The following theorem shows that the exponential rate of extinction is exponentially small in $K$.

\begin{theorem}\label{pertemasse}
There exists $K_{0}>0$ and two numbers $d_{1}>d_{2}>0$ such that for all $K>K_{0}$
\begin{equation}\label{est-lambo}
\e^{-d_{1} K}\leq  \lambda_{0}(K) \leq \e^{-d_{2} K}.
\end{equation}
\end{theorem}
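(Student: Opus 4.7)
The two bounds follow from two-sided control of the one-step extinction probability. Writing \eqref{lambo} at $t=1$ gives $\lambda_0(K) = -\log \proba_{\qsd_{\sK}}(T_{\vecz}>1)$; since $-\log(1-x) \in [x, 2x]$ for $x\in [0,\tfrac12]$, it is enough to show
\[
c_1\,e^{-d_1 K} \;\le\; \proba_{\qsd_{\sK}}(T_{\vecz}\le 1) \;\le\; C_1\,e^{-d_2 K}
\]
for some constants $c_1, C_1 > 0$. To transfer such bounds between $\qsd_{\sK}$ and the deterministic launching state $\vecnf := \lfloor K\vecxf\rfloor$, we invoke Theorem \ref{Champ-Vill}: after a burn-in of length $t_0(K) = O(\log K)$, the conditional law starting from $\vecnf$ is within a constant total-variation distance of $\qsd_{\sK}$, and $\log K$ is negligible compared to the exponential target.

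For the lower bound $\lambda_0(K) \ge e^{-d_1 K}$, we construct an explicit extinction trajectory. Lemma \ref{entrec} (together with the coming-down-from-infinity estimates of Section \ref{preparatory}) shows that $\qsd_{\sK}$ puts mass bounded below on the set $\{\vecn:|\vecn|\le LK\}$. For every such starting state $\vecn$, fix the deterministic path $\vecn \to \vecn - \vece{j_1} \to \cdots \to \vecz$ of length $|\vecn| \le LK$ consisting only of death jumps (killing one coordinate at a time). At each visited state, hypothesis \eqref{cond:mort} together with local boundedness of $\vecB,\vecD$ on the compact set visited forces the ratio of the chosen death rate to the total jump rate to be bounded below by some $c_0>0$; hence the embedded chain follows the whole path with probability at least $c_0^{LK} = e^{-d_1 K/2}$. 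The associated sojourn times are independent exponentials with rates $\ge \xi\sup_\ell n_\ell$ (again by \eqref{cond:mort}), whose sum has expectation bounded uniformly in $K$, so the path completes within unit time with probability $\ge \tfrac12$ by Markov's inequality. Averaging $\proba_{\vecn}(T_{\vecz}\le 1)\ge \tfrac12 e^{-d_1 K/2}$ against $\qsd_{\sK}$ yields the bound.

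For the upper bound $\lambda_0(K) \le e^{-d_2 K}$, we use that only the unit states $\vece{j}$ can jump to $\vecz$. Differentiating \eqref{lambo} at $t=0$ gives
\[
\lambda_0(K) \;=\; \sum_{j=1}^d K D_j(\vece{j}/K)\,\qsd_{\sK}(\vece{j}) \;\le\; C\,\qsd_{\sK}(\{\vecn:|\vecn|=1\}),
\]
where the last inequality uses $D_j(\vecz)=0$ and the Lipschitz regularity \eqref{lip}. It therefore suffices to show the QSD puts exponentially small mass near the origin. Introduce the exponential weight $V(\vecn):=\exp(\alpha K \lyap(\vecn/K))$, where $\lyap\ge 0$ is the Lyapunov function of Section \ref{preparatory} (minimized at $\vecxf$), and $\alpha>0$ is small. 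A Taylor expansion in $1/K$ applied to \eqref{gegene} gives
\[
\frac{\gen\!_{\sK} V(\vecn)}{V(\vecn)} \;=\; \alpha K\,\langle \vecB-\vecD,\nabla\lyap\rangle(\vecn/K) + O(\alpha^2 K),
\]
which by the contraction implied by \eqref{cond:lyap} is $\le -\gamma\alpha K$ for some $\gamma>0$ whenever $\vecn/K$ lies outside a small fixed neighborhood of $\vecxf$, provided $\alpha$ is small enough. Plugging this into the QSD eigen-relation $\qsd_{\sK}(\gen\!_{\sK} V) = -\lambda_0\qsd_{\sK}(V)$ and splitting the domain between this neighborhood (where $V$ is $O(1)$) and its complement yields $\qsd_{\sK}(V)\le C$; since $V(\vece{j}) \sim \exp(\alpha K \lyap(\vecz)) = e^{d_2 K}$, this gives $\qsd_{\sK}(\vece{j}) \le C\,e^{-d_2 K}$, hence the desired upper bound on $\lambda_0(K)$.

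The upper bound is the main obstacle. The inequality \eqref{cond:lyap} is quadratic while $V$ is exponential in $K$, so the $O(\alpha^2 K)$ Taylor remainder in $\gen\!_{\sK} V/V$ must be controlled uniformly on $\dintegers$, including the far region where the process may still wander. Handling this will likely require truncating $V$ outside $\{|\vecn|\le MK\}$ and using Hypothesis \eqref{cond:descente} together with Sublemma \ref{descente1} to ensure the tail contribution to $\qsd_{\sK}(V)$ remains bounded independently of $K$. The lower bound, by comparison, is a combinatorial path argument whose only nontrivial input is the $\qsd_{\sK}$-concentration on a ball of size $O(K)$ around $\vecnf$ established earlier.
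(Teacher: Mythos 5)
Your lower bound $\lambda_{0}(K)\ge \e^{-d_{1}K}$ follows the paper's own route (Lemmas \ref{lbi1} and \ref{lbi2}): an explicit all-deaths path of length $O(K)$, whose embedded-chain probability is $\ge c_{0}^{O(K)}$ by \eqref{cond:mort}, launched from a region carrying a uniform amount of $\qsd_{\sK}$-mass. The one step that fails as written is the time control. The sojourn time at a state whose maximal coordinate equals $q$ has rate of order $q$ (not of order $K$), so the expected traversal time of your path is $\sum_{q\le LK}O(1/q)=O(\log K)$, not $O(1)$; Markov's inequality therefore does not give completion within unit time with probability $\ge 1/2$. The paper instead requires each sojourn to be shorter than $1/(\xi K)$ and absorbs the resulting cost $\prod_{q\le LK}(1-\e^{-q/K})^{d}=\e^{-\Oun K}$ into the exponent; alternatively you could replace the time window $1$ by $O(\log K)$. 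Either fix is routine, but the argument you wrote is not correct.

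The upper bound $\lambda_{0}(K)\le \e^{-d_{2}K}$ is where you genuinely diverge from the paper, and where the real gaps are. The paper never touches the eigenvalue relation: it proves Lemma \ref{masseinf} (the process launched in the macroscopic ball $D$ stays there for a time $\e^{\Oun K}$ with probability $\ge 1/2$, via Dynkin's formula for $\lyap$ together with Theorem \ref{formule} and Corollary \ref{tondeuse}), iterates by the Markov property, and transfers to $\qsd_{\sK}$ using $\qsd_{\sK}\ge c_{1}\nu$ from \eqref{cond-A1}. Your route --- the flux identity $\lambda_{0}(K)=\sum_{j}KD_{j}(\vece{j}/K)\,\qsd_{\sK}(\vece{j})$ combined with a Lyapunov bound forcing $\qsd_{\sK}$ to have exponentially small mass on $\{|\vecn|=1\}$ --- is legitimate in principle (and would give extra information), but three steps are missing or wrong. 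First, your $V$ is, up to notation, the paper's $\lyap$ of \eqref{lyapou}, and the claimed drift $\gen_{K}V/V\le -\gamma\alpha K$ off a neighborhood of $\vecxf$ is false exactly where you need it: near the origin the total jump rate is $O(\|\vecn\|)$, so Theorem \ref{formule} only yields a rate $-\alpha\beta\frac{\|\vecn\|}{K}\frac{\|\vecn-\vecnf\|^{2}}{K}=O(\alpha\|\vecn\|)$, and Corollary \ref{tondeuse} explicitly excludes $\|\vecn\|<\pcte{tondeuse}$, which contains the states $\vece{j}$; establishing a strictly negative drift there requires redoing the error estimates of Theorem \ref{formule} with their $\|\vecn\|$-dependence tracked, which you have not done. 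Second, the deduction of ``$\qsd_{\sK}(V)\le C$'' from $\qsd_{\sK}(\gen_{K}V)=-\lambda_{0}\qsd_{\sK}(V)$ is not carried out: $V$ is unbounded, so both $\qsd_{\sK}(V)<\infty$ and the validity of the eigenrelation for $V$ need proof (your proposed truncation), and the splitting actually yields $\qsd_{\sK}(V\un_{U^{c}})=O(K)$ rather than $O(1)$ --- still sufficient, but not what you assert. Third, the region $\|\vecn\|>RK$, where \eqref{cond:lyap} is unavailable, is deferred to a ``likely require'' remark. As it stands the upper bound is a plausible program, not a proof.
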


Hence we get an estimate of the mean time to extinction \eqref{eq-mte}:
\[
\e^{d_{2} K}\leq \esperance_{\qsd_{\sK}}[T_{\vecz}]\leq \e^{d_{1} K}
\]
for all $K>K_0$.
When $d=1$, a more precise estimate was proved in our previous work \cite[Theorem 3.2]{ccm}. 

\begin{remark}
The upper bound in \eqref{est-lambo} could be obtained by a large deviation asymptics for jump processes
(see \cite[Section 4.2]{champagnat}). Theorem \ref{pertemasse} also provides a lower bound. 
In the present paper we are interested, among other things, in the different time scales for large $K$ and not so much in their precise asymptotics. 
\end{remark}

The following theorem provides a quantitative bound for the distance (in total variation) between the law of the process and a convex combination of the quasi-stationary distribution and the Dirac measure at the origin. 

\begin{theorem}\label{mainincon}
Let $c$ and $t_0(K)$ be as in Theorem \ref{Champ-Vill}.
There exist positive constants $\cte{mainincon}$, $\pcte{mainincon}$,
$\ceta{mainincon}$, $K_{0}$, such that for all $t\ge 0$ and all $K>K_{0}$, for each $\vecn\in\domaine$, there exists  a number $p_{\sK}(\vecn)\in(c,1]$ such that 
\begin{align}
\nonumber
\MoveEqLeft \sup_{\vecn\in\domaine}\left\|\, \proba\!_{\vecn}(\vecN^{\sK}(t)\in \cdot) -
 \e^{-\lambda_{0}(K)t} p_{\sK}(\vecn)\,
\qsd_{\sK}(\cdot)-\big(1- \e^{-\lambda_{0}(K)t} p_{\sK}(\vecn)\big)
\delta_{\vecz}(\cdot)\right\|_{\tv}\\
\label{eq:approxmixture}
&  \le 2 \e^{- \ceta{mainincon}K} \e^{-\lambda_{0}(K)t}
+ \, \cte{mainincon}\e^{- \omega(K)  t}
\end{align}
where 
\[
\omega(K) = \frac{- \log(1-c)}{t_{0}(K)}\ge \frac{\pcte{mainincon}}{\log K}\,.
\]
\end{theorem}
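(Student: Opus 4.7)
The plan is to adapt the general approach of Champagnat--Villemonais \cite{cv}, combining the Doeblin-type estimate of Theorem \ref{Champ-Vill} with a quantitative control on the survival probability. Writing $\nu^{\vecn}_{\sK}(t):=\proba\!_{\vecn}(\vecN^{\sK}(t)\in\cdot\mid t<T_{\vecz})$ and splitting the state space into $\{\vecz\}$ and $\domaine$, a direct computation yields
\begin{align*}
& \left\|\proba\!_{\vecn}(\vecN^{\sK}(t)\in\cdot)-\e^{-\lambda_{0}(K)t}p_{\sK}(\vecn)\qsd_{\sK}(\cdot)-\bigl(1-\e^{-\lambda_{0}(K)t}p_{\sK}(\vecn)\bigr)\delta_{\vecz}(\cdot)\right\|_{\tv}\\
&\qquad\le \bigl|\proba\!_{\vecn}(T_{\vecz}>t)-\e^{-\lambda_{0}(K)t}p_{\sK}(\vecn)\bigr|+\proba\!_{\vecn}(T_{\vecz}>t)\,\|\nu^{\vecn}_{\sK}(t)-\qsd_{\sK}\|_{\tv}.
\end{align*}
The second summand is handled immediately by Theorem \ref{Champ-Vill} and $\proba\!_{\vecn}(T_{\vecz}>t)\le 1$: it is at most $2(1-c)^{\lfloor t/t_{0}(K)\rfloor}\le\frac{2}{1-c}\e^{-\omega(K)t}$, which supplies the $\cte{mainincon}\e^{-\omega(K)t}$ contribution.

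To treat the first summand I would define $p_{\sK}(\vecn):=\lim_{s\to\infty}\eta_{s}(\vecn)$ with $\eta_{s}(\vecn):=\e^{\lambda_{0}(K)s}\proba\!_{\vecn}(T_{\vecz}>s)$. The identity $\qsd_{\sK}(\eta_{u})=1$ (which is just a rewriting of \eqref{lambo}) combined with the Markov property at time $s$ gives
\[
\eta_{s+u}(\vecn)-\eta_{s}(\vecn)=\eta_{s}(\vecn)\,\bigl(\nu^{\vecn}_{\sK}(s)(\eta_{u})-\qsd_{\sK}(\eta_{u})\bigr),
\]
hence $|\eta_{s+u}(\vecn)-\eta_{s}(\vecn)|\le 2\eta_{s}(\vecn)\|\eta_{u}\|_{\infty}\|\nu^{\vecn}_{\sK}(s)-\qsd_{\sK}\|_{\tv}$. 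Provided a uniform upper bound $M:=\sup_{u\ge 0}\|\eta_{u}\|_{\infty}<\infty$ is available, Theorem \ref{Champ-Vill} then implies that $(\eta_{s}(\vecn))_{s}$ is Cauchy uniformly in $\vecn\in\domaine$; letting $u\to\infty$ and multiplying by $\e^{-\lambda_{0}(K)s}$ yields
\[
\bigl|\proba\!_{\vecn}(T_{\vecz}>s)-\e^{-\lambda_{0}(K)s}p_{\sK}(\vecn)\bigr|\le 4M^{2}(1-c)^{\lfloor s/t_{0}(K)\rfloor}\,\e^{-\lambda_{0}(K)s}.
\]

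The main obstacle is proving the uniform bound $\|\eta_{u}\|_{\infty}\le M$, and for this I would invoke the pathwise analysis of Section \ref{preparatory}. Sublemma \ref{descente1} shows the process comes down from infinity in time of order one, Lemma \ref{entrec} provides entry in time $O(\log K)$ into a ball of radius $O(\sqrt{K})$ around $\vecnf$, and inside that ball the survival probability can be compared, up to a multiplicative constant, with that starting from $\qsd_{\sK}$; this forces the required bound on $\eta_{u}$. The same ingredients also produce the range $p_{\sK}(\vecn)\in(c,1]$: the lower bound $p_{\sK}(\vecn)>c$ stems from the Doeblin inequality that underlies Theorem \ref{Champ-Vill}, and the upper bound from the normalization of $p_{\sK}$ against $\qsd_{\sK}$. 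Finally, to recognize the advertised form of the bound I would split on $t$: for $t$ large enough that $4M^{2}(1-c)^{\lfloor t/t_{0}(K)\rfloor}\le 2\e^{-\ceta{mainincon}K}$---which requires $t\gtrsim K\log K$ since $t_{0}(K)\asymp\log K$---the first summand is dominated by $2\e^{-\ceta{mainincon}K}\e^{-\lambda_{0}(K)t}$; for smaller $t$, using $\e^{-\lambda_{0}(K)t}\le 1$ together with $(1-c)^{\lfloor t/t_{0}(K)\rfloor}\le\frac{1}{1-c}\e^{-\omega(K)t}$ absorbs the first summand into a $C\e^{-\omega(K)t}$ term. Adding the two contributions with the second-summand estimate produces the announced inequality.
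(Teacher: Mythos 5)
Your proof is, at bottom, a probabilistic rephrasing of the paper's argument rather than a genuinely different one: the paper also takes the weight to be $p_{\sK}(\vecn)=\vp_{\sK}(\vecn)\wedge 1$, where $\vp_{\sK}$ is constructed in Proposition \ref{specdual} precisely as $\lim_{s\to\infty}\e^{\lambda_{0}s}\sg_{s}\fun$, i.e.\ your $\lim_{s}\eta_{s}$, and the two summands of your decomposition correspond respectively to the estimate \eqref{restep} combined with Proposition \ref{normevp}, and to Corollary \ref{maincor}. Your splitting of the total variation norm, the identity $\eta_{s+u}(\vecn)=\eta_{s}(\vecn)\,\nu^{\vecn}_{\sK}(s)(\eta_{u})$ together with $\qsd_{\sK}(\eta_{u})=1$ from \eqref{lambo}, and the resulting uniform Cauchy argument are all correct; what you gain is that you bypass the functional-analytic detour through $\lun$ and $\linf$ (essential spectral radius, resolvent), which the paper really only needs for Theorem \ref{ancien}.

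Two steps need to be firmed up. First, the bound $M=\sup_{u}\|\eta_{u}\|_{\infty}<\infty$ must be \emph{uniform in $K$}, otherwise $\cte{mainincon}=\Oun M^{2}$ is not a constant; your appeal to Sublemma \ref{descente1} and Lemma \ref{entrec} does not by itself deliver this. The clean route is Condition \eqref{cond-A2} combined with the inequality $\qsd_{\sK}\ge c_{1}\nu$ established in Section \ref{sec:MTE}: for every $\vecn\in\domaine$ and $u\ge 0$ one has $\e^{-\lambda_{0}u}=\proba_{\qsd_{\sK}}(T_{\vecz}>u)\ge c_{1}\,\proba_{\nu}(T_{\vecz}>u)\ge c_{1}c_{2}\,\proba_{\vecn}(T_{\vecz}>u)$, hence $M\le 1/c$. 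Second, your claim that the normalization against $\qsd_{\sK}$ yields $p_{\sK}(\vecn)\le 1$ is incorrect: $\qsd_{\sK}(\vp_{\sK})=1$ only controls the limit on average, and nothing prevents $\lim_{s}\eta_{s}(\vecn)$ from exceeding $1$ for $\vecn$ near $\vecnf$ --- the paper itself only proves $\|\vp_{\sK}\|_{\linf}\le 1+\e^{-\cte{normevp}K}$ (Proposition \ref{normevp}). You must therefore truncate, setting $p_{\sK}:=(\lim_{s}\eta_{s})\wedge 1$, and control the truncation error; this is available inside your framework, since your estimate evaluated at $s=K\log K$ gives $\lim_{s}\eta_{s}(\vecn)\le \e^{\lambda_{0}s}+4M^{2}(1-c)^{\lfloor s/t_{0}\rfloor}\le 1+\e^{-\Oun K}$, using Theorem \ref{pertemasse} for $\lambda_{0}\le\e^{-d_{2}K}$ and $t_{0}=\Oun\log K$. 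The truncation then contributes exactly the $2\e^{-\ceta{mainincon}K}\e^{-\lambda_{0}t}$ term of the statement; note that your time-splitting device is not what produces that term and is in fact unnecessary, since $4M^{2}(1-c)^{\lfloor t/t_{0}\rfloor}\e^{-\lambda_{0}t}\le \frac{4M^{2}}{1-c}\,\e^{-\omega(K)t}$ already holds for all $t\ge 0$.
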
 

\begin{remark}
Let us give the meaning of inequality \eqref{eq:approxmixture} in two different regimes corresponding to two different time-scales. We assume that $K$ is large enough to have $\e^{- \ceta{mainincon}K}\ll 1$.
First notice that the right-hand side of \eqref{eq:approxmixture} is $\ll 1$ provided that $t\gg \log K$.
Then, for $\log K\ll t\ll 1/\lambda_0(K)$, \eqref{eq:approxmixture} implies that 
\begin{align*}
\MoveEqLeft \sup_{\vecn\in\domaine}\left\|\, \proba\!_{\vecn}(\vecN^{\sK}(t)\in \cdot) -
 p_{\sK}(\vecn)\,
\qsd_{\sK}(\cdot)-\big(1- p_{\sK}(\vecn)\big)
\delta_{\vecz}(\cdot)\right\|_{\tv}\\
&  \le 2 \e^{- \ceta{mainincon}K} \e^{-\lambda_{0}(K)t}
+ \, \cte{mainincon}\e^{- \omega(K)  t} + 2(1-\e^{-\lambda_{0}(K)t})\ll 1.
\end{align*}
This means that, in that time span, the law of the process is close to a mixture of the Dirac measure at the origin and the quasi-stationary distribution with respective weights $1- p_{\sK}(\vecn)$ and $p_{\sK}(\vecn)$. 
For $t\gg 1/\lambda_0(K)$, \eqref{eq:approxmixture} implies  that the law of the process is close to the Dirac measure at the origin.
\end{remark}

Let $(\sg^{\sK}_{t}, t\geq 0)$ be the semigroup of the birth and death process killed at $\vecz$. 
More precisely
\[
\sg^{\sK}_t f(\vecn)=\esperance_{\vecn} \left[ f(\vecN^{\sK}(t))\, \un_{\{t<T_{\vecz}\}}\right]
\]
where $f:\integers^d\backslash \{\vecz\}\to\real$ is any bounded measurable function.
We now  prove that the spectral gap of this semigroup is larger than
$\Oun/\log K$, which is what we obtained in dimension one \cite[Theorem 3.3]{ccm}. 

\begin{theorem}\label{ancien}
The resolvent of $(\sg^{\sK}_{t},t\geq 0)$ in the Banach space $\linf$
is meromorphic in the set $\Re z>-\omega(K) $ with a unique simple pole at 
$-\lambda_{0}(K)$ with residue
the one dimensional  projection $\pi_{\sK}$ given by
\[
\pi_{\sK}(f)=\vp_{\sK} \qsd_{\sK}(f)\,.
\]
The sequence $\big(\vp_{\sK}(\vecn)\big)_{\vecn \in\domaine}$ is such that
$\qsd_{\sK}(\vp_{\sK})=1$,  and, for all $t\ge0$,
\[
\sg^{\sK}_{t}\vp_{\sK}=\e^{-\lambda_{0}(K)t} \vp_{\sK}.
\] 
Moreover, for all $\vecn\in\domaine$,
\[
c\le \vp_{\sK}(\vecn)\le 1+ \e^{-\Oun K}\,,
\]
where $c$ is defined in Theorem \ref{Champ-Vill}.
In particular, the spectral gap $\omega(K) - \lambda_{0}(K)$ is bounded below by
\[
\frac{\pcte{mainincon}}{\log K} - \e^{-d_{2} K}.
\]
\end{theorem}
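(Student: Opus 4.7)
The plan is to build a right eigenfunction $\vp_{\sK}$ of the semigroup $\sg^{\sK}_t$ out of the survival probabilities $h_t(\vecn):=\proba_{\vecn}(t<T_{\vecz})=\sg^{\sK}_t\un(\vecn)$, couple it with $\qsd_{\sK}$ to form the rank-one projection $\pi_{\sK}$, and then read the spectral picture off the Laplace transform of $\sg^{\sK}_t-\e^{-\lambda_0(K)t}\pi_{\sK}$.

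I first define $\vp_{\sK}(\vecn):=\lim_{t\to\infty}\e^{\lambda_{0}(K)t}h_t(\vecn)$. The existence of the limit follows by rewriting Theorem \ref{Champ-Vill} in the unconditioned form
\[
\bigl|\sg^{\sK}_tf(\vecn)-h_t(\vecn)\,\qsd_{\sK}(f)\bigr|\le 4(1-c)^{\lfloor t/t_0(K)\rfloor}\|f\|_{\infty},\qquad f\in\linf,
\]
and combining it with the Markov identity $h_{s+t}=\sg^{\sK}_t h_s$ and $\qsd_{\sK}(h_s)=\e^{-\lambda_{0}(K)s}$ (from \eqref{lambo}). Applied to $f=h_s$ and multiplied by $\e^{\lambda_0(K)(s+t)}$ this reads
\[
\bigl|\e^{\lambda_0(K)(s+t)}h_{s+t}(\vecn)-\e^{\lambda_0(K)t}h_t(\vecn)\bigr|\le 4\,\e^{\lambda_0(K)(s+t)-\omega(K)t},
\]
which is summable along the dyadic sequence $t_k=2^k$ (take $s=t=t_k$) because $\omega(K)\ge\pcte{mainincon}/\log K$ dominates $\lambda_0(K)\le\e^{-d_2K}$ for $K\ge K_0$ by Theorems \ref{mainincon} and \ref{pertemasse}. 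Hence $(\e^{\lambda_0(K)t}h_t(\vecn))_t$ is Cauchy, its limit $\vp_{\sK}(\vecn)$ is finite, and passing to the limit in $\sg^{\sK}_t h_s=h_{s+t}$ by dominated convergence yields $\sg^{\sK}_t\vp_{\sK}=\e^{-\lambda_0(K)t}\vp_{\sK}$; the normalisation $\qsd_{\sK}(\vp_{\sK})=\lim_s\e^{\lambda_0(K)s}\qsd_{\sK}(h_s)=1$ is automatic. Evaluating \eqref{eq:approxmixture} at $A=\domaine$ and letting $t\to\infty$ gives $|\vp_{\sK}(\vecn)-p_{\sK}(\vecn)|\le 2\e^{-\ceta{mainincon}K}$, so the envelope $p_{\sK}(\vecn)\in(c,1]$ from Theorem \ref{mainincon} transfers, after enlarging $K_0$ if necessary, to $c\le\vp_{\sK}(\vecn)\le 1+\e^{-\Oun K}$.

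Next, set $\pi_{\sK}(f):=\vp_{\sK}\,\qsd_{\sK}(f)$. Because $\qsd_{\sK}(\vp_{\sK})=1$, $\pi_{\sK}$ is a rank-one idempotent, and the identities $\sg^{\sK}_t\vp_{\sK}=\e^{-\lambda_0(K)t}\vp_{\sK}$ and $\qsd_{\sK}\sg^{\sK}_t=\e^{-\lambda_0(K)t}\qsd_{\sK}$ give $\sg^{\sK}_t\pi_{\sK}=\pi_{\sK}\sg^{\sK}_t=\e^{-\lambda_0(K)t}\pi_{\sK}$, from which the remainder $Q_t:=\sg^{\sK}_t-\e^{-\lambda_0(K)t}\pi_{\sK}$ satisfies $Q_{t+s}=Q_tQ_s$ by a direct computation. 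To bound its operator norm, decompose an arbitrary $f\in\linf$ as $f=\vp_{\sK}\qsd_{\sK}(f)+g$ with $\qsd_{\sK}(g)=0$ and $\|g\|_{\infty}\le(2+\e^{-\Oun K})\|f\|_\infty$; since $Q_tf=\sg^{\sK}_t g$, the unconditioned estimate above applied to $g$ yields $\|Q_tf\|_{\infty}\le 4(1-c)^{\lfloor t/t_0(K)\rfloor}\|g\|_\infty\le C\,\e^{-\omega(K)t}\|f\|_\infty$.

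The resolvent then splits as
\[
\reso(z)f=\int_0^{\infty}\e^{-zt}\sg^{\sK}_tf\,\dd t=\frac{\pi_{\sK}(f)}{z+\lambda_0(K)}+\int_0^{\infty}\e^{-zt}Q_tf\,\dd t,
\]
whose second summand converges absolutely in $\linf$-operator norm for every $z$ with $\Re z>-\omega(K)$ and defines an $\linf$-valued holomorphic function there; accordingly $\reso(z)$ is meromorphic on $\{\Re z>-\omega(K)\}$ with a single simple pole of residue $\pi_{\sK}$ at $-\lambda_0(K)$, and the spectral-gap estimate is the direct combination $\omega(K)-\lambda_0(K)\ge\pcte{mainincon}/\log K-\e^{-d_2K}$. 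I expect the Cauchy argument producing $\vp_{\sK}$ to be the only genuinely non-trivial step, and this is precisely where the separation of scales $\omega(K)\gg\lambda_0(K)$ furnished by Theorems \ref{mainincon} and \ref{pertemasse} becomes essential; once $\vp_{\sK}$ is in hand, the rest is bookkeeping around the single Doeblin-type inequality above.
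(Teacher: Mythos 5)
Your construction of $\vp_{\sK}$ as $\lim_{t\to\infty}\e^{\lambda_0(K)t}\,\proba_{(\cdot)}(t<T_{\vecz})$ and your final resolvent splitting are sound, and the route is genuinely more direct than the paper's. The paper first works with the adjoint semigroup on $\lun$: from the Doeblin-type estimate it deduces that $\sg^{\dagger}_s$ is uniformly approximated by rank-one (hence compact) operators, invokes essential-spectral-radius results from Edmunds--Evans to locate the spectrum outside the disk of radius $\e^{-\omega s}$, rules out Jordan blocks at $\e^{-\lambda_0 s}$ by an explicit contradiction, and only then dualizes to $\linf$ to obtain $\vp_{\sK}=\lim_n\e^{n\lambda_0 s}\sg_{ns}\fun$ (with a separate argument showing independence of $s$). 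You bypass all of the operator-theoretic machinery by exhibiting the exact rank-one splitting $\sg_t=\e^{-\lambda_0 t}\pi_{\sK}+Q_t$ with $\|Q_t\|_{\linf}\le C\e^{-\omega t}$ and taking the Laplace transform; this gives meromorphy, the simple pole, and the residue in one stroke. What you lose is the finer spectral information the paper records along the way (essential spectral radius, absence of any other spectrum outside $|z|\le\e^{-\omega s}$), but none of that is needed for the statement of the theorem. Your telescoping/Cauchy argument for the existence of the limit, resting on $\omega(K)\gg\lambda_0(K)$, is essentially the same mechanism the paper uses, just along continuous time instead of along $ns$.

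The one step I would not accept as written is the derivation of $c\le\vp_{\sK}\le 1+\e^{-\Oun K}$ from Theorem \ref{mainincon}. In the paper these bounds are the content of Proposition \ref{normevp}, which is an \emph{input} to the proof of Theorem \ref{mainincon} (the weights there are defined by $p_{\sK}=\vp_{\sK}\wedge 1$), so invoking Theorem \ref{mainincon} here is circular relative to the paper's logical architecture. Even granting Theorem \ref{mainincon} as a black box, your comparison only yields $\vp_{\sK}(\vecn)\ge p_{\sK}(\vecn)-2\e^{-\ceta{mainincon}K}>c-2\e^{-\ceta{mainincon}K}$, which does not recover the stated lower bound $c$ because nothing prevents $p_{\sK}(\vecn)$ from being arbitrarily close to $c$. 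The non-circular argument is short: positivity of $\vp_{\sK}$ together with condition \eqref{cond-A1} gives $\e^{-\lambda_0 t_0}\vp_{\sK}\ge\sg_{t_0}(\un_{\Delta}\vp_{\sK})\ge c_1\nu(\vp_{\sK})$, while integrating condition \eqref{cond-A2} against $\qsd_{\sK}$ and letting $t\to\infty$ in your own estimate $|\e^{\lambda_0 t}h_t-\vp_{\sK}|\le\Oun\e^{-(\omega-\lambda_0)t}$ gives $\nu(\vp_{\sK})\ge c_2$, whence $\vp_{\sK}\ge c_1c_2=c$; the upper bound follows from $\|\vp_{\sK}\|_{\linf}\le(\e^{-\lambda_0 t}-\cte{maincor}\e^{-\omega t})^{-1}$ evaluated at $t=K\log K$. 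With that substitution your proof is complete.
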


\begin{remark}
We will see in the proofs that the weights $p_{\sK}(\vecn)$ of Theorem  \ref{mainincon} are equal to $\vp_{\sK}(\vecn)\wedge 1$.
\end{remark}

\section{Some preparatory results}
\label{preparatory}

\subsection{A Lyapunov function}

We first introduce the natural quantity 
\[
\vecnf =\lfloor K \vecxf \rfloor
\]
wich will appear throughout the article. 
Let $\lyap:\integers^d\to \real_+$ defined by
\begin{equation}
\label{lyapou}
\lyap(\vecn)=\e^{\frac{\alpha}{K}\| \vecn-\vecn^*\|^2}
\end{equation}
where $\alpha >0$ is a parameter to be chosen later on.
We now prove that under the previous assumptions and for $\alpha$ small enough, the function $\varphi$ is a Lyapunov function.

\begin{theorem}\label{formule}
There exist $0<\alpha<1/2$,  $K_{0}>0$ and $\cte{formule}>0$ such that for all $K\geq K_{0}$ and for all
$\vecn \in \mathcal{B}(\vecz, R K)$, we have
\[
\gen_{K}\varphi(\vecn) 
\le \bigg(-  \alpha\beta\,  \frac{\|\vecn\|}{K} \frac{\left \|{\vecn} - {\vecnf}\right\|^2}{K}\, + \cte{formule}\bigg)\; \varphi(\vecn)
\]
where $\beta$ and $R$ are defined in \eqref{cond:lyap}.
\end{theorem}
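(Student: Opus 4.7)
The plan is a direct computation followed by a Taylor expansion and a comparison with the drift term. Since
$$\frac{\varphi(\vecn\pm\vece{j})}{\varphi(\vecn)}=\exp\!\left(\pm\frac{2\alpha(n_j-n_j^*)}{K}+\frac{\alpha}{K}\right),$$
setting $a_j:=2\alpha(n_j-n_j^*)/K+\alpha/K$ and $b_j:=-2\alpha(n_j-n_j^*)/K+\alpha/K$, one has
$$\frac{\gen_{\sK}\varphi(\vecn)}{\varphi(\vecn)}=K\sum_{j=1}^{d}\big[B_j(\vecn/K)(e^{a_j}-1)+D_j(\vecn/K)(e^{b_j}-1)\big].$$
On $\mathcal{B}(\vecz,RK)$ the exponents $a_j,b_j$ are bounded uniformly in $K$, so Taylor's formula yields $e^{a_j}-1=a_j+\tfrac12 a_j^2+O(|a_j|^3)$ with an explicit constant $e^{|a_j|}/6$, and similarly for $b_j$. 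I would expand to second order, analyse the linear and quadratic contributions separately, and dispose of the cubic remainder at the end.

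For the linear part, a direct calculation gives
$$K\sum_j(B_j a_j+D_j b_j)=2\alpha\sum_j(B_j-D_j)(\vecn/K)(n_j-n_j^*)+\alpha\sum_j(B_j+D_j)(\vecn/K).$$
The first sum equals $2\alpha K\langle\vecB(\vecn/K)-\vecD(\vecn/K),\vecn/K-\vecnf/K\rangle$; replacing $\vecnf/K$ by $\vecxf$ costs an additive $O(1)$ since $\|\vecnf/K-\vecxf\|\le\sqrt d/K$ and $\vecB,\vecD$ are bounded on $\mathcal{B}(\vecz,R)$. Applying \eqref{cond:lyap} gives the bound $-2\alpha\beta\|\vecn\|\|\vecn-K\vecxf\|^2/K^2$, and expanding $\|\vecn-K\vecxf\|^2=\|\vecn-\vecnf\|^2+2\langle\vecn-\vecnf,\vecnf-K\vecxf\rangle+\|\vecnf-K\vecxf\|^2$ with $\|\vecnf-K\vecxf\|\le\sqrt d$ absorbs another $O(1)$. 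The second linear sum is trivially $O(1)$ on the ball.

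The potentially dangerous piece is the quadratic contribution $\tfrac{K}{2}\sum_j(B_j a_j^2+D_j b_j^2)$, which is positive and a priori of the same order as the drift --- this is the main obstacle. The crucial observation is that $\vecB$ and $\vecD$ are locally Lipschitz and vanish at the origin by \eqref{lip}--\eqref{cond:vanish}, so on the ball one has $(B_j+D_j)(\vecn/K)\le 2L\|\vecn\|/K$ for some $L$. Substituting $u_j^2=4\alpha^2(n_j-n_j^*)^2/K^2$ gives
$$\frac{K}{2}\sum_j(B_j+D_j)(\vecn/K)\,u_j^2\le 4\alpha^2 L\,\frac{\|\vecn\|}{K}\,\frac{\|\vecn-\vecnf\|^2}{K},$$
and the mixed terms $(B_j-D_j)u_j\alpha/K$ and the $\alpha^2/K^2$ pieces are plainly $O(1)$ on the ball. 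The cubic remainder is handled by the same mechanism: using $|a_j|^3\le 4(|u_j|^3+(\alpha/K)^3)$, the bound $(B_j+D_j)\le 2L\|\vecn\|/K$, and $\|\vecn-\vecnf\|/K\le 2R$ yields a contribution of the form $C\alpha^3(\|\vecn\|/K)(\|\vecn-\vecnf\|^2/K)$ plus $O(1)$. Collecting everything gives
$$\frac{\gen_{\sK}\varphi(\vecn)}{\varphi(\vecn)}\le\bigl(-2\alpha\beta+4\alpha^2 L+C\alpha^3\bigr)\frac{\|\vecn\|}{K}\,\frac{\|\vecn-\vecnf\|^2}{K}+\cte{formule},$$
and choosing $\alpha\in(0,1/2)$ small enough (for instance $\alpha<\beta/(8L)$) makes the bracket at most $-\alpha\beta$, which is the claimed estimate. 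The whole argument hinges on the vanishing of $\vecB,\vecD$ at the origin, which is what forces the positive quadratic error to carry the factor $\|\vecn\|/K$ and hence to be dominated by the drift rather than producing an unbounded positive contribution.
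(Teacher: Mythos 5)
Your proposal is correct and follows essentially the same route as the paper's proof: the same explicit computation of $\varphi(\vecn\pm\vece{j})/\varphi(\vecn)$, the same use of \eqref{cond:lyap} for the drift term with an $\Oun$ cost for replacing $\vecnf/K$ by $\vecxf$, and the same key observation that \eqref{lip}--\eqref{cond:vanish} force $(B_j+D_j)(\vecn/K)\le C\|\vecn\|/K$, so that the positive quadratic contribution carries the factor $\frac{\|\vecn\|}{K}\frac{\|\vecn-\vecnf\|^2}{K}$ with an $O(\alpha^2)$ coefficient and is absorbed by the drift for $\alpha$ small. The only cosmetic difference is that you expand to third order with an explicit cubic remainder, whereas the paper bundles everything beyond the linear term into the single bound $0\le\e^x-1-x\le C_1(R)x^2$ valid for bounded $x$.
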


\begin{proof}
We use the elementary fact that for all $x\in\real$ such that $|x|\leq R$ there exists $c_1(R)>0$ such that
\[
0\leq \e^x -1 -x\leq C_1(R)\, x^2.
\]
Then, for all $\vecn \in \mathcal{B}(0,RK)$ we get
\begin{align*}
& \frac{\gen_{K}\varphi(\vecn)}{\varphi(\vecn)} \\
& =K \sum_{j=1}^{d} \left[ B_{j}\left(\frac{\vecn}{K}\right)\left(\frac{\varphi(\vecn+\vece{j})}{\varphi(\vecn)}-1\right)
+ D_{j}\left(\frac{\vecn}{K}\right)\left(\frac{\varphi(\vecn-\vece{j})}{\varphi(\vecn)}-1\right)\right] \\
&= K \sum_{j=1}^{d} \left[\; B_{j}\left(\frac{\vecn}{K}\right)
\left(\exp\Big(\frac{\alpha}{K}(2(n_{j}-n^*_{j}  ) + 1)\Big) -1 \right) \right. \\
& \left. \,\quad\qquad\quad + D_{j}\left(\frac{\vecn}{K}\right)\left(\exp\Big(\frac{\alpha}{K}(- 2(n_{j}-n^*_{j}  ) + 1)\Big) -1\right)\right]\\
& = K \, \sum_{j=1}^{d}\left[\, 2 \alpha\,\left(B_{j}\left(\frac{\vecn}{K}\right) - D_{j}\left(\frac{\vecn}{K} \right)\right)\, \left( \frac{n_{j}-n^*_{j}}{K}\right) \right. \\
& \left. \;\;\qquad\qquad + \, \left(B_{j}\left(\frac{\vecn}{K}\right) + D_{j}\left(\frac{\vecn}{K}\right)\right) \,
\frac{4 C_1(R)\, \alpha^2 \|\vecn - \vecn^*\|^2}{K^2} \right.\\
& \left. \;\;\qquad\qquad  + \left(B_{j}\left(\frac{\vecn}{K}\right)+ D_{j}\left(\frac{\vecn}{K}\right)\right)
\left( \frac{\alpha}{K} + \frac{2C_1(R)(n_j-n^*_j)}{K^2}+\frac{C_1(R)\, \alpha^2}{K^2}\right)
\,\right]\,.
\end{align*}
Using \eqref{lip} and \eqref{cond:vanish}, there exists $C_2(R)>0$ such that 
\[
0\leq B_{j}\left(\frac{\vecn}{K}\right) + D_{j}\left(\frac{\vecn}{K}\right) \leq C_2(R)\frac{\|\vecn\|}{K}
\]
for all $\vecn\in  \mathcal{B}(\vecz, R K)$. It is easy to verify that the third term in the square bracket
is bounded in absolute value by a constant independent of $K$ provided $K$ is larger than some
$K_0>0$. The second term in the square bracket is bounded by 
\begin{equation}\label{eq:trucmuche}
4dC_2(R)\, \alpha^2 \frac{\left\|\vecn\right\|}{K} \frac{\|{\vecn} - {\vecnf}\|^2}{K}\,.
\end{equation}
We finally deal with the first term in the square bracket.
Writing $\vecX=\vecB-\vecD$ for brevity, we obtain by \eqref{cond:lyap} that
\begin{align*}
\MoveEqLeft 2 \alpha K \sum_{j=1}^{d} \Big(B_{j}\left(\frac{\vecn}{K}\right) -
D_{j}\left(\frac{\vecn}{K}\right)\Big)\, \left( \frac{n_{j}-n^*_{j}}{K}\right) \\
&   =  2 \alpha K \left\langle \vecX\left(\frac{\vecn}{K}\right),\frac{\vecn- \vecn^*}{K}\right\rangle\\
& =   2 \alpha K  \left\langle \vecX\left(\frac{\vecn}{K}\right), \left(\frac{\vecn}{K} - {\vecxf}\right) \right\rangle
+ 2  \alpha K  \left\langle \vecX\left(\frac{\vecn}{K}\right), \left(\vecxf - \frac{\vecn^*}{K}\right) \right\rangle\\ 
& \leq  - 2 \alpha\,\beta K \frac{\left\|\vecn\right\|}{K}  \left\|\frac{\vecn}{K} - \frac{\vecnf}{K}\right\|^2 + \Oun\\ 
& \leq  - 2 \alpha \beta\, \frac{\left\|\vecn\right\|}{K} \frac{\|{\vecn} - {\vecnf}\|^2}{K}
+ \Oun,
\end{align*}
where we used that $\left\|\vecxf - \frac{\vecn^*}{K}\right\| \leq \frac{1}{K}$ and 
$\vecX\left(\frac{\vecn}{K}\right)$ is bounded on $\mathcal{B}(0,R)$, and where $\Oun$ is a quantity uniformly bounded in $K$. To finish the proof, we choose $\alpha$ small enough in such a way that the prefactor $4dC_2(R)\, \alpha^2$ in \eqref{eq:trucmuche} is less than half of $2 \alpha \beta$.
\end{proof}

\begin{corollary}\label{tondeuse}
There exist  $K_{0}>0$ and two constants $\crho{tondeuse}>0$ and $\pcte{tondeuse}>4$ such that, for all $K\geq K_{0}$ and for all $\pcte{tondeuse}\leq \|\vecn\| \leq R K$ satisfying
\[
\|\vecn - \vecnf\|\geq \crho{tondeuse}\, \sqrt{K}
\]
we have
\[
\gen_{K}\lyap(\vecn) \leq - \frac{\alpha\beta}{2} \frac{\left\|\vecn\right\|}{K} \frac{\|{\vecn} - {\vecnf}\|^2}{K}\, \varphi(\vecn).
\]
\end{corollary}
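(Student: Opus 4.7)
The plan is to show that whenever $\|\vecn\|\ge \pcte{tondeuse}$ and $\|\vecn-\vecnf\|\ge \crho{tondeuse}\sqrt{K}$, the additive constant $\cte{formule}$ supplied by Theorem \ref{formule} can be absorbed into half of the negative quadratic term. Concretely, it suffices to find $\pcte{tondeuse}>4$ and $\crho{tondeuse}>0$ such that, uniformly in $K\geq K_0$ and $\vecn$ in the prescribed annular region,
\[
\frac{\alpha\beta}{2}\,\frac{\|\vecn\|}{K}\,\frac{\|\vecn-\vecnf\|^{2}}{K}\;\ge\;\cte{formule}.
\]
Once this inequality is established, adding the remaining $-\frac{\alpha\beta}{2}\frac{\|\vecn\|}{K}\frac{\|\vecn-\vecnf\|^{2}}{K}\lyap(\vecn)$ to the estimate of Theorem \ref{formule} gives the claimed bound.

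To prove the displayed inequality I would split on the size of $\|\vecn\|$ relative to $K\|\vecxf\|$. \textbf{Regime 1:} $\|\vecn\|\le \tfrac{K\|\vecxf\|}{2}$. Since $\|\vecnf-K\vecxf\|\le 1$, we have $\|\vecn-\vecnf\|\ge \|\vecnf\|-\|\vecn\|\ge \tfrac{K\|\vecxf\|}{3}$ for $K$ large, hence
\[
\frac{\|\vecn\|}{K}\,\frac{\|\vecn-\vecnf\|^{2}}{K}\;\ge\;\frac{\|\vecn\|\,\|\vecxf\|^{2}}{9}\;\ge\;\frac{\pcte{tondeuse}\|\vecxf\|^{2}}{9}\,.
\]
\textbf{Regime 2:} $\|\vecn\|> \tfrac{K\|\vecxf\|}{2}$. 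Then $\|\vecn\|/K\ge \|\vecxf\|/2$, and using the hypothesis $\|\vecn-\vecnf\|\ge \crho{tondeuse}\sqrt{K}$ we get
\[
\frac{\|\vecn\|}{K}\,\frac{\|\vecn-\vecnf\|^{2}}{K}\;\ge\;\frac{\|\vecxf\|}{2}\,\crho{tondeuse}^{\,2}\,.
\]

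It is then enough to pick $\crho{tondeuse}$ large enough that $\tfrac{\alpha\beta\|\vecxf\|}{4}\crho{tondeuse}^{\,2}\ge \cte{formule}$, and $\pcte{tondeuse}$ large enough (and in any case $>4$) so that $\tfrac{\alpha\beta\|\vecxf\|^{2}}{18}\pcte{tondeuse}\ge \cte{formule}$. Both choices depend only on the fixed constants $\alpha,\beta,\|\vecxf\|,\cte{formule}$, not on $K$, so increasing $K_0$ if necessary to absorb the $o(1)$ term in the triangle inequality of Regime 1 finishes the proof.

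The computation is essentially a bookkeeping exercise; the only subtlety is recognizing that the factor $\|\vecn\|/K$ in the Lyapunov drift is harmless for small $\|\vecn\|$ precisely because $\|\vecn-\vecnf\|$ is then automatically of order $K$, which is why a purely additive lower bound $\|\vecn\|\ge \pcte{tondeuse}$ (rather than a lower bound proportional to $K$) is sufficient.
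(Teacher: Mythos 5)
Your proposal is correct and takes essentially the same approach as the paper: the paper's proof is the one-line statement that $\pcte{tondeuse}$ and $\crho{tondeuse}$ are chosen large enough that $\frac{\alpha\beta}{2}\frac{\|\vecn\|}{K}\frac{\|\vecn-\vecnf\|^2}{K}>\cte{formule}$ on the prescribed region, which is exactly the inequality you establish via the two regimes. Your case split merely supplies the bookkeeping the paper leaves to the reader.
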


\begin{proof}
We choose $\pcte{tondeuse}$ and $\crho{tondeuse}$ large enough such that for $\vecn$ as in the statement,  $\frac{\alpha\beta}{2} \frac{\left\|\vecn\right\|}{K} \frac{\|{\vecn} - {\vecnf}\|^2}{K}> \cte{formule}$.
\end{proof}
\begin{remark}
The intuitive rate of decrease
\[
\frac{\alpha\beta}{2} \frac{\left\|\vecn\right\|}{K} \frac{ \|\vecn - \vecnf\|^2}{K}
\] 
of the Lyapunov function, given by Corollary \ref{tondeuse}, is uniformly bounded below by the  constant $\cte{formule}$, if $\pcte{tondeuse}\leq \|\vecn\| \leq R K$ and $\|\vecn - \vecnf\|\geq \crho{tondeuse}\, \sqrt{K}$.
However, if $\|{\vecn} \|$ and $ \|{\vecn} - {\vecnf}\|$ are of order $K$, this rate is also of order $K$. We will  later take advantage of this non uniformity of the rate by a suitable decomposition of the set 
$\domaine \cap \{ \|\vecn \|\geq \pcte{tondeuse}$\}. 
\end{remark}

\subsection{Lemma of the four domains}

In this section, we formulate a lemma and a corollary of it which will help us to take advantage of the decomposition of the space $\dintegers$. We could formulate it in a much more abstract setting. 
Since $K$ plays no role here, we drop the $K$ dependence, hence $\vecN(t)$ stands for 
$\vecN^{\sK}(t)$, $\gen$ for $\gen\!_{\sK}$, etc.

\begin{figure}[h]
\begin{center}
\ifpdf
\includegraphics[scale=.6,clip]{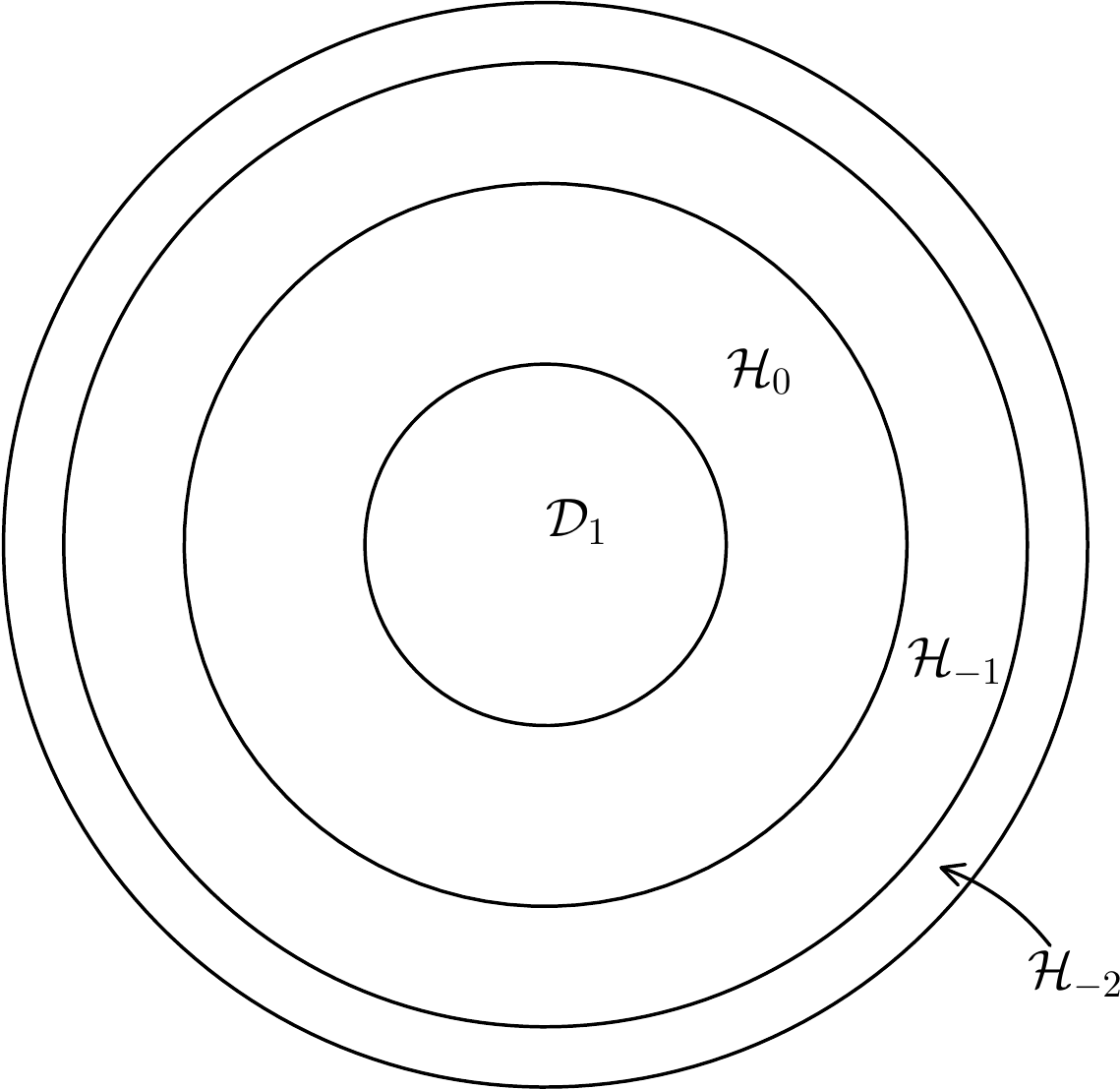}
\else
\includegraphics[scale=.6,clip]{4domaines.eps}
\fi 
\caption{The four domains}\label{fig4dom}
\end{center}
\end{figure}

\begin{lemma}\label{disque}
Let $\mathcal{D}_{-2}, \mathcal{D}_{-1}, \mathcal{D}_{0}, \mathcal{D}_{1}$ be subsets
of $\domaine$ such that 
\[
 \mathcal{D}_{1}\varsubsetneq \mathcal{D}_{0}\varsubsetneq
 \mathcal{D}_{-1} \varsubsetneq \mathcal{D}_{-2} 
\varsubsetneq\domaine,
\]
with $\mathcal{D}_{-2}$ a compact subset.
Next, let 
\[
\cour_{-2}=\mathcal{D}_{-2}\backslash \mathcal{D}_{-1}\,,\quad
\cour_{-1}=\mathcal{D}_{-1}\backslash \mathcal{D}_{0}\,,\quad
\cour_{0}=\mathcal{D}_{0}\backslash \mathcal{D}_{1}\,.
\]
(See Figure \ref{fig4dom}.)
Assume that  for all $\vecn\in \cour_{0}$ we have
\[
\proba_{\vecn}\big(T_{\cour_{-2}}<\infty\big)=1,
\]
and
\[
\cour_{-2}\cap \mathcal{D}_{1}=\emptyset\quad\text{and}\quad 
\{\vecn :  d(\vecn,  \cour_{0} \cup \cour_{-1}) = 1\} \subset
\mathcal{D}_{1} \cup \cour_{-2}.
\]
Assume that there exists a positive function $\psi$ 
defined in  $\domaine$ such that
\[
\Lambda:=-\sup_{\cour_{-2}\,\cup\,\cour_{-1}\,\cup\,\cour_{0}}
\frac{\gen\psi(\vecn)}{\psi(\vecn)}>0.
\]
Let
\[
a_{0}=\sup_{\vecn\in \cour_{0}}\psi(\vecn),\quad
a''_{-2}=\inf_{\vecn\in \cour_{-2}}\psi(\vecn)
\quad\text{and}\quad
a'_{-1}=\inf_{\vecn\in \cour_{-1}\,\cup\,\cour_{0}}\psi(\vecn).
\]
Assume that $a_{0}/a''_{-2}<1$. Then
\[
\inf_{\vecn\in
 \cour_{0}}\proba_{\vecn}\big(T_{\mathcal{D}_{1}}\le
t\,, \,T_{\cour_{-2}}>T_{\mathcal{D}_{1}})\ge   
1-\frac{a_{0}}{a''_{-2}}-\frac{a_{0}}{a'_{-1}}\;\e^{-\Lambda t}.
\]
\end{lemma}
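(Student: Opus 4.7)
The plan is to treat $\psi$ as an exponentially tilted Lyapunov function and reduce the lemma to two estimates coming from a single supermartingale inequality. Set $\tau:=T_{\mathcal{D}_{1}}\wedge T_{\cour_{-2}}$. First I would check two geometric facts: because the chain jumps only to nearest neighbours and the hypothesis $\{\vecn:d(\vecn,\cour_{0}\cup\cour_{-1})=1\}\subset\mathcal{D}_{1}\cup\cour_{-2}$ is in force, any trajectory started in $\cour_{0}$ remains in $\cour_{0}\cup\cour_{-1}$ on $[0,\tau)$ and lies in $\mathcal{D}_{1}\cup\cour_{-2}$ at time $\tau$; since $\cour_{-2}\cap\mathcal{D}_{1}=\emptyset$ the exit hits exactly one of these two sets, and by $\proba_{\vecn}(T_{\cour_{-2}}<\infty)=1$ we have $\tau<\infty$ a.s. Hence the event whose probability must be bounded below equals $\{\tau\le t,\,\vecN^{\sK}(\tau)\in\mathcal{D}_{1}\}$, and its complement is the disjoint union $\{\tau>t\}\,\sqcup\,\{\tau\le t,\,\vecN^{\sK}(\tau)\in\cour_{-2}\}$.

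Next, I would apply Dynkin's formula to $(s,\vecn)\mapsto \e^{\Lambda s}\psi(\vecn)$. The defining inequality $\gen\psi+\Lambda\psi\le 0$ on $\cour_{-2}\cup\cour_{-1}\cup\cour_{0}$ gives that $M_{s}:=\e^{\Lambda(s\wedge\tau)}\psi(\vecN^{\sK}(s\wedge\tau))$ is a nonnegative supermartingale (all integrability is automatic because the stopped trajectory takes values in the compact set $\mathcal{D}_{-2}$, on which $\psi$ is bounded). Optional stopping at $t$ then yields, for every $\vecn\in\cour_{0}$,
\[
\esperance_{\vecn}[M_{t}]=\esperance_{\vecn}\big[\e^{\Lambda(t\wedge\tau)}\,\psi(\vecN^{\sK}(t\wedge\tau))\big]\le\psi(\vecn)\le a_{0}.
\]

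From this single inequality both required bounds fall out. On $\{\tau>t\}$ the process sits in $\cour_{0}\cup\cour_{-1}$, where $\psi\ge a'_{-1}$, so restricting $\esperance_{\vecn}[M_{t}]\le a_{0}$ to this event yields $\proba_{\vecn}(\tau>t)\le (a_{0}/a'_{-1})\,\e^{-\Lambda t}$. To control the other piece, passing $t\to\infty$ via Fatou's lemma applied to $M_{t}$ gives $\esperance_{\vecn}[\psi(\vecN^{\sK}(\tau))]\le a_{0}$, and since $\psi\ge a''_{-2}$ on $\cour_{-2}$,
\[
a''_{-2}\,\proba_{\vecn}\big(\vecN^{\sK}(\tau)\in\cour_{-2}\big)\le\esperance_{\vecn}\big[\psi(\vecN^{\sK}(\tau))\,\un_{\{\vecN^{\sK}(\tau)\in\cour_{-2}\}}\big]\le a_{0}.
\]
Summing the two estimates and subtracting from $1$ gives the asserted bound, uniformly in $\vecn\in\cour_{0}$.

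The only step I expect to require a little care is the $t\to\infty$ passage in the second bound: here I rely on $\tau<\infty$ a.s. and positivity of $\psi$, rather than on any a priori bound on $\esperance_{\vecn}[\e^{\Lambda\tau}]$ (which we neither have nor need). Once the two disjoint pieces of the complement are lined up against the sole inequality $\esperance_{\vecn}[M_{t}]\le a_{0}$ with the right infima $a'_{-1}$ and $a''_{-2}$, the remainder is mechanical.
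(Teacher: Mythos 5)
Your proposal is correct and follows essentially the same route as the paper: Dynkin's formula applied to $\e^{\Lambda s}\psi$ gives the single inequality $\esperance_{\vecn}[\e^{\Lambda(t\wedge\tau)}\psi(\vecN^{\sK}(t\wedge\tau))]\le a_{0}$, from which the two bounds (exit into $\cour_{-2}$ controlled by $a_{0}/a''_{-2}$, and $\{\tau>t\}$ controlled by $(a_{0}/a'_{-1})\e^{-\Lambda t}$) are extracted exactly as in the paper's proof. The only differences are cosmetic: you decompose the complement slightly differently and use Fatou rather than dominated convergence for the $t\to\infty$ passage, both of which are fine.
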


Note that $a_{0}/a'_{-1}\ge 1$. In practice we will use for $\cour_{-2}$ some kind of outer boundary of $\mathcal{D}_{-1}$.
\begin{proof}
Using Dynkin's formula, we have for a path issued from $\vecn\in\cour_{0}$
\begin{align*}
\MoveEqLeft \e^{\Lambda\left(t\wedge T_{\mathcal{D}_{1}}\wedge T_{\cour_{-2}}\right)}
\;\psi\big(\vecN(t\wedge  T_{\mathcal{D}_{1}}\wedge  T_{\cour_{-2}})\big)\\
&=  \int_{0}^{t\wedge  T_{\mathcal{D}_{1}}\wedge  T_{\cour_{-2}}}\e^{\Lambda s}
\big( \Lambda\psi(\vecN(s))+\gen\psi(\vecN(s))\big) \dd s+
\mathcal{M}(t\wedge T_{\mathcal{D}_{1}}\wedge T_{\cour_{-2}})
\end{align*}
where $\left(\mathcal{M}(t \wedge T_{\mathcal{D}_{1}}\wedge T_{\cour_{-2}})\right)_{t\geq 0}$ is a martingale. Using the assumptions and the fact that $\psi$ is bounded by $a_{0}$ on $ \cour_{0}$  we obtain
\begin{equation}\label{calcsto4d}
\esperance_{\vecn}\left[\e^{\Lambda\left(t\wedge T_{\mathcal{D}_{1}}\wedge T_{\cour_{-2}}\right)}
\;\psi\big(\vecN(t\wedge  T_{\mathcal{D}_{1}}\wedge  T_{\cour_{-2}})\big)\right]
\le \psi\big(\vecn\big)\le a_{0} .
\end{equation}
Since $\psi$ is positive we deduce that 
\begin{align*}
a_{0} & \ge \esperance_{\vecn}\left[
\psi\big(\vecN(t\wedge  T_{\mathcal{D}_{1}}\wedge  T_{\cour_{-2}})\big)\right]\\
& \ge \esperance_{\vecn}
\left[
\psi\big(\vecN(t\wedge  T_{\mathcal{D}_{1}}\wedge T_{\cour_{-2}})\big)
\;\un_{\{T_{\mathcal{D}_{1}}\ge T_{\cour_{-2}}\}} \un_{\{T_{\cour_{-2}}\le t\}}\right]\\
& =\esperance_{\vecn}\left[
\psi\big(\vecN(T_{\cour_{-2}})\big)\;\un_{\{T_{\mathcal{D}_{1}}\ge T_{\cour_{-2}}\}}
\un_{\{T_{\cour_{-2}}\le t\}}\right]\\
& \ge a''_{-2}\,\proba_{\vecn}\big(T_{\mathcal{D}_{1}}\ge T_{\cour_{-2}}\,,\, T_{\cour_{-2}}\le t\big).
\end{align*}
Letting $t$ tend to infinity and using our hypothesis (and Lebesgue's
dominated convergence theorem) we get that for all $\vecn\in\cour_{0}$
\[
\proba_{\vecn}\big(T_{\cour_{-2}}\leq T_{\mathcal{D}_{1}}\big)\le \frac{a_{0}}{a''_{-2}}\, .
\]
Using again \eqref{calcsto4d} we also have that for all $\vecn\in \cour_{0}$
\[
\esperance_{\vecn}\left[\e^{\Lambda t}\psi\big(\vecN(t)\big)\;
\un_{\left\{T_{\cour_{-2}}>T_{\mathcal{D}_{1}}> t\right\}}\right]
\le a_{0},
\]
which implies that for all $t\ge 0$
\[
\proba_{\vecn}\left(T_{\cour_{-2}}>T_{\mathcal{D}_{1}}> t\right)
\le \frac{a_{0}}{a'_{-1}}\e^{-\Lambda t}.
\]
We have for all $\vecn\in \cour_{0}$
\begin{align*}
\proba_{\vecn}\big(T_{\mathcal{D}_{1}}\le t\,,\,T_{\cour_{-2}}>T_{\mathcal{D}_{1}})
& =
\proba_{\vecn}\big(T_{\cour_{-2}}>T_{\mathcal{D}_{1}})-
\proba_{\vecn}\big(T_{\mathcal{D}_{1}}>t\,,\,T_{\cour_{-2}}>T_{\mathcal{D}_{1}})\\
& =1-\proba_{\vecn}\big(T_{\cour_{-2}}\le T_{\mathcal{D}_{1}})
-\proba_{\vecn}\big(T_{\cour_{-2}}>T_{\mathcal{D}_{1}}> t).
\end{align*}
The lemma follows from the above estimates.
\end{proof}

\begin{corollary}\label{t4d}
Under the assumptions of Lemma \ref{disque} we have
\[
\inf_{\vecn\in
 \cour_{0}}\proba_{\vecn}\big(T_{\mathcal{D}_{1}}\le
t_{\mathcal{D}_{1}}\,,T_{\cour_{-2}}>T_{\mathcal{D}_{1}})\ge   
1-\eta_{\mathcal{D}_{1}}
\]
with
\[
t_{\mathcal{D}_{1}}=\frac{1}{\Lambda}\;\log\left(\frac{a''_{-2}}{a'_{-1}}\right)
\quad\text{and} \quad
\eta_{\mathcal{D}_{1}}=\frac{2a_{0}}{a''_{-2}}\,.
\]
The estimate also holds with
\[
\eta_{\mathcal{D}_{1}}=\frac{1}{2}+\frac{a_{0}}{2\,a''_{-2}}
\]
and
\[
t_{\mathcal{D}_{1}}=-\frac{1}{\Lambda}\;\log\left(\frac{a'_{-1}}{2\,a_{0}}
\left(1-\frac{a_{0}}{a''_{-2}}\right)\right).
\]
\end{corollary}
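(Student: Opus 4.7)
The corollary is a direct exploitation of the bound obtained in Lemma~\ref{disque}, namely
\[
\inf_{\vecn\in \cour_{0}}\proba_{\vecn}\big(T_{\mathcal{D}_{1}}\le t,\, T_{\cour_{-2}}>T_{\mathcal{D}_{1}}\big)
\ge 1-\frac{a_{0}}{a''_{-2}}-\frac{a_{0}}{a'_{-1}}\,\e^{-\Lambda t},
\]
so the only thing to do is to choose $t$ appropriately. The plan is therefore to simply substitute the prescribed values of $t_{\mathcal{D}_{1}}$ into this inequality and read off the bound; there is no real obstacle and no additional probabilistic argument is needed.

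For the first pair $\big(t_{\mathcal{D}_{1}},\eta_{\mathcal{D}_{1}}\big)=\bigl(\Lambda^{-1}\log(a''_{-2}/a'_{-1}),\,2a_{0}/a''_{-2}\bigr)$, I would balance the two error terms by choosing $t$ so that $\frac{a_{0}}{a'_{-1}}\e^{-\Lambda t}=\frac{a_{0}}{a''_{-2}}$. This gives precisely $t_{\mathcal{D}_{1}}=\Lambda^{-1}\log(a''_{-2}/a'_{-1})$, which is nonnegative because $a''_{-2}\ge a'_{-1}$ (since $\cour_{-2}\subset\cour_{-1}\cup\cour_{0}\cup\cour_{-2}$—or, more honestly, because $a'_{-1}\le a_0\le$ values on the outer boundary; this monotonicity is built into the way the levels are arranged, and in any case $a''_{-2}\ge a'_{-1}$ will be valid in the applications). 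Substituting yields
\[
1-\frac{a_{0}}{a''_{-2}}-\frac{a_{0}}{a'_{-1}}\cdot\frac{a'_{-1}}{a''_{-2}}=1-\frac{2a_{0}}{a''_{-2}},
\]
which is the first stated bound.

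For the second pair, I would instead choose $t$ so that the exponential contribution equals $\tfrac{1}{2}\bigl(1-\tfrac{a_{0}}{a''_{-2}}\bigr)$, that is,
\[
\frac{a_{0}}{a'_{-1}}\,\e^{-\Lambda t}=\frac{1}{2}\Big(1-\frac{a_{0}}{a''_{-2}}\Big),
\]
which solves to
\[
t=-\frac{1}{\Lambda}\log\!\left(\frac{a'_{-1}}{2a_{0}}\Big(1-\frac{a_{0}}{a''_{-2}}\Big)\right).
\]
Plugging this into the Lemma~\ref{disque} bound gives
\[
1-\frac{a_{0}}{a''_{-2}}-\frac{1}{2}\Big(1-\frac{a_{0}}{a''_{-2}}\Big)=\frac{1}{2}-\frac{a_{0}}{2a''_{-2}}
=1-\Big(\frac{1}{2}+\frac{a_{0}}{2a''_{-2}}\Big),
\]
which is the second stated bound. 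The only (minor) conceptual point is to check that the argument of the logarithm in the second expression is strictly less than $1$ (so $t\ge0$); but this follows from the hypothesis $a_{0}/a''_{-2}<1$ combined with the structural inequality $a'_{-1}\le a_{0}$, which makes $\tfrac{a'_{-1}}{2a_{0}}\le \tfrac{1}{2}<1$. With these two substitutions the corollary is established.
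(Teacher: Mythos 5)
Your proposal is correct and is exactly the intended argument: the paper states the corollary without proof, as an immediate consequence of substituting the two prescribed values of $t$ into the bound of Lemma \ref{disque}, which is precisely what you do (and your side remarks $a'_{-1}\le a_{0}<a''_{-2}$, guaranteeing $t_{\mathcal{D}_{1}}\ge 0$ in both cases, follow from the definitions of $a'_{-1},a_{0}$ and the standing assumption $a_{0}/a''_{-2}<1$).
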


\section{Proof of Theorem  \ref{Champ-Vill}}\label{sec:qsd}

\subsection{Plan for the proof: checking conditions \eqref{cond-A1} and \eqref{cond-A2}}

Our proof relies on a general theorem proved in \cite{cv}. We formulate it in our setting.
Let $(\vecN^{\sK}(t),t\geq 0)$ be the birth-and-death process defined above. Suppose there exists a probability measure
$\nu$ on $E$ such that 
\begin{itemize}
\item There exist $t_0,c_1>0$ such that 
\begin{equation}
\label{cond-A1}\tag{A1}  
\proba\!_{\vecn}\big(\vecN^{\sK}(t_{0})\in \cdot \, |\, t_{0}<T_{\vecz}\big) \geq c_{1}\;\nu(\cdot),\;
\forall \vecn\in \domaine.
\end{equation}
\item
There exists $c_{2} >0$ such that 
\begin{equation}
\label{cond-A2}\tag{A2}  
\proba\!_{\nu}( t<T_{\vecz}) \geq c_{2} \,\proba\!_{\vecn}( t<T_{\vecz}),\; 
\forall \vecn\in \domaine,\;\forall  t\geq 0.
\end{equation}
\end{itemize}
Then there exists a unique quasi-stationary distribution $\qsd_{\sK}$ such that
for every initial distribution $\mu$,
\[
\| \proba\!_{\mu}(\vecN^{\sK}(t)\in \cdot \,|\, t< T_{\vecz}) - \qsd_{\sK}(\cdot)\|_{\tv} \leq 2(1-c_{1}c_{_{2}})^{t/t_{0}}.
\]
We shall take $\nu$ as the uniform probability measure supported on a ball centered at $\vecnf$ with radius of order $\sqrt{K}$. We shall also prove that $c_1$ and $c_2$ are independent of $K$, and that $t_0$ is of order $\log K$.

\subsection{Proof of Condition \eqref{cond-A1}}
\label{a1}

Let
\begin{equation}\label{def:Delta}
\Delta = \mathcal{B}\big(\vecn^*, 2 \,\crho{tondeuse}\sqrt{K}\, \big),
\end{equation}
where $\mathcal{B}(\vecn, r)$ denotes the ball centered in  $\vecn$ with radius $r$ and $\crho{tondeuse}$ the constant introduced in Corollary \ref{tondeuse}. Since $\vecn^*$ is of order $K$,  the set  $\Delta$ is included in the interior of $\integers^d$ for $K$ large enough.

\begin{notation}
We shall denote by $\nu$ the uniform probability measure supported on $\Delta$. 
\end{notation}
This discrete measure thus gives each point of $\Delta$ a mass proportional to $K^{-d/2}$.

The proof of Condition \eqref{cond-A1} relies on the following three lemmas whose proofs are given later on. 

The first lemma shows that   the descent (from infinity) into the set $\Delta$ happens with a  time scale of at most $\log K$.

\begin{lemma}\label{entrec}
There exist $\cte{entrec}>0$ and $\ceta{entrec}<1$ such that for all
$K$ large enough 
\[
\inf_{\vecn\in\Delta^{c}}\proba_{\vecn}\big(T_{\Delta}< \cte{entrec} \log K\big)\ge 1-\ceta{entrec}.
\]
\end{lemma}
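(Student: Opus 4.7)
The plan is to show that, from any $\vecn \in \Delta^{c}$, the process enters $\Delta$ in time $O(\log K)$ with probability bounded below uniformly in $K$. I split $\Delta^c$ into three regimes, handle each with a different tool, and chain the estimates via the strong Markov property.

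First, a ``come-down from infinity'' step. For $\vecn$ with $\|\vecn\| > RK$, I dominate stochastically (as in the proof of Proposition \ref{prop-capitaine-Flam}) the total population $|\vecN^{\sK}(t)|$ by a one-dimensional birth-and-death chain with rates $KB_{\mathrm{max}}(m/K)$ and $KD_{\mathrm{min}}(m/K)$; by \eqref{cond:descente} and \eqref{cond:integ} this chain enters $\{m \le LK\} \subset \mathcal{B}(\vecz, RK)$ in bounded expected time, uniformly in the starting state and in $K$. This is the content of the forthcoming Sublemma \ref{descente1}, after which it suffices to treat starting points in $\mathcal{B}(\vecz, RK)$.

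Second, a dyadic Lyapunov iteration for $\vecn \in \mathcal{B}(\vecz, RK)\setminus\Delta$ with $\|\vecn - \vecnf\| \le \|\vecnf\|/3$. I set $r_k := 2^{k}\cdot 2\crho{tondeuse}\sqrt{K}$ for $k = 0,1,\ldots,k_{\max}$ with $k_{\max} = \tfrac12 \log_2 K + O(1)$ (so $r_{k_{\max}} \le \|\vecnf\|/3$), and at each step I apply Lemma \ref{disque} with $\mathcal{D}_1 = \mathcal{B}(\vecnf, r_{k-1})$, $\mathcal{D}_0 = \mathcal{B}(\vecnf, r_k)$, and outer enlargements $\mathcal{D}_{-1} = \mathcal{B}(\vecnf, r_{k+1})$, $\mathcal{D}_{-2} = \mathcal{B}(\vecnf, r_{k+2})$, using the Lyapunov function $\varphi$ of \eqref{lyapou}. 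On the annulus $A_k := \mathcal{D}_0 \setminus \mathcal{D}_1$ one has $\|\vecn\| \ge 2\|\vecnf\|/3$ throughout, so Corollary \ref{tondeuse} gives a decay rate $\Lambda_k = \Theta(4^k)$, while $\log\varphi$ on $A_k$ and hence $\log(a''_{-2}/a'_{-1})$ are also $\Theta(4^k)$. Corollary \ref{t4d} then delivers a transit time $t_k = O(1)$ and a failure probability $\eta_k = O(\exp(-c\cdot 4^k))$, uniformly in $k$. Chaining the $k_{\max}$ transits via the strong Markov property gives total time $\sum_{k} t_k = O(\log K)$ and total failure probability $\sum_{k}\eta_k$ bounded away from $1$.

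Third, the remaining starting points (either $\|\vecn\|$ small, where Corollary \ref{tondeuse} is unavailable, or $\|\vecn\|$ large but $\|\vecn - \vecnf\| > \|\vecnf\|/3$) require an extra ``approach'' step bringing the process into $\mathcal{B}(\vecnf, \|\vecnf\|/3)$ so that the dyadic iteration above can take over. For $\|\vecn\|$ small I would couple the process with a supercritical multitype branching process whose mean matrix is the linearization of $\vecB - \vecD$ at $\vecz$: by \eqref{cond:vanish} (linear instability) and \eqref{cond:nais} (positive diagonal birth derivatives) the Perron eigenvalue is positive, so standard branching-process theory yields survival and growth to size $\Theta(K)$ in time $O(\log K)$ with probability bounded below uniformly in $K$. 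For $\|\vecn\|$ large but far from $\vecnf$, Proposition \ref{conv-dyn} (Kurtz's theorem) applied on a compact subset of the basin of $\vecxf$ carries the rescaled process to a neighborhood of $\vecxf$ in bounded time. The main technical obstacle I anticipate is extracting uniform-in-$K$ lower bounds in this third regime and gluing them cleanly with the dyadic iteration: the branching-process survival probabilities can be small but are $K$-independent, and Kurtz's theorem is a limit statement that must be quantified for starting points varying in a compact set. The restriction $r_k \le \|\vecnf\|/3$ in Regime II is precisely what prevents the iteration from leaking back into the problematic small-$\|\vecn\|$ zone.
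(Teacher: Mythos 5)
Your Regime I is exactly the paper's Sublemma \ref{descente1}, and your Regime II reproduces the inner (dyadic) half of the paper's Sublemma \ref{descente4}; both of those parts are sound. The genuine gap is Regime III, which you only sketch and which rests on tools the stated hypotheses do not support. For the branching step: linear instability of $\vecz$ in \eqref{cond:vanish} together with \eqref{cond:nais} does \emph{not} imply that the multitype branching process obtained by linearizing at $\vecz$ survives with positive probability from \emph{every} initial type. The matrix $D(\vecB-\vecD)(\vecz)$ is Metzler but may be reducible with a subcritical block; starting from one individual of a subcritical type, the minorizing branching process dies out almost surely and your coupling yields no lower bound at all, so the uniform infimum over $\vecn$ is not controlled. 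For the Kurtz step: Proposition \ref{conv-dyn} is a qualitative limit for a fixed initial condition, and turning it into a bound that is quantitative in $K$ and uniform over a compact set of rescaled starting points bounded away from $\vecz$ is precisely the work you defer. As written, Regime III does not close.

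The idea you are missing is that none of this machinery is needed, because the Lyapunov estimate is not confined to a small ball around $\vecnf$. Corollary \ref{tondeuse} gives a decay rate of order $\frac{\|\vecn\|}{K}\frac{\|\vecn-\vecnf\|^{2}}{K}$, which is bounded below by a positive constant as soon as $\|\vecn\|\ge \pcte{tondeuse}$ (a $K$-independent constant) and $\|\vecn-\vecnf\|$ is of order $K$; it does \emph{not} require $\|\vecn\|$ to be of order $K$. Hence the dyadic iteration of your Regime II can be continued outward, with radii increasing by doubling increments up to $\|\vecnf\|-\Oun$, still with $\Oun$ transit times and summable failure probabilities: this is what Sublemmas \ref{descente2} and \ref{descente4} do, and it covers all of $\mathcal{B}(\vecz,RK)\setminus\Delta$ except the fixed ball $\{\|\vecn\|<\pcte{tondeuse}+\Oun\}$ around the origin. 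That fixed ball is then escaped by forcing at most $17$ consecutive births of a type $j_{0}$ with $n_{j_{0}}>0$, each jump having probability bounded below by a $K$-independent constant thanks to \eqref{cond:mort} and \eqref{cond:nais}; this costs time $1$ and a constant probability (Sublemma \ref{descente3}), after which the Lyapunov iteration takes over. If you insist on your Regime III, you must at least add an irreducibility hypothesis on the linearization at $\vecz$ and prove a uniform quantitative version of Kurtz's theorem; the route through the Lyapunov function avoids both.
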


The second lemma shows that on a time span of order $\log K$, the process starting in $\Delta$ stays near $\Delta$, more precisely in a ball with a radius of order $\sqrt{K}$ centered at $\vecn^*$.
\begin{lemma}\label{restec}
There exists $\cte{restec}>2 \,\crho{tondeuse}$ and $\ceta{restec}<1$ such that for all
$K$ large enough
\[
\inf_{\vecn\in\Delta}\;\inf_{0\le t\le \cte{entrec} \log K+1}\proba_{\vecn}\big(\vecN^{\sK}(t)\in \Delta'\big)
\ge 1-\ceta{restec}
\]
where
\[
\Delta' = \mathcal{B}\big(\vecn^*, \cte{restec}\sqrt{K}\, \big)\supset \Delta.
\]
\end{lemma}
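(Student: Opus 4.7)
The plan is to replace the exponential Lyapunov function $\lyap$ by the quadratic one $\psi(\vecn):=\|\vecn-\vecnf\|^2$ and derive an a priori second moment bound $g(t):=\esperance_{\vecn}\!\big[\psi(\vecN^{\sK}(t))\,\un_{\{t<\tau_G\}}\big]\le M_\star K$, uniformly in $t\in[0,\cte{entrec}\log K+1]$ and $\vecn\in\Delta$, where $\tau_G$ is the first exit time from a good region $G:=\mathcal{B}(\vecz,RK)\cap\{\vecn:\|\vecn\|\ge \|\vecxf\|K/2\}$ and $M_\star$ is a constant independent of $K$. Chebyshev's inequality then yields
\[
\proba_{\vecn}\!\big(\vecN^{\sK}(t)\notin\Delta'\big)\le \frac{M_\star}{\cte{restec}^2}+\proba_{\vecn}(\tau_G\le t),
\]
and a separate exponential bound on $\proba_{\vecn}(\tau_G\le t)$ closes the argument. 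The key point is that $\psi$ only takes values of order $K$ on the boundary of $\Delta'$ (whereas $\lyap$ already reaches $\e^{\alpha\cte{restec}^2}$ there), so Chebyshev gives a bound polynomial in $\cte{restec}$, which makes a $K$-independent choice of $\cte{restec}$ possible.

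Computing $\|\vecn\pm\vece{j}-\vecnf\|^2-\|\vecn-\vecnf\|^2=\pm 2(n_j-n_j^*)+1$ gives
\[
\gen\!_{\sK}\psi(\vecn)
= 2K\big\langle \vecB(\vecn/K)-\vecD(\vecn/K),\,\vecn-\vecnf\big\rangle
+ K\sum_{j=1}^{d}(B_j+D_j)(\vecn/K).
\]
Using \eqref{cond:lyap} with $\vecx=\vecn/K$, the estimate $\|\vecnf-K\vecxf\|\le\sqrt{d}$, and the Lipschitz/vanishing-at-zero properties of $\vecB,\vecD$, the drift contribution is bounded on $G$ by $-\beta(\|\vecn\|/K)\|\vecn-\vecnf\|^2+\Oun K$ and the trace contribution by $\Oun K$. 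Since $\|\vecn\|/K\ge\|\vecxf\|/2$ on $G$, one obtains
\[
\gen\!_{\sK}\psi(\vecn)\le -\lambda\,\psi(\vecn)+C_\star K,\qquad \vecn\in G,
\]
with $\lambda,C_\star>0$ constants independent of $K$ and of $\cte{restec}$. Since the generator of the process killed upon exiting $G$ is bounded above by $\gen\!_{\sK}\psi$ on $G$ (the dropped terms being non-negative), $g$ satisfies $g'(t)\le -\lambda g(t)+C_\star K$, whence by Gronwall $g(t)\le\psi(\vecn)+C_\star K/\lambda\le M_\star K$ with $M_\star:=4\crho{tondeuse}^2+C_\star/\lambda$ (using $\psi(\vecn)\le 4\crho{tondeuse}^2 K$ for $\vecn\in\Delta$); Markov then gives $\proba_{\vecn}(\|\vecN^{\sK}(t)-\vecnf\|>\cte{restec}\sqrt{K},\,t<\tau_G)\le M_\star/\cte{restec}^2$.

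For $\proba_{\vecn}(\tau_G\le t)$, any exit point satisfies $\|\vecN^{\sK}(\tau_G)-\vecnf\|\ge\eta K$ with $\eta:=\min((R-\|\vecxf\|)/2,\|\vecxf\|/4)$, so $\lyap(\vecN^{\sK}(\tau_G))\ge\e^{\alpha\eta^2 K}$. Keeping only the $\cte{formule}\,\lyap$ term in Theorem~\ref{formule} shows that $\e^{-\cte{formule}t}\lyap(\vecN^{\sK}(t\wedge\tau_G))$ is a supermartingale, which yields
\[
\proba_{\vecn}(\tau_G\le t)\le \lyap(\vecn)\,\e^{\cte{formule}t-\alpha\eta^2 K}\le \e^{-\alpha\eta^2 K/2}
\]
for $t\le\cte{entrec}\log K+1$ and $K$ large, since the polynomial factor $K^{\cte{formule}\cte{entrec}}$ is dominated by $\e^{\alpha\eta^2 K/2}$. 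Combining, $\proba_{\vecn}(\vecN^{\sK}(t)\notin\Delta')\le M_\star/\cte{restec}^2+\e^{-\alpha\eta^2 K/2}$; choosing $\cte{restec}>2\crho{tondeuse}$ large enough so that $M_\star/\cte{restec}^2\le\ceta{restec}/2$ for some $\ceta{restec}\in(0,1)$, and then $K$ large enough that the exponential term is also $\le\ceta{restec}/2$, finishes the proof.

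The hard part, I expect, will be establishing the Foster--Lyapunov inequality for $\psi$ on $G$ with constants independent of both $K$ and $\cte{restec}$: in particular, carefully tracking how the various $\Oun K$ errors (from the Lipschitz estimate on the trace term, the correction $\vecnf\ne K\vecxf$, and the lossy bound in the transitional regime $\|\vecn-\vecnf\|\lesssim\sqrt{K}$, where $-\lambda\psi$ is itself of order $K$) can all be absorbed into $C_\star K$ without introducing any $\cte{restec}$-dependence that would propagate to $M_\star$ and spoil the constant choice of $\cte{restec}$.
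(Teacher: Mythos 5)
Your proposal is correct and follows essentially the same route as the paper: the paper likewise replaces $\lyap$ by the quadratic function $\|\vecn-\vecnf\|^2$ (cut off on a ball of radius $\Oun K$ around $\vecnf$), derives the drift inequality $\gen\!_{\sK}\psi\le-\sigma\psi+\Oun K$ from \eqref{cond:lyap}, integrates it via Dynkin/It\^o to get a second-moment bound of order $K$ uniformly in $t$, applies Chebyshev at radius $\cte{restec}\sqrt K$, and controls the exit from the $\Oun K$-ball by the exponential Lyapunov function (Sublemma \ref{b}). The only differences are cosmetic (your region $G$ versus the paper's ball $\widetilde{\mathcal{B}}$, and your splitting on $\{t<\tau_G\}$ before Chebyshev rather than after).
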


The third lemma says that the probability measure $\nu$ is a significant component of the distribution of the process at time $1$  starting near $\Delta$. This lemma does not seem to be available in the literature. The main difference with existing results (see for instance \cite{coulhon}) is that our generator is not symmetric. 

\begin{lemma}\label{uni}
There exists $\ceta{uni}<1$ such that for all
$K$ large enough  and all $A\subset \Delta$
\[
\inf_{\vecn\in\Delta'}\;
\proba_{\vecn}\big(\vecN^{\sK}(1)\in A\big)\ge (1-\ceta{uni})\,\nu(A),
\]
where $\Delta'$ is defined in Lemma \ref{restec}.
\end{lemma}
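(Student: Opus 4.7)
The goal is to establish the pointwise lower bound
\[
\proba_{\vecn}\bigl(\vecN^{\sK}(1) = \vecm\bigr)\;\ge\;c\,K^{-d/2}
\]
uniformly for $\vecn\in\Delta'$ and $\vecm\in\Delta$, with $c>0$ independent of $K$; since $\Delta$ has cardinality of order $K^{d/2}$ and $\nu$ is uniform on it, summing this pointwise estimate over $\vecm\in A$ yields the bound $(1-\ceta{uni})\nu(A)$ of the statement. The strategy is to couple the birth-and-death process with a symmetric random walk that sits inside it, apply a local central limit theorem to the walk, and control the remaining position-dependent contribution.

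First I would slightly enlarge $\Delta'$ to a ball $\Delta''$ of radius $O(\sqrt{K})$ around $\vecnf$. On $\Delta''$ one has $\|\vecn/K-\vecxf\|=O(1/\sqrt{K})$, so by the Lipschitz property \eqref{lip} and the fact that $B_j(\vecxf),D_j(\vecxf)>0$ (a consequence of \eqref{cond:vanish} since $\vecxf\neq\vecz$), there exist constants $0<\underline{b}<\bar b$, independent of $K$, with $B_j(\vecn/K),D_j(\vecn/K)\in[\underline{b},\bar b]$ for every $\vecn\in\Delta''$ and every $j$. Lemma \ref{restec}, applied on $[0,1]$ with the radius of $\Delta''$ chosen larger than $\cte{restec}\sqrt{K}$, gives $\proba_{\vecn}(\mathcal{G})\ge 1-\eta'$, with $\eta'<1$ uniform in $K$, where $\mathcal{G}=\{\vecN^{\sK}(t)\in\Delta''\text{ for all }t\in[0,1]\}$. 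Next I would realize the process from $2d$ independent Poisson random measures $\mathcal{N}_{j}^{\pm}$ on $[0,1]\times[0,\infty)$ with Lebesgue intensity, splitting each $\mathcal{N}_{j}^{\epsilon}$ into a core part $\mathcal{N}_{j}^{\epsilon,\mathrm{c}}:=\mathcal{N}_{j}^{\epsilon}\cap\bigl([0,1]\times[0,K\underline{b}]\bigr)$ and a remainder $\mathcal{N}_{j}^{\epsilon,\mathrm{e}}$. On $\mathcal{G}$, every core atom triggers a jump (since the rate at every visited position is at least $K\underline{b}$), and one can write
\[
\vecN^{\sK}(1)-\vecn \;=\; Y \,+\, Z,
\]
where $Y$ collects the signed count of core atoms and $Z$ the signed count of extras that actually triggered a jump. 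Unconditionally, $Y$ has the law of the time-$1$ value of a symmetric continuous-time random walk on $\zentiers^d$ with $2d$ Poisson clocks of rate $K\underline{b}$ and uniform direction.

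Then I would apply a local central limit theorem to $Y$. Its components are independent Skellam random variables, each the difference of two Poisson$(K\underline{b})$ variables; their pmf is given explicitly by the modified Bessel function $I_n(2K\underline{b})$, whose uniform asymptotic $I_n(2\mu)\sim e^{2\mu}(4\pi\mu)^{-1/2}\exp\!\bigl(-n^{2}/(4\mu)\bigr)$ for $n=O(\sqrt{\mu})$ yields a constant $c_1>0$ such that, for every $\mathbf{r}\in\zentiers^d$ with $\|\mathbf{r}\|\le C\sqrt{K}$ and $K$ large,
\[
\proba\bigl(Y=\mathbf{r}\bigr)\;\ge\; c_{1}\,K^{-d/2}.
\]
To conclude one must reduce the target identity $Y=\vecm-\vecn-Z$ to such a bounded-target situation. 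The key estimate is that, on $\mathcal{G}$, the vector $Z$ concentrates on the diffusive scale: Hypothesis \eqref{cond:fixedpt} gives $\vecB(\vecxf)-\vecD(\vecxf)=\vecz$, and Lipschitz continuity on $\Delta''$ then forces $\|\vecB(\vecn/K)-\vecD(\vecn/K)\|=O(1/\sqrt{K})$, so the integrated drift is $O(\sqrt{K})$; Doob's inequality applied to the compensated martingale of $Z$ then bounds its fluctuations by $O(\sqrt{K})$ with probability bounded away from zero. Thus on a positive-probability event there exists a vector $\vecv^{*}$ with $\|\vecv^{*}\|\le C'\sqrt{K}$ such that $Z=\vecv^{*}$; on this event, $\vecN^{\sK}(1)=\vecm$ becomes $Y=\vecm-\vecn-\vecv^{*}$, a target at distance $O(\sqrt{K})$ from the origin, and the LCLT bound delivers the desired $c\,K^{-d/2}$.

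The main obstacle is the last step, namely the coupling between $Y$ and $Z$. Although the core and extra Poisson measures are independent at the source, the extras that actually trigger jumps depend on the trajectory and hence on the core clocks; so one cannot simply condition on $Z$ and treat $Y$ as independent of it. Rigorously decoupling them---either via a careful conditional analysis on the extra $\sigma$-algebra that preserves the Skellam LCLT lower bound, or by replacing the symmetric-walk LCLT with an inhomogeneous local CLT for the uniformly elliptic embedded jump chain of $\vecN^{\sK}$ with its $O(1/\sqrt{K})$ drift---is the real work, and is precisely where the non-self-adjointness of the generator prevents the direct use of standard symmetric heat-kernel techniques as in \cite{coulhon}.
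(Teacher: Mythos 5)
Your plan reduces the lemma to the right pointwise estimate $\proba_{\vecn}(\vecN^{\sK}(1)=\vecm)\ge c\,K^{-d/2}$, and the thinning of the Poisson measures into a symmetric ``core'' walk $Y$ plus a state-dependent remainder $Z$ is a natural idea. But the proof has a genuine gap, which you yourself flag in the last paragraph: the identity $\vecN^{\sK}(1)-\vecn=Y+Z$ is useless for a \emph{lower} bound unless you can control the joint law of $(Y,Z)$, and $Z$ is a functional of the whole trajectory, hence of the core atoms that build $Y$. Conditioning on $\{Z=\vecv^{*}\}$ biases $Y$, so the Skellam local limit theorem cannot be applied to the shifted target $\vecm-\vecn-\vecv^{*}$; and the bias is not negligible, because the drift $K(\vecB-\vecD)(\vecn/K)$ on $\Delta''$ is of order $\sqrt{K}$ per unit time, i.e.\ exactly the diffusive scale of the target set $\Delta$. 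A naive bound on the accumulated Radon--Nikodym correction over the $\asymp K$ jumps gives $\exp(O(\sqrt K))$, which destroys the $K^{-d/2}$ estimate. So the step you defer is not a technicality: it is the entire content of the lemma, and as written the argument is a program, not a proof.

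For comparison, the paper resolves precisely this point by a different route: it writes $\proba_{\vecn}(\vecN^{\sK}(1)=\vecm)$ as a path sum, restricts to $q\in[\Lambda_*-\sqrt K,\Lambda_*+\sqrt K]$ jumps staying within $\sqrt K\log K$ of $\vecnf$, changes measure to the \emph{symmetric} embedded walk $\plat$ with rates frozen at $\vecxf$, and then applies Jensen's inequality so that only the \emph{expectation} of $\log\Pi_q$ under the bridge of $\plat$ needs to be bounded (Lemma \ref{boulot}). The dangerous first-order drift term is then killed by an exact cancellation ($\sum_j(\mb_{j,r}+\md_{j,r})=0$, exploited via reversibility of $\plat$ in Sublemma \ref{compense}), leaving only second-moment contributions of order one by Sublemma \ref{moment}; the prefactor $Z_q(\vecn,\vecm)\asymp K^{-(d+1)/2}$ comes from the local limit theorem for $\plat$ plus Stirling, and summing over the $\asymp\sqrt K$ admissible values of $q$ gives $K^{-d/2}$. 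If you want to salvage your coupling approach you would need an inhomogeneous local CLT lower bound for the embedded chain with its $O(1/\sqrt K)$ drift, uniformly over $\Delta'\times\Delta$ --- which is essentially equivalent to the cancellation the paper proves, and which, as the paper notes, is not available off the shelf because the generator is not symmetric.
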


\noindent\textbf{Proof of Condition \eqref{cond-A1}}. Applying the three preceding lemmas, we can prove that condition \eqref{cond-A1} holds for $K$ large enough with
\begin{align}
\label{c1}
c_1 &= (1-\ceta{entrec})\, (1-\ceta{restec})\,(1- \ceta{uni})<1,\\
\label{t0}
t_{0} &= t_{0}(K)=1+\cte{entrec} \log K.t_{0}(K)
\end{align}
Indeed, for all $\vecn\in\domaine$ and for all $A\subset\Delta$ we can write
\[
\proba_{\vecn}\big(\vecN^{\sK}(t_{0})\in A\big)=
\esperance_{\vecn}\left[\un_{A}\big(\vecN^{\sK}(t_{0})\big)\right]\ge 
\esperance_{\vecn}\left[\un_{\{T_{\Delta}<\cte{entrec} \log K\}}
\;\un_{A}\big(\vecN^{\sK}(t_{0})\big)\right].
\]
Now by the Markov property we have
\begin{align*}
\MoveEqLeft \proba_{\vecn}\big(\vecN^{\sK}(t_{0})\in A\big) \\
& \ge 
\esperance_{\vecn}\left[\un_{\{T_{\Delta}<\cte{entrec} \log K\}}
\;\esperance_{\vecN^{\sK}(T_{\Delta})}\big[\un_{A}\big(\vecN^{\sK}(t_{0}-T_{\Delta})\big)\big]\right]\\
& \ge \esperance_{\vecn}\left[\un_{\{T_{\Delta}<\cte{entrec} \log K\}}
\;\esperance_{\vecN^{\sK}(T_{\Delta})}\big[
\un_{\Delta'}\big(\vecN^{\sK}(t_{0}-T_{\Delta}-1)\big)\, Y_{A,\Delta}(t_0)\big]\right],
\end{align*}
where
\[
Y_{A,\Delta}(t_0)=\esperance_{\vecN^{\sK}(t_{0}-T_{\Delta}-1)}
\big[\un_{A}\big(\vecN^{\sK}(1)\big)\big].
\]
Using successively Lemma \ref{uni}, Lemma \ref{restec} and Lemma \ref{entrec} we get
\begin{align}
\label{eureka}
\proba_{\vecn}\big(\vecN^{\sK}(t_{0})\in A\big)\ge 
(1-\ceta{entrec})\, (1-\ceta{restec})\,(1- \ceta{uni})\, \nu(A).
\end{align}
Since $\vecz$ is an absorbing point we have
$\proba_{\vecn}\big(\vecN^{\sK}(t_{0})\in A,T_{\vecz}\leq t_{0}\big)=0$,
and using the trivial estimate
$\proba_{\vecn}\big(T_{\vecz}>t_{0}\big)\le 1$ we get
\begin{align*}
\proba_{\vecn}\big(\vecN^{\sK}(t_{0})\in A\,\big|\, T_{\vecz}>t_{0}\big) &\ge 
\frac{(1-\ceta{entrec})\, (1-\ceta{restec})\,(1- \ceta{uni})}{\proba_{\vecn}\big(T_{\vecz}>t_{0}\big)}\;\nu(A)\\
& \ge (1-\ceta{entrec})\, (1-\ceta{restec})\,(1- \ceta{uni})\;\nu(A).
\end{align*}
Thus we have proved that Condition \eqref{cond-A1} holds.

\subsection{Proof of Lemma \ref{entrec}}

The proof of Lemma \ref{entrec} is based on the fine description of the trajectories of the process. For this purpose, we need to introduce a decomposition of  $\domaine$ according to the different time scales at which the process goes down from infinity to
$\Delta$.
 
Let
\[
R_{*}=\frac{1}{2}\;\Big(R+\sup_{y\in P_L\cap\, \rpd} \|y-\vecxf\|\Big)\,,
\]
where $P_L$ is the hyperplane defined in \eqref{def:PL}.

Note that $R_{*}<R$ by hypothesis \eqref{cond:hyperplan}. We define the sets
\begin{align*}
\mathcal{E}_{1} &=\bigg\{\vecn\in\domaine : \sum_{j=1}^{d}n_{j}>L K\bigg\}\\
H_{-5} &=\big\{\vecn\in\domaine : R_{*}\,K\le \|\vecn-\vecnf\|< R K\big\}\\
H_{-4} &=\bigg\{\vecn\in\domaine : \sum_{j=1}^{d}n_{j}> L K\,,\,\|\vecn-\vecnf\|< R_{*} K\bigg\}\\
H_{-3} &=\bigg\{\vecn\in\domaine : \sum_{j=1}^{d}n_{j}\le L K\,,\,\|\vecn-\vecnf\|\ge \|\vecnf\|-\pcte{tondeuse}\bigg\}\\
H_{-2} &=\big\{\vecn\in\domaine : \|\vecnf\|-(\pcte{tondeuse}+4)\le \|\vecn-\vecnf\|< \|\vecnf\|-\pcte{tondeuse}\big\}\\
H_{-1} &=\big\{\vecn\in\domaine : \|\vecnf\|-(\pcte{tondeuse}+8)\le \|\vecn-\vecnf\|< \|\vecnf\|-(\pcte{tondeuse}+4)\big\}\\
H_{0}&=\big\{\vecn\in\domaine :  \|\vecnf\|-(\pcte{tondeuse}+12)\le \|\vecn-\vecnf\|< \|\vecnf\|-(\pcte{tondeuse}+8)\big\}\\
\mathcal{E}_{2}&=\big\{\vecn\in\domaine : \|\vecn\|<\pcte{tondeuse}+17 \big\}.
\end{align*}
These sets are well-defined provided that $K$ is large enough.

\begin{figure}[h]
\begin{center}
\ifpdf
\includegraphics[scale=1.0,clip]{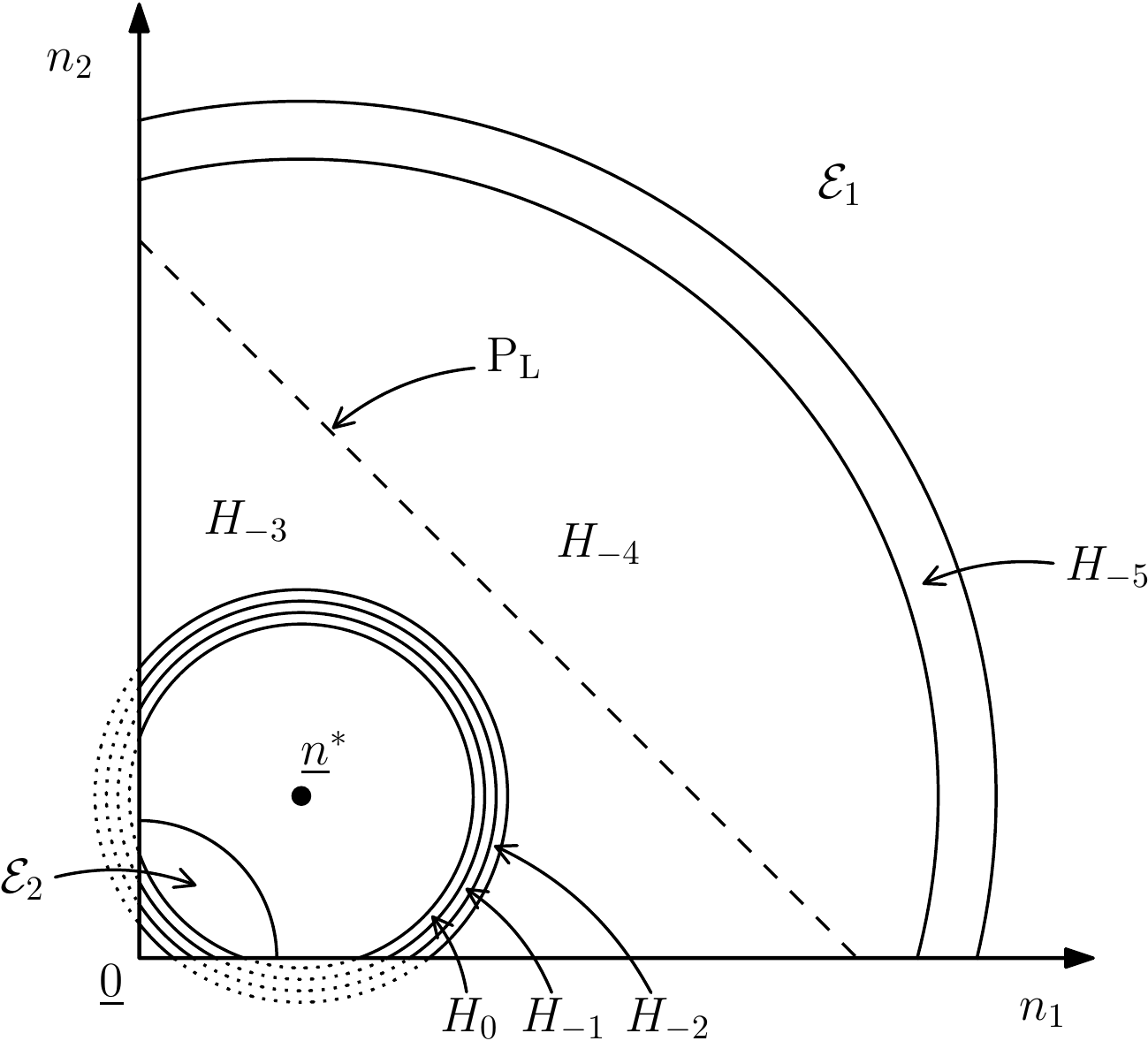}
\else
\includegraphics[scale=1.0,clip]{descente.eps}
\fi
\caption{The various subsets when $d=2$ when $K$ is large enough.}
\end{center}
\end{figure}

The proof  of Lemma \ref{entrec} will result from a series of sublemmas which quantify the probability of coming down from infinity and  crossing the various level sets of the Lyapunov function.

\begin{sublemma}\label{descente1}
There exist two constants $\tcte{descente1}>0$ and $\ceta{descente1}<1$ (independent of $K$) such that for $K$ large enough
\[
\inf_{\vecn\in \mathcal{E}_{1}}\proba_{\vecn}\big(T_{\mathcal{E}_{1}^c}\le
\tcte{descente1}\big)\ge 1-\ceta{descente1}.
\]
\end{sublemma}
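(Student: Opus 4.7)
The goal is to bound the time for the scalar process $M(t) := |\vecN^{\sK}(t)|$ to drop from above $LK$ down to at most $LK$, uniformly over starting configurations (which may be arbitrarily large) and over $K$. The plan is to build a Lyapunov function of the form $V(\vecn) = f(|\vecn|)$ with $f : \integers \to \real_+$ chosen so that (i) $V$ is bounded uniformly in $K$ by virtue of \eqref{cond:integ}, and (ii) $\gen\!_{\sK} V \le -\tfrac{1}{2}$ on $\mathcal{E}_1$ by virtue of \eqref{cond:descente}. The natural choice is
\[
f(m) = \sum_{k=\lfloor LK\rfloor+1}^{m} \frac{1}{K D_{\mathrm{min}}(k/K)} \quad (m > \lfloor LK\rfloor), \qquad f(m) = 0 \text{ otherwise}.
\]

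Next, I would compute the generator directly. Since $f$ depends only on $|\vecn|$,
\[
\gen\!_{\sK} V(\vecn) = K \sum_{j=1}^d B_j(\vecn/K)\,\Delta^+ f(|\vecn|) + K\sum_{j=1}^d D_j(\vecn/K)\,\Delta^- f(|\vecn|),
\]
with $\Delta^\pm f(m) = f(m\pm 1) - f(m)$. Eventual monotonicity of $D_{\mathrm{min}}$ makes $f$ concave beyond a fixed level, so $\Delta^+ f(m) \le -\Delta^- f(m)$; and on $\mathcal{E}_1$, Hypothesis \eqref{cond:descente} combined with \eqref{domination} gives $\sum_j B_j(\vecn/K) \le \tfrac{1}{2} \sum_j D_j(\vecn/K)$. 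Inserting these, together with $\sum_j D_j(\vecn/K) \ge D_{\mathrm{min}}(|\vecn|/K)$, collapses the right-hand side to
\[
\gen\!_{\sK} V(\vecn) \le -\tfrac{1}{2}\, \frac{\sum_j D_j(\vecn/K)}{D_{\mathrm{min}}(|\vecn|/K)} \le -\tfrac{1}{2}.
\]
A Riemann-sum comparison using \eqref{cond:integ} gives $\sup_{\vecn} V(\vecn) \le C^{*}$ with $C^{*}$ independent of $K$ for $K$ large enough.

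The conclusion is then routine: Dynkin's formula at the bounded stopping time $t \wedge T_{\mathcal{E}_1^c}$ and nonnegativity of $V$ yield $\tfrac{1}{2}\,\esperance_{\vecn}[t \wedge T_{\mathcal{E}_1^c}] \le V(\vecn) \le C^{*}$; monotone convergence gives $\esperance_{\vecn}[T_{\mathcal{E}_1^c}] \le 2 C^{*}$ uniformly in $\vecn \in \mathcal{E}_1$, and Markov's inequality finishes the proof with, say, $\tcte{descente1} = 4 C^{*}$ and $\ceta{descente1} = \tfrac{1}{2}$.

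The main obstacle I anticipate is not the logic but the bookkeeping at two junctions. First, one must verify the concavity inequality $\Delta^+ f \le -\Delta^- f$ at the boundary $m = \lfloor LK\rfloor + 1$ where $f(m-1) = 0$: this follows because $\Delta^+ f(m) = 1/(KD_{\mathrm{min}}((m+1)/K))$ while $-\Delta^- f(m) = 1/(KD_{\mathrm{min}}(m/K))$, and monotonicity of $D_{\mathrm{min}}$ takes care of the inequality. Second, the Riemann sum defining $\sup V$ must be bounded by $C \int_L^{\infty} \dd s/D_{\mathrm{min}}(s)$ uniformly in $K$ when $D_{\mathrm{min}}$ is only \emph{eventually} monotone; this is handled by splitting the sum at the threshold of monotonicity and absorbing the finite transient contribution (bounded since $D_{\mathrm{min}} > 0$ on bounded intervals by \eqref{cond:mort}) into the constant.
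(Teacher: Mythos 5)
Your argument is correct, but it takes a genuinely different route from the paper. The paper first dominates the total population $\sum_j \langle\vecN^{\sK}(t),\vece{j}\rangle$ by a one-dimensional birth-and-death chain with rates $KB_{\mathrm{max}}(\cdot/K)$ and $KD_{\mathrm{min}}(\cdot/K)$, then invokes the classical explicit series formula for the mean hitting time of the level $\lfloor LK\rfloor$, bounds the product terms geometrically via \eqref{cond:descente} to reduce the series to $2\sum_{m>\lfloor LK\rfloor}1/(KD_{\mathrm{min}}(m/K))$, and identifies this as a Riemann sum for $\int_L^\infty \dd s/D_{\mathrm{min}}(s)$ via \eqref{cond:integ}. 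You instead build the truncated scale-type function $f$ directly as a Foster--Lyapunov function for the $d$-dimensional generator and conclude by Dynkin plus Markov; your $f$ is essentially the mean hitting time of the associated pure-death chain, so the two proofs rest on exactly the same two hypotheses and the same integral, but yours avoids both the coupling construction and the appeal to the Karlin--Taylor formula, at the cost of having to verify the discrete concavity $\Delta^+f\le -\Delta^-f$ by hand. One point deserves the same care you already give to the Riemann sum: $D_{\mathrm{min}}$ is only \emph{eventually} monotone, so on the band $L<|\vecn|/K\le s_0$ (with $s_0$ the monotonicity threshold) the concavity step can fail and you should instead bound $\gen_{\sK}V\le B_{\mathrm{max}}(s)/D_{\mathrm{min}}(s+1/K)-1$ there; this is still $\le -1/2$ for $K$ large because the supremum in \eqref{cond:descente} is strictly below $1/2$ and $D_{\mathrm{min}}$ is continuous and bounded away from zero on $[L,s_0+1]$ by \eqref{cond:mort}. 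With that one repair the proof is complete and yields the same conclusion as the paper's.
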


\begin{proof}
The process $\big(\sum_{j=1}^d \langle \vecN^{\sK}(t),\vece{j}\rangle, t\geq 0\big)$ can be coupled with a one-dimensional birth-and-death process $(Z(t),t\geq 0)$ with birth rate $\Lambda(m) = K B_{\mathrm{max}}\left(\frac{m}{K}\right)$ and death rate $M(m)= K D_{\mathrm{min}}\left(\frac{m}{K}\right)$. The coupling is such that 
\[
Z(t)\geq \sum_{j=1}^d \langle \vecN^{\sK}(t),\vece{j}\rangle \quad\text{if}\quad Z(0)\geq \sum_{j=1}^d \langle \vecN^{\sK}(0),\vece{j}\rangle.
\]
Let us introduce $p_{\sK} = \lfloor L K\rfloor$ and  denote by $\widehat T_{p_{K}}$ its hitting time.
We are going to prove that 
$A_{\sK}:=\sup_{{p> p_{\sK}}} \esperance_{p}(\widehat T_{p_{K}}) $ is bounded uniformly in $K$.
As shown in \cite[p.384]{TaylorKarlin1998} or in \cite[Chap.3]{Allen2011},  one has
\[
A_{\sK} = \sum_{m=p_{\sK}+1}^\infty
\left(\frac{1}{M(m)} + \sum_{i=m+1}^\infty \frac{\Lambda(m) \cdots \Lambda(i-1)}{M(m) \cdots M(i)}\right).
\]
By assumption \eqref{cond:descente}, for $q\geq p_{\sK}$, $\Lambda(q)/M(q) \leq 1/2$. Then
\[ 
A_{K} \leq \sum_{m=p_{\sK}+1}^\infty \left(\frac{1}{M(m)} + \sum_{i=m+1}^\infty \frac{2^{m-i}}{M(i)}\right)\leq 2 \sum_{m=p_{\sK}+1}^\infty \frac{1}{M(m)}\,,
\]
where we have interchanged the order of the sums to get the second inequality.
By Hypothesis \eqref{cond:integ}, we know that 
\[
\frac{1}{K}\,\sum_{m=p_{K}+1}^\infty \frac{1}{D_{\mathrm{min}}(\frac{m}{K})} 
\xrightarrow[K\to \infty]{} \int_{L}^\infty \frac{\dd s}{D_{\mathrm{min}}(s)}<+\infty.
\]
Then there exists $K_{0}$ such that for all $K\geq K_{0}$, for all $p\geq p_{K}$, we have
\[
\esperance_{p}(\widehat T_{p_{K}}) \leq 3 \int_{L}^\infty \frac{\dd s}{D_{\mathrm{min}}(s)}\,.
\]
The result follows by Markov inequality with 
\[
\tcte{descente1}= 6  \int_{L}^\infty \frac{\dd s}{D_{\mathrm{min}}(s)}
\quad\text{and}\quad \ceta{descente1} = \frac{1}{2}\,.
\]
\end{proof}

\begin{sublemma}\label{descente2}
There exist two constants $\tcte{descente2}>0$ and
$\ceta{descente2}<1$ (independent of $K$) such that for $K$ large enough
\[
\inf_{\vecn\in H_{-3}\,\cup\, H_{-2}\,\cup\, H_{-1}}\proba_{\vecn}\big(T_{H_{0}}\le
\tcte{descente2}\big)\ge 1-\ceta{descente2}.
\]
\end{sublemma}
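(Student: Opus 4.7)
The plan is to apply Corollary~\ref{t4d} (via Lemma~\ref{disque}) with the Lyapunov function $\varphi$ from \eqref{lyapou} and four nested domains built from the shells $H_{-3},H_{-2},H_{-1}$. For a starting point $\vecn\in H_{-1}$, I would take
\[
\mathcal{D}_{1}=\{\vecn\in\domaine:\|\vecn-\vecnf\|<\|\vecnf\|-(\pcte{tondeuse}+8)\},\ \cour_{0}=H_{-1},\ \cour_{-1}=H_{-2},\ \cour_{-2}=H_{-3}.
\]
Because a unit coordinate jump changes $\|\vecn-\vecnf\|$ by at most $1$, the $4$-unit gaps between consecutive $H_{i}$'s enforce the adjacency condition $\{\vecn:d(\vecn,\cour_{0}\cup\cour_{-1})=1\}\subset\mathcal{D}_{1}\cup\cour_{-2}$, and the first entry into $\mathcal{D}_{1}$ starting from $\cour_{0}$ must fall in $H_{0}$, so $T_{\mathcal{D}_{1}}=T_{H_{0}}$. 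The condition $\proba\!_{\vecn}(T_{\cour_{-2}}<\infty)=1$ is automatic from Proposition~\ref{prop-capitaine-Flam}, since every trajectory to $\vecz$ must cross $H_{-3}$.

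For the quantitative Lyapunov estimates, Corollary~\ref{tondeuse} applies since on $H_{-1}\cup H_{-2}\cup H_{-3}$ the distance $\|\vecn-\vecnf\|$ ranges between $\|\vecnf\|-(\pcte{tondeuse}+8)$ and roughly $\|\vecnf\|$, hence is of order $K$ and in particular exceeds $\crho{tondeuse}\sqrt{K}$ for $K$ large; on $H_{-1}\cup H_{-2}$ the reverse triangle inequality gives $\|\vecn\|>\pcte{tondeuse}$. I would deduce
\[
-\frac{\gen_{K}\varphi(\vecn)}{\varphi(\vecn)}\geq\frac{\alpha\beta}{2}\,\frac{\|\vecn\|}{K}\,\frac{\|\vecn-\vecnf\|^{2}}{K}\geq \Lambda_{0},
\]
where $\Lambda_{0}>0$ is $K$-independent (since $\|\vecn\|/K\geq\pcte{tondeuse}/K$ and $\|\vecn-\vecnf\|^{2}/K\sim K\|\vecxf\|^{2}$, the product is $\Oun$). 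The explicit form of $\varphi$ controls the key ratio:
\[
\frac{a_{0}}{a''_{-2}}\leq\exp\Big(\tfrac{\alpha}{K}\big((\|\vecnf\|-(\pcte{tondeuse}+4))^{2}-(\|\vecnf\|-\pcte{tondeuse})^{2}\big)\Big)\leq \e^{-8\alpha\|\vecxf\|+o(1)},
\]
a constant in $(0,1)$ uniformly in $K$ large. Corollary~\ref{t4d} then yields $K$-independent constants $\tcte{descente2}$ and $\ceta{descente2}<1$ giving the claim for $\vecn\in H_{-1}$. For $\vecn\in H_{-2}$ or $H_{-3}$, I would rerun the same scheme with the shells shifted outward one step (e.g.\ $\cour_{0}=H_{-2}$, $\cour_{-1}=H_{-3}$, $\cour_{-2}$ a thin shell inside $\{LK<\sum_{j}n_{j}\leq RK\}$ where Theorem~\ref{formule} still applies), reach $H_{-1}$ (resp.\ $H_{-2}$) in a bounded time, and concatenate via the strong Markov property.

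The main technical obstacle is that $H_{-3}$ contains small-norm points such as $\vecn=\vece{j}$ with $\|\vecn\|<\pcte{tondeuse}$, where Theorem~\ref{formule} yields only $-\gen_{K}\varphi/\varphi\geq-\cte{formule}$ and the Lyapunov bound on $\cour_{-2}$ fails. For these I would argue separately using Hypotheses~\eqref{cond:mort} and \eqref{cond:nais}: the birth and death rates out of such a state are of order one, and the linearization of $\vecB-\vecD$ at $\vecz$ is supercritical, so with probability bounded below by a positive constant the process realizes enough births in time $O(1)$ to reach $\|\vecn\|\geq\pcte{tondeuse}+17$, after which the Lyapunov-based descent above completes the passage to $H_{0}$ in a further bounded time.
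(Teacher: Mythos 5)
Your overall strategy --- Lemma \ref{disque}/Corollary \ref{t4d} applied to the Lyapunov function $\lyap$, plus a hands-on argument for the small-norm part of $H_{-3}$ --- is the paper's strategy. But the paper applies the corollary \emph{once}, with $\cour_{0}=D_{0}\setminus D_{1}$ (i.e.\ all of $H_{-3}\cup H_{-2}\cup H_{-1}$) and with guard regions pushed far out, $\cour_{-1}=H_{-4}$ (between the hyperplane $P_L$ and Euclidean distance $R_{*}K$ from $\vecnf$) and $\cour_{-2}=H_{-5}$ (between $R_{*}K$ and $RK$); there $a_{0}/a''_{-2}<1/4$, $\log(a''_{-2}/a'_{-1})=\Oun K$ and $\Lambda=\Oun K$, giving a time of order one. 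Your thin-shell substitute does not close, for two concrete reasons.

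First, the adjacency condition $\{\vecn: d(\vecn,\cour_{0}\cup\cour_{-1})=1\}\subset\mathcal{D}_{1}\cup\cour_{-2}$ fails for $\cour_{0}=H_{-1}$, $\cour_{-1}=H_{-2}$, $\cour_{-2}=H_{-3}$. The sets $H_{-1},H_{-2}$ are \emph{full} Euclidean annuli of radius $\approx\|\vecnf\|$ around $\vecnf$; on their far side they contain points with $\sum_{j}n_{j}>LK$ and of norm up to $\approx 2\|\vecnf\|$. An outward unit neighbour $\vecm$ of such a point has $\|\vecm-\vecnf\|\ge\|\vecnf\|-\pcte{tondeuse}$ but need not satisfy $\sum_{j}m_{j}\le LK$, hence lies in neither $H_{-3}$ nor $D_{1}$: nothing in the hypotheses forces $\mathcal{B}(\vecxf,\|\vecxf\|)\subset\{|\vecy|\le L\}$ (only the much smaller ball of \eqref{cond:hyperplan} is controlled). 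Without the adjacency condition the Dynkin argument of Lemma \ref{disque} breaks, since the process can leave $\cour_{0}\cup\cour_{-1}$ without triggering $T_{\mathcal{D}_{1}}$ or $T_{\cour_{-2}}$; and enlarging $\cour_{-2}$ to the full Euclidean annulus does not help, because it then contains points of norm $\approx 2\|\vecnf\|$ (where Theorem \ref{formule} is only proved for $\|\vecn\|\le RK$) and points of norm $<\pcte{tondeuse}$ (which contaminate the supremum defining $\Lambda$, not just the set of starting points). Second, for $\cour_{0}=H_{-3}$ your proposed outer guard, ``a thin shell inside $\{LK<\sum_{j}n_{j}\le RK\}$'', cannot satisfy $a_{0}/a''_{-2}<1$: $\sup_{H_{-3}}\lyap$ is attained at the corners of the simplex $\{\sum_{j}n_{j}\le LK\}$, which lie \emph{on} $P_L$ at Euclidean distance $K\sup_{\vecy\in P_L\cap\rpd}\|\vecy-\vecxf\|$ from $\vecnf$, whereas a shell hugging $P_L$ contains points much closer to $\vecnf$, so $a_{0}/a''_{-2}=\e^{\Oun K}\gg 1$. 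Any admissible guard must sit at Euclidean distance exceeding the corner distance --- which is exactly why the paper introduces $R_{*}$ and defines $H_{-4},H_{-5}$ by Euclidean distance rather than by $\sum_{j}n_{j}$. Repairing both points leads you back to the paper's construction. Your separate treatment of the near-origin part of $H_{-3}$ via \eqref{cond:mort} and \eqref{cond:nais} is in the spirit of the paper's Sublemma \ref{descente3} and is the right complement.
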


\begin{proof}
We define 
\begin{align}
\nonumber
D_{-2}&=\big\{\vecn\in\domaine\,:\,\|\vecn-\vecnf\|<R K\big\}\\
\nonumber
D_{-1}&=\big\{\vecn\in\domaine\,:\,\|\vecn-\vecnf\|<R_{*}K\big\}\\
\nonumber
D_{0}&=\left\{\vecn\in\domaine\,:\, \sum_{j=1}^{d}n_{j}\le L K\right\}\\
D_{1}&=\big\{\vecn\in\domaine\,:\,\|\vecn-\vecnf\|< \|\vecnf\|-(\pcte{tondeuse}+8)\big\}.
\label{def-D1}
\end{align}
We now apply Corollary \ref{t4d} with $ \mathcal{D}_{i} = D_{i}$, $i=-2, -1, 0, 1$. For $K$ large enough and using \eqref{cond:hyperplan}, the Lyapunov function $\varphi$ defined in Theorem \ref{formule} and the geometry of the sets, we have
\[
\frac{a''_{-2}}{a'_{-1}}\le \e^{\Oun K},\quad
\frac{a_{0}}{a''_{-2}}<\frac{1}{4}.
\]
Moreover we have
\[
\Lambda = \Oun K
\]
by Theorem \ref{formule}. The result follows since $H_{-3}\,\cup\, H_{-2}\,\cup\, H_{-1} = D_{0}\backslash D_{1}$ and since for $K$ large enough, $D_{1}$ can be reached from $D_{0}\backslash D_{1}$ only
through $H_{0}$. 
\end{proof}

We need a specific estimate near $\vecz$. 
\begin{sublemma}\label{descente3}
There exists $\ceta{descente3}<1$ (independent of $K$) such that for $K$ large enough
\[
\inf_{\vecn\in \mathcal{E}_{2}\backslash D_{1}}\proba_{\vecn}\big(T_{H_{0}}\le
1\big)\ge 1-\ceta{descente3}\,.
\]
\end{sublemma}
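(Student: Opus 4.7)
The proof is a direct path-construction argument exploiting the fact that $\mathcal{E}_2\setminus D_1$ is, for $K$ large, a finite set of integer points of size independent of $K$, and that from any such point $H_0$ can be reached in a bounded number of birth events whose combined probability is bounded below uniformly in $K$.

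For $K$ large the set $\mathcal{E}_2\setminus D_1$ is contained in the fixed finite set $\{\vecn\in\domaine:\|\vecn\|<\pcte{tondeuse}+17\}$. If $\vecn\in \mathcal{E}_2\cap H_0$ the bound holds trivially since $T_{H_0}=0$, so we may restrict to $\vecn\in\mathcal{E}_2\setminus(D_1\cup H_0)$, where $\|\vecn-\vecnf\|\ge\|\vecnf\|-(\pcte{tondeuse}+8)$. Set $c:=\tfrac12\min_j\partial_{x_j}B_j(\vecz)>0$ (positive by \eqref{cond:nais}) and $x^*_{\min}:=\min_j x^*_j>0$. Since $B_j\ge 0$ and $B_j(\vecz)=0$ force the gradient components of $B_j$ at $\vecz$ to be non-negative, Taylor expansion gives, for any $\vecv$ of bounded norm and any coordinate $j$ with $v_j\ge 1$,
\[
KB_j(\vecv/K)\ge c\,v_j\ge c\quad\text{for $K$ large.}
\]
Moreover, the identity $\|\vecv+\vece{j}-\vecnf\|^2-\|\vecv-\vecnf\|^2=2(v_j-n^*_j)+1$ together with $n^*_j\sim Kx^*_j$ and $\|\vecv-\vecnf\|\sim K\|\vecxf\|$ shows that a birth in coordinate $j$ decreases $\|\vecv-\vecnf\|$ by an amount lying in the interval $(x^*_{\min}/(2\|\vecxf\|),1)$.

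Pick any $j_0$ with $n_{j_0}\ge 1$ (which exists since $\vecn\ne\vecz$) and consider the deterministic sequence $\vecv_k:=\vecn+k\vece{j_0}$ of successive births, for $k\le M$, where
\[
M:=\left\lceil \frac{4(\pcte{tondeuse}+17)\|\vecxf\|}{x^*_{\min}}\right\rceil.
\]
Along this path one always has $v_{k,j_0}\ge 1$, so each designated birth has rate $\ge c$; each step decreases $\|\vecv_k-\vecnf\|$ by less than $1$ and at least $x^*_{\min}/(2\|\vecxf\|)$. Since $H_0$ is a shell of width $4$ in this distance, the trajectory cannot skip over $H_0$ and must enter it within $M$ steps. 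Note that $M$ is a constant depending only on the vector field data, not on $\vecn$ or $K$.

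Finally, the total jump rate $\lambda(\vecv)=K\sum_j(B_j(\vecv/K)+D_j(\vecv/K))$ is bounded on the region $\{\|\vecv\|\le \pcte{tondeuse}+17+M\}$ by a constant $\Lambda$ uniformly in $K$ large, by local Lipschitz continuity of $\vecB,\vecD$. At each state the next-jump identity is independent of the next-jump waiting time; the conditional probability that the next jump is the designated birth is at least $c/\Lambda$, and the waiting time $W_k\sim\mathrm{Exp}(\lambda(\vecv_k))$ with $\lambda(\vecv_k)\ge c$ satisfies $\proba(W_k\le 1/M)\ge 1-e^{-c/M}$. Hence
\[
\proba_{\vecn}(T_{H_0}\le 1)\;\ge\;\Big(\frac{c}{\Lambda}\Big)^{\!M}\proba\Big(\textstyle\sum_{k=1}^M W_k\le 1\Big)\;\ge\;\Big(\frac{c}{\Lambda}\Big)^{\!M}(1-e^{-c/M})^{M},
\]
a positive constant independent of $\vecn\in\mathcal{E}_2\setminus D_1$ and of $K$. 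The main subtlety is ensuring the designated birth always has positive rate along the full path; this holds automatically because $j_0$ was chosen with $n_{j_0}\ge 1$ and births only increase this coordinate.
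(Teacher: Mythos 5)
Your proof is correct and follows essentially the same route as the paper: an explicit staircase of successive births in a coordinate $j_0$ with $n_{j_0}\ge 1$, with the birth rate and the jump probabilities bounded below via \eqref{cond:nais}, and the holding times controlled so that the whole path is completed before time $1$. Your accounting of the number of required steps (a constant $M$ of order $\|\vecxf\|/\min_j x^*_j$, justified by the observation that each birth decreases $\|\cdot-\vecnf\|$ by an amount strictly between $0$ and $1$ while $H_{0}$ has width $4$, so the path cannot overshoot) is in fact more careful than the paper's claim that $s\le 17$ births always suffice, which is not justified as stated.
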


\begin{proof}
For all $\vecn\in\mathcal{E}_{2}\backslash D_{1}$, for all $ j\in \{1,\ldots, d\}$, there exists $s\leq 17$, such that $\vecn+ s \vece{j} \in H_{0}$. Since $\vecn \neq \vecz$, there exists $j_{0}$ with $n_{j_{0}} >0$. 

Let
\begin{align*}
\MoveEqLeft \mathcal{V}   =
\big\{ \vecm(t), t_{0}=0, \exists t_{1}<\frac{1}{s}, \ldots, t_{s}<\frac{1}{s} \quad \text{such that}\\
&  \quad  \vecm(t) = \vecn + q\, \vece{j_{0}}, \forall t_{q}\leq  t< t_{q+1}, 0\leq q\leq s-1 \big\}.
\end{align*}
Let us compute the probability for the birth and death process to belong to $\mathcal{V}$. Note that by assumption
\[
KB_{j_{0}}\left(\frac{\vecn}{K}\right) = \sum_{\ell=1}^d n_{\ell}\, \partial_{x_{\ell}} B_{j_{0}}(0) \, + \,{\mathcal O}\left(\frac{1}{K}\right)
\]
and
\[
KD_{j_{0}}\left(\frac{\vecn}{K}\right) = \sum_{\ell=1}^d n_{\ell}\, \partial_{x_{\ell}} D_{j_{0}}(0)\,+ \,{\mathcal O}\left(\frac{1}{K}\right).
\]
Therefore, for $K$ large enough, the birth probability of an individual with type $j_{0}$ is bounded below by 
\begin{align*}
\MoveEqLeft[8]
 \inf_{ \vecn \in \mathcal{E}_{2}} \frac{KB_{j}(\frac{\vecn}{K}) }{K \sum_{\ell=1}^d B_{\ell}(\frac{\vecn}{K}) + K \sum_{\ell=1}^d D_{\ell}(\frac{\vecn}{K})} \\
 & > \frac12\, \inf_{  \vecn \in \mathcal{E}_{2}} \frac{\partial_{x_{j_{0}}} B_{j_{0}}(0)}{\sum_{\ell=1}^d n_{\ell}\, \partial_{x_{\ell}} B_{j_{0}}(0) \, +\,\sum_{\ell=1}^d n_{\ell}\, \partial_{x_{\ell}} D_{j_{0}}(0)}= \zeta ,
\end{align*}
and $\zeta >0$ by \eqref{cond:nais} and since $\max_{1\leq \ell\leq d} n_{\ell}\leq 17$ for
$\vecn \in \mathcal{E}_{2}$.
Note also that  the denominator (which is the jump rate)  is bounded below by $\zeta' =\inf_{j} \partial_{x_{j}} B_{j}(0)>0$ by \eqref{cond:nais}.
Therefore,
\[
\proba_{ \vecn}( N^K \in\mathcal{V}) \geq \zeta^s\Big(1 - \e^{-\zeta'/s}\Big)^s \geq \zeta^{17} \Big(1 - \e^{-\zeta'/17}\Big)^{17}.
\]
The results follows.
\end{proof}

In the following lemma we will partition more finely the disk $D_{1}$ to fit as well as possible the speed of decrease of the distance between the process and $\vecnf$.
\begin{sublemma}\label{descente4}
There exists two constants $\tcte{descente4}>0$ and
$\ceta{descente4}<1$ such that for $K$ large enough
\[
\inf_{\vecn\in D_{1}\backslash \Delta}\proba\big(T_{\Delta}\le
\tcte{descente4}\,\log K\big)\ge 1-\ceta{descente4},
\]
where $\Delta$ is defined in \eqref{def:Delta} and $D_1$ in \eqref{def-D1}.
\end{sublemma}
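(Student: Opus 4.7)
My plan is to iterate the four-domains Corollary \ref{t4d} along a geometric sequence of concentric balls around $\vecnf$, using the Lyapunov function $\varphi$ of Theorem \ref{formule} together with the sharper pointwise decrease rate from Corollary \ref{tondeuse}. Fix $\lambda\in(0,1)$ (small, to be chosen later) and set $r_k=\max\bigl(\lambda^k(\|\vecnf\|-\pcte{tondeuse}-8),\,2\crho{tondeuse}\sqrt{K}\bigr)$ for $k=0,1,\ldots,N$, where $N=O(\log K)$ is the smallest index with $r_N = 2\crho{tondeuse}\sqrt{K}$; then $\mathcal{B}(\vecnf,r_0)\supseteq D_1$ and $\mathcal{B}(\vecnf,r_N)\subseteq\Delta$. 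For each $k\geq 1$ I would apply Corollary \ref{t4d} with $\mathcal{D}_1=\mathcal{B}(\vecnf,r_k)$, $\mathcal{D}_0=\mathcal{B}(\vecnf,r_{k-1})$, and outer balls $\mathcal{D}_{-1},\mathcal{D}_{-2}$ of radii in $(r_{k-1},r_{k-2})$, taking $\psi=\varphi$.

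The exponential form of $\varphi$ yields $\log(a''_{-2}/a'_{-1})=\Theta(\alpha r_{k-2}^2/K)$ and $a_0/a''_{-2}=\exp\bigl(-c(\lambda)\,\alpha r_{k-1}^2/K\bigr)$, where $c(\lambda)\to+\infty$ as $\lambda\to 0$. Choosing $\lambda$ small enough makes these ratios $\leq 1/2$; moreover, since $r_k^2/K$ grows geometrically as $k$ decreases, the per-step failure probabilities $q_k=a_0/a''_{-2}$ decay faster than any geometric sequence as $k$ ranges from $N$ down to $1$, so $\sum_k q_k$ is dominated by the innermost (largest) term and bounded uniformly in $K$ by a constant $\ceta{descente4}<1$. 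Corollary \ref{tondeuse} gives $\Lambda\geq (\alpha\beta/2)\,(\|\vecn\|/K)(r_k^2/K)$ on the relevant annuli; on the \emph{interior} annuli (those with $r_{k-2}\leq \|\vecnf\|/2$), $\|\vecn\|\gtrsim \|\vecnf\|$ uniformly, so $\Lambda=\Omega(r_k^2/K)$ matches $\log(a''_{-2}/a'_{-1})$ and yields $t_k=O(1)$; summing the $N=O(\log K)$ interior steps gives total time $O(\log K)$.

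The main obstacle is the $O(1)$ outermost annuli where $\|\vecn\|$ may drop to $\pcte{tondeuse}+8$ and the naive 4-domains estimate degenerates to $t_k=O(K)$, because $\varphi$ overweights the neighborhood of $\vecz$ relative to the true escape rate of the process. To treat them I would invoke Hypothesis \eqref{cond:nais}: near $\vecz$, the process $\vecN^{\sK}$ stochastically dominates a supercritical multi-type linear branching process (the linearization of $\vecB-\vecD$ at $\vecz$, supercritical thanks to the linear instability of $\vecz$), and by a standard Galton-Watson argument such a process reaches total population of order $K$ in time $O(\log K)$ with probability at least its survival probability, a constant in $(0,1)$. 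Once in the interior regime the geometric descent applies, and summing the two phases yields the claimed bound with total time $\tcte{descente4}\log K$ and uniform failure probability $\ceta{descente4}<1$.
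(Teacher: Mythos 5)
Your skeleton --- iterating Corollary \ref{t4d} with $\psi=\varphi$ over $\Oun\log K$ nested balls centred at $\vecnf$, with per-step times $\Oun$ and summable failure probabilities --- is the paper's, and your geometric radii reproduce what the paper does on the inner range $\Oun K\geq\|\vecn-\vecnf\|\geq 2\crho{tondeuse}\sqrt K$. The gap is in the outer range, and your patch does not close it. With purely geometric radii the outermost shells have width $\Theta(K)$, so the required potential drop $\log(a''_{-2}/a'_{-1})\asymp\alpha r_{k-2}^{2}/K\asymp K$, while $\Lambda$ --- a worst-case bound over the \emph{whole} shell, which contains points with $\|\vecn\|$ as small as $\pcte{tondeuse}+8$ --- is only bounded below by a constant; hence $t_k=\Oun K$, as you note. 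But this defect afflicts \emph{every} starting point of the outer shells, including points on the far side of $\vecnf$ with $\|\vecn\|\approx 2\|\vecnf\|$, for which a branching comparison near $\vecz$ is irrelevant; and these shells cover all of $D_1$ except a ball of radius $\Oun\lambda^{2}\|\vecnf\|$ about $\vecnf$, i.e.\ most of the domain. Worse, for the step $k=1$ the buffers $\mathcal{D}_{-1},\mathcal{D}_{-2}$ must lie outside $\mathcal{B}(\vecnf,r_0)$ and, being geometric, reach radius $r_0/\lambda>\|\vecnf\|$; they then contain points with $\|\vecn\|<\pcte{tondeuse}$ and neighbours of $\vecz$, where Corollary \ref{tondeuse} gives nothing and the hypothesis $\Lambda>0$ of Lemma \ref{disque} fails. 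Finally, domination by the linearisation at $\vecz$ holds only in a ball $\{\|\vecn\|\leq\epsilon K\}$, so on the survival event it delivers the process at $\|\vecn\|\asymp\epsilon K$ with uncontrolled type composition; such a point has $\|\vecn-\vecnf\|\geq\|\vecnf\|-\epsilon K$ and is therefore still in the outermost annulus, not in your ``interior regime''. The hand-off asserted in your last sentence is the missing step.

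The paper's fix is to calibrate the shell widths to the degraded rate rather than to change the tool. In the outer phase it takes $R_j=R_1-2^{j-1}+1$: shells of width $\asymp 2^{j}$ starting from width $4$, all kept inside $\{\|\vecn\|\geq\pcte{tondeuse}\}$ so that Corollary \ref{tondeuse} applies throughout (this is exactly what the spacings $\pcte{tondeuse},\pcte{tondeuse}+4,\pcte{tondeuse}+8,\pcte{tondeuse}+12$ are for). The potential drop per step is then $\alpha(R_{j-1}^{2}-R_{j+1}^{2})/K\asymp 2^{j}\|\vecxf\|$, while $\Lambda_j\gtrsim\|\vecnf\|-R_{j-2}\gtrsim 2^{j}$, so $t_j=\Oun$ even though the rate is nowhere near $\Omega(K)$; after $\Oun\log K$ such steps the radii are $\leq\frac12\inf_{\ell}n^{*}_{\ell}$, every shell satisfies $\|\vecn\|=\Omega(K)$, and the geometric halving you propose takes over. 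Replacing your single geometric sequence by this two-phase sequence removes the need for any branching-process argument.
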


\begin{proof}
We start by defining a decreasing (finite) sequence of numbers
$(R_{j})$ as follows:
\begin{align*}
R_{-2} &=\|\vecnf\|- \pcte{tondeuse}, \, R_{-1}=\|\vecnf\|-(\pcte{tondeuse}+4), \\
R_{0} &=\|\vecnf\|-(\pcte{tondeuse}+8), \, R_{1}=\|\vecnf\|-(\pcte{tondeuse}+12).
\end{align*}
Define
\[
j_{*}=\inf\left\{j: R_1-2^{j-1}+1\leq \frac{1}{2}\,\inf_{\ell} n^{*}_\ell\right\}.
\]
Note that $j_*=\Oun \log K$.
For $2\leq j\leq j_{*}$ we define
\[
R_j=R_1-2^{j-1}+1.
\]
Note that for $1\leq j\leq j_{*}$, $R_{j}\geq R_{j^{*}}=\Oun K$. 
Define
\[
j_{**}=\sup\{j > j^{*}: R_{j^*} \, 2^{-(j-j^*)}> \crho{tondeuse}\sqrt{K}\}-1.
\]
Note that $j_{**}=\Oun \log K$.
For $j^{*}\leq j\leq j^{**}+1$, let
\[
R_j= R_{j^*}\, 2^{-(j-j^*)}.
\]
Note that $\crho{tondeuse}\sqrt{K} \leq R_{j^{**}-1}\leq 2\crho{tondeuse}\sqrt{K}$ and that for $j\leq j^*$,
$\mathcal{B}(\vecn^*,R_{j}) \subset \mathcal{B}(\vecz,\|n^*\|/2)^c$.
We now define a (finite) decreasing sequence of domains $(D_{j})$, where $-2\le j\le j_{**}+1$, by 
\[
D_{j}=\mathcal{B}(\vecnf,R_{j})\cap \domaine.
\]
We also define a finite sequence of annuli $(H_{j})$, where $-2\le j\le j_{**}$, by 
\[
H_j=D_{j}\backslash D_{j+1}.
\]
Recall that the Lyapunov function $\lyap$ has been defined in Theorem \ref{formule}. We  define the following sequences of positive numbers:\\
$(A_{j})_{-2\le j\le j_{**}}$ by
\[
A_{j}=\sup_{\vecn\in H_{j}}\lyap(\vecn)
\]
$(A'_{j})_{-1\le j\le j_{**}}$ by
\[
A'_{j}=\inf_{\vecn\in H_{j}\cup H_{j-1}}\lyap(\vecn)
\]
$(A''_{j})_{0\le j\le j_{**}}$ by
\[
A''_{j}=\inf_{\vecn\in H_{j-2}}\lyap(\vecn)
\]
$(\Lambda_j)_{0\leq j\leq j_{**}}$ by
\[
\Lambda_j=-\sup_{\vecn\in H_{j-2}\cup H_{j-1}\cup H_{j}}\frac{\gen\!_{\sK} \varphi(\vecn)}{\varphi(\vecn)}.
\]
$(\eta_j)_{0\leq j\leq j_{**}}$ by
\begin{equation}\label{eta}
\eta_j=\frac{2 A_j}{A''_{j}}
\end{equation}
$(t_j)_{0\leq j\leq j_{**}}$ by
\begin{equation}\label{et}
t_j=\frac{1}{\Lambda_j}\log \frac{A''_{j}}{A'_{j}}.
\end{equation}

It is left to the reader to check that there exists a constant $c>1$, independent of $j$ and $K$, such that
\begin{equation}\label{catondplus0}
c^{-1} \exp\left(\frac{\alpha R_j^2}{K}\right) \leq A_j \leq c \exp\left(\frac{\alpha R_j^2}{K}\right)
\end{equation}
\begin{equation}\label{catondplus1}
c^{-1} \exp\left(\frac{\alpha R_{j+1}^2}{K}\right) \leq A'_j \leq c \exp\left(\frac{\alpha R_{j+1}^2}{K}\right)
\end{equation}
\begin{equation}\label{catondplus2}
c^{-1} \exp\left(\frac{\alpha R_{j-1}^2}{K}\right) \leq A''_j \leq c \exp\left(\frac{\alpha R_{j-1}^2}{K}\right)\,.
\end{equation}
If $j\leq j_*$,  we have by Corollary \ref{tondeuse}
\begin{equation}\label{la0}
\Lambda_j \geq c^{-1} \,( \|\vecnf \| -R_{j-2})
\end{equation}
and if $j_*<j\leq j_{**}$ we have by Theorem \ref{formule}
\begin{equation}\label{la1}
\Lambda_j \geq \frac{R_{j+1}^{\,2}}{c K}.
\end{equation}
Let us introduce 
\[
t_{s}=\sum_{j=0}^{j_{**}}t_{j}\;, \text{ with } t_{j}>0.
\]
Using the Markov property and the monotonicity of $\proba_{\vecn}\big(T_{\Delta}\le t\big)$ as a function of $t$, we have, for all $0\le \ell\le j_{**}$ and for $\vecn\in D_{\ell}\backslash D_{\ell+1}$,
\[
\proba_{\vecn}\left(T_{\Delta}\le \sum_{j=\ell}^{j_{**}}t_{j}\right)
\ge \proba_{\vecn}\left(T_{\Delta}\le \sum_{j=\ell}^{j_{**}}t_{j}\;,\;
T_{D_{\ell+1}}\le t_{\ell} \right)
\]
\[
=\esperance_{\vecn}\left[\un_{\{T_{D_{\ell+1}}\le t_{\ell}\}}\;
\proba_{\vecN^{\sK}(T_{D_{\ell+1}})}\left(T_{\Delta}\le \sum_{j=\ell+1}^{j_{**}}t_{j}
+ t_{\ell}-T_{D_{\ell+1}}\right)\right]
\]
\[
\ge \proba_{\vecn}\big(T_{D_{\ell+1}}\le t_{\ell}\big)\;\;
\inf_{\vecn\in D_{\ell+1}\backslash D_{\ell+2}}\proba_{\vecn}
\left(T_{\Delta}\le \sum_{j=\ell+1}^{j_{**}}t_{j}\right).
\]
Using this estimate recursively together with $D_{j_{**}}\subset \Delta$ we obtain 
for all $\vecn\in D_{\ell}\backslash D_{\ell+1}$
\[
\proba_{\vecn}\left(T_{\Delta}\le \sum_{j=\ell}^{j_{**}}t_{j}\right)\ge
\prod_{j=\ell}^{j_{**}}\,\inf_{\vecn\in D_{j}\backslash D_{j+1}}
\proba_{\vecn}\big(T_{D_{j+1}}\le t_{j}\big).
\]
Therefore, from  the monotonicity of $t \mapsto \proba_{\vecn}\big(T_{\Delta}\le t\big)$
we have for all $\vecn\in D_{0}$
\[
\proba_{\vecn}\big(T_{\Delta}\le t_{s}\big)\ge
 \prod_{j=0}^{j_{**}}\,\inf_{\vecn\in D_{j}\backslash D_{j+1}}
\proba_{\vecn}\big(T_{D_{j+1}}\le t_{j}\big).
\]
We now derive a lower bound for each term in the product and an upper
bound for  each $t_{j}$, hence for $t_{s}$.

By elementary computations using the explicit form for $R_{j}$, \eqref{et},\eqref{catondplus0}, \eqref{catondplus1}  \eqref{catondplus2},  \eqref{la0}
and \eqref{la1}, we obtain that for $j=0$ to $j^{**} +1$, $t_{j}$ is of order $1$. Therefore
\[
\sum_{j=0}^{j^{**} +1} t_{j} = {\cal O}(\log K).
\]
One  can also check by considering \eqref{eta} that
\[
\sum_{j=0}^{j^{**} +1} \eta_{j} = \Oun.
\]
The result follows by applying Corollary \ref{t4d}.
\end{proof}

We can now prove Lemma \ref{entrec}. We give the proof for $\vecn\in \mathcal{E}_{1}$, the other cases are
similar and left to the reader. Using Sublemmas \ref{descente1}, \ref{descente2}, \ref{descente3},  \ref{descente4} and the Markov property we have, for all $K$ large enough and all $\vecn\in \mathcal{E}_{1}$,
\[
\proba_{\vecn}\big(T_{\Delta}\le (\tcte{descente1}+\tcte{descente2}
+\tcte{descente4})\,\log K\big)\ge \ceta{descente1}\,\ceta{descente2}\,
\ceta{descente4}.
\]
The result follows.

\subsection{Proof of Lemma \ref{restec}}

\begin{sublemma} \label{b}
Let 
\begin{equation}
\label{defD}
D  =  \mathcal{B} \Big(\vecnf, \frac{1}{2}\inf_{j}\vecnf_{j}\Big)\cap \domaine
\end{equation}
and define $\tilde \rho=\tilde \rho(K) = \frac{1}{K}\frac{\min(n^*_{\ell})}{2}$.

For $K$ large enough and  for all $\vecn\in \Delta$ and  $t\ge 0$,
\[
\proba_{\vecn}\big(t>T_{D^{c}}\big)\le 
(\Oun + \Oun t) \e^{-\alpha\tilde \rho^2 K}.
\]
\end{sublemma}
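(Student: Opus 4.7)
The proof is a Dynkin-formula argument tailored to extract a bound \emph{linear} in $t$ from the Lyapunov function $\lyap$ of Theorem~\ref{formule}, by exploiting the sign-differentiated behaviour of $\gen\!_{\sK}\lyap$ on the annulus $D\setminus\Delta$ versus the inner ball $\Delta$. First I would verify the geometry. Since $\vecnf=\lfloor K\vecxf\rfloor$ and $D=\mathcal{B}(\vecnf,\tfrac12\inf_j n^{*}_{j})\cap\domaine$, Hypothesis~\eqref{cond:hyperplan} yields $D\subset\mathcal{B}(\vecz,RK)$ for $K$ large, so Theorem~\ref{formule} is applicable throughout $D$; one also checks $\Delta\subset D$ for $K$ large. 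Then for every $\vecn\in D\setminus\Delta$ one has $\|\vecn-\vecnf\|\ge 2\crho{tondeuse}\sqrt{K}\ge \crho{tondeuse}\sqrt{K}$ and $\|\vecn\|\ge\|\vecnf\|/2\ge\pcte{tondeuse}$, so Corollary~\ref{tondeuse} gives $\gen\!_{\sK}\lyap(\vecn)\le 0$. On $\Delta$, instead, $\lyap(\vecn)\le e^{4\alpha\crho{tondeuse}^{2}}$, so Theorem~\ref{formule} produces
\[
\gen\!_{\sK}\lyap(\vecn)\le \cte{formule}\,\lyap(\vecn)\le \cte{formule}\,e^{4\alpha\crho{tondeuse}^{2}}=:M,
\]
a constant independent of $K$. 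Combining the two cases, $\gen\!_{\sK}\lyap\le M\,\un_{\Delta}$ pointwise on $D$.

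The remainder is then routine. Dynkin's formula with stopping time $t\wedge T_{D^{c}}$ (applicable because the trajectory stays in the compact set $D$ together with the one-step outer shell, on which $\lyap$ is bounded) gives, for every $\vecn\in\Delta$,
\[
\esperance_{\vecn}\bigl[\lyap(\vecN^{\sK}(t\wedge T_{D^{c}}))\bigr]\le \lyap(\vecn)+M\,\esperance_{\vecn}\!\left[\int_{0}^{t\wedge T_{D^{c}}}\!\un_{\{\vecN^{\sK}(s)\in\Delta\}}\,\dd s\right]\le e^{4\alpha\crho{tondeuse}^{2}}+Mt.
\]
On the event $\{t>T_{D^{c}}\}$ the process has just jumped out of $D$, so $\|\vecN^{\sK}(T_{D^{c}})-\vecnf\|>\tilde\rho K$ and therefore $\lyap(\vecN^{\sK}(T_{D^{c}}))\ge e^{\alpha\tilde\rho^{2}K}$. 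Discarding the non-negative contribution of $\{T_{D^{c}}\ge t\}$ and dividing yields
\[
\proba_{\vecn}(t>T_{D^{c}})\le \bigl(e^{4\alpha\crho{tondeuse}^{2}}+Mt\bigr)e^{-\alpha\tilde\rho^{2}K},
\]
which is the announced bound with $\Oun=e^{4\alpha\crho{tondeuse}^{2}}$ and $\Oun=M$.

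The one subtle point, and the main obstacle to guard against, is that the naive time-weighted supermartingale $e^{-\cte{formule}s}\lyap(\vecN^{\sK}(s))$ built only from Theorem~\ref{formule} would give the much weaker estimate $\Oun\,e^{\cte{formule}t}\,e^{-\alpha\tilde\rho^{2}K}$, which is useless on the time scale $t\asymp\log K$ needed in Lemmas~\ref{entrec} and \ref{restec}. The passage from exponential-in-$t$ to linear-in-$t$ relies critically on Corollary~\ref{tondeuse}, which turns $\gen\!_{\sK}\lyap$ non-positive on the annulus $D\setminus\Delta$, so that the only surviving contribution in the Dynkin integral is the occupation time inside $\Delta$, where $\lyap$ and hence $\gen\!_{\sK}\lyap$ are $O(1)$ uniformly in $K$.
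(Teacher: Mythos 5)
Your proof is correct and follows essentially the same route as the paper: Dynkin's formula applied to $\lyap\big(\vecN^{\sK}(t\wedge T_{D^{c}})\big)$, with the exit level $\lyap\ge \e^{\alpha\tilde\rho^{2}K}$ on $D^{c}$. The paper's own proof is a two-line sketch citing only Theorem~\ref{formule}; your explicit splitting of $D$ into $\Delta$ and $D\setminus\Delta$ and the use of Corollary~\ref{tondeuse} to make the drift non-positive on the annulus is exactly the step needed to obtain a bound linear rather than exponential in $t$, and correctly fills in what the paper leaves implicit.
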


\begin{proof} Let $\varphi$ defined in \eqref{lyapou} and $t>0$.
We apply  Dynkin's Theorem  to $\varphi(\vecN^{\sK}(T_{D^c}\wedge t))$  (in the spirit of the proof of Lemma \ref{disque}). 
Using Theorem \ref{formule}, we obtain for $K$ large enough 
\[
\e^{\alpha \tilde \rho^2 K} \proba_{\vecn}(T_{D^c}<t) \le  \Oun + \Oun t
\]
and the result follows.
\end{proof}

\begin{proof}[Proof of Lemma \ref{restec}]
We will in fact prove a stronger  result with $t\leq K$ which will imply the result if $K$ is large enough. 

Let us define the ball $\widetilde{\mathcal{B}}= \widetilde{\mathcal{B}}({\vecn^*},\frac{2 \tilde \rho}{3} K)$. 
Let us consider the function 
\[
\psi(\vecn) = \|\vecn - \vecn^*\|^2 \ \un_{\{\vecn\in \widetilde{\mathcal{B}}\}}.
\]
Assuming $\vecn, \vecn+\vece{j}, \vecn-\vece{j} \in \widetilde{\mathcal{B}}$ and using  \eqref{gegene}, \eqref{lip}, \eqref{cond:vanish} and \eqref{cond:lyap},  we obtain
\begin{align*}
& \gen_{K} \psi(\vecn) \\
&=  K \sum_{j=1}^{d}\left[ 2 \Big(B_{j}\left(\frac{\vecn}{K}\right)-D_{j}\left(\frac{\vecn}{K}\right)\Big) (\vecn_{j}- \vecn_{j}^*) 
+ \Big(B_{j}\left(\frac{\vecn}{K}\right) +  D_{j}\left(\frac{\vecn}{K}\right)\Big)\right]\\
& \leq - \sigma \psi(\vecn) + \Oun \,K + \Oun \,K^3 \un_{\widetilde{\mathcal{B}}^c}
\end{align*}
for all $\vecn\in \widetilde{B}$, where $\sigma := \beta \tilde \rho$. From It\^o's formula and for $t>0$, we have
\[
\esperance_{\vecn}\left[\e^{\sigma(t\wedge T_{\tilde{\mathcal{B}}^c})}
\psi\big(\vecN^{\sK}(t\wedge T_{\widetilde{\mathcal{B}}^c})\big)\right]
\le \psi(\vecN^{\sK}(0)) + \Oun K\, \esperance_{\vecn}\left[\frac{\e^{\sigma(t\wedge T_{\widetilde{\mathcal{B}}^c})}-1}{\sigma}\right].
\]
Then
\[
\esperance_{\vecn}\left[\e^{\sigma t} \psi\big(\vecN^{\sK}(t)\big) \un_{\{T_{\widetilde{\mathcal{B}}^c}>t\}}\right] 
\le \psi(\vecn) + \Oun \, \frac{\e^{\sigma t}}{\sigma}\,K .
\]
On another hand, for $\forall t \leq K$,
\[
\esperance_{\vecn}\left[\psi\big(\vecN^{\sK}(t)\big) \un_{\{T_{\widetilde{\mathcal{B}}^c}<t\}}\right] 
\le  \Oun \,K^3 \e^{-\alpha \tilde \rho^2 K}.
\]
Therefore, for all $t \leq K$,
\[
\esperance_{\vecn}\left[\psi\big(\vecN^{\sK}(t)\big)\right] 
\le 4 \crho{tondeuse}^2 K+  \frac{\Oun}{\sigma} K  + \Oun K^3 \e^{-\alpha \tilde \rho^2 K}.
\]
If  $\rho' = \sqrt{4 \crho{tondeuse}^2 K+  \frac{\Oun}{\sigma} + 1}$, we deduce that
\begin{align*}
& \proba_{\vecn}\big(\|\vecN^{\sK}(t)-\vecn^*\|> \rho' \sqrt{K}\,\big)\nonumber\\
&=
\proba_{\vecn}\big(\|\vecN^{\sK}(t)-\vecn^*\|> \rho' \sqrt{K}, T_{\widetilde{\mathcal{B}}^c}<t\big) 
+  \proba_{\vecn}\big(\|\vecN^{\sK}(t)-\vecn^*\|> \rho' \sqrt{K}, T_{\widetilde{\mathcal{B}}^c}>t\big)\nonumber   \\
& \le  K \e^{-\alpha \tilde \rho^2 K} +\frac{1}{(\rho')^2 K}\Big(4 \crho{tondeuse}^2 K+
\frac{ \Oun K}{ \sigma}\Big) \leq 1.
\end{align*}
Then there are positive constants $\cte{delta-prime}$ and $\ceta{delta-prime}$ such that
\begin{equation}
\label{delta-prime}
\inf_{\vecn\in\Delta}  \inf_{0\leq t\leq \cte{delta-prime} K} \proba_{\vecn}\big(\vecN^{\sK}(t) \in \Delta'\big) \geq 1-\ceta{delta-prime}
\end{equation}
for $K$ large enough and Lemma \ref{restec} follows.
\end{proof}

\subsection{Proof of Lemma \ref{uni}}

We first introduce some notations.

\begin{align}
\label{def-petit-lambda}
\lambda(\vecx) &=\sum_{j=1}^{d}\big[B_{j}(\vecx)
+D_{j}(\vecx)\big]\quad\textup{and}\quad 
\lambda_{*}=\lambda\big(\vecxf)\,,\\
\label{def-grand-lambda}
\Lambda\big(\vecn\big) & =\Lambda\big(\vecn,K\big)=
K\lambda\big(\vecn/K\big)
\quad\text{and}\quad
\Lambda_{*}=K\lambda_{*}\,,\\
\label{def-rw}
p\big(\vecn,\vecn+\vece{j}\big) &=
\frac{B_{j}\big(\vecn/K\big)}{\lambda\big(\vecn/K\big)}
\,,\; 
p\big(\vecn,\vecn-\vece{j}\big)=
\frac{D_{j}\big(\vecn/K\big)}{\lambda(\vecn/K\big)}\,.
\end{align}

We will compare the  process  $\vecN^{\sK}$ in $\Delta'$  to the birth-and-death process
$(\widetilde  \vecN, t\geq 0)$ defined by
\begin{align*}
& \proba\big(\,\widetilde\vecN(t+\dd t) = \vecn + \vece{j} \,|\,\tilde  \vecN(t)=\vecn\big) = K\,B_{j}(\vecxf)\, \dd t \,,\\
& \proba\big(\,\widetilde  \vecN(t+\dd t) = \vecn - \vece{j} \,|\,\tilde  \vecN(t)=\vecn\big) = K\,D_{j}(\vecxf)\, \dd t.
\end{align*}
The embedded chain will be  the symmetric random walk $(\plat_{\ell})_{\ell\in \integers}$ with state space  $\zentiers^d$ and transition matrix $p^{*}$ defined by 
\begin{equation}\label{rj}
r_j=p^{*}\big(\vecn,\vecn+\vece{j}\big)=\frac{B_{j}(\vecxf)}{\lambda(\vecxf)}
=p^{*}\big(\vecn,\vecn-\vece{j}\big)=\frac{D_{j}(\vecxf)}{\lambda(\vecxf)}.
\end{equation}
To prove Lemma \ref{uni}, we need to obtain a lower bound for  $\proba_{\vecn}\big(N^{\sK}(1)=\vecm\big)$ with $\vecn\in \Delta'$ and $\vecm \in \Delta$.
We have 
\begin{align*}
\MoveEqLeft \proba_{\vecn}\big(N^{\sK}(1)=\vecm\big) =\sum_{q}
\sum_{\substack{\vg:\\ \vg(0)=\vecn,\;\vg(q)=\vecm}}
\int\cdots\int_{t_{0}+\cdots +t_{q-1}<1} \\
&
\prod_{\ell=0}^{q-1}p\big(\vg(\ell),\vg(\ell+1)\big)
\prod_{\ell=0}^{q-1}\Lambda\big(\vg(\ell)\big)
\prod_{\ell=0}^{q-1}\e^{-t_{\ell}\Lambda(\vg(\ell))}
\e^{-\Lambda\left(\vecm\right) \big(1-\sum_{\ell=0}^{q-1}t_{\ell}\big)}
\prod_{\ell=0}^{q-1}\dd t_{\ell}.
\end{align*}
We  restrict our attention to the paths whose number of jumps between $0$ and $1$ belongs to
$[\, \Lambda_{*}-\sqrt K,\Lambda_{*}+\sqrt K\, ]$ (and is then of order $K$) and whose values belong to
$\mathcal{B}(\vecnf, \sqrt{K}\log K)$.
Moreover, we make a change of law and write a kind of Girsanov formula with respect to the law of $\tilde  \vecN$. We obtain
\begin{align*}
\MoveEqLeft  \proba_{\vecn}\big(N^{\sK}(1)=\vecm\big)\\
& \ge \sum_{q\in[\Lambda_{*}-\sqrt K,\Lambda_{*}+\sqrt K]}\quad
\sum_{\substack{\vg\,:\, \vg(0)=\vecn,\;\vg(q)=\vecm\\ \sup_{1\le\ell\le q-1}|\vg(\ell)-\vecnf|
\le \sqrt K\log K}} \int\cdots\int_{t_{0}+\cdots+t_{q-1}<1}\\
&
\prod_{\ell=0}^{q-1}p(\vg(\ell),\vg(\ell+1))
\prod_{\ell=0}^{q-1}\Lambda(\vg(\ell))
\prod_{\ell=0}^{q-1}\e^{-t_{\ell}\,\Lambda(\vg(\ell))}
\e^{-\Lambda(\vecm)\,\big(1-\sum_{\ell=0}^{q-1}t_{\ell}\big)}
\prod_{\ell=0}^{q-1}\dd t_{\ell}\\
&
= \sum_{q\in[\Lambda_{*}-\sqrt K,\Lambda_{*}+\sqrt K]}
\Lambda_{*}^{q}\e^{-\Lambda_{*}}\!\!\!\!\!\!
\sum_{\substack{\vg\,:\,\vg(0)=\vecn,\;\vg(q)=\vecm\\ \sup_{1\le\ell\le  q-1}|\vg(\ell)-\vecnf|\le \sqrt K\log K}}
\!\!\!\!\int\cdots\int_{t_{0}+\cdots+t_{q-1}<1}\\
&
\qquad \prod_{\ell=0}^{q-1}p^{*}(\vg(\ell),\vg(\ell+1))\;
\Pi_{q}\big(\vg,t_{0},\ldots,t_{q-1}\big)\;
\prod_{\ell=0}^{q-1}\dd t_{\ell}
\end{align*}
with
\[
\Pi_{q}=\Pi_{q}^1\, \Pi_{q}^2 \, \Pi_{q}^3
\]
where
\[
\begin{aligned}
\Pi_{q}^1\big(\vg,t_{0},\ldots,t_{q-1}\big) &=
\prod_{\ell=0}^{q-1} \frac{p\big(\vg(\ell),\vg(\ell+1)\big)}{p^{*}\big(\vg(\ell),\vg(\ell+1)\big)}\\
\Pi_{q}^2\big(\vg,t_{0},\ldots,t_{q-1}\big) &=
\;\prod_{\ell=0}^{q-1}\frac{\Lambda\big(\vg(\ell)\big)}{\Lambda_{*}}
\;\prod_{\ell=0}^{q-1}\e^{-t_{\ell}\,\big[\Lambda \big(\vg(\ell)\big)-\Lambda_{*}\big]}
\end{aligned}
\]
and
\[
\Pi_{q}^3\big(\vg,t_{0},\ldots,t_{q-1}\big)=
\e^{-\big[\Lambda\big(\vecm\big)-\Lambda_{*}\big]\,\big(1-\sum_{\ell=0}^{q-1}t_{\ell}\big)}.
\]
The structure of the previous expression is as follows:
\begin{equation}\label{eureka}
\sum_{q\in[\, \Lambda_{*}-\sqrt K,\Lambda_{*}+\sqrt K\, ]}\quad \esperance^*_{\vecn\to\vecm,q}(\Pi_{q}(\tilde  \vecN) \un_{C_{q}}),
\end{equation}
where $C_{q}$ describes the restriction of the $q$ states of the process to  $\mathcal{B}(\vecnf, \sqrt{K}\log K)$ and $\Pi_{q}=  \Pi_{q}^1\,\Pi_{q}^2\,\Pi_{q}^3$ and $\esperance^*_{\vecn\to\vecm,q}$ denotes the expectation related to the law of the process $\tilde \vecN$ going from $\vecn$ to $\vecm$ in $q$ jumps. 
 
Equation  \eqref{eureka} writes
\[
\sum_{q\in[\,\Lambda_{*}-\sqrt K,\Lambda_{*}+\sqrt K\,]}\quad \esperance^*_{\vecn\to\vecm,q}(\un_{C_{q}})\, \frac{\esperance^*_{\vecn\to\vecm,q}(\Pi_{q}(\tilde  \vecN) \,\un_{C_{q}})}{\esperance^*_{\vecn\to\vecm,q}(\un_{C_{q}})}.
\]
To get a lower bound of this expression, we use Jensen's inequality and obtain
\begin{align*} 
\MoveEqLeft \sum_{q\in[\, \Lambda_{*}-\sqrt K,\Lambda_{*}+\sqrt K\, ]}\quad \esperance^*_{\vecn\to\vecm,q}(\Pi_{q}(\tilde  \vecN) \,\un_{C_{q}})\ge\\
& \sum_{q\in[\,\Lambda_{*}-\sqrt K,\Lambda_{*}+\sqrt K\,]}\quad \esperance^*_{\vecn\to\vecm,q}(\un_{C_{q}})\, \exp\left(\frac{\esperance^*_{\vecn\to\vecm,q}(\log \Pi_{q}(\tilde  \vecN) \,\un_{C_{q}})}{\esperance^*_{\vecn\to\vecm,q}(\un_{C_{q}})}\right).
\end{align*}
 
Replacing each term with its complete expression, we obtain
\begin{align}
\label{Jen}
\MoveEqLeft[8] \proba_{\vecn}\big(N^{\sK}(1)=\vecm\big)\ge
\sum_{q\in[\Lambda_{*}-\sqrt K,\,\Lambda_{*}+\sqrt K]}
 \Bigg(  \e^{-\Lambda_{*}}\;
\frac{\Lambda_{*}^{q}}{q!}\;\e^{S_{q}(\vecn,\vecm)}\; \times \nonumber\\
& \quad 
\sum_{\substack{\vg\, :\, \vg(0)=\vecn,\;\vg(q)=\vecm\\ \sup_{1\le\ell\le q-1}|\vg(\ell)-\vecnf|\le \sqrt K\,\log K}}
\prod_{\ell=0}^{q-1}p^{*}\big(\vg(\ell),\vg(\ell+1)\big)\Bigg)
\end{align}
with
\[
S_{q}(\vecn,\vecm)=\frac{1}{Z_{q}(\vecn,\vecm)}\;\e^{-\Lambda_{*}}\;{\Lambda_{*}}^{q}
\sum_{\substack{\vg\, :\, \vg(0)=\vecn,\;\vg(q)=\vecm\\ 
\sup_{1\le\ell\le q-1}|\vg(\ell)-\vecnf|\le \sqrt K \log K}}
\int\cdots\int_{t_{0}+\cdots+t_{q-1}<1}\;
\]
\[
\prod_{\ell=0}^{q-1}p^{*}\big(\vg(\ell),\vg(\ell+1)\big)
\;\log\Pi_{q}\big(\vg,t_{0},\ldots,t_{q-1}\big)\;
\prod_{\ell=0}^{q-1}dt_{\ell}
\]
and
\[
Z_{q}(\vecn,\vecm)=\e^{-\Lambda_{*}}\;
\frac{\Lambda_{*}^{q}}{q!}\;
\sum_{\substack{\vg,\;\vg(0)=\vecn,\;\vg(q)=\vecm\\
\sup_{1\le\ell\le q-1}|\vg(\ell)-\vecnf|\le \sqrt K\log K}}
\prod_{\ell=0}^{q-1}p^{*}\big(\vg(\ell),\vg(\ell+1)\big). 
\]
Our aim is now to give a lower bound for the right-hand side term in \eqref{Jen}. It  will be deduced from the three next lemmas which show that
\[
\esperance^*_{\vecn\to\vecm,q}(\un_{C_{q}})= Z_{q}(\vecn,\vecm) \sim 1/K^{(d+1)/2},\; \text{as}\;K\to \infty
\]
and
\[
\frac{\esperance^*_{\vecn\to\vecm,q}(\log \Pi_{q}(\tilde  \vecN) \,\un_{C_{q}})}{\esperance^*_{\vecn\to\vecm,q}(\un_{C_{q}})}\, = S_{q}(\vecn,\vecm)
\]
is of order one uniformly in $q$.  
 
Let us first estimate $Z_{q}(\vecn,\vecm)$.
\begin{lemma}\label{Zq}
We have the following estimates.
\begin{itemize}
\item[\textup{(i)}] There exists a constant $\cte{Zq}>1$ independent of $K$ such that for
$K$ large enough, for all $q\in\big[\Lambda_{*}-\sqrt K,\Lambda_{*}+\sqrt K \big]$
and for all $\vecm\in\Delta$, $\vecn \in \Delta'$
\[
\cte{Zq}^{-1}\;K^{-d/2-1/2}\le Z_{q}(\vecn,\vecm)\le\cte{Zq}\;K^{-d/2-1/2}.
\]
\item[\textup{(ii)}]
There exists a constant $\cte{Zq}'>0$ independent of $K$ such that for
$K$ large enough
\begin{align}
\label{nouvelan}
\MoveEqLeft[4]
\inf_{\substack{\vecn\in\Delta' \\ \vecm\in\Delta}}
\quad
\sum_{q\in[\Lambda_{*}-\sqrt K,\,\Lambda_{*}+\sqrt K]} \;\e^{-\Lambda_{*}}\;
\frac{\Lambda_{*}^{q}}{q!}\quad\times\nonumber \\
&\!\!\!\!\!
\sum_{\substack{\vg,\;\vg(0)=\vecn,\;\vg(q)=\vecm\\ \sup_{1\le\ell\le q-1}|\vg(\ell)-\vecnf|\le \sqrt K\,\log K}}
\prod_{\ell=0}^{q-1}p^{*}\big(\vg(\ell),\vg(\ell+1)\big)
\ge \cte{Zq}'\;K^{-d/2}\,.
\end{align}
\end{itemize}
\end{lemma}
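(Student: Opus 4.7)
The plan is to recognize $Z_q(\vecn,\vecm)$ as a product of a Poisson factor and a constrained transition probability of the symmetric random walk $(\plat_\ell)$ introduced in \eqref{rj}:
\[
Z_q(\vecn,\vecm) = e^{-\Lambda_*}\frac{\Lambda_*^q}{q!}\cdot \proba^{*}_{\vecn}\bigl(\plat_q=\vecm,\ \sup_{1\le\ell\le q-1}\|\plat_\ell-\vecnf\|\le \sqrt{K}\log K\bigr),
\]
where $\proba^{*}_{\vecn}$ denotes the law of $(\plat_\ell)$ started at $\vecn$. Each factor is then estimated in turn: Stirling's formula gives $e^{-\Lambda_*}\Lambda_*^q/q! = (2\pi\Lambda_*)^{-1/2}e^{-(q-\Lambda_*)^2/(2\Lambda_*)}(1+o(1))$, which is of order $K^{-1/2}$, uniformly for $|q-\Lambda_*|\le\sqrt K$ since $\Lambda_*$ is of order $K$.

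The walk probability is estimated via a multidimensional local central limit theorem (LCLT). Decompose a step of $\plat$ as: first pick a coordinate $j$ with probability $2r_j$, then jump by $\pm 1$ with equal probability. Then $\plat_q-\vecn$ is a sum of $q$ i.i.d.\ bounded, centered lattice vectors with nondegenerate covariance $\mathrm{diag}(2r_1,\ldots,2r_d)$, and Fourier inversion on $[-\pi,\pi]^d$ (or a coordinate-by-coordinate binomial decomposition together with the one-dimensional LCLT) yields, for $q$ of parity matching $\sum_j(m_j-n_j)$,
\[
\proba^{*}_{\vecn}(\plat_q=\vecm) = \frac{2}{(2\pi q)^{d/2}\prod_j\sqrt{2 r_j}}\exp\!\Bigl(-\sum_{j=1}^d\frac{(m_j-n_j)^2}{4 r_j q}\Bigr)\bigl(1+o(1)\bigr),
\]
which is of order $K^{-d/2}$ whenever $\|\vecm-\vecn\|=O(\sqrt K)$; this covers the case $\vecn\in\Delta'$, $\vecm\in\Delta$ since $\Delta,\Delta'$ both have radius of order $\sqrt K$ around $\vecnf$. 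Restricting to paths confined to $\mathcal{B}(\vecnf,\sqrt K\log K)$ removes only a negligible fraction: Doob's maximal inequality applied coordinate-wise to the martingales $\plat_\ell^{(j)}-n_j$ (or an exponential moment bound for bounded-increment martingales) shows that the supremum of $\|\plat_\ell-\vecnf\|$ over $\ell\le q$ with $q=O(K)$ exceeds $\sqrt K\log K$ starting from $\vecn\in\Delta'$ with probability at most $C\exp(-c(\log K)^2)$, super-polynomially small in $K$. Combining the two factors yields (i), with the caveat that $Z_q=0$ for $q$ of the wrong parity, so that the lower bound applies only to parity-matched $q$ (the upper bound being trivial otherwise).

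For (ii), exactly half of the integers $q\in[\Lambda_*-\sqrt K,\Lambda_*+\sqrt K]$ have the correct parity, and each contributes a quantity of order $K^{-(d+1)/2}$ by (i), so the sum has order $\sqrt K\cdot K^{-(d+1)/2}=K^{-d/2}$, uniformly in $(\vecn,\vecm)\in\Delta'\times\Delta$. Equivalently, one may recognize $\sum_q Z_q(\vecn,\vecm)$ as the probability that the auxiliary birth-and-death process $\widetilde{\vecN}$ goes from $\vecn$ to $\vecm$ in unit time while keeping its number of jumps in the prescribed window and its trajectory inside the ball; under this dynamics the coordinates of $\widetilde{\vecN}(1)$ are independent symmetric Skellam variables with parameter $KB_j(\vecxf)=KD_j(\vecxf)$ (by \eqref{cond:fixedpt}), and a product LCLT for the Skellam law, together with Chebyshev's bound on the Poisson jump count, gives the same $K^{-d/2}$ estimate. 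The main obstacle is proving the LCLT with two-sided constants \emph{uniform} in $q,\vecn,\vecm$ and in $K$ with an error genuinely of lower order than $K^{-d/2}$, and simultaneously verifying that the confinement constraint survives in the lower bound; the parity of the walk must also be tracked carefully, since it affects the precise constant in the single-$q$ estimate of (i) but is averaged out in the sum of (ii).
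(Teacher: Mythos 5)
Your proof follows essentially the same route as the paper's: Stirling's formula for the Poisson factor (order $K^{-1/2}$), a Hoeffding/martingale-type bound showing the confinement to $\mathcal{B}(\vecnf,\sqrt K\log K)$ costs only $\e^{-\Oun(\log K)^{2}}$, the local limit theorem for the walk $(\plat_\ell)$ giving the $K^{-d/2}$ factor, and then summing over the $\Oun(\sqrt K)$ admissible values of $q$ for (ii). Your parity caveat is moreover a genuine observation the paper's proof glosses over: since each step of $\plat$ changes $\sum_j(\plat_\ell)_j$ by $\pm1$, one has $Z_q(\vecn,\vecm)=0$ for half the integers $q$ in the window, so the two-sided bound in (i) can only hold for parity-matched $q$, exactly as you state, and this does not affect (ii).
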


\begin{proof}
(i) Note first, using Stirling's formula, that 
 for $K$ large enough
\[
\frac{1}{2\sqrt{\Lambda_{*}}}\, \e^{-\gamma}
\leq
\e^{-\Lambda_{*}}\frac{\Lambda_{*}^{q}}{q!}
\leq 
\frac{2}{\sqrt{\Lambda_{*}}}\, \e^{-\gamma}
\]
where $\gamma$ is Euler's constant.  Then $\e^{-\Lambda_{*}} \frac{\Lambda_{*}^{q}}{q!}$ is of order $1/\sqrt{K}$. Now, we note that  for all $K$ large enough and all $q\in[\Lambda_{*}-\sqrt K,\Lambda_{*}+\sqrt K]$, we have
\begin{align*}
\MoveEqLeft[4]
\sup_{\substack{\vecn\in\Delta'\\\vecm\in\Delta}}
\sum_{\substack{\vg\, :\,\vg(0)=\vecn,\;\vg(q)=\vecm\\ \sup_{1\le\ell\le q-1}|\vg(\ell)-\vecnf|>\sqrt K\,\log K}}
\prod_{\ell=0}^{q-1}p^{*}\big(\vg(\ell),\vg(\ell+1)\big)\\
& \leq
\sup_{\vecn\in\Delta'}\; \sum_{\ell=1}^{q-1} \proba_{\vecn} \left( |\plat_\ell-\vecnf|>\sqrt{K}\log K\right)\,.
\end{align*}
Applying Hoeffding's inequality to the random walk $(\plat_{\ell})_{\ell\in \integers}$ we get  
\begin{equation}
\label{coupe}
\sup_{\substack{\vecn\in\Delta'\\ \vecm\in\Delta}}
\sum_{\substack{\vg\,:\, \vg(0)=\vecn,\;\vg(q)=\vecm\\ \sup_{1\le\ell\le q-1}|\vg(\ell)-\vecnf|>\sqrt K\,\log K}}
\prod_{\ell=0}^{q-1}p^{*}\big(\vg(\ell),\vg(\ell+1)\big)
\le \e^{-\Oun (\log K)^2}\,.
\end{equation}
We deduce that
\[
\left|Z_{q}(\vecn,\vecm)-\e^{-\Lambda_{*}}\;
\frac{\Lambda_{*}^{q}}{q!}\;
\sum_{\vg,\;\vg(0)=\vecn,\;\vg(q)=\vecm}
\prod_{\ell=0}^{q-1}p^{*}\big(\vg(\ell),\vg(\ell+1)\big)\right|\leq \e^{-\Oun (\log K)^2}.
\]
To finish the proof, we apply the local  limit theorem \cite[Chapter 3]{Durrett} to the random walk $(\plat_{\ell})_{\ell\in \integers}$.
Statement  (ii)  immediately follows at once from (i).
\end{proof}

\begin{lemma}\label{boulot}
There exists a constant $\cte{boulot}>0$ independent of $K$ such that for
$K$ large enough
\[
\sup_{q\in[\Lambda_{*}-\sqrt K,\Lambda_{*}+\sqrt K]}\big|S_{q}(\vecn,\vecm)\big|\le \cte{boulot}.
\]
\end{lemma}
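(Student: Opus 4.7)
The approach is to view $S_{q}(\vecn,\vecm)$ as a bridge expectation and then exploit an algebraic cancellation forced by $\vecxf$ being a fixed point of $\vecB-\vecD$, i.e.\ $B_{j}(\vecxf)=D_{j}(\vecxf)$ for every $j$. Concretely, $S_{q}(\vecn,\vecm)$ equals the expectation of $\log\Pi_{q}$ under the joint law of the symmetric random walk bridge $\vg$ of length $q$ (transition $p^{*}$, the embedded chain of $\tilde{\vecN}$) from $\vecn$ to $\vecm$, together with jump times $(t_{0},\dots,t_{q-1})$ uniformly distributed on the simplex $\{t_{\ell}>0,\sum_{\ell}t_{\ell}<1\}$ independently of $\vg$, conditioned on the event $C_{q}=\{\sup_{1\le\ell\le q-1}|\vg(\ell)-\vecnf|\le \sqrt{K}\log K\}$. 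Since $\proba(C_{q}^{c})\le \e^{-\Oun(\log K)^{2}}$ by the Hoeffding argument already used in \eqref{coupe}, conditioning on $C_{q}$ changes the result by a factor $1+o(1)$, so it is enough to bound the unconditional bridge expectation. Under this reference measure exchangeability gives $\esperance[t_{\ell}\mid\vg]=\esperance[1-\sum_{\ell}t_{\ell}\mid\vg]=1/(q+1)$.

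I then Taylor-expand $\log\Pi_{q}=\log\Pi_{q}^{1}+\log\Pi_{q}^{2}+\log\Pi_{q}^{3}$ to second order around $\vecnf$. Writing $\delta_{\ell}:=\Lambda(\vg(\ell))-\Lambda_{*}=\nabla\lambda(\vecxf)\cdot(\vg(\ell)-\vecnf)+O(|\vg(\ell)-\vecnf|^{2}/K)$ and $\delta:=\Lambda(\vecm)-\Lambda_{*}$, integrating the times yields
\begin{equation*}
\esperance_{t}\!\left[\log\Pi_{q}^{2}+\log\Pi_{q}^{3}\,\Big|\,\vg\right]
=\left(\frac{1}{\Lambda_{*}}-\frac{1}{q+1}\right)\sum_{\ell}\delta_{\ell}-\frac{\delta}{q+1}+O\!\left(\sum_{\ell}\frac{\delta_{\ell}^{2}}{K^{2}}\right).
\end{equation*}
For $\log\Pi_{q}^{1}=\sum_{\ell}\log(p/p^{*})(\vg(\ell),\epsilon_{\ell})$, the fixed-point identity yields the algebraic relation $\sum_{j}r_{j}\bigl[\nabla\log B_{j}(\vecxf)+\nabla\log D_{j}(\vecxf)\bigr]=\nabla\lambda(\vecxf)/\lambda_{*}$ together with $\sum_{j}r_{j}=1/2$, and these combine to make the $p^{*}$-conditional expectation of the leading $(\vg(\ell)-\vecnf)/K$ term vanish. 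Hence $\esperance^{p^{*}}[\log\Pi_{q}^{1}\mid\vg]=O\!\bigl(\sum_{\ell}|\vg(\ell)-\vecnf|^{2}/K^{2}\bigr)$ along any $p^{*}$-walk.

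The quantitative conclusion follows from three ingredients. First, the prefactor $|\Lambda_{*}^{-1}-(q+1)^{-1}|$ is $O(K^{-3/2})$ because $|q+1-\Lambda_{*}|\le\sqrt{K}+1$. Second, the standard SRW-bridge formula $\esperance^{\mathrm{br}}[\vg(\ell)-\vecnf]=(1-\ell/q)(\vecn-\vecnf)+(\ell/q)(\vecm-\vecnf)$ yields $|\esperance^{\mathrm{br}}[\sum_{\ell}\delta_{\ell}]|=O(K^{3/2})$ uniformly in $\ell$, since both $\vecn\in\Delta'$ and $\vecm\in\Delta$ lie within $O(\sqrt{K})$ of $\vecnf$; multiplying the two gives $O(1)$, while $\delta/(q+1)=O(K^{-1/2})$. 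Third, the local central limit theorem applied to $\plat$ as in Lemma~\ref{Zq} gives the uniform second-moment bound $\esperance^{\mathrm{br}}[|\vg(\ell)-\vecnf|^{2}]=O(K)$, so all quadratic remainders sum to $O(K\cdot K/K^{2})=O(1)$.

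The main technical difficulty is that under the bridge measure the jump directions $\epsilon_{\ell}$ are not exactly $p^{*}$-distributed conditional on $\vg(\ell)$ but only up to a Doob $h$-transform correction of relative size $|\vecm-\vg(\ell)|/(q-\ell)$. Controlling the resulting contribution to $\esperance^{\mathrm{br}}[\log\Pi_{q}^{1}]$ requires combining the $O(1/K)$ size of the Taylor coefficients of $\log(p/p^{*})$ with the bridge estimate $\esperance^{\mathrm{br}}[|\vecm-\vg(\ell)|\cdot|\vg(\ell)-\vecnf|]=O(K)$ and the same $\pm$-symmetric cancellation used above; this delicate bookkeeping, together with the careful application of the local CLT to the symmetric walk $\plat$, is where most of the work of the proof lies.
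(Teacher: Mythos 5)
Your overall strategy is the same as the paper's: split $\log\Pi_q=\log\Pi_q^1+\log\Pi_q^2+\log\Pi_q^3$, Taylor-expand around $\vecnf$, kill the linear term of $\log\Pi_q^1$ with the exact identity $\sum_j(\mb_{j,r}+\md_{j,r})=0$ (your relation $\sum_j r_j[\nabla\log B_j+\nabla\log D_j]=\nabla\lambda/\lambda_*$ is equivalent to it), control all quadratic remainders by a second-moment bound on $|\vg(\ell)-\vecnf|^2$ along the bridge via the local CLT (the paper's Sublemma \ref{moment}), and discard the restriction to $C_q$ with Hoeffding (Sublemmas \ref{lem:qrobaproba}, \ref{lem:erreurproba}). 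Your handling of $\Pi_q^2$ is a mild variant: you compute the bridge first moment $\esperance^{\mathrm{br}}[\vg(\ell)]$ explicitly and pair $O(K^{3/2})$ against the prefactor $|\Lambda_*^{-1}-(q+1)^{-1}|=O(K^{-3/2})$, whereas the paper's Sublemma \ref{boulot2} avoids first moments altogether by the inequality $|ab|\le a^2/2+b^2/2$ and then only needs Sublemma \ref{moment}. Both work.

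The one place where your argument as written does not close is exactly the step you defer as ``delicate bookkeeping'', namely the bridge correction to the linear term of $\log\Pi_q^1$ (the content of the paper's Sublemma \ref{compense}). With the ingredients you list — coefficients of size $O(1/K)$, a Doob correction of relative size $|\vecm-\vg(\ell)|/(q-\ell)$, and the bound $\esperance^{\mathrm{br}}[|\vecm-\vg(\ell)|\,|\vg(\ell)-\vecnf|]=O(K)$ — the resulting estimate is
\[
\frac{1}{K}\sum_{\ell=0}^{q-1}\frac{O(K)}{q-\ell}=O(\log K),
\]
not $O(1)$; and the ``same $\pm$-symmetric cancellation'' cannot be invoked here, because that cancellation uses $\sum_j(\mb_{j,r}+\md_{j,r})=0$ acting on the \emph{symmetric} part of the conditional step law, while the leading Doob correction is precisely the \emph{antisymmetric} part, which couples to $\mb_{j,r}-\md_{j,r}$ and does not vanish. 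To recover $O(1)$ you must split $\esperance^{\mathrm{br}}[(\vg(\ell)-\vecnf)(\vecm-\vg(\ell))]$ into the product of means, which is $O\big(\sqrt{K}\cdot\frac{q-\ell}{q}\sqrt{K}\big)$ and hence $O(q-\ell)$ after the $1/q$ gain, and the covariance, which is $-\mathrm{Var}(\vg(\ell))=O(\ell(q-\ell)/q)=O(q-\ell)$; only then does division by $q-\ell$ give a uniformly $O(1)$ summand and a total of $O(q/K)=O(1)$. The paper organizes this differently (and rigorously) via reversibility of $\plat$ plus the Fourier smoothing bound $\sup_{\vecu}|\proba_{\vecz}(\plat_\ell=\vecu)-\proba_{\vecz}(\plat_\ell=\vecu\pm\vece{j})|=O(K^{-(d+1)/2})$ for $\ell\ge q/3$ (Sublemma \ref{lem:cancelation}), splitting the time sum at $2q/3$. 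So the route is right, but this specific step needs the finer decomposition rather than the crude product bound you state.
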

\begin{proof}
Observe that
\[
\big|S_{q}(\vecn,\vecm)\big|\le K^{d/2} \;q! \;\Oun\quad\times
\]
\[
\left|\;\sum_{\substack{\vg,\;\vg(0)=\vecn,\;\vg(q)=\vecm\\ \sup_{1\le\ell\le q-1}|\vg(\ell)-\vecnf|\le \sqrt K\,\log K}}
\int\cdots\int_{t_{0}+\cdots+t_{q-1}<1}\;\right.
\]
\[
\qquad\qquad \left.\prod_{\ell=0}^{q-1}p^{*}\big(\vg(\ell),\vg(\ell+1)\big)
\;\log\Pi_{q}\big(\vg,t_{0},\ldots,t_{q-1}\big)\;
\prod_{\ell=0}^{q-1}dt_{\ell}
\vphantom{\sum_{\substack{\vg,\;\vg(0)=\vecn,\;\vg(q)=\vecm\\ \sup_{1\le\ell\le q-1}|\vg(\ell)-\vecnf|\le \sqrt K\,\log K}}\int\cdots\int_{t_{0}+\cdots+t_{q-1}<1}}\;\right|.
\]
Since $\Pi_{q} = \Pi_{q}^1\,\Pi_{q}^2\,\Pi_{q}^3$, 
\[
\log\Pi_{q}\big(\vg,t_{0},\ldots,t_{q-1}\big) = \sum_{i=1}^3 \log\Pi_{q}^i \big(\vg,t_{0},\ldots,t_{q-1}\big)
\]
and we have to estimate separately  the three terms.
The result  follows from several  technical lemmas which are postponed to Section \ref{sec:pleindelemmes}.
\end{proof}

It follows from  \eqref{Jen} and  \eqref{nouvelan} and Lemma \ref{boulot}  that there exists $\ceta{uni}<1$ such that 
\[
\proba_{\vecn}\big(N^{\sK}(1)=\vecm\big) \geq (1-\ceta{uni})\,\nu(\{m\}),
\]
where $\nu$ is the  measure  defined in Subsection \ref{a1}.

\subsection{Proof of Condition \eqref{cond-A2}}

Our aim is to show the existence of a constant $c_{2}$ such that for all $t\geq 0$ and $\vecn \in \Delta$, $\vecm\in \Delta^c$
\begin{equation}\label{eq-harnack}
\proba_{\vecn}(T_{\vecz}>t)\geq c_{2}\, \proba_{\vecm}(T_{\vecz}>t)
\end{equation}
where $\Delta$ is defined in \eqref{def:Delta}. For all $t\ge 0$ let 
\[
g(t)=\sup_{\vecn\in \Delta} \proba_{\vecn}\big(T_{\vecz}>t\big)
\quad\text{and}\quad
f(t)=\sup_{\vecn\in \Delta^{c}} \proba_{\vecn}\big(T_{\vecz}>t\big).
\]
The proof of Condition \eqref{cond-A2} will be the consequence of the four following lemmas which we prove hereafter.

\begin{lemma}\label{regg}
There exist $\eta>1$ and $\delta>0$ such that
\[
\eta\, \delta <1 \ \ (\hbox{whence}\, \delta < 1),
\] 
and, for all $K$ large enough, there exists $t_*=t_*(K)$ such that
\[
\forall  t\ge t_{*}, \;g(t-t_{*})\le \eta g(t)
\]
and 
\[
\sup_{\vecn\in \Delta^{c}}\proba_{\vecn}\big(T_{\Delta}\wedge T_{\vecz}>t_{*}\big)<\delta.
\]
\end{lemma}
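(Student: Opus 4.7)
The plan is to take $t_*(K) = 1 + \cte{entrec}\log K$, i.e., the same time scale $t_0(K)$ from \eqref{t0} used in the proof of Condition \eqref{cond-A1}. The second inequality is then immediate: since $\{T_{\Delta} \wedge T_{\vecz} > t_*\} \subseteq \{T_{\Delta} > t_*\}$, Lemma \ref{entrec} gives
\[
\sup_{\vecn \in \Delta^c}\proba_{\vecn}(T_{\Delta} \wedge T_{\vecz} > t_*) \leq \ceta{entrec},
\]
and we fix $\delta$ slightly above $\ceta{entrec}$.

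For the first inequality, the plan is to exploit Condition \eqref{cond-A1} (now established with $c_1 = (1-\ceta{entrec})(1-\ceta{restec})(1-\ceta{uni})$, see \eqref{c1}) to prove the auxiliary bound
\[
\proba_{\vecn}\big(\vecN^{\sK}(s) \in \Delta \,\big|\, T_{\vecz} > s\big) \geq c_1 \quad \text{for all } \vecn \in \domaine \text{ and all } s \geq t_*,
\]
which follows by the Markov property at time $s - t_*$ applied to $\proba_{\vecn}(\vecN^{\sK}(s) \in \Delta, T_{\vecz} > s)$, using that $\nu(\Delta) = 1$. In parallel, Sublemma \ref{b} applied to $\vecn \in \Delta$ (together with the observation that $\vecz \notin D$, where $D$ is the ball defined in \eqref{defD}, so $T_{\vecz} \geq T_{D^c}$) yields
\[
\sup_{\vecn \in \Delta}\proba_{\vecn}(T_{\vecz} \leq t_*) \leq (\Oun + \Oun\, t_*)\,\e^{-\alpha\tilde\rho^2 K} \leq \e^{-\Oun K}
\]
for $K$ large enough, since $t_* = \Oun \log K$ and $\alpha\tilde\rho^2 K$ grows linearly in $K$. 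Combining both bounds, for any $\vecn \in \Delta$ and $t \geq t_*$ the Markov property at time $t - t_*$ yields
\[
\frac{\proba_{\vecn}(T_{\vecz} > t)}{\proba_{\vecn}(T_{\vecz} > t - t_*)} = \esperance_{\vecn}\!\big[\proba_{\vecN^{\sK}(t - t_*)}(T_{\vecz} > t_*) \,\big|\, T_{\vecz} > t - t_*\big] \geq c_1\big(1 - \e^{-\Oun K}\big),
\]
where the conditional expectation is bounded from below by restricting to the event $\{\vecN^{\sK}(t - t_*) \in \Delta\}$ and combining the two estimates above. Taking the supremum over $\vecn \in \Delta$ produces $g(t) \geq c_1(1 - \e^{-\Oun K})\,g(t - t_*)$, which is the required first inequality with $\eta := 1/[c_1(1 - \e^{-\Oun K})]$; for $K$ large enough this is bounded above by the $K$-independent constant $2/c_1$.

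The last step is to verify $\eta\delta < 1$. As $K \to \infty$ the product $\eta\delta$ tends to $\ceta{entrec}/c_1$, so the condition reduces in the limit to $\ceta{entrec} < (1-\ceta{entrec})(1-\ceta{restec})(1-\ceta{uni})$. The main technical obstacle is precisely this balance of constants: the mixing constant $c_1$ of \eqref{cond-A1} and the descent constant $\ceta{entrec}$ are produced by the same family of Sublemmas \ref{descente1}--\ref{descente4} and Lemmas \ref{restec}, \ref{uni}, so one must calibrate them jointly --- for instance by iterating the Markov step in the proof of Sublemma \ref{descente1} to drive $\ceta{descente1}$, and hence $\ceta{entrec}$, below any prescribed threshold --- in order to secure $\ceta{entrec} < c_1$ and therefore $\eta\delta < 1$ for all $K$ large enough.
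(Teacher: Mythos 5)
There is a genuine gap, and it sits exactly at the point you yourself flag as ``the main technical obstacle''. Your choices force $\eta\delta$ to be essentially $\ceta{entrec}/c_{1}$ with $c_{1}=(1-\ceta{entrec})(1-\ceta{restec})(1-\ceta{uni})$, so the lemma's key requirement $\eta\delta<1$ reduces to $\ceta{entrec}<c_{1}$. With the constants as actually constructed in the paper this inequality \emph{fails}: Sublemma \ref{descente1} is proved with $\ceta{descente1}=\tfrac12$, so $1-\ceta{entrec}\le\tfrac12$, hence $\ceta{entrec}\ge\tfrac12\ge 1-\ceta{entrec}\ge c_{1}$. Your proposed remedy --- recalibrating all the descent sublemmas to drive $\ceta{entrec}$ below a threshold depending on $(1-\ceta{restec})(1-\ceta{uni})$ --- is plausible in principle (one can iterate the Markov step to make the failure probability of the descent geometrically small at the cost of enlarging $\cte{entrec}$), but it is not carried out, and it is precisely the nontrivial content of the lemma. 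A blind proof that defers the one inequality the statement is about cannot be counted as complete. (The rest of your argument is essentially sound: the bound $\sup_{\vecn\in\Delta}\proba_{\vecn}(T_{\vecz}\le t_{*})\le\e^{-\Oun K}$ via Sublemma \ref{b}, and the lower bound $g(t)\ge c_{1}(1-\e^{-\Oun K})\,g(t-t_{*})$ via Condition \eqref{cond-A1} and the Markov property, are both correct, up to treating separately the range $t_{*}\le t<2t_{*}$ where the auxiliary conditional bound at time $t-t_{*}$ is not yet available.)

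The paper avoids this delicate balance of constants altogether by \emph{decoupling} $\eta$ from $\delta$, and this is the idea your proof is missing. It takes $t_{*}=K$, much larger than the descent time $\ttc=\cte{entrec}\log K$. The constant $\eta=\cte{reregg}=\cte{Tg}^{-1}$ comes from a crude survival estimate $\inf_{\vecn}\proba_{\vecn}(T_{\vecz}>K)\ge\cte{Tg}$, giving $g(t-K)\le\eta\,g(t)$; it is fixed first and may be large. Then, iterating the single-step estimate $\sup_{\vecn}\proba_{\vecn}(T_{\Delta}\wedge T_{\vecz}>\ttc)\le 1-\ceta{tcto}$ via the Markov property yields $\sup_{\vecn}\proba_{\vecn}(T_{\Delta}\wedge T_{\vecz}>q\ttc)\le(1-\ceta{tcto})^{q}$, so $\delta=(1-\ceta{tcto})^{q_{*}}$ can be made smaller than $1/(2\eta)$ simply by choosing $q_{*}$ large enough \emph{after} $\eta$ is known; since $q_{*}\ttc<K=t_{*}$ for $K$ large, both requirements hold simultaneously. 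In short: keep $t_{*}$ on a time scale long enough to iterate the descent estimate many times, and let the geometric decay in the number of iterations produce $\delta$, rather than trying to win the race between $\ceta{entrec}$ and $c_{1}$ at a single descent.
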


\begin{proof}
The proof consists in several steps.

We first show that there exists a constant $\ceta{tcto} \in (0,1)$ such that for all $K$
large enough and  $\ttc=\cte{entrec} \log K$,
\begin{equation}
\label{tcto}
\sup_{\vecn\in\domaine}\proba_{\vecn}\big(T_{\Delta}\wedge T_{\vecz}>\ttc\big) \le 1-\ceta{tcto}.
\end{equation}
We have
\begin{align*}
\proba_{\vecn}\big(T_{\Delta}>\ttc\,,\, T_{\vecz}>\ttc\big)
&=\proba_{\vecn}\big(T_{\vecz}>\ttc\big)-\proba_{\vecn}\big(T_{\Delta}\le \ttc\,,\, T_{\vecz}>\ttc\big)\\
& \le 1-\proba_{\vecn}\big(T_{\Delta}\le \ttc\,,\, T_{\vecz}>\ttc\big)\,.
\end{align*}
We also have, using the Markov property, the monotonicity of
$\proba_{\vecn}\big(T_{\vecz}>t\big)$ and Sublemma \ref{b}
\begin{align*}
\proba_{\vecn}\big(T_{\Delta}\le \ttc\,,\, T_{\vecz}>\ttc\big)
&= \esperance_{\vecn}\left[\un_{\{T_{\Delta}\le \ttc\}} \,
      \proba_{\vecN^{\sK}(T_{\Delta})}\big(T_{\vecz}>\ttc-T_{\Delta}\big)\right]\\
& \geq \esperance_{\vecn}\left[\un_{\{T_{\Delta}\le \ttc\}} \,
          \proba_{\vecN^{\sK}(T_{\Delta})}\big(T_{\vecz}>\ttc\big)\right]\\
          & \geq \proba_{\vecn}\big(T_{\Delta}\le \ttc\big)\;\,\inf_{\vecm\in \Delta}\proba_{\vecm}\big(T_{D^{c}}> \ttc \big)\\
& \ge \proba_{\vecn}\big(T_{\Delta}\le \ttc\big)\;
  \big(1-     (\Oun + \Oun \ttc) \e^{-\alpha\,\tilde \rho^2 K}\big)\;,
\end{align*}
and the result follows for $K$ large enough using Lemma \ref{entrec}.

Let us now prove recursively that  for $K$ large enough and for all integer $q$,
\begin{equation}\label{ntcto}
\sup_{\vecn\in\domaine}\proba_{\vecn}\big(T_{\Delta}\wedge T_{\vecz}>q\,\ttc\big) \le (1-\ceta{tcto})^{q}.
\end{equation}
 The inequality is true for $q=1$. For $q>1$, we
can write using \eqref{tcto}
\begin{align*}
\proba_{\vecn}\big(T_{\Delta}\wedge T_{\vecz}>q\,\ttc\big)
& =
\esperance_{\vecn}\left[\un_{\{T_{\Delta}\wedge T_{\vecz}>(q-1)\,\ttc\}}\; 
\proba_{\vecN^{\sK}((q-1)\ttc)}\big(T_{\Delta}\wedge T_{\vecz}>\ttc\big)\right]\\
& \le \esperance_{\vecn}\left[\un_{\{T_{\Delta}\wedge T_{\vecz}>(q-1)\,\ttc\}}\right] \;
(1-\ceta{tcto})\\
& =(1-\ceta{tcto}) \, \proba_{\vecn}\big(T_{\Delta}\wedge T_{\vecz}>(q-1)\,\ttc\big)
\end{align*}
and the result follows.

By a similar proof as for \eqref{tcto},  (use first Lemma \ref{entrec} and after the first entrance in $\Delta$ 
use the Markov property and Sublemma \ref{b}), we also obtain that 
there exists a number $\cte{Tg}>0$ such that for all $K$ large enough
\begin{equation} \label{Tg}
\inf_{\vecn\in\domaine}\proba_{\vecn}\big(T_{\vecz}>K\big)\ge \cte{Tg}.
\end{equation}
It implies that for all $s\ge0$, for all $\vecn\in \Delta$ and for $K$ large enough,
\begin{align*}
\proba_{\vecn}\big(T_{\vecz}>s+K)
&= \esperance_{\vecn}\left[\un_{\{T_{\vecz}>s\}}\,
\proba_{\vecN^{\sK}(s)}\big(T_{\vecz}>K\big)\right]\\
&  \ge  \proba_{\vecn}\big(T_{\vecz}>s\big)\,\inf_{\vecn\in\domaine}\proba_{\vecn}\big(T_{\vecz}>K\big)\\
& \ge \cte{Tg}\, \proba_{\vecn}\big(T_{\vecz}>s\big).
\end{align*}
Then, for all $s\geq 0$ and all $\vecn \in \Delta$, 
we have $g(s+K)\geq \cte{Tg}\, \proba_{\vecn}\big(T_{\vecz}>s\big)$
and $g(s)\le \cte{reregg}\;g(s+K)$ with $\cte{reregg}=\cte{Tg}^{-1}>0$.
We have thus proved that for all $K$ large enough and all $t\ge K$
\begin{equation}\label{reregg}
g(t-K)\le \cte{reregg}\;g(t).
\end{equation}
Note that $\cte{reregg}$ is necessarily strictly greater than $1$. \\
  Let us now take $t_*=K$ and let $q_*$ be the smallest positive integer such that
$(1-\ceta{tcto})^{q}(\cte{reregg})<\frac12$. We take $\eta=\cte{reregg}$
and $\delta=(1-\ceta{tcto})^{q_*}$. We of course have $\eta\delta<1$.
Inequality \eqref{reregg} implies $g(t-t_*)\leq \eta g(t)$ for all $t\geq t_*$.
Moreover, since for $K$ large enough $q_* t_\Delta<K$ and by \eqref{ntcto}, we have
\[
\sup_{\vecn\in \Delta^{c}}\proba_{\vecn}\big(T_{\Delta}\wedge T_{\vecz}>t_{*}\big)
\leq \sup_{\vecn\in \Delta^{c}}\proba_{\vecn}\big(T_{\Delta}\wedge T_{\vecz}>q_* t_\Delta\big)
<(1-\ceta{tcto})^{q_*}=\delta.
\]
The lemma is proved.
\end{proof}

\begin{lemma}\label{fg}
With $t_{*}$, $\eta$  and $\delta>0$  defined in Lemma \ref{regg}, we get 
for all integer $n$
\[
f(n\,t_{*})\le \left(1+\frac{\eta}{1-\delta\,\eta}\right)\;g(n\,t_{*}).
\]
\end{lemma}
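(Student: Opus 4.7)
The plan is to first derive a one-step recurrence relating $f(nt_*)$ to $f((n-1)t_*)$ and $g(nt_*)$ via a strong Markov decomposition, and then close the argument by induction on $n$ using the regularity bound $g(t-t_*)\le\eta\,g(t)$ supplied by Lemma~\ref{regg}.

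Fix $\vecn\in\Delta^c$ and $n\ge1$. I would split according to whether the process enters $\Delta$ in the first time window of length $t_*$:
\[
\proba_{\vecn}(T_{\vecz}>nt_*)
=\proba_{\vecn}(T_{\vecz}>nt_*,\,T_\Delta\le t_*)
+\proba_{\vecn}(T_{\vecz}>nt_*,\,T_\Delta>t_*).
\]
For the first term, I apply the strong Markov property at $T_\Delta$. On $\{T_\Delta\le t_*\}$ one has $\vecN^{\sK}(T_\Delta)\in\Delta$ and $nt_*-T_\Delta\ge(n-1)t_*$, so using that $s\mapsto\proba_{\vecm}(T_{\vecz}>s)$ is non-increasing and then taking the supremum over $\vecm\in\Delta$ yields the bound $g((n-1)t_*)\le\eta\,g(nt_*)$, the last inequality being exactly Lemma~\ref{regg}. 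For the second term I use the (simple) Markov property at time $t_*$: the event $\{T_{\vecz}>nt_*,\,T_\Delta>t_*\}$ is contained in $\{T_\Delta\wedge T_{\vecz}>t_*\}$, on which $\vecN^{\sK}(t_*)\in\Delta^c\setminus\{\vecz\}$, so the inner probability is bounded by $f((n-1)t_*)$, while the outer indicator contributes at most $\delta$ by the second assertion of Lemma~\ref{regg}. Taking the supremum over $\vecn\in\Delta^c$, I obtain the recurrence
\[
f(nt_*)\le\eta\,g(nt_*)+\delta\,f((n-1)t_*),\qquad n\ge1.
\]

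I then prove the lemma by induction on $n$. The base case $n=0$ is trivial since $f(0)=g(0)=1\le 1+\eta/(1-\delta\eta)$. For the inductive step, assuming $f((n-1)t_*)\le\bigl(1+\frac{\eta}{1-\delta\eta}\bigr)g((n-1)t_*)$ and combining with Lemma~\ref{regg} in the form $g((n-1)t_*)\le\eta\,g(nt_*)$, the recurrence gives
\[
f(nt_*)\le \eta\Bigl[1+\delta+\tfrac{\delta\eta}{1-\delta\eta}\Bigr]g(nt_*)
=\eta\,\frac{1+\delta(1-\delta\eta)}{1-\delta\eta}\,g(nt_*).
\]
The desired bound $\bigl(1+\tfrac{\eta}{1-\delta\eta}\bigr)g(nt_*)=\tfrac{1-\delta\eta+\eta}{1-\delta\eta}g(nt_*)$ then amounts to the inequality $\eta(1+\delta-\delta^2\eta)\le 1-\delta\eta+\eta$, which after simplification is exactly $(1-\delta\eta)^2\ge0$. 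This is automatic, and it is precisely where the hypothesis $\eta\delta<1$ from Lemma~\ref{regg} is being used (to keep the denominator $1-\delta\eta$ positive and the induction quantitative).

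I do not anticipate a genuine obstacle here: the decomposition is standard, and all the work — in particular the nontrivial control on $g$ through both $g(t-t_*)\le\eta g(t)$ and the uniform escape estimate $\sup_{\Delta^c}\proba_{\vecn}(T_\Delta\wedge T_{\vecz}>t_*)<\delta$ — has already been done in Lemma~\ref{regg}. The only point requiring slight care is measurability when applying the strong Markov property at $T_\Delta$, and checking that on $\{T_{\vecz}>nt_*,T_\Delta>t_*\}$ one really has $\vecN^{\sK}(t_*)\in\Delta^c\setminus\{\vecz\}$ so that $f((n-1)t_*)$ is an admissible upper bound.
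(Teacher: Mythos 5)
Your proposal is correct and follows essentially the same route as the paper: the same decomposition at $T_{\Delta}\le t_*$ versus $T_{\Delta}>t_*$, yielding the same recurrence $f(nt_*)\le\eta\,g(nt_*)+\delta\,f((n-1)t_*)$. The only (harmless) difference is in closing the recursion: you run a direct induction with the target bound as hypothesis, reducing the step to $(1-\delta\eta)^2\ge0$, whereas the paper unrolls the recurrence to get $f(nt_*)\le\delta^n+\frac{\eta}{1-\delta\eta}g(nt_*)$ and then absorbs $\delta^n$ using $g(nt_*)\ge\eta^{-n}$ and $\delta\eta<1$.
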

\begin{proof}
For all $\vecm\in \Delta^{c}$ and $t\ge t_{*}$ we have using the Markov property
\begin{align*}
& \proba\!_{\vecm}\big(T_{\vecz}>t\big)\\
& =
\proba\!_{\vecm}\big(T_{\vecz}>t,T_{\Delta}\le  t_{*}\big)+
\proba\!_{\vecm}\big(T_{\vecz}>t,T_{\Delta}> t_{*}\big)\\
&= \esperance_{\vecm}\left[\un_{\{T_{\Delta}\le t_{*}\}}\proba\!_{\vecN^{\sK}(T_{\Delta})}\big(T_{\vecz}>t-T_{\Delta}\big)\right]+\esperance_{\vecm}\Big[\un_{\{T_{\Delta}\wedge T_{\vecz}>t_{*}\}}\proba\!_{\vecN^{\sK}(t_{*})}
\big(T_{\vecz}>t-t_{*}\big)\Big]\\
& \le g(t-t_{*})+\delta f(t-t_{*})\le \eta \,g(t)+\delta f(t-t_{*})\, , 
\end{align*}
where we have used Lemma \ref{regg}.
This implies for all $n\ge 0$ 
\[
f(n\,t_{*}) \le \eta \,g(n\,t_{*}) +\delta\, f((n-1)\,t_{*}).
\]
It is easy to verify recursively that this implies 
\[
f(n\,t_{*})\le\, \delta^{n}+ \frac{\eta}{1-\delta\,\eta}\;g(n\,t_{*}).
\]
The result follows by observing that from Lemma \ref{regg} we have 
$g(n\,t_{*})\ge \eta^{-n}$ for all integers $n$.
\end{proof}

\begin{lemma}
With notations of Lemma \ref{regg} and Lemma \ref{fg}, for all $t>0$
\[
f(t)\le \eta\,\left(1+\frac{\eta}{1-\delta\,\eta}\right)\;g(t).
\]
\end{lemma}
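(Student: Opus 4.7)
The plan is to lift the discrete-grid bound of Lemma \ref{fg} to continuous time using just two ingredients: the monotonicity of $f$ and $g$, and the ``backward shift'' estimate $g(t-t_*)\le \eta g(t)$ from Lemma \ref{regg}. The functions $f$ and $g$ are suprema of $t\mapsto \proba_{\vecn}(T_{\vecz}>t)$, hence non-increasing; moreover $g(0)=1$ because $\vecz\notin\Delta$.

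Given $t>0$, write $n=\lfloor t/t_{*}\rfloor$ so that $n\,t_{*}\le t<(n+1)\,t_{*}$. Monotonicity of $f$ immediately gives $f(t)\le f(n\,t_{*})$. When $n\ge 1$, Lemma \ref{fg} gives
\[
f(n\,t_{*})\le \Big(1+\frac{\eta}{1-\delta\eta}\Big)\, g(n\,t_{*}).
\]
Applying Lemma \ref{regg} at time $(n+1)\,t_{*}\ge t_{*}$ yields $g(n\,t_{*})\le \eta\, g((n+1)\,t_{*})$, and since $(n+1)\,t_{*}\ge t$, monotonicity of $g$ gives $g((n+1)\,t_{*})\le g(t)$. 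Composing these three inequalities yields exactly
\[
f(t)\le \eta\Big(1+\frac{\eta}{1-\delta\eta}\Big)\, g(t).
\]

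For the remaining range $n=0$, i.e.\ $t\in[0,t_{*})$, the desired bound must be obtained directly since Lemma \ref{fg} is vacuous at $n=0$. Here we use $f(t)\le 1=g(0)$, then Lemma \ref{regg} applied at $t_{*}$ yields $g(0)\le\eta\, g(t_{*})$, and monotonicity yields $g(t_{*})\le g(t)$ (since $t<t_{*}$, we have $g(t)\ge g(t_{*})$). Thus $f(t)\le \eta\, g(t)$, which is in turn dominated by $\eta(1+\eta/(1-\delta\eta))\,g(t)$ because the parenthesis is $>1$.

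There is no real obstacle here; the entire step is bookkeeping around the fact that the $(n+1)$-th grid point lies to the right of $t$ while the $n$-th lies to the left, so one uses monotonicity on the $f$-side and the factor $\eta$ from Lemma \ref{regg} on the $g$-side to close the gap.
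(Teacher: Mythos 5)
Your proof is correct and follows essentially the same route as the paper: split at $n=\lfloor t/t_*\rfloor$, use monotonicity of $f$ and $g$ together with the shift bound $g(t-t_*)\le\eta\,g(t)$ to bridge between grid points, and handle $t<t_*$ directly via $f(t)\le 1\le\eta\,g(t_*)\le\eta\,g(t)$. (Minor aside: Lemma \ref{fg} at $n=0$ is trivially true rather than vacuous, but your direct treatment of that case is exactly what the paper does.)
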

\begin{proof}
We first consider the case $t>t_{*}$. Let $n=[t/t_{*}]$. We have by Lemma \ref{fg}, the monotonicity of
$f$, Lemma \ref{regg} and the monotonicity of $g$
\[
f(t)\le f(n\,t_{*})\le
\left(1+\frac{\eta}{1-\delta\,\eta}\right)\;g(n\,t_{*})
\le \eta\,\left(1+\frac{\eta}{1-\delta\,\eta}\right)\;g((n+1)\,t_{*})
\]
\[
\le \eta\,\left(1+\frac{\eta}{1-\delta\,\eta}\right)\;g(t).
\]
For $0\le t\le t_{*}$ we have by Lemma \ref{regg} and using the monotonicity
of $g(t)$
\[
f(t)\le 1\le \eta\,g(t_{*})\le \eta\;g(t).
\]
\end{proof}

\begin{lemma}\label{Harnack}
There exists a constant $0<\cte{Harnack}<1$, such that for all $K$ large
enough and all $t>0$ we have
\[
\inf_{\vecn\in \Delta}\proba_{\vecn}\big(T_{\vecz}>t\big)\ge
\cte{Harnack}\sup_{\vecn\in \Delta}\proba_{\vecn}\big(T_{\vecz}>t\big).
\]
\end{lemma}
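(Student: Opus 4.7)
My approach combines the Markov property at time $t_{0}=t_{0}(K)=1+\cte{entrec}\log K$ (at which Condition \eqref{cond-A1} holds), Sublemma \ref{b}, and the bound $f(s)\le Cg(s)$ established in the preceding lemma (with $C=\eta(1+\eta/(1-\delta\eta))$). I will write $F(\vecn,t):=\proba_{\vecn}(T_{\vecz}>t)$ and $\phi(t):=\esperance_{\nu}[F(\cdot,t)]$.

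For the lower bound on $\inf_{\vecn\in\Delta}F(\vecn,t)$: for $\vecn\in\Delta$ and $t\ge t_{0}$, the Markov property at $t_{0}$ together with \eqref{cond-A1} gives
\[
F(\vecn,t)=\esperance_{\vecn}\bigl[\un_{T_{\vecz}>t_{0}}F(\vecN^{\sK}(t_{0}),t-t_{0})\bigr]\ge c_{1}F(\vecn,t_{0})\phi(t-t_{0}).
\]
Since $\Delta\subset D$ and $t_{0}=\Oun\log K$ is much smaller than the exit-time scale $\e^{\alpha\tilde\rho^{2}K}$, Sublemma \ref{b} yields $F(\vecn,t_{0})\ge 1/2$ for $K$ large, whence $\inf_{\vecn\in\Delta}F(\vecn,t)\ge(c_{1}/2)\phi(t-t_{0})$; for $t<t_{0}$ the same sublemma directly gives $\inf_{\vecn\in\Delta}F(\vecn,t)\ge 1/2\ge g(t)/2$ and the conclusion is trivial. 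For the upper bound, splitting the same Markov expectation at $t_{0}$ for the maximizer $\vecm\in\Delta$ according to whether $\vecN^{\sK}(t_{0})\in\Delta$ or $\Delta^{c}\setminus\{\vecz\}$ and using $f\le Cg$,
\[
g(t)\le g(t-t_{0})+f(t-t_{0})\le(1+C)g(t-t_{0}).
\]

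Combining, Lemma \ref{Harnack} reduces to a uniform (in $s\ge 0$ and $K$) lower bound on the ratio $\phi(s)/g(s)$ — i.e., the $\nu$-average of the survival probabilities on $\Delta$ must be comparable to their supremum. This average-versus-maximum comparison is the main obstacle, because \eqref{cond-A1} is only a one-sided minorization of the transition kernel and does not translate directly into an upper bound. My plan to close this is a coupling argument at the $t_{0}$-scale: for $\vecn,\vecn'\in\Delta$, at each $t_{0}$-step \eqref{cond-A1} allows a maximal coupling under which, with probability at least $c_{1}/2$, both processes land at a common point drawn from $\nu\subset\Delta$ and remain coupled thereafter. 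Sublemma \ref{b} ensures that both trajectories stay in $D$ throughout $kt_{0}=\Oun\log K$ time with probability $1-o_{K}(1)$, and the bound $f\le Cg$ absorbs whatever mass does leak out to $\Delta^{c}$. After $k$ such steps, with $k$ chosen independently of $K$, the total-variation distance between the laws of $\vecN^{\sK}(kt_{0})$ started from $\vecn$ and from $\vecn'$ is at most $(1-c_{1}/2)^{k}+o_{K}(1)$; combined with $\sup_{\vecp}F(\vecp,s)\le(C\vee 1)g(s)$ and Lemma \ref{regg} (to compare $g(s-kt_{0})$ and $g(s)$, valid once $kt_{0}\le t_{*}=K$), taking $k$ large enough yields $F(\vecn,t)\ge g(t)/2$, proving Lemma \ref{Harnack} with $\cte{Harnack}=1/2$.
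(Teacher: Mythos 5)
Your argument is correct in outline, but it takes a genuinely different route from the paper, so let me compare the two. The paper does not couple: it first shows (estimates \eqref{partout} and \eqref{vapartout}) that from any starting point in $\Delta$ the process hits any \emph{prescribed point} $\vecm\in\Delta$ within a time of order $K^{d/2}\log K$ with probability bounded below by a constant, taking $\vecm=\vecnt$ to be the maximizer of $\proba_{\cdot}(T_{\vecz}>t)$; the strong Markov property at $T_{\vecnt}$ together with the \emph{monotonicity} of $s\mapsto\proba_{\vecnt}(T_{\vecz}>s)$ then gives $\proba_{\vecn}(T_{\vecz}>t)\ge\proba_{\vecn}(T_{\vecnt}\le\cdot)\,g(t)$ directly, with no need to compare $g$ at two different times. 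Your Doeblin coupling over $k$ blocks of length $t_{0}$ instead bounds the oscillation $|\proba_{\vecn}(T_{\vecz}>t)-\proba_{\vecn'}(T_{\vecz}>t)|$ by $\big((1-c')^{k}+o_{K}(1)\big)\sup_{\vecp}\proba_{\vecp}(T_{\vecz}>t-kt_{0})$, and therefore must pay for the time shift via Lemma \ref{regg} and for the supremum over all of $\domaine$ via the bound $f\le Cg$ of the preceding lemma; in exchange you only need a window of length $\Oun(\log K)$ rather than $\Oun(K^{d/2}\log K)$, and you avoid the pointwise kernel estimate in the form \eqref{partout}. Two points in your sketch deserve to be made explicit. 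First, for the coin flip to have success probability bounded below at \emph{every} block (not just the first), you need the unconditional minorization $\proba_{\vecp}(\vecN^{\sK}(t_{0})\in\cdot)\ge c_{1}\nu(\cdot)$ for all $\vecp\in\domaine$, which is exactly what is proved in Subsection \ref{a1} (alternatively, combine \eqref{cond-A1} with the uniform survival bound \eqref{Tg}); the factor $1/2$ in your $c_{1}/2$ is not needed. Second, the only way the coupling can fail permanently is that one copy reaches $\vecz$ before coalescence; this is correctly absorbed into your $o_{K}(1)$ via Sublemma \ref{b} applied to both marginals started in $\Delta$, but it is the crux and should be stated as such, since a Doeblin minorization of a sub-Markovian kernel does not by itself yield contraction. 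Finally, your opening reduction to the ratio $\phi(s)/g(s)$ is superfluous (the coupling proves the lemma directly), and the short-time case should be handled for all $t\le K$, not just $t\le t_{0}$, again by Sublemma \ref{b}; both are cosmetic.
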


\begin{proof}
Let $\alcte{partout}=1+\cte{entrec}$. With use of Lemma \ref{entrec},  \eqref{delta-prime}, Lemma \ref{uni}, Sublemma \ref{b} and twice the Markov 
property, we obtain 
 that there exists  $\cte{partout}>0$ such that for all $K$ large enough 
\begin{equation}\label{partout}
\sup_{\vecm\in \Delta}\,\sup_{\vecn\in D} \proba_{\vecn}\big( \vecN^{\sK}(\alcte{partout}\log K)=\vecm\big)\ge \frac{\cte{partout}}{K^{d/2}}.
\end{equation}
Indeed, for $\vecn\in D$ and  $\alpha'<\alcte{partout}$, we have
\begin{align*}
\MoveEqLeft \proba_{\vecn}\big( \vecN^{\sK}(\alcte{partout}\log K)=\vecm\big)\ge \;\proba_{\vecn}\big(T_{\Delta<\alpha'\log K}\,;\, \vecN^{\sK}(\alcte{partout}\log K)=\vecm\big)\\
&\ge    \esperance_{\vecn}\left[\un_{\{T_{\Delta}<\alpha'\log K\}}\,
\proba_{\vecN^{\sK}(s)}\big( \vecN^{\sK}(\alcte{partout}\log K - T_{\Delta})=\vecm \big)\right]\\
&\ge \Oun\,\inf_{\vecm\in \Delta} \proba_{\vecm}\big( \vecN^{\sK}(\alcte{partout}\log K - T_{\Delta} -1)\in \Delta\,;\,\vecN^{\sK}(\alcte{partout}\log K - T_{\Delta})=\vecm\big)
\end{align*}
and \eqref{partout} follows. 

We have for all $\vecn\in \Delta$ and all $\vecm\in \Delta$
\begin{align*}
& \proba_{\vecn}\big(T_{\vecm}>\alcte{partout}\, \cte{partout} K^{d/2}\log K)\\
& \le 
\proba_{\vecn}\big(T_{\vecm}>\alcte{partout}\, \cte{partout}K^{d/2}\;\log
K\,,\, T_{D^{c}}>\alcte{partout}\,\cte{partout} K^{d/2}\log K\big)\\
&  \quad + \proba_{\vecn}\big(T_{D^{c}}<\alcte{partout}\, \cte{partout} K^{d/2}\log K\big)\\
& \le \proba_{\vecn}\left(\bigcap_{q=1}^{\cte{partout}\,K^{d/2}}
\bigg\{\vecN^{\sK}(q\alcte{partout}\log K)\in D\,,\vecN^{\sK}(q\alcte{partout}\log K)\neq\vecm \bigg\}\right)\\
& \quad +\proba_{\vecn}\big(T_{D^{c}}<\alcte{partout}\, \cte{partout} K^{d/2}\log K\big)
\end{align*}
and using the Markov property and  \eqref{partout}, we obtain

\[
\le \left(1-\frac{\cte{partout}}{K^{d/2}}\right)^{\cte{partout}\,K^{d/2}}+\,
\proba\!_{\vecn}\big(T_{D^{c}}<\alcte{partout}\, \cte{partout} K^{d/2} \log K\big).
\]

Using Sublemma \ref{b}, we deduce that there exists $\ceta{vapartout}>0$
such that 
\begin{equation}\label{vapartout}
\sup_{\vecm\in \Delta}\, \sup_{\vecn\in \Delta}
\proba_{\vecn}\big(T_{\vecm}>\alcte{partout}\, \cte{partout} K^{d/2}\log K\big)
\le 1- \ceta{vapartout}.
\end{equation}

For all $t>0$, let us now  define
\[
\vecnt=\mathrm{argmax}_{\vecm\in \Delta}\proba_{\vecm}\big(T_{\vecz}>t\big).
\]
For $t>\alcte{partout}\,\cte{partout}\, K^{d/2}\log K$, we have for all $\vecn\in \Delta$
\begin{align*}
\proba_{\vecn}\big(T_{\vecz}>t\big)
& \ge \esperance_{\vecn}\left[\un_{\{T_{\vecnt}<\alcte{partout}\,\cte{partout}\, K^{d/2}\,\log K\}}\,
\proba_{\vecN^{\sK}({T_{\vecnt}})}\big(T_{\vecz}>t-T_{\vecnt}\big)\right]\\
& \ge \esperance_{\vecn}\left[\un_{\{T_{\vecnt}<\alcte{partout}\,\cte{partout}\, K^{d/2}\,\log K\}}\,
\proba_{\vecN^{\sK}({T_{\vecnt}})}\big(T_{\vecz}>t\big)\right] \\
& \ge \proba_{\vecn}\left[T_{\vecnt}<\alcte{partout}\,\cte{partout}\, K^{d/2}\,\log K\right]\,
\sup_{\vecm\in \Delta}\proba_{\vecm}\big(T_{\vecz}>t\big)
\end{align*}
and the result follows from \eqref{vapartout}.\\
For $t\le \alcte{partout}\,\cte{partout}\, K^{d/2}\,\log K$, we use that 
\[
\sup_{\vecm\in \Delta}\proba_{\vecm}\big(T_{\vecz}>t\big)\le 1
\]
and Sublemma \ref{b}. This concludes the proof of Lemma \eqref{Harnack}.
\end{proof}

Condition \eqref{cond-A2} follows immediately using successively
the four preceding lemmas. The constant $c_{2}$ in \eqref{eq-harnack} is given by
\begin{equation}
\label{c2}
c_{2} = \frac{\cte{Harnack}}{\eta\,\left(1+\frac{\eta}{1-\delta\eta}\right)}<1.
\end{equation}

\subsection{Proof of Theorem \ref{Champ-Vill}}
The proof of Theorem \ref{Champ-Vill} follows from Conditions \eqref{cond-A1} and \eqref{cond-A2} using the result in \cite{cv}. The constant $c$ is equal to $c_{1} c_{2}<1$, where $c_{1}$ and $c_{2}$ have been defined in \eqref{c1} and \eqref{c2}. The number $t_{0}(K)$ defined in  \eqref{t0} is of order $\log K$.

\section{Proof of Theorem  \ref{pertemasse}}\label{sec:MTE}

\subsection{Proof of the upper bound}

The proof will be the consequence of the following  lemma. 
\begin{lemma}\label{masseinf}
Recall that $D$ has been defined in Sublemma \ref{b}. There exist $K_{0}>0$, $\bcte{masseinf}>0$ and $0<\cte{masseinf}<1$ such that for all $K\geq K_{0}$
\[
\inf_{\vecn\in D}\proba_{\vecn}\left(T_{D^{c}}>\e^{\bcte{masseinf} K}\right)\geq \cte{masseinf}.
\]
\end{lemma}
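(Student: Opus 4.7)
The plan is in two main steps. First, for starting points $\vecn\in\Delta$, Sublemma~\ref{b} already yields the conclusion: since $\tilde\rho$ is of order one in $K$, choosing $\bcte{masseinf}<\alpha\tilde\rho^{2}$ and setting $t=\e^{\bcte{masseinf}K}$ gives
\[
\sup_{\vecn\in\Delta}\proba_\vecn\big(T_{D^{c}}\le\e^{\bcte{masseinf}K}\big)\le (\Oun+\Oun\,\e^{\bcte{masseinf}K})\,\e^{-\alpha\tilde\rho^{2}K}\xrightarrow[K\to\infty]{}0,
\]
so that $\inf_{\vecn\in\Delta}\proba_\vecn\big(T_{D^{c}}>\e^{\bcte{masseinf}K}\big)\ge 1/2$ for $K$ large enough.

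Second, to propagate this lower bound to arbitrary $\vecn\in D$, I would use the strong Markov property at $T_{\Delta}$:
\[
\proba_\vecn\big(T_{D^{c}}>\e^{\bcte{masseinf}K}\big)\ge \proba_\vecn\big(T_{\Delta}\le t_{0}<T_{D^{c}}\big)\,\inf_{\vecm\in\Delta}\proba_\vecm\big(T_{D^{c}}>\e^{\bcte{masseinf}K}-t_{0}\big),
\]
with $t_{0}=\cte{entrec}\log K$. The second factor is at least $1/2$ by the first step, so the issue reduces to bounding the first factor below by a positive constant uniformly in $\vecn\in D\setminus\Delta$. For $\vecn$ in the \emph{inner} region $\mathcal{B}(\vecnf,\rho_{D}/2)\setminus\Delta$ with $\rho_{D}=\tfrac12\inf_{j}\vecnf_{j}$, I would apply Corollary~\ref{t4d} to the nested domains $\mathcal D_{-2}=D$, $\mathcal D_{-1}=\mathcal B(\vecnf,\rho_{D}-2)\cap\domaine$, $\mathcal D_{0}=\mathcal B(\vecnf,\rho_{D}/2)\cap\domaine$, $\mathcal D_{1}=\Delta$, with the Lyapunov function $\lyap$ of \eqref{lyapou}. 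A direct computation on the exponential form of $\lyap$ gives $a_{0}/a''_{-2}\le \e^{-(3\alpha\tilde\rho^{2}/4)K}$, while Corollary~\ref{tondeuse} ensures that $\Lambda$ is bounded below by a positive constant on the corresponding annuli; Corollary~\ref{t4d} then produces $\proba_\vecn(T_{\Delta}\le t_{0},\,T_{\Delta}<T_{\mathcal H_{-2}})\ge 1-2\e^{-cK}$, and since exiting $D$ requires first entering $\mathcal H_{-2}$, the first factor is at least $1/2$ for such starting points.

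The main obstacle is the \emph{outer} shell $D\setminus\mathcal B(\vecnf,\rho_{D}/2)$, where $\lyap$ is already exponentially large in $K$ and a single invocation of Corollary~\ref{t4d} cannot make $a_{0}/a''_{-2}$ small. I would handle it by adapting the multi-scale descent used in the proof of Sublemma~\ref{descente4}: slice the outer shell into $\Oun(\log K)$ thin annular bands between geometrically spaced radii $R_{j}$, apply Corollary~\ref{t4d} on each band while using Corollary~\ref{tondeuse} to keep $\Lambda_{j}$ uniformly bounded below, and chain the inward crossings via the strong Markov property. Because the Lyapunov ratios $\e^{\alpha(R_{j}^{2}-R_{j+1}^{2})/K}$ telescope, the product of the one-band success probabilities stays bounded below by a constant independent of $K$ and the accumulated crossing time remains $\Oun\log K$, which combined with the previous step yields the uniform lower bound stated in the lemma.
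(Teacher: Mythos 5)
Your overall strategy coincides with the paper's: treat starting points in $\Delta$ by the Lyapunov/Dynkin estimate, then reduce a general $\vecn\in D$ to that case by forcing an early entrance into $\Delta$. Your first step is correct and in fact shorter than the paper's: the statement of Sublemma \ref{b} is precisely the bound needed, and since $\tilde\rho$ is of order one, any $\bcte{masseinf}<\alpha\tilde\rho^{2}$ works (the paper instead re-runs the Dynkin argument with an intermediate ball $\widetilde\Delta$ of radius $\crho{tondeuse}\sqrt K$ rather than quoting the sublemma). Where you genuinely diverge is the second step: the paper simply invokes Lemma \ref{entrec}, whose infimum is over all of $\Delta^{c}$ and hence covers $D\setminus\Delta$, and then applies the Markov property at $T_{\Delta}$; you instead rebuild a descent estimate inside $D$ from Corollary \ref{t4d}, with one application on an inner region and a multiscale slicing \`a la Sublemma \ref{descente4} on the outer shell. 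Your route is heavier, but it buys something real: the four-domains lemma delivers the joint event $\{T_{\Delta}\le t_{0},\,T_{D^{c}}>T_{\Delta}\}$, which is what the Markov-property step actually requires (the process must not leave $D$ before reaching $\Delta$), whereas the paper's chain of inequalities conditions only on $\{T_{\Delta}<\cte{entrec}\log K\}$ and leaves this point implicit. Two small repairs: in your displayed inequality the last factor should be $\inf_{\vecm\in\Delta}\proba_{\vecm}\big(T_{D^{c}}>\e^{\bcte{masseinf}K}\big)$, since the worst case under the strong Markov property is $T_{\Delta}=0$, not $T_{\Delta}=t_{0}$ (harmless, as that quantity is also $\ge 1/2$); and a single application of Corollary \ref{t4d} on the inner region yields a crossing time $\Lambda^{-1}\log(a''_{-2}/a'_{-1})$ of order $K$, not $\log K$, because $\log(a''_{-2}/a'_{-1})=\Oun K$ while Corollary \ref{tondeuse} only guarantees $\Lambda\ge\Oun$ near the inner edge of the annulus — so either accept $t_{0}=\Oun K$ (which suffices here, since all that matters is $t_{0}\ll\e^{\bcte{masseinf}K}$) or slice the inner region as well. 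Neither point affects the validity of your approach.
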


\begin{proof}
As in the proof of Sublemma \ref{b}, we use  Dynkin's Theorem applied to  $\lyap\big(\vecN^{\sK}(t\wedge  T_{D^{c}})\big)$ to obtain 
\begin{equation}
\label{dyn}
\esperance_{\vecn}\left[\lyap\big(\vecN^{\sK}(t\wedge  T_{D^{c}})\big)\right] =\lyap(\vecn)+ \esperance_{\vecn}\left[\int_{0}^{t\wedge T_{D^c}} \gen_{K}\varphi(\vecN^{\sK}(s)) \, \dd s\right].
\end{equation}
We distinguish the cases $\vecn\in \Delta$ and $\vecn\notin \Delta$.

Let us introduce the set $\widetilde \Delta = \{\vecn : \|\vecn - \vecnf\|\leq \crho{tondeuse}\, \sqrt{K}\}$, where the constant $ \crho{tondeuse}$ has been defined in Corollary \ref{tondeuse}. 

For  an initial state $\vecn\in \Delta$, we  remark that $T_{{\widetilde \Delta}^c} < T_{{D}^c}$.
For any $t>0$, Theorem \ref{formule} yields  
\[
\esperance_{\vecn}\left[\int_{0}^{t\wedge T_{{\widetilde \Delta}^c}} \gen_{K}\varphi(\vecN^{\sK})(s) \, \dd s\right] \leq \Oun\, t.
\]
We can write
\[
\int_{0}^{t\wedge T_{D^c}} \gen_{K}\varphi(\vecN^{\sK}(s)) \, \dd s =  \int_{0}^{t\wedge T_{\widetilde \Delta}^c} \gen_{K}\varphi(\vecN^{\sK}(s)) \, \dd s+ \int_{t\wedge T_{\widetilde \Delta}^c}^{t\wedge T_{D^c}} \gen_{K}\varphi(\vecN^{\sK}(s)) \, \dd s\,.
\]
Using Theorem \ref{formule}, we remark that   the first term of the rhs  is bounded by $\Oun\, t$. Corollary \ref{tondeuse} implies that the second term is non positive. 
In the other hand, there exists $b>0$ such that 
\[
\inf_{\vecn\in\partial D^{c}}\varphi(\vecn)\ge \e^{b K}
\]
By \eqref{dyn} we finally obtain
\[
\e^{b K} \proba_{\vecn}\left(T_{D^{c}}<t\right) \le \Oun \,t+\varphi(\vecn).
\]
Since
\[
\sup_{\vecn\in\Delta}\varphi(\vecn)=\Oun
\]
we conclude that for $K$ large enough 
$\sup_{\vecn\in\Delta}\proba_{\vecn}\left(T_{D^{c}}<\e^{b K/2}\right)<\frac{1}{2}$.
Therefore
\begin{equation}
\label{min}
\inf_{\vecn\in\Delta}\proba_{\vecn}\left(T_{D^{c}}\ge \e^{bK/2}\right)>\frac{1}{2}.
\end{equation}
For $\vecn\in D\backslash \Delta$ we have for $K$ large enough
(in particular $\e^{b\,K/2}>\cte{entrec}\;\log K$)
by Lemma  \ref{entrec} and the Markov property and monotonicity of
$t\mapsto \proba_{\vecn}\left(T_{D^{c}}\ge t\right)$  and \eqref{min}
\begin{align*}
\proba_{\vecn}\left(T_{D^{c}}\ge \e^{b\,K/2}\right)
& \ge
\esperance_{\vecn}\left[\un_{\{T_{\Delta}< \cte{entrec} \log K\}}
\proba\!_{\vecN^{\sK}(T_{\Delta})}\left(T_{D^{c}}\ge \e^{b\,K/2}-\, T_{\Delta}\right)\right]\\
&  \ge 
\esperance_{\vecn}\left[\un_{\{T_{\Delta}<\cte{entrec} \log K\}}
\; \proba\!_{\vecN^{\sK}(T_{\Delta})}\left(T_{D^{c}}\ge \e^{b\,K/2}-\, \cte{entrec} \log K\right)\right] \\
& \ge 
\esperance\!_{\vecn}\left[\un_{\{T_{\Delta}<\cte{entrec} \log K\}}
\; \proba_{\vecN^{\sK}(T_{\Delta})}\left(T_{D^{c}}\ge \e^{b\,K/2}\right)\right]\\
& \ge
\frac{1}{2} \;\proba\!_{\vecn}\big(T_{\Delta}<\cte{entrec} \log K\big)\,.
\end{align*}
The result follows from Lemma \ref{entrec} with $\cte{masseinf}=\frac{1-\ceta{entrec}}{2}$.
\end{proof}

We can now prove the upper bound of Theorem  \ref{pertemasse}
\[
\lambda_{0}(K) \leq \e^{-d_{2} K}
\]
for $d_{2}>0$. For $\vecn\in D$ and for all integer $q>1$ we have by the Markov property
\begin{align*}
\proba_{\vecn}\left(T_{D^{c}}\ge q\,\e^{\bcte{masseinf} K}\right)
& =
\esperance_{\vecn}\left[\un_{\left\{T_{D^{c}}>\e^{\bcte{masseinf} K}\right\}}\,
\proba_{\vecN\left(\e^{\bcte{masseinf} \sK}\right)}\left(T_{D^{c}}\ge (q-1)\e^{\bcte{masseinf} K}\right)\right] \\
& \ge   
\proba_{\vecn}\left({T_{D^{c}}>\e^{\bcte{masseinf} K}}\right)
\;\inf_{\vecm\in D} \proba_{\vecm}\left(T_{D^{c}}\ge (q-1) \e^{\bcte{masseinf} K}\right).
\end{align*}
Using Lemma \ref{masseinf} we get for all $q\ge 1$
\[
\inf_{\vecn\in D}
\proba_{\vecn}\left(T_{D^{c}}\ge q\,\e^{\bcte{masseinf} K}\right)\ge \cte{masseinf}^{\,q}.
\]
Therefore
\[
\inf_{\vecn\in D}
\proba_{\vecn}\left(T_{\vecz}\ge q\, \e^{\bcte{masseinf} K}\right)\ge \cte{masseinf}^{\,q}.
\]
By Property \eqref{cond-A1} proved in Subsection \ref{a1}, we know that
\[
\proba_{\vecn}\left(\vecN^{\sK}(t_{0}) \in .\right)\ge c_{1}\, \nu(.) \, \proba_{\vecn}\left(t_{0} < T_{0}\right).
\]
Integrating by $\qsd_{\sK}$ and using that $\proba_{ \qsd_{\sK}}\left(t_{0} < T_{0}\right) = e^{-\lambda_{0}(K) t_{0}}>0$, we obtain
\[
\qsd_{\sK}(.)\ge c_{1}\, \nu(.) \,.
\]
Then for all point  $\vecn_{0}\in \Delta$, 
$\qsd_{\sK}(\vecn_{0})\ge c_{1}\, \nu(\vecn_{0})>0$ and integrating by $\qsd_{\sK}$ we get for all $q\ge 1$
\[
\proba_{ \qsd_{\sK}}\Big(T_{\vecz}\ge q\;\e^{\bcte{masseinf}\;K}\Big)\ge
 \qsd_{\sK}(\vecn_{0})\; \cte{masseinf}^{\,q}.
\]
From \eqref{lambo} and this bound we deduce that 
\begin{align*}
\lim_{t\to\infty}-\frac{1}{t}\log \proba_{ \qsd_{\sK}}\left(T_{\vecz}\ge t\right)
& =\lim_{q\to\infty} -\frac{1}{q\,\e^{\bcte{masseinf}K}} 
\log \proba_{ \qsd_{\sK}}\left(T_{\vecz}\ge q\,\e^{\bcte{masseinf}K}\right)\\
& \leq -\log \cte{masseinf}\, \e^{-\bcte{masseinf}\;K}.
\end{align*}
We have used the fact that, since the limit exists, we can compute it along all diverging sequence.
Therefore we have proved the upper bound in Theorem \ref{pertemasse} with $d_2=\bcte{masseinf}/2$ provided that $K$ is large enough.

\subsection{Proof of the lower bound}

The proof will result from the following two lemmas. 

\begin{lemma}\label{lbi1}
There exists   $\bcte{lbi1}>0$ and $\tcte{lbi1}>0$ independent of $K$
such that for $K$ large enough
\[
\sup_{\vecn\in \mathcal{E}_{1}^{c}}\proba_{\vecn}\left(T_{\vecz}\ge \tcte{lbi1}\right)
\le 1-\e^{-\bcte{lbi1}\,K}.
\]
\end{lemma}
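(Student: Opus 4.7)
The idea is to construct an explicit event of exponentially small (but positive) probability, on which the process reaches $\vecz$ within a fixed time $\tcte{lbi1}$. Since starting from $\vecn\in\mathcal{E}_1^c$ we have $|\vecn|\le LK$, extinction requires at most $LK$ jumps; we will ask that they all be deaths, in a prescribed order, and all occur quickly.

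First, I would define a deterministic killing strategy. Set $\vecm_0=\vecn$ and, recursively, $\vecm_k=\vecm_{k-1}-\vece{j_k}$, where $j_k\in\arg\max_\ell (\vecm_{k-1})_\ell$, stopping at $\vecm_{|\vecn|}=\vecz$. Consider the event $\mathcal{A}$ that the process follows exactly this sequence of jumps and reaches $\vecz$ by time $\tcte{lbi1}$. Since the embedded jump chain and the holding times are independent given the sequence of visited states, we have
\[
\proba_{\vecn}(\mathcal{A})\;\ge\;\Big(\prod_{k=1}^{|\vecn|}P_k\Big)\cdot\proba\Big(\sum_{k=1}^{|\vecn|}S_k\le\tcte{lbi1}\Big),
\]
where $P_k$ is the probability that the $k$-th jump is the prescribed death, and $S_k\sim\mathrm{Exp}(\Lambda(\vecm_{k-1}))$ are independent.

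Next, I would give a uniform lower bound on the $P_k$. Hypothesis \eqref{cond:mort} yields $KD_{j_k}(\vecm_{k-1}/K)\ge\xi(\vecm_{k-1})_{j_k}\ge\xi|\vecm_{k-1}|/d$, while the local Lipschitz property together with \eqref{cond:vanish} gives $\Lambda(\vecm)\le C_1|\vecm|$ on $\{|\vecm|\le LK\}$ for some constant $C_1$. Hence $P_k\ge p_0:=\xi/(dC_1)>0$, uniformly in $k$ and $K$, so $\prod_k P_k\ge p_0^{LK}=e^{-|\log p_0|LK}$.

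The main obstacle is the waiting-time factor: as $|\vecm_{k-1}|$ decreases toward $1$, $\Lambda(\vecm_{k-1})\ge\xi|\vecm_{k-1}|/d$ becomes small, so typical total waiting times are of order $\log K$, not bounded. To handle this I would use the stochastic domination $S_k\le_{\mathrm{st}}\mathrm{Exp}(\xi(|\vecn|-k+1)/d)$ and allocate the time budget evenly, setting $t_*=\tcte{lbi1}/|\vecn|$. Since $\{\forall k:S_k\le t_*\}\subset\{\sum_k S_k\le\tcte{lbi1}\}$,
\[
\proba\Big(\sum_k S_k\le\tcte{lbi1}\Big)\;\ge\;\prod_{j=1}^{|\vecn|}\bigl(1-e^{-\xi j\tcte{lbi1}/(d|\vecn|)}\bigr).
\]
Setting $u=\xi\tcte{lbi1}/(d|\vecn|)$ and taking the logarithm, one checks by standard estimates (using $\log(1-e^{-uj})\sim \log(uj)$ for $uj\lesssim 1$ and Stirling for $\sum\log j$, plus the fact that the tail $uj\gtrsim 1$ contributes $O(1)$) that this product is bounded below by $\exp(-C_2|\vecn|)\ge\exp(-C_2 LK)$ for some constant $C_2=C_2(\xi,d,\tcte{lbi1})>0$ independent of $K$.

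Combining both bounds,
\[
\proba_{\vecn}(T_{\vecz}\le\tcte{lbi1})\;\ge\;\proba_{\vecn}(\mathcal{A})\;\ge\;e^{-(|\log p_0|+C_2)LK}\;=:\;e^{-\bcte{lbi1}K},
\]
uniformly in $\vecn\in\mathcal{E}_1^c$, which rearranges to the claimed bound $\sup_{\vecn\in\mathcal{E}_1^c}\proba_{\vecn}(T_\vecz\ge\tcte{lbi1})\le 1-e^{-\bcte{lbi1}K}$. The value of $\tcte{lbi1}$ may be chosen arbitrarily (e.g.\ $\tcte{lbi1}=1$), at the price of a larger $\bcte{lbi1}$.
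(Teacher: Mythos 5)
Your proposal is correct and follows essentially the same route as the paper's proof: the same killing path obtained by repeatedly removing an individual of the maximal-size type, the same uniform lower bound $\xi/(d\,\xi')$ on each embedded-chain transition via \eqref{cond:mort} and the local Lipschitz bound on the total rate, and the same allocation of a holding-time budget of order $1/K$ per jump leading to a product of the form $\prod_j\bigl(1-\e^{-cj/K}\bigr)$, estimated exactly as you sketch (the paper uses the threshold $\theta$ solving $1-\e^{-\theta}=\theta/2$ plus Stirling). The only differences are cosmetic: the paper fixes each holding time below $1/(\xi K)$ with total budget $\tcte{lbi1}=dL/\xi$ rather than splitting $\tcte{lbi1}$ evenly into $|\vecn|$ pieces.
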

Recall that $\mathcal{E}_{1}^{c} =\big\{\vecn\in\domaine : \sum_{j=1}^{d}n_{j}\le L K\big\}$.
\begin{proof}
Starting from $\vecn\in \mathcal{E}_{1}^{c}$, we consider a path from $\vecn$ to
$\vecz$ obtained by decreasing successively the maximum of the
components.  We denote this path by $(\vecm(p))_{0\le p\le Q(\vecn)}$
with $\vecm(0)=\vecn$ and $\vecm(Q(\vecn))=\vecz$. We observe that
from the construction of $(\vecm(p))_{0\le p\le Q(\vecn)}$, the
sequence of integers $\max(\vecm_{\ell}(p))$ is nonincreasing (with jumps of size 1) and can
have plateaus of length at most $d$. Note also that $Q(\vecn) \leq d L K$. 

Using Hypotheses \eqref{lip}, \eqref{cond:vanish} and \eqref{cond:mort}  there exists $\xi'>0$ such that
for all $\vecn\in \mathcal{E}_{1}^{c}$ and for all $j$ such that $n_{j} = \max_{\ell=1,\ldots,d} n_{\ell}$,  
\[
K D_{j}\left(\frac{\vecn}{K}\right) \ge \xi  \,\max(n_{\ell})\quad\text{and}\quad
K\left(B_{j}\left(\frac{\vecn}{K}\right)+D_{j}\left(\frac{\vecn}{K}\right)\right) \leq \xi'
\,\max(n_{\ell}). 
\]
Therefore,
\[
\inf_{\vecn\in \mathcal{E}_{1}^{c}}\;\inf_{j\in \mathrm{argmax }\, \{n_{\ell}\}}
\frac{D_{j}\left(\frac{\vecn}{K}\right)}{\sum_{\ell=1}^d\left(B_{\ell}\left(\frac{\vecn}{K}\right)
+D_{\ell}\left(\frac{\vecn}{K}\right)\right)} \geq \frac{\xi}{d \xi'}>0. 
\]
This implies  that the probability of the path of the embedded chain
is larger than $(\xi/d \xi')^{dLK}$.  

If $\Theta$ denotes the first jump time of the process, it follows from the above
inequalities that for all we have for all $\vecn\in \mathcal{E}_{1}^{c}$ and for
all $\vecn$ 
\begin{align*}
\proba_{\vecn}\Big(\Theta < \frac{1}{\xi K}\Big)
& =
1 - \exp\left[- \frac{1}{\xi}\sum_{\ell=1}^d \left(B_{\ell}\left(\frac{\vecn}{K}\right)
+D_{\ell}\left(\frac{\vecn}{K}\right)\right)\right] \\
& \geq 
1 - \exp\Big(-\frac{\max_{1\leq \ell\leq d} n_{\ell}}{K}\Big). 
\end{align*}
For all $\vecn\in \mathcal{E}_{1}^{c}$, we  define a (measurable) set of trajectories
$\mathcal{T}_{\vecn}$ of the stochastic  process by
\[
\mathcal{T}_{\vecn}=\bigg\{\vecn(\cdot)\,\bigg|\,\;\exists \;
s_{0}=0<s_{1}<s_{2}<\cdots<s_{Q(\vecn)}\,,\,
\forall\,0\le
p\le Q(\vecn)-1, 
\]
\[
s_{p+1}-s_{p}<\frac{1}{\xi\,K}\,,\;\mathrm{and}\;
\vecn(t)=\vecm(p)\;\mathrm{for}\;t\in[ s_{p},s_{p+1}[\,,\;
\vecn(s_{Q(\vecn)})=\vecz\bigg\}\,.
\]
Let
\[
\tcte{lbi1}=\frac{d L}{\xi}\,.
\]
Since $Q(\vecn) \leq d L K$, we have for all $\vecn\in \mathcal{E}_{1}^{c}$
\begin{align*}
\MoveEqLeft \proba_{\vecn}\left(T_{\vecz}< \tcte{lbi1}\right) \\
&\ge \proba_{\vecn}\big(\mathcal{T}_{\vecn}\big) 
\ge \left(\frac{\xi}{d\xi'}\right)^{dLK}
\prod_{p=0}^{Q(\vecn)-1}\left(1-\e^{-\max_{1\leq \ell\leq d} m_{\ell}(p)/K}\right)\\
&\ge \left(\frac{\xi}{d\xi'}\right)^{dLK}\;
\left[\prod_{q=1}^{[L K]}\left(1-\e^{-q/K}\right)\right]^{d}\,.
\end{align*}
Let $\theta$ be the unique solution of $1-\e^{-\theta}=\theta/2$.
Note that $\theta>0$ and 
\[
1-\e^{-x}\ge \frac{x}{2}, \,\forall \;x\in[0,\theta],
\; 1-\e^{-x}\ge 1-\e^{-\theta},\,\forall \;x>\theta.
\]
For $L>\theta$ we have
\begin{align*}
\prod_{q=1}^{\lfloor L K\rfloor}\left(1-\e^{-q/K}\right)
&=
\prod_{q=1}^{\lfloor \theta K\rfloor}\left(1-\e^{-q/K}\right)
\prod_{q=1+\lfloor\theta K\rfloor}^{\lfloor L K\rfloor}\left(1-\e^{-q/K}\right)\\
& \ge
2^{-\lfloor\theta K\rfloor}\,\frac{\lfloor\theta K\rfloor!}{K^{\lfloor\theta K\rfloor}}\;
\left(1-\e^{-\theta}\right)^{\lfloor L K\rfloor-\lfloor\theta K\rfloor}\\
& \ge 
2^{-\lfloor\theta K\rfloor}\;\e^{-\lfloor \theta K \rfloor}\;\left(1-\e^{-\theta}\right)^{\lfloor L K\rfloor-\lfloor \theta K\rfloor}
\end{align*}
using Stirling's formula and $K$ large enough to obtain the last estimate. For $L\le\theta$ we have by a similar argument
\[
\prod_{q=1}^{\lfloor L K\rfloor}\left(1-\e^{-q/K}\right)\ge 2^{-\lfloor L K\rfloor}\;\e^{-\lfloor L K\rfloor}.
\]
We finally get
\[
\proba_{\vecn}\left(T_{\vecz}< \tcte{lbi1}\right)
\ge \left(\frac{\xi}{d\, \xi'}\right)^{d L K}
(2\,e)^{-d\,K\;(L\,\wedge\,\theta)}
\left(1-\e^{-\theta}\right)^{d\,(1+K\,((L-\theta)\vee \,0))},
\]
and the result follows since $\frac{\xi}{d\,\xi'}<1$.
\end{proof}

\begin{lemma}\label{lbi2}
There exists  $\tcte{lbi2}>0$ independent of $K$
such that for $K$ large enough
\[
\sup_{\vecn\in\domaine}\proba_{\vecn}\left(T_{\vecz}\ge \tcte{lbi2}\right)
\le 1-(1-\ceta{descente1})\;\e^{-\bcte{lbi2}\,K}.
\]
\end{lemma}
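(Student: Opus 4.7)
The idea is to bootstrap Lemma~\ref{lbi1}, which controls the probability of extinction by time $\tcte{lbi1}$ starting from the ``low-population'' region $\mathcal{E}_1^c$, to an analogous bound valid for every starting point in $\domaine$. The only initial points not covered by Lemma~\ref{lbi1} lie in $\mathcal{E}_1$, and for those Sublemma~\ref{descente1} already tells us that with probability at least $1-\ceta{descente1}$ the process enters $\mathcal{E}_1^c$ before time $\tcte{descente1}$. I would therefore set
\[
\tcte{lbi2} \;:=\; \tcte{descente1} + \tcte{lbi1}, \qquad \bcte{lbi2} \;:=\; \bcte{lbi1},
\]
and split into the two cases $\vecn\in\mathcal{E}_1^c$ and $\vecn\in\mathcal{E}_1$.

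In the first case, Lemma~\ref{lbi1} gives directly
\[
\proba_{\vecn}\!\left(T_{\vecz}\ge \tcte{lbi2}\right)
\le \proba_{\vecn}\!\left(T_{\vecz}\ge \tcte{lbi1}\right)
\le 1-\e^{-\bcte{lbi1}K}
\le 1-(1-\ceta{descente1})\,\e^{-\bcte{lbi2}K},
\]
using monotonicity in $t$ and $1-\ceta{descente1}\le 1$. In the second case, I would write
\[
\proba_{\vecn}\!\left(T_{\vecz}<\tcte{lbi2}\right)
\ge \proba_{\vecn}\!\left(T_{\mathcal{E}_1^c}\le \tcte{descente1},\; T_{\vecz}<\tcte{lbi2}\right),
\]
and apply the strong Markov property at the stopping time $T_{\mathcal{E}_1^c}$. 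Since the only admissible jumps are by $\pm\vece{j}$, on the event $\{T_{\mathcal{E}_1^c}\le\tcte{descente1}\}$ we have $\vecN^{\sK}(T_{\mathcal{E}_1^c})\in \mathcal{E}_1^c$, and the time remaining until $\tcte{lbi2}$ is at least $\tcte{lbi1}$. Lemma~\ref{lbi1} applied to that entrance point then yields
\[
\proba_{\vecn}\!\left(T_{\vecz}<\tcte{lbi2}\right)
\ge \proba_{\vecn}\!\left(T_{\mathcal{E}_1^c}\le \tcte{descente1}\right)\,
\inf_{\vecm\in\mathcal{E}_1^c}\proba_{\vecm}\!\left(T_{\vecz}<\tcte{lbi1}\right)
\ge (1-\ceta{descente1})\,\e^{-\bcte{lbi1}K},
\]
by Sublemma~\ref{descente1} and Lemma~\ref{lbi1}; taking complements gives the required inequality.

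There is no real obstacle here: the proof is a clean strong-Markov splice of the two previous estimates, and the factor $(1-\ceta{descente1})$ appearing in the statement is precisely the cost paid once, when starting from $\mathcal{E}_1$, for the ``descent'' phase into $\mathcal{E}_1^c$ before the ``killing'' phase of Lemma~\ref{lbi1} can be invoked. The only small point to verify carefully is that the process indeed hits $\mathcal{E}_1^c$ (not just approaches it) at the stopping time $T_{\mathcal{E}_1^c}$, which is immediate from the unit jump structure of the process.
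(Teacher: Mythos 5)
Your proof is correct and follows essentially the same route as the paper: split according to whether the starting point lies in $\mathcal{E}_1^c$ or $\mathcal{E}_1$, take $\tcte{lbi2}=\tcte{descente1}+\tcte{lbi1}$, and splice Sublemma~\ref{descente1} with Lemma~\ref{lbi1} via the strong Markov property at $T_{\mathcal{E}_1^c}$. The paper phrases the estimate as an upper bound on $\proba_{\vecn}(T_{\vecz}>\tcte{lbi2})$ rather than a lower bound on its complement, but the content is identical (including the implicit choice $\bcte{lbi2}=\bcte{lbi1}$).
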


\begin{proof}
Let 
\[
\tcte{lbi2}=\tcte{lbi1}+\tcte{descente1}.
\]
By Lemma \ref{lbi1} we have 
\[
\sup_{\vecn\in\mathcal{E}_{1}^{c}}\proba_{\vecn}\left(T_{\vecz}\ge \tcte{lbi2}\right)
\le 1-\e^{-\bcte{lbi1}\,K}.
\]
For all $\vecn\in \mathcal{E}_{1}$ using the Markov property we obtain the following estimate
\begin{align*}
\proba_{\vecn}\big(T_{\vecz}>\tcte{lbi2}\big)
& = \proba_{\vecn}
\big(T_{\vecz}>\tcte{lbi2}\,,\,T_{\mathcal{E}_{1}^{c}}>\tcte{descente1}\big)+
\proba_{\vecn}\big(T_{\vecz}>\tcte{lbi2}\,,\,T_{\mathcal{E}_{1}^{c}}\le
\tcte{descente1}\big)\\
& \le
\proba_{\vecn}\big(T_{\mathcal{E}_{1}^{c}}> \tcte{descente1}\big) + \proba_{\vecn}\big(T_{\vecz}>\tcte{lbi2}\,,\,
T_{\mathcal{E}_{1}^{c}}\le\tcte{descente1}\big)  \\
& =
\proba_{\vecn}\big(T_{\mathcal{E}_{1}^{c}}> \tcte{descente1}\big)+
\esperance_{\vecn}\Big[\un_{\{T_{\mathcal{E}_{1}^{c}}\le \tcte{descente1}\}}\;
\proba_{\vecN^{\sK}(T_{\mathcal{E}_{1}^{c}})}\big(T_{\vecz}>\tcte{lbi2}-T_{\mathcal{E}_{1}^{c}}\big)\Big]\\
& \le 
\proba_{\vecn}\big(T_{\mathcal{E}_{1}^{c}}>
\tcte{descente1}\big)+
\esperance_{\vecn} \Big[\un_{\{T_{\mathcal{E}_{1}^{c}}\le \tcte{descente1}\}}\;
\proba_{\vecN^{\sK}(T_{\mathcal{E}_{1}^{c}})}\big(T_{\vecz}>\tcte{lbi1}\big)\Big]\\
& \le 
\proba_{\vecn}\big(T_{\mathcal{E}_{1}^{c}}>
\tcte{descente1}\big)+\proba_{\vecn}\big(T_{\mathcal{E}_{1}^{c}}\le
\tcte{descente1} \big)\;\left(1-\e^{-\bcte{lbi1}\,K}\right)
\end{align*}
where we made use of Lemma \ref{lbi1}. Using Lemma \ref{descente1} we get
\[
\proba_{\vecn}\big(T_{\vecz}>\tcte{lbi2}\big)
\le 1-\proba_{\vecn}\big(T_{\mathcal{E}_{1}^{c}}\le
\tcte{descente1} \big)\;\e^{-\bcte{lbi1}K}
\le 1-(1-\ceta{descente1})\;\e^{-\bcte{lbi1} K}\,.
\]
\end{proof}

We can now prove the lower bound.

\begin{proof}[Proof of the lower bound in Theorem \ref{pertemasse}]
Using Lemma \ref{lbi2} and the Markov property we get for all
$q\ge 0$
\[
\sup_{\vecn\in\domaine}\proba_{\vecn}\left(T_{\vecz}\ge q\, \tcte{lbi2}\right)\le \left(
1-(1-\ceta{descente1})\;\e^{-\bcte{lbi1}K}\right)^{q}\,.
\]
This implies
\[
\proba_{\qsd_{\sK}}\big(T_{\vecz}\ge q\, \tcte{lbi2}\big)\le  \left(
1-(1-\ceta{descente1})\;\e^{-\bcte{lbi1}K}\right)^{q}\,.
\]
Therefore
\begin{align*}
\lim_{t\to\infty}\frac{1}{t}\;\log \proba_{\qsd_{\sK}}\left(T_{\vecz} \ge
t\right)
& =\lim_{q\to\infty}\frac{1}{q}\;\log \proba_{\qsd_{\sK}}\left(T_{\vecz}\ge
q\, \tcte{lbi2}\right)\\
& \le \log\left(1-(1-\ceta{descente1})\;\e^{-\bcte{lbi1}K}\right)\,.
\end{align*}
and the lower bound follows by taking for instance
\[
d_{1}=1+\bcte{lbi1}.
\]
\end{proof}

\section{Proofs of Theorems \ref{mainincon} and  \ref{ancien}}\label{sec:mixtureandspectrum}

In the sequel we will assume that $K$ is large enough.
We first observe that since $\domaine$
is discrete and countable, the Banach space of bounded
complex measures on $\domaine$ equipped with the norm of total variation is  identical  to
$\lun$. Its dual is therefore $\linf$.

We establish a consequence of  Theorem \ref{Champ-Vill}.  To simplify the notation we write $\sg=\sg^K$ and $\sg^{\dagger}$  denotes the adjoint semigroup.\begin{corollary}\label{maincor}
There exists a constant $\cte{maincor}>0$ such that for all $K>K_{0}$, and for any $t>0$ 
\[
\big\|\rangun_{t}-\sg_{t}^{\dagger}\big\|_{\lun}\le \cte{maincor}\e^{-\omega t}
\]
where $\rangun_{t}$ is the rank one operator given by
\[
\langle\rangun_{t}\mu\,,\,f\rangle= \qsd_{\sK}(f)
\proba_{\mu}\big(t<
T_{\vecz}\big)
\]
and 
\[
\omega=\omega(K)=\frac{-\log(1-c)}{t_{0}(K)}\ge \frac{d_{3}}{\log K}
\]
where $d_{3}>0$ is independent of $K$.
\end{corollary}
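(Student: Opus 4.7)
The plan is to reduce the estimate to Theorem \ref{Champ-Vill} by unpacking the definitions of the operators $\sg_t^\dagger$ and $\rangun_t$ on $\lun$, and then converting the geometric factor $(1-c)^{\lfloor t/t_0(K)\rfloor}$ into the claimed exponential decay with rate $\omega(K)$.

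First I would observe that for any probability measure $\mu$ on $\domaine$,
\[
(\sg_t^\dagger \mu)(A) = \esperance_\mu\big[\un_A(\vecN^{\sK}(t))\,\un_{\{t<T_\vecz\}}\big] = \proba_\mu(t<T_\vecz)\,\proba_\mu\big(\vecN^{\sK}(t)\in A\,\big|\,t<T_\vecz\big),
\]
while by definition $\rangun_t \mu$ is the measure $\proba_\mu(t<T_\vecz)\,\qsd_\sK$. Hence the difference factorises as
\[
\sg_t^\dagger\mu - \rangun_t\mu = \proba_\mu(t<T_\vecz)\;\Big[\proba_\mu\big(\vecN^{\sK}(t)\in\cdot\,\big|\,t<T_\vecz\big) - \qsd_\sK\Big].
\]
Both measures inside the bracket are probabilities, so their difference has total mass zero; consequently its $\lun$-norm is exactly twice its total-variation norm in the sup-over-sets convention of the paper. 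Theorem \ref{Champ-Vill} then yields
\[
\|\sg_t^\dagger\mu - \rangun_t\mu\|_{\lun} \leq 4\,\proba_\mu(t<T_\vecz)\,(1-c)^{\lfloor t/t_0(K)\rfloor} \leq 4\,(1-c)^{\lfloor t/t_0(K)\rfloor}.
\]

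The next step is to extend this from probabilities to arbitrary $\mu\in\lun$. Since $\sg_t^\dagger$ and $\rangun_t$ are both $\mathds{C}$-linear, I would decompose $\mu$ via the Jordan decomposition of its real and imaginary parts, write each non-zero summand as a nonnegative scalar multiple of a probability measure, and apply the previous bound to each summand. This produces an absolute constant (coming from bounding the sum of the four $\lun$-norms of the pieces by $2\|\mu\|_{\lun}$) and gives
\[
\|(\sg_t^\dagger - \rangun_t)\mu\|_{\lun} \leq 16\,(1-c)^{\lfloor t/t_0(K)\rfloor}\,\|\mu\|_{\lun}.
\]

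Finally, to pass from the geometric decay to the exponential rate, I would use $\lfloor t/t_0(K)\rfloor > t/t_0(K) - 1$ to obtain
\[
(1-c)^{\lfloor t/t_0(K)\rfloor} \leq \frac{1}{1-c}\,\e^{-\omega(K)\,t}, \qquad \omega(K) = \frac{-\log(1-c)}{t_0(K)},
\]
which gives the claimed inequality with $\cte{maincor} = 16/(1-c)$. The lower bound $\omega(K)\ge d_3/\log K$ is then immediate from the bound $t_0(K)\le b\log K$ supplied by Theorem \ref{Champ-Vill}, with $d_3 = |\log(1-c)|/b$. I do not expect any genuine obstacle: the corollary is essentially a reformulation of Theorem \ref{Champ-Vill} at the operator-theoretic level, and the only care needed is the standard bookkeeping between the sup-over-sets total variation norm and the $\lun$-norm of signed measures with vanishing total mass.
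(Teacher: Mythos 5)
Your proposal is correct and follows essentially the same route as the paper: reduce to probability measures, where the difference $\sg_t^\dagger\mu-\rangun_t\mu$ is $\proba_\mu(t<T_{\vecz})$ times the difference of two probability measures controlled by Theorem \ref{Champ-Vill}, then extend to all of $\lun$ by linearity (the paper dismisses this as ``standard arguments''; your Jordan-decomposition bookkeeping is exactly what is meant) and absorb the floor in the exponent into the constant. The only difference is the value of $\cte{maincor}$ ($16/(1-c)$ versus the paper's $4/(1-c)$), which is immaterial.
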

\begin{proof} If $\mu$ is a probability measure, we get from Theorem
\ref{Champ-Vill} multiplying the estimate by $\proba_{\mu}\big(t<T_{\vecz}\big)$
\[
\big\|\rangun_{t}\mu-\sg_{t}^{\dagger}\mu\big\|_{\lun}\le \frac{2}{1-c}\; 
\proba_{\mu}\big(t<T_{\vecz}\big)\; \e^{-\omega t}
\le \frac{2}{1-c}\; \e^{-\omega t}\,.
\]
By standard arguments this implies that for  any sequence 
$f\in\lun$ we have
\[
\big\|\rangun_{t}f-\sg_{t}^{\dagger}f\big\|_{\lun}\le  \cte{maincor}\; \e^{-\omega t}\;\|f\|_{\lun},
\]
with $\cte{maincor}=4/(1-c)$. 
\end{proof}

We now derive some consequences of this estimate.

\begin{lemma}\label{evpt}
For any $s>0$, the operator $\sg_{s}^{\dagger}$ has only one eigenvalue
of modulus larger than  $\exp(-\omega s)$ which is equal to $\exp(-\lambda_{0} s)$. This eigenvalue is simple, the
corresponding eigenvector is $\qsd_{\sK}$.
\end{lemma}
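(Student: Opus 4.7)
The plan is to leverage Corollary \ref{maincor} to reduce the spectral problem for $\sg_s^\dagger$ (in the intervals $|\lambda|>e^{-\omega s}$) to the spectral problem for the rank-one operator $\rangun_s$, which is trivial.

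First I would verify that $\qsd_{\sK}$ is an eigenvector with eigenvalue $e^{-\lambda_0(K) s}$. By definition of the adjoint and of the killed semigroup, for any $f\in\linf$,
\[
\langle \sg_s^\dagger \qsd_{\sK},f\rangle=\esperance_{\qsd_{\sK}}\big[f(\vecN^{\sK}(s))\un_{\{s<T_{\vecz}\}}\big]
=\proba_{\qsd_{\sK}}(s<T_{\vecz})\,\qsd_{\sK}(f)=\e^{-\lambda_{0}(K) s}\,\qsd_{\sK}(f),
\]
where the middle equality uses the defining property of the qsd and the last equality uses \eqref{lambo}. Theorem \ref{pertemasse} ensures $\lambda_0(K)\le\e^{-d_2 K}\ll \omega(K)\sim 1/\log K$ for $K$ large, so the eigenvalue $\e^{-\lambda_0(K) s}$ lies in the half-plane $|\lambda|>\e^{-\omega s}$.

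Next I would show uniqueness and simplicity. Suppose $\mu\in\lun$ is a nonzero eigenvector with $\sg_s^\dagger \mu=\lambda \mu$ and $|\lambda|>\e^{-\omega s}$. The semigroup property gives $\sg_{ns}^\dagger \mu=\lambda^n\mu$ for every integer $n\ge1$. Applying Corollary \ref{maincor} at time $ns$, write $\sg_{ns}^\dagger=\rangun_{ns}+E_{ns}$ with $\|E_{ns}\|_{\lun}\le \cte{maincor}\e^{-\omega ns}$. Using that $\rangun_{ns}\mu=\proba_{\mu}(ns<T_{\vecz})\,\qsd_{\sK}$ and dividing by $\lambda^n$, we obtain
\[
\mu=\lambda^{-n}\proba_{\mu}(ns<T_{\vecz})\,\qsd_{\sK}+\lambda^{-n} E_{ns}\mu.
\]
The remainder is bounded in $\lun$ by $\cte{maincor}\big(|\lambda|^{-1}\e^{-\omega s}\big)^n\|\mu\|_{\lun}$, which tends to $0$ as $n\to\infty$. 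Consequently $c_n:=\lambda^{-n}\proba_{\mu}(ns<T_{\vecz})$ converges to some scalar $c$ and $\mu=c\,\qsd_{\sK}$. Since $\mu\neq0$, we have $c\neq 0$, and then the eigenvalue equation $\sg_s^\dagger\mu=\lambda\mu$ forces $\lambda=\e^{-\lambda_0(K) s}$ by the first step. This shows the eigenvalue is unique in $|\lambda|>\e^{-\omega s}$ and its eigenspace is one-dimensional, spanned by $\qsd_{\sK}$.

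The main (minor) obstacle is purely book-keeping: ensuring that $\sg_s^\dagger$ indeed acts on $\lun$ (so that Corollary \ref{maincor} applies), interpreting $\proba_\mu(ns<T_{\vecz})$ for signed measures $\mu$ as the bilinear pairing $\langle\mu,\sg_{ns}\fun\rangle$, and verifying $\e^{-\lambda_0(K) s}>\e^{-\omega s}$ from the estimates of Theorem \ref{pertemasse}; no deeper ingredient is needed.
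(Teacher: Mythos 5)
Your argument for existence of the eigenvalue and for uniqueness of the eigen\emph{vector} is essentially the paper's: iterate the eigenvalue equation, compare ${\sg_{s}^{\dagger}}^{n}\mu=\sg_{ns}^{\dagger}\mu$ with the rank-one operator $\rangun_{ns}$ via Corollary \ref{maincor}, and let $n\to\infty$ to force $\mu$ to be proportional to $\qsd_{\sK}$. That part is correct.

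However, there is a gap concerning the word ``simple.'' What you establish is that the \emph{geometric} eigenspace of $\e^{-\lambda_{0}s}$ is one-dimensional and that no other eigenvalue lies outside the disk of radius $\e^{-\omega s}$. Simplicity in the sense needed later in the paper (the resolvent has a \emph{simple pole} at $-\lambda_{0}(K)$ with a rank-one residue, Theorem \ref{ancien}) is \emph{algebraic} simplicity: one must also rule out a Jordan block, i.e.\ a generalized eigenvector $f$ with $(\sg_{s}^{\dagger}-\e^{-\lambda_{0}s})f=\qsd_{\sK}$ but $f$ not proportional to $\qsd_{\sK}$. The paper handles this by a separate contradiction argument: if such an $f$ existed, then
\[
{\sg_{s}^{\dagger}}^{n}f=n\,\e^{-(n-1)\lambda_{0}s}\,\qsd_{\sK}+\e^{-n\lambda_{0}s}\,f,
\]
and comparing again with $\rangun_{ns}f$ via Corollary \ref{maincor}, using $\omega>\lambda_{0}$, forces $f$ to be proportional to $\qsd_{\sK}$ after all --- a contradiction. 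Your proposal omits this step entirely, so as written it proves only geometric simplicity. The missing piece is short and uses the same tools you already invoke, but it is needed for the statement as the paper uses it.
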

\begin{proof}
Assume $f$ is an eigenvector of $\sg_{s}^{\dagger}$ with eigenvalue $z$
such that $|z|>\exp(-\omega s)$. From 
\[
{\sg_{s}^{\dagger}}^{n}f=z^{n}\;f,
\] 
we get using Corollary \ref{maincor} and the semi-group property
\begin{align*}
\big\|z^{n}\;f-\rangun_{n s}f\big\|_{\lun}
& =\big\|{\sg_{s}^{\dagger}}^{n}f-\rangun_{n s}f\big\|_{\lun} \\
& =\big\|\sg_{n s}^{\dagger}f-\rangun_{n s}f\big\|_{\lun} \\
& \le \cte{maincor}
\e^{-n\,\omega s}\|f\|_{\lun}\,.
\end{align*}
In other words
\[
\big\|f-z^{-n}\,\rangun_{n s}f\big\|_{\lun}
\le \cte{maincor}\, |z|^{-n}
\e^{-n \omega s} \|f\|_{\lun}\,.
\]
If $|z| > \exp(-\omega s)$ the right-hand side tends to zero when
$n$ tends to infinity and $f$ must be proportional to $\qsd_{\sK}$
(since $\rangun_{n s}f$ is proportional to  $\qsd_{\sK}$)
which is an eigenvector of $\sg_{s}^{\dagger}$ with eigenvalue
$\exp(-\lambda_{0}s)$. 

We now prove (by contradiction) that the equation
\[
\sg_{s}^{\dagger}f-\e^{-\lambda_{0} s}f=\qsd_{\sK}
\]
has no solution. Assume there exists such an $f\in \lun$ (which is
necessarily non zero).  We get
\[
{\sg_{s}^{\dagger}}^{n}f=n\,\e^{-(n-1)\lambda_{0}s}\, \qsd_{\sK}+\e^{-n\lambda_{0} s}\,f. 
\]
Therefore using again Corollary \ref{maincor} and the semi-group property we obtain
\[
\left\|n\,e^{-(n-1)\lambda_{0}s}\,\qsd_{\sK}+e^{-n\lambda_{0}s}f-\rangun_{n s}f\right\|_{\lun}
\le \cte{maincor}\;\e^{-n\,\omega s}\;\|f\|_{\lun}
\]
which implies
\[
\left\|f+n\,\e^{\lambda_{0}s}\qsd_{\sK}-
\e^{n\lambda_{0}s}\rangun_{n s}f\right\|_{\lun}
\le \cte{maincor}\;
\e^{-n\,(\omega-\lambda_{0}) s}\,.
\] 
Since $\omega>\lambda_{0}$ the right hand side tends to zero when $n$
tends to infinity and we deduce that $f$ must be proportional to
$\qsd_{\sK}$, a contradiction.
\end{proof}

The following result completes the description of the spectrum of
$\sg_{s}^{\dagger}$ outside the disk in the complex plane 
of radius $\exp(-\omega  s)$.

\begin{proposition}\label{specpt}
For any $s>0$ the operator $\sg_{s}^{\dagger}$ \textup{(}as an operator in $\lun$\textup{)} 
has spectral radius $\exp(-\lambda_{0}\,s)$
and essential spectral radius at most $\exp(-\omega s)$. 
Outside the disk $|z|\le \exp(-\omega\,s)$ the spectrum consists of only one  simple
eigenvalue $\exp(-\lambda_{0}s)$ with eigenvector $\qsd_{\sK}$.
\end{proposition}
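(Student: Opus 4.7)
The plan is to combine Corollary~\ref{maincor} applied to iterates of $\sg_{s}^{\dagger}$ with Weyl/Riesz--Schauder theory of compact perturbations and then invoke Lemma~\ref{evpt} to identify the point spectrum outside a small disk.

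First I would control the essential spectral radius. By the semigroup property $(\sg_{s}^{\dagger})^{n}=\sg_{n s}^{\dagger}$, so for every $n\ge 1$ Corollary~\ref{maincor} gives
\[
\bigl\|(\sg_{s}^{\dagger})^{n}-\rangun_{n s}\bigr\|_{\lun}\le \cte{maincor}\,\e^{-\omega n s}.
\]
Since $\rangun_{n s}$ is a rank-one (hence compact) operator on $\lun$, the essential norm (distance to compact operators) satisfies $\|(\sg_{s}^{\dagger})^{n}\|_{\mathrm{ess}}\le \cte{maincor}\e^{-\omega n s}$. Taking the $n$-th root and letting $n\to\infty$ yields $r_{\mathrm{ess}}(\sg_{s}^{\dagger})\le \e^{-\omega s}$, which is the essential spectral radius bound.

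Next I would describe the spectrum outside the disk $\{|z|\le\e^{-\omega s}\}$. By the Riesz--Schauder/Weyl theorem for bounded operators on a Banach space, any point of the spectrum $\sigma(\sg_{s}^{\dagger})$ with modulus strictly larger than $r_{\mathrm{ess}}(\sg_{s}^{\dagger})$ is an isolated eigenvalue of finite algebraic multiplicity. Each such eigenvalue $z$ in particular satisfies $\sg_{s}^{\dagger}f=zf$ for some nonzero $f\in\lun$ with $|z|>\e^{-\omega s}$, so Lemma~\ref{evpt} forces $z=\e^{-\lambda_{0}s}$ and $f\propto \qsd_{\sK}$. Thus the only spectral point outside the disk $|z|\le \e^{-\omega s}$ is $\e^{-\lambda_{0}s}$, and its geometric multiplicity is one.

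To see that $\e^{-\lambda_{0}s}$ is algebraically simple, I would use the second part of Lemma~\ref{evpt}, which precisely rules out a Jordan chain by showing the equation $(\sg_{s}^{\dagger}-\e^{-\lambda_{0}s})f=\qsd_{\sK}$ has no solution in $\lun$; the kernel of $(\sg_{s}^{\dagger}-\e^{-\lambda_{0}s})^{2}$ therefore coincides with its kernel, namely $\mathbb{C}\,\qsd_{\sK}$. Finally, since by Theorem~\ref{pertemasse} $\lambda_{0}\le\e^{-d_{2}K}$ while $\omega\ge d_{3}/\log K$, for $K$ large we have $\lambda_{0}<\omega$, so $\e^{-\lambda_{0}s}>\e^{-\omega s}$ lies strictly outside the essential spectrum, and hence the spectral radius of $\sg_{s}^{\dagger}$ equals $\e^{-\lambda_{0}s}$. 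I do not anticipate a genuine obstacle beyond recalling the standard Riesz--Schauder theorem; the only point requiring a little care is the quantitative comparison $\e^{-\lambda_{0}s}>\e^{-\omega s}$, which is handled by the two-sided estimates on $\lambda_{0}(K)$ and $\omega(K)$ already available.
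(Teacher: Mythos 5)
Your proposal is correct and follows essentially the same route as the paper: bound the essential spectral radius via the rank-one approximations $\rangun_{ns}$ of $(\sg_{s}^{\dagger})^{n}$ furnished by Corollary \ref{maincor}, then invoke Riesz--Schauder theory and Lemma \ref{evpt} to identify the unique simple eigenvalue $\e^{-\lambda_{0}s}$ outside the disk $|z|\le\e^{-\omega s}$. The only difference is that you spell out the algebraic-simplicity and spectral-radius steps that the paper leaves implicit.
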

\begin{proof}
From Corollary \ref{maincor} and the semi-group property, 
we have for any integer $n$
\[
\big\|\rangun_{n s}-{\sg_{s}^{\dagger}}^{n}\big\|_{\lun}
\le \cte{maincor} \e^{-n \omega s}.
\]
Therefore  since all operators $\rangun_{ns}$ have rank one and
therefore are compact, it follows from Corollaries  I.4.9 and I.4.11 in \cite{EE} (page 44) that the essential spectral radius of
$\sg_{s}^{\dagger}$ is at most $\exp(-\omega s)$. The rest of the proposition follows from Lemma  \ref{evpt} since outside of the essential spectrum, the spectrum can only consists of isolated eigenvalues with finite
algebraic and geometric multiplicities.
\end{proof}

\begin{proposition}\label{specdual}
For any $s>0$,
the operator $\sg_{s}$ acting in $\linf$ 
has spectral radius $\exp(-\lambda_{0}\,s)$
and essential spectral radius at most $\exp(-\omega s)$.
Outside the disk $|z|\le \exp(-\omega \,s)$ the spectrum consists of only one
eigenvalue $\exp(-\lambda_{0}s)$ with a simple strictly positive 
eigenvector $\vp_{\sK}$ satisfying $\qsd_{\sK}(\vp_{\sK})=1$ and independent of $s$.
\end{proposition}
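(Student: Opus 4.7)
My plan is to obtain Proposition \ref{specdual} from Proposition \ref{specpt} by a duality argument, viewing $\sg_s$ on $\linf$ as the Banach space adjoint of $\sg_s^\dagger$ on $\lun$, and then to construct $\vp_\sK$ explicitly in terms of the non-extinction probabilities $h_t(\vecn)=\proba_\vecn(t<T_\vecz)$.

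First, I would dualize Corollary \ref{maincor}: since $\rangun_t \mu=\langle h_t,\mu\rangle\qsd_\sK$, its adjoint is the rank-one operator $\rangun_t^{*}f=\qsd_\sK(f)\,h_t$ on $\linf$, and
\[
\big\|\rangun_t^{*}-\sg_t\big\|_{\linf}=\big\|\rangun_t-\sg_t^{\dagger}\big\|_{\lun}\le\cte{maincor}\e^{-\omega t}.
\]
Because $\rangun_t^{*}$ is compact, the same quotation of Corollaries I.4.9 and I.4.11 of \cite{EE} used in Proposition \ref{specpt} bounds the essential spectral radius of $\sg_s$ by $\e^{-\omega s}$. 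The full spectrum is preserved under taking adjoints, so $\sigma(\sg_s)=\sigma(\sg_s^{\dagger})$; in particular the spectral radius equals $\e^{-\lambda_0 s}$ and outside $|z|\le\e^{-\omega s}$ the only element of $\sigma(\sg_s)$ is $\e^{-\lambda_0 s}$, with the same algebraic multiplicity as for $\sg_s^{\dagger}$, namely one (this uses that the Riesz projection associated with an isolated eigenvalue of $T^{*}$ is the adjoint of that for $T$, so the ranks agree).

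Next, I would construct the eigenvector. Apply the adjoint estimate to $f=h_s$, using $\qsd_\sK(h_s)=\proba_{\qsd_\sK}(s<T_\vecz)=\e^{-\lambda_0 s}$ and the Markov identity $\sg_t h_s=h_{t+s}$, to get
\[
\big|h_{t+s}(\vecn)-\e^{-\lambda_0 s}h_t(\vecn)\big|\le \cte{maincor}\e^{-\omega t}.
\]
Multiplying by $\e^{\lambda_0(t+s)}$ shows that $\big(\e^{\lambda_0 t}h_t\big)_{t\ge 0}$ is Cauchy in $\linf$ (since $\omega>\lambda_0$ for $K$ large by Theorem \ref{pertemasse}), so its uniform limit
\[
\vp_\sK(\vecn):=\lim_{t\to\infty}\e^{\lambda_0 t}\,\proba_\vecn(t<T_\vecz)
\]
exists in $\linf$ and is manifestly independent of $s$. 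The Markov property $h_{t+s}(\vecn)=\esperance_\vecn[h_t(\vecN^\sK(s))\,\un_{s<T_\vecz}]$, together with the uniform convergence (which legitimates passing to the limit inside the expectation), yields $\vp_\sK(\vecn)=\e^{\lambda_0 s}\sg_s\vp_\sK(\vecn)$, i.e. $\sg_s\vp_\sK=\e^{-\lambda_0 s}\vp_\sK$ for every $s>0$. The normalization is immediate: $\qsd_\sK(\e^{\lambda_0 t}h_t)=1$ for every $t$, and uniform convergence gives $\qsd_\sK(\vp_\sK)=1$.

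Finally, simplicity of the eigenvalue $\e^{-\lambda_0 s}$ of $\sg_s$ has already been noted above. For strict positivity, $\qsd_\sK(\vp_\sK)=1$ forces $\vp_\sK\not\equiv 0$, so there exists $\vecm\in\domaine$ with $\vp_\sK(\vecm)>0$; for any other $\vecn\in\domaine$, Hypothesis \eqref{cond:mort} and the fact that $B_j,D_j$ are not simultaneously zero off the origin imply that $\proba_\vecn(\vecN^\sK(s)=\vecm,\,s<T_\vecz)>0$ for every $s>0$, and the invariance identity $\vp_\sK(\vecn)=\e^{\lambda_0 s}\esperance_\vecn[\vp_\sK(\vecN^\sK(s))\un_{s<T_\vecz}]$ then gives $\vp_\sK(\vecn)>0$. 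The main obstacle is really step two: one has to check carefully that the duality between Riesz projections does transfer simplicity (not merely the fact that $\e^{-\lambda_0 s}$ is isolated), and that the uniform limit defining $\vp_\sK$ commutes with $\sg_s$ and with integration against $\qsd_\sK$; both rely crucially on $\omega(K)>\lambda_0(K)$ for $K$ large, which is guaranteed by Theorem \ref{pertemasse}.
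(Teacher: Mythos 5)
Your argument is correct and follows essentially the same route as the paper: transfer the spectral picture from $\sg_s^{\dagger}$ on $\lun$ to $\sg_s$ on $\linf$ by duality (the paper invokes Theorem IX.1.1 of \cite{EE} together with Proposition \ref{specpt}, which amounts to your adjoint/Riesz-projection argument), and then realize the eigenvector as the limit of $\e^{\lambda_{0}t}\,\proba_{(\,\cdot\,)}(t<T_{\vecz})=\e^{\lambda_{0}t}\sg_{t}\fun$, normalized by $\qsd_{\sK}(\vp_{\sK})=1$. The only cosmetic differences are that the paper takes the limit along the discrete times $ns$ and then deduces independence of $s$ from simplicity and positivity (whereas your continuous-time limit gives it for free), and that it defers strict positivity to the quantitative bound of Proposition \ref{normevp} rather than arguing via irreducibility.
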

\begin{proof}
The result follows from Theorem IX.1.1 in \cite{EE} and 
Proposition \ref{specpt} except for the
properties of the eigenvector. We have (where $\fun$ is the constant
function one on $\domaine$) 
\[
\lim_{n\to \infty} \e^{n\,\lambda_{0} s}\sg_{n\,s}\fun= v_{s},
\]
From $\langle \qsd_{\sK}\,,\, \fun\rangle =1$, we conclude that 
$\langle \qsd_{\sK}\,,\, v_{s}\rangle =1$, and hence $v_{s}$
is an eigenvector of $\sg_{s}$ with eigenvalue 
$\exp(-\lambda_{0}s)$.  Since the operator $\sg_{s}$ maps
positive functions to positive functions we conclude that $v_{s}$
is positive. 

Let $t'>0$ not being an integer multiple of $s$. By the semi-group
property we have 
\[
\sg_{s}\big(\sg_{t'}\,v_{s}\big)= \e^{-\lambda_{0}\,s} \sg_{t'}\,v_{s}.
\]
Since $\sg_{t'}\,v_{s}$ is positive and the eigenvalue
$\exp(-\lambda_{0}s)$ of $\sg_{s}$ is simple, this function must be
proportional to $v_{s}$. From
\[
\langle \qsd_{\sK}\,,\,\sg_{t'} v_{s}\rangle= \e^{-\lambda_{0} t'}
\]
we conclude that
\[
\sg_{t'}\,v_{s}= \e^{-\lambda_{0}\,t'} v_{s}\,.
\]
The independence of $v_{s}$ on $s$ follows and we denote this vector by $\vp_{\sK}$.
\end{proof}

\begin{proposition}\label{normevp}
There exists a positive constant $\cte{normevp}$ such that
\[
\|\vp_{\sK}\|_{\linf}\le 1+\e^{-\cte{normevp}K}\,.
\]
Moreover
\[
\inf_{\vecn\in\domaine}\vp_{\sK}(\vecn)\ge c
\]
where the constant $c$ is defined in Theorem  \ref{Champ-Vill}.
\end{proposition}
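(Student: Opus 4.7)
The plan is to use the spectral identity
\[
\big|\,\e^{\lambda_{0}(K)\,t}\,\proba_{\vecn}(t<T_{\vecz})-\vp_{\sK}(\vecn)\big|\ \le\ \cte{maincor}\,\e^{-(\omega(K)-\lambda_{0}(K))\,t}\,\|\vp_{\sK}\|_{\linf},
\]
valid for every $\vecn\in\domaine$ and every $t>0$, as the workhorse for both bounds. I will derive it by applying Corollary \ref{maincor} to $\delta_{\vecn}\in\lun$ and pairing the estimate against $\vp_{\sK}\in\linf$ (which is a well-defined element of $\linf$ by Proposition \ref{specdual}), using $\langle \sg_{t}^{\dagger}\delta_{\vecn},\vp_{\sK}\rangle=\sg_{t}\vp_{\sK}(\vecn)=\e^{-\lambda_{0}(K)t}\vp_{\sK}(\vecn)$ and $\langle\rangun_{t}\delta_{\vecn},\vp_{\sK}\rangle=\qsd_{\sK}(\vp_{\sK})\,\proba_{\vecn}(t<T_{\vecz})=\proba_{\vecn}(t<T_{\vecz})$. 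In particular this identity yields the representation $\vp_{\sK}(\vecn)=\lim_{t\to\infty}\e^{\lambda_{0}(K)t}\proba_{\vecn}(t<T_{\vecz})$.

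For the lower bound I will pass to the limit in Conditions \eqref{cond-A1} and \eqref{cond-A2}. In the form actually proved in Section \ref{a1}, \eqref{cond-A1} reads $\proba_{\vecn}(\vecN^{\sK}(t_{0})\in A,\,t_{0}<T_{\vecz})\ge c_{1}\nu(A)$ (before passing to conditional probability), so by the Markov property
\[
\proba_{\vecn}(t+t_{0}<T_{\vecz})\ \ge\ c_{1}\,\proba_{\nu}(t<T_{\vecz})\qquad\forall\,\vecn\in\domaine,\ t\ge 0.
\]
Multiplying by $\e^{\lambda_{0}(K)(t+t_{0})}$ and letting $t\to\infty$ gives $\vp_{\sK}(\vecn)\ge c_{1}\e^{\lambda_{0}(K)t_{0}}\nu(\vp_{\sK})\ge c_{1}\nu(\vp_{\sK})$. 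Passing similarly to the limit in \eqref{cond-A2} yields $\nu(\vp_{\sK})\ge c_{2}\vp_{\sK}(\vecn)$ for every $\vecn$, hence $\nu(\vp_{\sK})\ge c_{2}\|\vp_{\sK}\|_{\linf}$. Chaining the two inequalities produces $\vp_{\sK}(\vecn)\ge c_{1}c_{2}\|\vp_{\sK}\|_{\linf}$ uniformly in $\vecn$, and the normalization $\qsd_{\sK}(\vp_{\sK})=1$ forces $\|\vp_{\sK}\|_{\linf}\ge 1$. Since the constant $c$ of Theorem \ref{Champ-Vill} is precisely $c_{1}c_{2}$, this proves $\vp_{\sK}(\vecn)\ge c$.

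For the upper bound, the workhorse identity combined with the trivial estimate $\proba_{\vecn}(t<T_{\vecz})\le 1$ gives, after taking the supremum in $\vecn$,
\[
\|\vp_{\sK}\|_{\linf}\ \le\ \frac{\e^{\lambda_{0}(K)t}}{1-\cte{maincor}\,\e^{-(\omega(K)-\lambda_{0}(K))t}}.
\]
I will choose $t=t(K)=C\,K\log K$ with $C$ large enough that $(\omega(K)-\lambda_{0}(K))\,t(K)\ge 2K$. Invoking $\omega(K)\ge d_{3}/\log K$ from Corollary \ref{maincor} and $\lambda_{0}(K)\le \e^{-d_{2}K}$ from Theorem \ref{pertemasse}, the denominator differs from $1$ by at most $\cte{maincor}\e^{-2K}$, while $\e^{\lambda_{0}(K)t(K)}\le \exp(C\,K\log K\,\e^{-d_{2}K})=1+O(K\log K\,\e^{-d_{2}K})$. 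Combining the two estimates yields $\|\vp_{\sK}\|_{\linf}\le 1+\e^{-\cte{normevp}K}$ for any $\cte{normevp}<d_{2}$ and $K$ sufficiently large.

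The main obstacle is the upper bound, which requires balancing two opposing time scales: the spectral gap $\omega(K)-\lambda_{0}(K)$ produces useful decay only at the slow rate $1/\log K$, while the factor $\e^{\lambda_{0}(K)t}$ starts to grow once $t$ approaches $1/\lambda_{0}(K)$. The exponential lower bound $\lambda_{0}(K)\le \e^{-d_{2}K}$ coming from Theorem \ref{pertemasse} is precisely what opens a window $t\sim K\log K$ in which both error sources are exponentially small in $K$. The lower bound, by contrast, is essentially a routine manipulation of the Doeblin-type inequalities already established in Section \ref{sec:qsd}.
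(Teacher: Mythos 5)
Your proof is correct and follows essentially the same route as the paper: both arguments hinge on the estimate obtained by pairing Corollary \ref{maincor} with $\vp_{\sK}$ (your ``workhorse identity'' is the paper's inequality \eqref{restep}), the upper bound is obtained in both cases by optimizing over $t\sim K\log K$ using $\lambda_{0}(K)\le\e^{-d_{2}K}$ from Theorem \ref{pertemasse} and $\omega(K)\ge d_{3}/\log K$, and the lower bound in both cases chains the unconditioned form of \eqref{cond-A1} with a $t\to\infty$ limit of \eqref{cond-A2} to get $\vp_{\sK}\ge c_{1}\nu(\vp_{\sK})\ge c_{1}c_{2}=c$. The only cosmetic difference is that you work throughout with the representation $\vp_{\sK}(\vecn)=\lim_{t\to\infty}\e^{\lambda_{0}(K)t}\proba_{\vecn}(t<T_{\vecz})$ where the paper applies the eigenvector relation $\sg_{t_{0}}\vp_{\sK}=\e^{-\lambda_{0}t_{0}}\vp_{\sK}$ directly.
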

\begin{proof}
Form Corollary \ref{maincor} it follows that  for any $t>0$
\begin{equation}\label{maincorbis}
\big\|\rangun_{t}^{\dagger}-\sg_{t}\big\|_{\linf}\le 
\cte{maincor} \e^{-\omega\,t}.
\end{equation}
From the definition of $\rangun_{t}$ and $\qsd_{\sK}(\vp_{\sK})=1$ we have
\[
\rangun_{t}^{\dagger}\vp_{\sK}=\proba_{(\,\cdot\,)}\big(t<T_{\vecz}\big).
\]
Hence
\begin{align}
\nonumber
\big\|\proba_{(\,\cdot\,)}\big(t<T_{\vecz}\big)
-\e^{-\lambda_{0}\,t}\vp_{\sK}\big\|_{\linf} &=
\big\|\rangun_{t}^{\dagger}\vp_{\sK}-\sg_{t}\vp_{\sK}\big\|_{\linf} \\
\label{restep}
& \le \cte{maincor} \e^{-\omega\,t}\; \|\vp_{\sK}\|_{\linf}.
\end{align}
Therefore if $t$ is large enough so that $\exp(-\lambda_{0}\,t)>
\cte{maincor} \exp(-\omega\,t)$ we obtain
\[
\|\vp_{\sK}\|_{\linf}\le
\frac{\big\|\proba_{(\,\cdot\,)}\big(t<T_{\vecz}\big)\big\|_{\linf}} 
{\e^{-\lambda_{0}\,t} -\cte{maincor}
  \e^{-\omega\,t}} \le \frac{1}  
{\e^{-\lambda_{0}\,t}-\,\cte{maincor} \e^{-\omega\,t}} .
\]
The first result follows by taking $t=K\,\log K$ since
$\lambda_{0}=\exp(-\Oun\,K)$ (see Theorem \ref{pertemasse}) and $\omega=\Oun/\log K$.

From the positivity of $\vp_{\sK}$ and by \eqref{eureka}
 we get (with $t_{0}=t_{0}(K)$)
\begin{equation}\label{lbc1}
\e^{-\lambda_{0}\,t_{0}}\vp_{\sK}\geq \sg_{t_{0}}(\vp_{\sK})\ge \sg_{t_{0}}(\un_{\Delta}\vp_{\sK}) \ge
c_{1}\;\nu(\vp_{\sK}).
\end{equation} 
For any $t>0$ by integrating both sides of \ref{cond-A2} against the positive
measure $\qsd_{\sK}$ we get
\[
c_{2}\; \e^{-\lambda_{0}\,t}=c_{2}\;\proba_{\qsd_{\sK}}\big(t<T_{\vecz}\big)
\le \proba_{\nu}\big(t<T_{\vecz}\big)
=\sum_{\vecm}\nu(\vecm)\;\proba_{\vecm}\big(t<T_{\vecz}\big).
\]
From the estimate \eqref{restep} and the first result we get
\[
c_{2}\;\e^{-\lambda_{0}\,t}\le
\sum_{\vecm}\nu(\vecm)\; \e^{-\lambda_{0}\,t}\vp_{\sK}(\vecm)
+\Oun \; \e^{-\omega\,t}.
\]
Multiplying by $\exp(\lambda_{0}\,t)$ and letting $t$ tend to
infinity, we get (since $\lambda_{0}<\omega$)
\[
c_{2}\le \nu(\vp_{\sK}).
\]
The second result follows by combining this estimate with the lower
bound \eqref{lbc1}
\end{proof}
For each $\vecn\in\domaine$ we define
\[
p_{\sK}(\vecn)=\vp_{\sK}(\vecn)\wedge 1.
\]

\begin{proof}[Proof of Theorem \ref{mainincon}]
Using the estimate \eqref{restep}  and Proposition \ref{normevp}, we
get for any $\vecn\in\domaine$
\begin{align*}
\left|\proba_{\vecn}\big(t<T_{\vecz}\big)
- \e^{-\lambda_{0}\,t}p_{\sK}(\vecn)\right| &\le 
\e^{-\lambda_{0}\,t}\; \e^{-\cte{normevp}\,K}
+\,\cte{maincor}\; \e^{-\omega\,t}\left(1+\e^{-\cte{normevp}\,K}\right)\\
&
\le \e^{-\cte{normevp}\,K} \e^{-\lambda_{0}\,t}+2\,\cte{maincor}\;\e^{-\omega\,t}.
\end{align*}
The result follows from Corollary \ref{maincor}
\end{proof}
\begin{proof}[Proof of Theorem \ref{ancien}]
Combining the estimates \eqref{maincorbis} and \eqref{restep} we
obtain
\[
\left\|\sg_{t}- \e^{-\omega\,t}\;\pi_{\sK}\right\|_{\linf}\le 
\cte{maincor} \e^{-\omega\,t}\;\left(2+\e^{-\cte{normevp}\,K}\right).
\]
Therefore if $\Re z>-\omega$ and $z\neq -\lambda_{0}(K)$ we have
\[
\int_{0}^{\infty}\sg_{t} \e^{-t\,z} \dd t=\frac{\pi_{\sK}}{z+\lambda_{0}(K)}+
\EuScript{M}_{z}
\]
where $\EuScript{M}_{z}$ is analytic  in $\Re z>-\omega$.
The result follows.
\end{proof}

\section{Proof of Lemma \ref{boulot}}\label{sec:pleindelemmes}

The proof of Lemma \ref{boulot} follows from a series of sublemmas.
We first estimate the contribution of $\Pi_q^2$ to $S_{q}(\vecn,\vecm)$.

\begin{sublemma}\label{moment}
For all $K$ large enough and  all $q\in[\,\Lambda_{*}-\sqrt K,\Lambda_{*}+\sqrt K\, ]$
\[
\sup_{0\le s\le q}\;\sup_{\substack{\vecn\in\Delta'\\ \vecm\in\Delta}}
\sum_{\substack{\vg\, : \,\vg(0)=\vecn,\;\vg(q)=\vecm\\ \sup_{1\le\ell\le q-1}|\vg(\ell)-\vecnf|<\sqrt K\,\log K}}
\prod_{\ell=0}^{q-1}p^{*}\big(\vg(\ell),\vg(\ell+1)\big)\;|\vg(s)-\vecnf|^{2}
\]
\[
\le \Oun\;K^{-d/2+1}.
\]
\end{sublemma}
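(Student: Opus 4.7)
The plan is to treat the expression as a second-moment computation for the symmetric walk $(\plat_{\ell})$ of \eqref{rj} associated to the bridge from $\vecn$ to $\vecm$ in $q$ steps, and to bound it via a Chapman--Kolmogorov decomposition at time $s$ together with the local limit theorem already used in the proof of Lemma \ref{Zq}.

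First I would drop the constraint $\sup_{1\le\ell\le q-1}|\vg(\ell)-\vecnf|<\sqrt K\log K$ by positivity and rewrite the sum as
\[
U_{s}(\vecn,\vecm):=\sum_{\vecw\in\zentiers^{d}} p^{*}_{s}(\vecn,\vecw)\,|\vecw-\vecnf|^{2}\,p^{*}_{q-s}(\vecw,\vecm),
\]
where $p^{*}_{j}$ denotes the $j$-step transition kernel of $\plat$. From $|\vecw-\vecnf|^{2}\le 2|\vecw-\vecn|^{2}+2|\vecn-\vecnf|^{2}$ one obtains
\[
U_{s}(\vecn,\vecm)\le 2\Sigma_{s}(\vecn,\vecm) + 2|\vecn-\vecnf|^{2}\,p^{*}_{q}(\vecn,\vecm),
\]
with $\Sigma_{s}(\vecn,\vecm):=\sum_{\vecw} p^{*}_{s}(\vecn,\vecw)\,|\vecw-\vecn|^{2}\,p^{*}_{q-s}(\vecw,\vecm)$. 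Since $\vecn\in\Delta'$ gives $|\vecn-\vecnf|^{2}=\Oun K$ and the local limit theorem yields $p^{*}_{q}(\vecn,\vecm)\le \Oun K^{-d/2}$ uniformly (because $q\in[\Lambda_{*}-\sqrt K,\Lambda_{*}+\sqrt K]$ is of order $K$), the second term is already $\Oun K^{-d/2+1}$.

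For $\Sigma_{s}(\vecn,\vecm)$ I would split on whether $s\le q/2$ or $s>q/2$. In the first case, the local limit theorem supplies $\sup_{\vecw} p^{*}_{q-s}(\vecw,\vecm)\le \Oun(q-s)^{-d/2}\le \Oun K^{-d/2}$, and the classical second-moment identity for the unconditional walk,
\[
\sum_{\vecw} p^{*}_{s}(\vecn,\vecw)\,|\vecw-\vecn|^{2} = \esperance^{*}_{\vecn}\big[|\plat_{s}-\vecn|^{2}\big]=\Oun\, s=\Oun K,
\]
uses only that the steps of $\plat$ are bounded, centered and of finite variance. In the second case I would invoke the reversibility $p^{*}(\vecu,\vecv)=p^{*}(\vecv,\vecu)$ built into \eqref{rj}, bound $\sup_{\vecw} p^{*}_{s}(\vecn,\vecw)\le \Oun K^{-d/2}$, and then
\[
\sum_{\vecw}|\vecw-\vecn|^{2}\,p^{*}_{q-s}(\vecm,\vecw)=\esperance^{*}_{\vecm}\big[|\plat_{q-s}-\vecn|^{2}\big]\le \Oun(q-s)+|\vecm-\vecn|^{2}=\Oun K,
\]
because $|\vecm-\vecn|=\Oun\sqrt K$ for $\vecm\in\Delta$ and $\vecn\in\Delta'$. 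In both cases $\Sigma_{s}(\vecn,\vecm)=\Oun K^{-d/2+1}$, uniformly in $s\in[0,q]$ and in $\vecn,\vecm$, which gives the claim.

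The only delicate input is the uniform heat-kernel bound $\sup_{\vecu,\vecw} p^{*}_{j}(\vecu,\vecw)\le \Oun j^{-d/2}$ for $j$ of order $K$, a standard consequence of the local limit theorem applied to the random walk $\plat$, whose step distribution is bounded, symmetric, and non-degenerate (since $B_{j}(\vecxf)=D_{j}(\vecxf)>0$ at the interior fixed point). I expect this to be the main technical point: the estimate must be uniform in the basepoint and across the range $j\in[c K,q]$, and some care is needed to account for the parity of $\plat$, which lives on the sub-lattice $\{\vecu:\sum_{i}u_{i}\equiv \sum_{i}(\plat_{0})_{i}+j\pmod 2\}$. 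Once this ingredient is granted --- it is the very same one already used in the proof of Lemma \ref{Zq} --- the rest of the argument is straightforward bookkeeping.
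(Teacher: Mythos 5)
Your proposal is correct and follows essentially the same route as the paper's proof: drop the path constraint by positivity, decompose at time $s$ via Chapman--Kolmogorov, use reversibility of the translation-invariant symmetric walk to put the local limit theorem bound $\Oun K^{-d/2}$ on the longer leg, and bound the remaining sum by the second moment $\Oun s=\Oun K$ of the walk started from $\vecn$ (or $\vecm$). Your added remarks on uniformity of the heat-kernel bound and on the parity sublattice are sensible technical caveats but do not change the argument.
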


\begin{proof}
We have
\begin{align*}
\MoveEqLeft
\sup_{0\le s\le q}\;\sup_{\substack{\vecn\in\Delta'\\ \vecm\in\Delta}}
\sum_{\substack{\vg\, :\,\vg(0)=\vecn,\;\vg(q)=\vecm\\ \sup_{1\le\ell\le q-1}|\vg(\ell)-\vecnf|<\sqrt K\,\log K}}
\prod_{\ell=0}^{q-1}p^{*}\big(\vg(\ell),\vg(\ell+1)\big)
\;|\vg(s)-\vecnf|^{2}\\
& \leq
\sup_{0\le s\le q}\;\sup_{\substack{\vecn\in\Delta'\\\vecm\in\Delta}}
\sum_{\vg\,:\,\vg(0)=\vecn\, :\,\vg(q)=\vecm}\; \prod_{\ell=0}^{q-1}p^{*}\big(\vg(\ell),\vg(\ell+1)\big)
\;|\vg(s)-\vecnf|^{2}.
\end{align*}
For $0 \leq s\leq q$ we have
\begin{align*}
\MoveEqLeft[4]
\sum_{\vg\,:\,\vg(0)=\vecn,\;\vg(q)=\vecm}\;
\prod_{\ell=0}^{q-1}p^{*}\big(\vg(\ell),\vg(\ell+1)\big)
\;|\vg(s)-\vecnf|^{2}\\
& =
\sum_{\vecu} \proba_{\vecn}(\plat_s=\vecu)\, |\vecu-\vecnf|^2\, \proba_{\vecu}(\plat_{q-s}=\vecm)\\
&=
\sum_{\vecu} \proba_{\vecn}(\plat_s=\vecu)\, |\vecu-\vecnf|^2\, \proba_{\vecm}(\plat_{q-s}=\vecu)
\end{align*}
where the second equality follows from the reversibility of the random walk $(\plat_{\ell})_{\ell\in \integers}$.
For $s\leq q/2$ and from the local  limit theorem \cite{Durrett} (applied to $\plat$) we have
\[
\proba_{\vecm}(\plat_{q-s}=\vecu) \le \frac{\Oun}{K^{d/2}}.
\]
It follows that 
\[
\sum_{\vg,\;\vg(0)=\vecn,\;\vg(q)=\vecm}\;
\prod_{\ell=0}^{q-1}p^{*}\big(\vg(\ell),\vg(\ell+1)\big)
\;|\vg(s)-\vecnf|^{2}\leq
\]
\[
\frac{\Oun}{K^{d/2}}\, \sum_{\vecu} \proba_{\vecn}(\plat_s=\vecu)\, |\vecu-\vecnf|^2
\leq \frac{\Oun}{K^{d/2}}\, \esperance_{\vecz}\left( |\plat_s+\vecn-\vecnf|^2 \right)\leq
\frac{\Oun}{K^{d/2-1}}.
\]
For $s>q/2$ the proof is similar (exchange the role of $\vecn$ and $\vecm$).
\end{proof}

\begin{sublemma}\label{boulot2}
There exists a constant $\cte{boulot2}>0$ independent of $K$ such that for
$K$ large enough and all $q\in[\Lambda_{*}-\sqrt K,\Lambda_{*}+\sqrt K]$
\[
K^{d/2}\;q!\;\left|\;\sum_{\substack{\vg,\;\vg(0)=\vecn,\;\vg(q)
=\vecm\\ \sup_{1\le\ell\le q-1}|\vg(\ell)-\vecnf|\le \sqrt K\,\log K}}
\int\cdots\int_{t_{0}+\cdots + t_{q-1}<1}\;
\prod_{\ell=0}^{q-1}p^{*}\big(\vg(\ell),\vg(\ell+1)\big)\right.
\]
\[
\left.
\log \Pi_q^2\big(\vg,t_{0},\ldots,t_{q-1}\big)
\prod_{\ell=0}^{q-1}\dd t_{\ell}
\vphantom{\sum_{\substack{\vg,\;\vg(0)=\vecn,\;\vg(q)=\vecm\\
\sup_{1\le\ell\le q-1}|\vg(\ell)-\vecnf|\le \sqrt K\,\log K}}
\int\cdots\int_{t_{0}+\cdots + t_{q-1}<1}}\;\right|\le \cte{boulot2}.
\]
\end{sublemma}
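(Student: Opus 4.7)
The plan is to perform the time integration explicitly, then Taylor expand the logarithm, exploiting a key cancellation arising from the window $q\in[\Lambda_{*}-\sqrt K,\Lambda_{*}+\sqrt K]$.

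Since
\[
\log\Pi_q^2 = \sum_{\ell=0}^{q-1}\log\frac{\Lambda(\vg(\ell))}{\Lambda_{*}} - \sum_{\ell=0}^{q-1}t_{\ell}\bigl(\Lambda(\vg(\ell))-\Lambda_{*}\bigr)
\]
is affine in the $t_{\ell}$, and $\int_{\sum t_{\ell}<1}\prod dt_{\ell}=1/q!$ while $\int_{\sum t_{\ell}<1}t_{\ell}\prod dt_{\ell}=1/(q+1)!$ by symmetry, the factor $q!$ in front of the sublemma cancels upon integration. Writing $W(\vg)=\prod_{\ell=0}^{q-1}p^{*}(\vg(\ell),\vg(\ell+1))$ and restricting the sum over paths as in the sublemma, the quantity to bound becomes
\[
K^{d/2}\left|\sum_{\vg}W(\vg)\left[\sum_\ell\log\frac{\Lambda(\vg(\ell))}{\Lambda_{*}}-\frac{1}{q+1}\sum_\ell\bigl(\Lambda(\vg(\ell))-\Lambda_{*}\bigr)\right]\right|.
\]

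Next I would Taylor expand using the pathwise restriction. Setting $\alpha_\ell=(\Lambda(\vg(\ell))-\Lambda_{*})/\Lambda_{*}$ and using $\Lambda(\vecn)=K\lambda(\vecn/K)$ with $\lambda$ locally Lipschitz, on the restricted event $|\vg(\ell)-\vecnf|\le\sqrt K\log K$ we obtain $|\alpha_\ell|\le C|\vg(\ell)-\vecnf|/K\le C\log K/\sqrt K$. Hence $\log(1+\alpha_\ell)=\alpha_\ell-\alpha_\ell^2/2+O(\alpha_\ell^3)$, and the bracket above equals
\[
\underbrace{\bigl(1-\tfrac{\Lambda_{*}}{q+1}\bigr)\sum_\ell\alpha_\ell}_{T_1}\;-\;\tfrac12\underbrace{\sum_\ell\alpha_\ell^2}_{T_2}\;+\;R,\qquad |R|\le C\tfrac{\log K}{\sqrt K}\,T_2.
\]

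The decisive point is that $q\in[\Lambda_{*}-\sqrt K,\Lambda_{*}+\sqrt K]$ forces $|1-\Lambda_{*}/(q+1)|\le C/\sqrt K$, which prevents the otherwise-dominant $T_1$ from blowing up. Quantitatively, Sublemma \ref{moment} gives $K^{d/2}\sum_\vg W(\vg)|\vg(\ell)-\vecnf|^2=O(K)$ for each $\ell$, so the $T_2$-contribution is at most $(C/K^2)\cdot q\cdot O(K)=O(1)$. For $T_1$, Cauchy--Schwarz combined with Lemma \ref{Zq}(i) and Sublemma \ref{moment} yields
\[
\sum_\vg W(\vg)|\vg(\ell)-\vecnf|\le\Bigl(\sum_\vg W(\vg)\Bigr)^{1/2}\Bigl(\sum_\vg W(\vg)|\vg(\ell)-\vecnf|^2\Bigr)^{1/2}=O(K^{-d/2+1/2}),
\]
whence $K^{d/2}\sum_\vg W\sum_\ell|\alpha_\ell|\le (C/K)\cdot q\cdot O(\sqrt K)=O(\sqrt K)$, and the $T_1$-contribution is $O(1/\sqrt K)\cdot O(\sqrt K)=O(1)$. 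The remainder $R$ contributes at most $O(\log K/\sqrt K)\cdot O(1)=o(1)$, yielding the claimed uniform bound by some constant $\cte{boulot2}$.

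The main obstacle I anticipate is identifying this cancellation correctly: one must recognize $\log\Pi_q^2$ as a log-likelihood ratio in which the linear-in-$\alpha_\ell$ contribution is balanced, up to an error of order $1/\sqrt K$, by the "time" term $\sum_\ell t_\ell(\Lambda(\vg(\ell))-\Lambda_{*})$ precisely because the number of jumps $q$ has been constrained to lie within $\sqrt K$ of its mean $\Lambda_{*}$. Without exploiting this window, $T_1$ would be of order $\sqrt K$ on typical paths and the bound would fail.
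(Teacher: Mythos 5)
Your proposal is correct and follows essentially the same route as the paper: the same explicit time integration (using $\int_{\sum t_\ell<1}t_s\prod\dd t_\ell=1/(q+1)!$), the same second-order Taylor expansion of $\log(\Lambda(\vg(\ell))/\Lambda_*)$, the same decisive observation that $|q-\Lambda_*|\le\sqrt K$ makes the coefficient of the linear term $O(K^{-1/2})$, and the same reliance on Sublemma \ref{moment} for the quadratic term. The only (immaterial) difference is that you control the linear term via Cauchy--Schwarz over the path measure together with Lemma \ref{Zq}(i), whereas the paper uses the pointwise inequality $|ab|\le\tfrac12(a^2+b^2)$ to fold the cross term into the deterministic $O(K^{-1})$ piece and the same second moment.
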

\begin{proof}
We first observe that for all $0\le s\le q-1$
\begin{align*}
& \int\cdots\int_{t_{0}+\cdots
  t_{q-1}<1}\;t_{s}\;\prod_{\ell=0}^{q-1}\dd t_{\ell}\\
& =\int\cdots\int_{t_{0}+\cdots+ t_{q-1}<1}\;t_{0}\;\prod_{\ell=0}^{q-1}\dd t_{\ell}\\
&=\int_{0}^{1}t_{0}\;\dd t_{0}\;\int\cdots\int_{t_{1}+\cdots +t_{q-1}<1-t_{0}}\prod_{\ell=1}^{q-1}\dd t_{\ell}\\
& =\frac{1}{(q-1)!}\;\int_{0}^{1}t_{0}(1-t_{0})^{q-1}\;\dd t_{0}\\
& =\frac{1}{(q-1)!}\;\left(\int_{0}^{1}(1-t_{0})^{q-1}\;\dd t_{0}-\int_{0}^{1}(1-t_{0})^{q}\;\dd t_{0}\right)\\
& =\frac{1}{(q-1)!}\;\left(\frac{1}{q}-\frac{1}{(q+1)}\right) =\frac{1}{(q+1)!}.
\end{align*}
Therefore we have to estimate
\[
K^{d/2}\sum_{\substack{\vg,\;\vg(0)=\vecn,\;\vg(q)=\vecm\\ \sup_{1\le\ell\le q-1}|\vg(\ell)-\vecnf|\le \sqrt K\,\log K}}
\;\prod_{\ell=0}^{q-1}p^{*}\big(\vg(\ell),\vg(\ell+1)\big)\;\times
\]
\[
\hskip 1cm\sum_{\ell=0}^{q-1}
\left[\log\left(\frac{\Lambda\big(\vg(\ell)\big)}{\Lambda_{*}}\right)
-\frac{1}{q+1}\,\big(\Lambda \big(\vg(\ell)\big)-\Lambda_{*}\big)\right].
\]
Since $|\vg(\ell)-\vecnf|\le \sqrt K\log K$ one has
\[
\log\left(\frac{\Lambda\big(\vg(\ell)\big)}{\Lambda_{*}}\right)=
\frac{\Lambda\big(\vg(\ell)\big)-\Lambda_{*}}{\Lambda_{*}}-
\frac{\big(\Lambda\big(\vg(\ell)\big)-\Lambda_{*}\big)^{2}}{2\,\Lambda_{*}^{2}}
+\mathcal{O}\big(K^{-3/2}\,(\log K)^{3}\big).
\]
Therefore,
\begin{align*}
\MoveEqLeft \log\left(\frac{\Lambda\big(\vg(\ell)\big)}{\Lambda_{*}}\right)
-\frac{1}{q+1}\,\big(\Lambda \big(\vg(\ell)\big)-\Lambda_{*}\big)\\
&=
\frac{\big(\Lambda\big(\vg(\ell)\big)-\Lambda_{*}\big)\,
\big(q+1-\Lambda_{*}\big)}{\Lambda_{*}\;(q+1)}-
\frac{\big(\Lambda\big(\vg(\ell)\big)-\Lambda_{*}\big)^{2}}{2\,\Lambda_{*}^{2}}
+\mathcal{O}\big(K^{-3/2}\,(\log K)^{3}\big).
\end{align*}
This implies using the binomial inequality that
\begin{align*}
\left|\log\left(\frac{\Lambda\big(\vg(\ell)\big)}{\Lambda_{*}}\right)
-\frac{1}{q+1}\,\big(\Lambda \big(\vg(\ell)\big)-\Lambda_{*}\big)\right|
&\le
\frac{\big(q+1-\Lambda_{*}\big)^{2}}{2(q+1)^{2}}+
\frac{\big(\Lambda\big(\vg(\ell)\big)-\Lambda_{*}\big)^{2}}{\Lambda_{*}^{2}}\\
&
\le \frac{\big(\Lambda\big(\vg(\ell)\big)-\Lambda_{*}\big)^{2}}{\Lambda_{*}^{2}}
+\mathcal{O}\big(K^{-1}\big)\,.
\end{align*}
The result follows from Sublemma \ref{moment}, the Lipschitz continuity  of
$\lambda$ and the fact that $\lambda(x_{*})$ is independent of $K$. (These quantities are defined
in \eqref{def-petit-lambda}.)
\end{proof}

Let us now estimate the contribution of $\Pi_q^3$ to $S_{q}(\vecn,\vecm)$.

\begin{sublemma}\label{lem:cancelation}
For all $K$ large enough
\[
\sup_{\substack{\vecw \in \Delta',\, \vecv\in\Delta,\\\,1\leq j\leq d,\, q/3\leq \ell \leq q}}
\sum_{\vecu} 
\proba_{\vecv}(\plat_{q-\ell-1}=\vecu)
\,\big|\proba_{\vecw}(\plat_\ell=\vecu)-\proba_{\vecw}(\plat_\ell=\vecu\pm \vece{j})\big|\,
\|\vecu-\vecnf\|
\]
\[
\leq \frac{\Oun}{K^{d/2}}.
\]
\end{sublemma}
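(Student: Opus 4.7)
The plan is to bound the discrete gradient $|\proba_{\vecw}(\plat_\ell=\vecu)-\proba_{\vecw}(\plat_\ell=\vecu\pm\vece{j})|$ by a gradient version of the local central limit theorem for $\plat$, and then to dispose of the remaining sum over $\vecu$ using a first-moment estimate on $\plat_{q-\ell-1}$ under $\proba_{\vecv}$.

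The walk $\plat$ has jumps $\pm\vece{j}$ with probability $r_j=B_j(\vecxf)/\lambda(\vecxf)>0$, hence is symmetric, mean-zero, with non-degenerate covariance $\mathrm{diag}(2r_1,\ldots,2r_d)$ and characteristic function $\phi(\theta)$ satisfying $|\phi(\theta)|<1$ on $[-\pi,\pi]^d\setminus\{0\}$. Taking the discrete difference in the Fourier representation
\[
\proba_{\vecz}(\plat_m=\vecu)=\frac{1}{(2\pi)^d}\int_{[-\pi,\pi]^d}\phi(\theta)^m\,\e^{-\ii\theta\cdot\vecu}\,\dd\theta
\]
introduces an extra factor $1-\e^{\mp\ii\theta_j}=\Oun\,\theta_j$ in the integrand. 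A standard saddle-point argument (splitting into a shrinking neighborhood of $\theta=0$ where $\phi(\theta)^m$ looks Gaussian, and a complement on which $|\phi(\theta)|^m$ decays exponentially in $m$) then yields the gradient local CLT
\[
\sup_{\vecu\in\zentiers^d}\bigl|\proba_{\vecz}(\plat_m=\vecu)-\proba_{\vecz}(\plat_m=\vecu\pm\vece{j})\bigr|\le \frac{C}{m^{(d+1)/2}},\qquad m\ge m_0,
\]
with $C$ depending only on the walk. This refines the usual local CLT bound $Cm^{-d/2}$ by the expected factor $m^{-1/2}$ coming from the $\theta_j$ factor.

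I would then apply this with $m=\ell$ (using translation invariance to start from $\vecw$ instead of $\vecz$), which gives a uniform bound $C/K^{(d+1)/2}$ since $\ell\ge q/3=\Oun K$. Plugging into the target sum and using $\|\vecu-\vecnf\|\le\|\vecu-\vecv\|+\|\vecv-\vecnf\|$ together with $\|\vecv-\vecnf\|\le 2\crho{tondeuse}\sqrt K$ (because $\vecv\in\Delta$), we reduce to
\[
\frac{C}{K^{(d+1)/2}}\Bigl(\esperance_{\vecz}[\|\plat_{q-\ell-1}\|]+2\crho{tondeuse}\sqrt K\Bigr).
\]
Since $q-\ell-1\le 2q/3=\Oun K$ and $\plat$ has bounded step, Cauchy--Schwarz gives $\esperance_{\vecz}[\|\plat_{q-\ell-1}\|]\le (2\sum_j r_j\,(q-\ell-1))^{1/2}=\Oun\sqrt K$, so the whole expression is $\Oun/K^{d/2}$, as wanted.

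The main obstacle is establishing the uniform-in-$\vecu$ gradient local CLT cleanly: a naive approach via differentiation of the Gaussian leading term in a pointwise local CLT produces a Gaussian-times-polynomial bound whose summation against $\|\vecu-\vecnf\|$ needs extra moment estimates, whereas the Fourier route isolates the $1-\e^{\mp\ii\theta_j}$ factor directly and delivers the bound uniformly in $\vecu$ with no tail to control separately. The marginal case $\ell=q$ (for which $q-\ell-1=-1$ is formal) can either be excluded from the supremum or handled by inspection of the single term $\vecu=\vecv$, for which $\|\vecv-\vecnf\|\le 2\crho{tondeuse}\sqrt K$ is enough.
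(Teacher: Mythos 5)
Your proposal follows essentially the same route as the paper's proof: the paper likewise writes $\proba_{\vecz}(\plat_\ell=\vecn)$ as a Fourier integral, bounds the discrete difference by inserting the factor $2|\sin(\theta_j/2)|$, splits the integral at $\|\vectheta\|\le \log K/\sqrt K$ to obtain the uniform gradient bound $\Oun\,K^{-(d+1)/2}$, and then sums against $\proba_{\vecv}(\plat_{q-\ell-1}=\cdot)\,\|\cdot-\vecnf\|$ exactly as you do, via the triangle and Cauchy--Schwarz inequalities. One caveat, which the paper's own write-up shares: since $\plat$ is a nearest-neighbour walk, its characteristic function satisfies $\phi(\pi,\ldots,\pi)=-1$, so your assertion that $|\phi(\vectheta)|<1$ on $[-\pi,\pi]^d\setminus\{\vecz\}$ is false and the complement of the small ball is not uniformly exponentially damped; moreover $\vecu$ and $\vecu\pm\vece{j}$ have opposite parities, so the difference in question equals whichever of the two probabilities is nonzero and is a priori only $\Oun\,\ell^{-d/2}$. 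A complete argument therefore has to address the period-two structure (e.g.\ by working on the appropriate sublattice or exploiting the parity constraint imposed by the factor $\proba_{\vecv}(\plat_{q-\ell-1}=\vecu)$), rather than only the neighbourhood of $\vectheta=\vecz$.
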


\begin{proof}
The reader can check that
\[
\proba_{\vecz}(\plat_\ell=\vecn)=
\frac{1}{(2\pi)^d} \int_{[-\pi,\pi]^d} \e^{-\ii \langle \vecn,\vectheta\rangle}\, \e^{-\ell S(\vectheta)} \,
\dd \vectheta
\]
where
\[
S(\vectheta)=-\log \left(2 \sum_{m=1}^d r_m \cos(\theta_m)\right).
\]
Then we have
\[
\sup_{\vecn}|\proba_{\vecz}(\plat_\ell=\vecn)-\proba_{\vecz}(\plat_\ell=\vecn \pm \vece{j})|
\leq 
\frac{2}{(2\pi)^d} \int_{[-\pi,\pi]^d}  \e^{-\ell S(\vectheta)}\, \Big|\sin \frac{\theta_j}{2} \Big| \,
\dd \vectheta.
\]
Next using a Taylor expansion of $S$ around $\underline{0}$, we have
\begin{align*}
\MoveEqLeft
\sup_{q/3\leq \ell \leq q}\sup_{\vecn}|\proba_{\vecz}(\plat_\ell=\vecn)-\proba_{\vecz}(\plat_\ell=\vecn \pm \vece{j})|
\\
& \leq
\e^{-\Oun\, (\log K)^2} +
\frac{1}{(2\pi)^d} \e^{\frac{\Oun(\log K)^4}{K}} 
\int_{\|\vectheta\| \leq \frac{\log K}{\sqrt{K}}}  \e^{-\frac{q}{3} \sum_{m=1}^d r_m \theta_m^2}\,
\left|\theta_j \right| 
\dd \vectheta\\
& \leq \frac{\Oun}{K^{\frac{d+1}{2}}}.
\end{align*}
Therefore
\begin{align*}
\MoveEqLeft
\sup_{\substack{\vecw \in \Delta',\, \vecv\in\Delta,\\\,1\leq j\leq d,\, q/3\leq \ell \leq q}}
\sum_{\vecu} 
\proba_{\vecv}(\plat_{q-\ell-1}=\vecu)\,
\big|\proba_{\vecw}(\plat_\ell=\vecu)-\proba_{\vecw}(\plat_\ell=\vecu\pm \vece{j})\big|\,
\|\vecu-\vecnf\|\\
& \leq
 \frac{\Oun}{K^{\frac{d+1}{2}}}\,\sup_{\vecv\in\Delta}\; \sup_{q/3\leq \ell \leq q}
\sum_{\vecu} 
\proba_{\vecv}(\plat_{q-\ell-1}=\vecu)\,\|\vecu-\vecnf\|\\
& =
\frac{\Oun}{K^{\frac{d+1}{2}}}\,\sup_{\vecv\in\Delta}\; \sup_{q/3\leq \ell \leq q}
\sum_{\underline z} \proba_{\vecz}(\plat_{q-\ell-1}=\underline z)\,\|\underline z+\vecv-\vecnf\|\leq
\frac{\Oun}{K^{\frac{d}{2}}}
\end{align*}
where we used the triangle inequality and Cauchy-Schwarz inequality.
\end{proof}

\begin{sublemma}\label{compense}
For all $K$ large enough and  all $q\in[\Lambda_{*}-\sqrt K,\Lambda_{*}+\sqrt K]$
\begin{align*}
\MoveEqLeft[8] \sup_{1\le r\le d}\;\sup_{\substack{\vecn\in\Delta'\\ \vecm\in\Delta}}
\Bigg|
\sum_{\substack{\vg\,:\,\vg(0)=\vecn,\;\vg(q)=\vecm\\ \sup_{1\le\ell\le q-1}|\vg(\ell)-\vecnf|<\sqrt K\log K}}
\prod_{\ell'=0}^{q-1}p^{*}\big(\vg(\ell'),\vg(\ell'+1)\big)\\
&\! \sum_{\ell=0}^{q-1} \; \sum_{j=1}^d \; \sum_{r=1}^d
\frac{1}{K\,r_{j}}
\bigg[\,
\mb_{j,r}\,(\vg(\ell)_{r}-\vecnf_{r})
\; \delta_{\gamma_j(\ell+1)-\gamma_j(\ell), 1} \\
 &\qquad\qquad\qquad\quad  +
\md_{j,r}\,(\vg(\ell)_{r}-\vecnf_{r})
\; \delta_{\gamma_j(\ell+1)-\gamma_j(\ell), -1}
\bigg]
\Bigg|\\
 & \le \Oun\, K^{-d/2}.
\end{align*}
\end{sublemma}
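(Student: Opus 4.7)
The strategy is to turn the path sum into a bilinear combination of transition densities of the symmetric walk $(\plat_k)$ by applying the Markov property and reversibility, and then to combine Sublemma~\ref{lem:cancelation} with an algebraic cancellation coming from the fixed-point condition \eqref{cond:fixedpt}. First, for each $\ell,j,r$ and sign $\pm$ I apply the Markov property at step $\ell$: the probability that the $\ell$-th step is $\pm\vece{j}$ is $r_j$, so
\[
\sum_{\vg\,:\,\vg(0)=\vecn,\,\vg(q)=\vecm}\prod_{\ell'=0}^{q-1}p^*\bigl(\vg(\ell'),\vg(\ell'+1)\bigr)(\vg_r(\ell)-n^*_r)\,\delta_{\gamma_j(\ell+1)-\gamma_j(\ell),\pm 1} = r_j\sum_{\vecu}\proba_{\vecn}(\plat_\ell=\vecu)(u_r-n^*_r)\proba_{\vecu\pm\vece{j}}(\plat_{q-\ell-1}=\vecm),
\]
and by reversibility of $(\plat_k)$ the last factor equals $\proba_{\vecm}(\plat_{q-\ell-1}=\vecu\pm\vece{j})$. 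The factor $r_j$ then cancels the $1/r_j$ in the weight. The restriction $\sup_{1\le\ell\le q-1}|\vg(\ell)-\vecnf|\le\sqrt K\log K$ is removed at cost $\e^{-\Oun(\log K)^2}$ via Hoeffding's inequality on $\plat$, exactly as in \eqref{coupe}.

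Next I split each inner bracket into $\vece{j}$-antisymmetric and $\vece{j}$-symmetric parts:
\[
\mb_{j,r}\proba_{\vecm}(\vecu+\vece{j})+\md_{j,r}\proba_{\vecm}(\vecu-\vece{j})=\tfrac12(\mb_{j,r}-\md_{j,r})\bigl[\proba_{\vecm}(\vecu+\vece{j})-\proba_{\vecm}(\vecu-\vece{j})\bigr]+\tfrac12(\mb_{j,r}+\md_{j,r})\bigl[\proba_{\vecm}(\vecu+\vece{j})+\proba_{\vecm}(\vecu-\vece{j})\bigr].
\]
The antisymmetric part is precisely the first-difference quantity controlled by Sublemma~\ref{lem:cancelation}: writing $\proba_{\vecm}(\vecu+\vece{j})-\proba_{\vecm}(\vecu-\vece{j}) = [\proba_{\vecm}(\vecu+\vece{j})-\proba_{\vecm}(\vecu)]-[\proba_{\vecm}(\vecu-\vece{j})-\proba_{\vecm}(\vecu)]$, then applying the sublemma (swapping the roles of $\ell$ and $q-\ell-1$ by reversibility in the regime $\ell<q/3$) bounds each $\vecu$-sum by $\Oun/K^{d/2}$. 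Summing over the $q=\Oun K$ values of $\ell$ and dividing by $K$ yields exactly the target bound $\Oun K^{-d/2}$ for this piece.

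For the symmetric part I further decompose $\proba_{\vecm}(\vecu+\vece{j})+\proba_{\vecm}(\vecu-\vece{j})=2\,\proba_{\vecm}(\vecu)+\bigl[\proba_{\vecm}(\vecu+\vece{j})-2\proba_{\vecm}(\vecu)+\proba_{\vecm}(\vecu-\vece{j})\bigr]$. Each second difference is $\Oun K^{-d/2-1}$ uniformly by twice differentiating the Gaussian local limit approximation of $\plat$ (each derivative costing a factor $1/\sqrt K$); paired with $\sum_{\vecu}\proba_{\vecn}(\plat_\ell=\vecu)|u_r-n^*_r|=\Oun\sqrt K$ (derived as in Sublemma~\ref{moment}), summed over $\ell$, and divided by $K$, this term is $\Oun K^{-d/2-1/2}$, well within budget. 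The truly delicate piece is the remaining ``mean-field'' contribution
\[
\frac{1}{K}\sum_{\ell=0}^{q-1}\sum_{j,r}(\mb_{j,r}+\md_{j,r})\sum_{\vecu}\proba_{\vecn}(\plat_\ell=\vecu)(u_r-n^*_r)\proba_{\vecm}(\plat_{q-\ell-1}=\vecu),
\]
which by reversibility reduces to $\frac{1}{K}\sum_\ell\sum_{j,r}(\mb_{j,r}+\md_{j,r})\proba_{\vecn}(\plat_q=\vecm)\,\esperance[\plat_{\ell,r}-n^*_r\mid\plat_0=\vecn,\plat_q=\vecm]$. A naive estimate via the bridge mean $\Oun\sqrt K$ would overshoot by a factor of $\sqrt K$; the needed cancellation comes from the identity $\sum_j r_j(\mb_{j,r}+\md_{j,r})=0$, which is a direct consequence of the fixed-point relation $B_j(\vecxf)=D_j(\vecxf)$ from \eqref{cond:fixedpt}, together with a first-order Taylor expansion of $r_j^{-1}$ around $\vecxf$ to absorb the mismatch introduced by the $1/r_j$ weight.

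\textbf{Main obstacle.} The antisymmetric bound is almost immediate from Sublemma~\ref{lem:cancelation}. The hard part is the symmetric ``mean-field'' contribution: Sublemma~\ref{lem:cancelation} is useless there, and the extra factor $\sqrt K$ must be killed by the precise algebraic structure of $\mb_{j,r}+\md_{j,r}$ at the fixed point. Carrying out this cancellation rigorously -- in particular reconciling the $r_j$-weighted identity with the $1/r_j$ normalization in the sublemma -- is the technical crux of the proof.
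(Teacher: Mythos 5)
Your overall architecture is the paper's: Markov property at step $\ell$ plus reversibility of $\plat$ to turn the path sum into $\sum_{\vecu}\proba_{\vecn}(\plat_\ell=\vecu)(u_r-n^*_r)\proba_{\vecm}(\plat_{q-\ell-1}=\vecu\pm\vece{j})$, removal of the path restriction by Hoeffding, Sublemma~\ref{lem:cancelation} to replace $\proba_{\vecm}(\plat_{q-\ell-1}=\vecu\pm\vece{j})$ by $\proba_{\vecm}(\plat_{q-\ell-1}=\vecu)$, and an algebraic cancellation over $j$ to kill the remaining term. But at what you yourself call the crux, the argument breaks. Once the $\delta$'s have been absorbed (your own first step: the probability of the prescribed step is $r_j$, cancelling the $1/r_j$), the surviving ``mean-field'' term carries the coefficient $\sum_{j}(\mb_{j,r}+\md_{j,r})$ with \emph{no} $r_j$ weight, and the identity that kills it is
\[
\sum_{j=1}^{d}(\mb_{j,r}+\md_{j,r})=\partial_{x_r}\Big(\sum_{j=1}^{d}\tfrac{B_j(\vecx)+D_j(\vecx)}{\lambda(\vecx)}\Big)\Big|_{\vecx=\vecxf}=\partial_{x_r}(1)=0,
\]
i.e.\ it is the derivative of the normalization of the jump probabilities and has nothing to do with the fixed-point relation \eqref{cond:fixedpt}. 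The identity you invoke instead, $\sum_j r_j(\mb_{j,r}+\md_{j,r})=0$, is a different weighted sum that is not generically true, and the proposed mechanism (``$B_j(\vecxf)=D_j(\vecxf)$ together with a first-order Taylor expansion of $r_j^{-1}$'') does not produce it; you also explicitly leave this step uncompleted. So the cancellation you need is actually simpler and exact, but as written your proof does not establish it.

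A secondary gap: your bounds on the antisymmetric first-difference and on the second-difference $\proba_{\vecm}(\plat_{q-\ell-1}=\vecu+\vece{j})-2\proba_{\vecm}(\plat_{q-\ell-1}=\vecu)+\proba_{\vecm}(\plat_{q-\ell-1}=\vecu-\vece{j})$ rely on the local limit theorem for the $\vecm$-walk, which requires $q-\ell-1\ge q/3$; for $\ell\ge 2q/3$ these estimates fail (e.g.\ $q-\ell-1=0$ gives a second difference of order $1$), and summing a crude bound over the $\Oun(K)$ such $\ell$ overshoots the target by $\sqrt K$. The paper handles this regime by shifting the summation variable $\vecv=\vecu\pm\vece{j}$ so that the difference falls on the $\ell$-step walk from $\vecn$ (which does satisfy $\ell\ge q/3$); this shift moves $\mp(\vece{j})_r$ into the factor $(\vecv_r\mp(\vece{j})_r-\vecnf_r)$, producing an extra diagonal term $\sum_j(-\mb_{j,j}+\md_{j,j})$-type contribution that must be (and is) bounded separately by the sup bound $\sup_{\vecu}\proba_{\vecn}(\plat_\ell=\vecu)\le\Oun K^{-d/2}$. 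You invoke the swap only for the antisymmetric piece and do not account for the extra term it generates.
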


\begin{proof}
For all $s\in\entiers$ we define
\begin{align*}
\qroba_{\vecn}(\plat_s=\vecu)
&=
\sum_{\substack{\vg\, :\,\vg(0)=\vecn,\;\vg(s)=\vecu\\ \sup_{1\le\ell\le s-1}|\vg(\ell)-\vecnf|<\sqrt K \log K}}
\prod_{\ell'=0}^{s-1}p^{*}\big(\vg(\ell'),\vg(\ell'+1)\big)\\
&= \esperance_{\vecn}
\left(\un_{\{\sup_{0\leq \ell\leq s-1} |\plat_{\ell}-\vecnf |<\sqrt{K} \log K\}}
\; \un_{\{\plat_s=\vecu\}}\right).
\end{align*}
After a simple cancellation we get
\begin{align*}
\MoveEqLeft[8] \sum_{\substack{\vg\,:\,\vg(0)=\vecn,\;\vg(q)=\vecm\\ \sup_{1\le\ell\le q-1}|\vg(\ell)-\vecnf|<\sqrt K\,\log K}}
\prod_{\ell'=0}^{q-1}p^{*}\big(\vg(\ell'),\vg(\ell'+1)\big)\\
& \!\!\!\sum_{\ell=0}^{q-1} \; \sum_{j=1}^d \; \sum_{r=1}^d \frac{1}{K r_{j}}
\bigg[\,
\mb_{j,r}\,(\vg(\ell)_{r}-\vecnf_{r})
\; \delta_{\gamma_j(\ell+1)-\gamma_j(\ell),1}\\
& \qquad\qquad\qquad\quad +
\md_{j,r}\,(\vg(\ell)_{r}-\vecnf_{r})
\; \delta_{\gamma_j(\ell+1)-\gamma_j(\ell),-1}
\bigg] \\
\MoveEqLeft[8]
=\sum_{\ell=0}^{q-1} \; \sum_{j=1}^d \; \sum_{r=1}^d \frac{1}{K} \sum_{|\vecu-\vecnf|<\sqrt{K}\log K}
\qroba_{\vecn} (\plat_\ell=\vecu)\; \times\\
\MoveEqLeft[6] \bigg[
\mb_{j,r}\,(\vecu_{r}-\vecnf_{r})\,  \qroba_{\vecu+\vece{j}}(\plat_{q-\ell-1}=\vecm)
+
\md_{j,r}\,(\vecu_{r}-\vecnf_{r})\,  \qroba_{\vecu-\vece{j}}(\plat_{q-\ell-1}=\vecm)
\bigg]\\
\MoveEqLeft[8] =\sum_{\ell=0}^{q-1} \; \sum_{j=1}^d \; \sum_{r=1}^d \frac{1}{K}\; \sum_{|\vecu-\vecnf|<\sqrt{K}\log K}
\qroba_{\vecn} (\plat_\ell=\vecu)\; \times
\end{align*}
\[
\left[
\mb_{j,r}\,(\vecu_{r}-\vecnf_{r})\,  \qroba_{\vecm}(\plat_{q-\ell-1}=\vecu+\vece{j})
+
\md_{j,r}\,(\vecu_{r}-\vecnf_{r})\,  \qroba_{\vecm}(\plat_{q-\ell-1}=\vecu-\vece{j})
\right],
\]
where we used the reversibility property of $(\plat_{\ell})_{\ell\in \integers}$ under $\qroba$.
Using  Sublemmas \ref{lem:qrobaproba} and \ref{lem:erreurproba} to bound the corrections,  it is enough
to estimate
\[
\sum_{\ell=0}^{q-1} \; \sum_{j=1}^d \; \sum_{r=1}^d \frac{1}{K}\; \sum_{\vecu}
\proba_{\vecn} (\plat_\ell=\vecu)\; \times
\]
\[
\left[
\mb_{j,r}\,(\vecu_{r}-\vecnf_{r})\,  \proba_{\vecm}(\plat_{q-\ell-1}=\vecu+\vece{j})
+
\md_{j,r}\,(\vecu_{r}-\vecnf_{r})\,  \proba_{\vecm}(\plat_{q-\ell-1}=\vecu-\vece{j})
\right].
\]
Using Sublemma \ref{lem:cancelation} we have
\begin{align*}
\MoveEqLeft
\sum_{\ell=0}^{2q/3-1} \; \sum_{j=1}^d \; \sum_{r=1}^d K^{-1}\; \sum_{\vecu}
\proba_{\vecn} (\plat_\ell=\vecu)\; \times \\
&
\left[
\mb_{j,r}\,(\vecu_{r}-\vecnf_{r})\,  \proba_{\vecm}(\plat_{q-\ell-1}=\vecu+\vece{j})
+
\md_{j,r}\,(\vecu_{r}-\vecnf_{r})\,  \proba_{\vecm}(\plat_{q-\ell-1}=\vecu-\vece{j})
\right]\\
 \MoveEqLeft =
\sum_{\ell=0}^{2q/3-1} \; \sum_{j=1}^d \; \sum_{r=1}^d K^{-1}\; \sum_{\vecu}
\proba_{\vecn} (\plat_\ell=\vecu)\; \times \\
&
\left[
\mb_{j,r}\,(\vecu_{r}-\vecnf_{r})\, +\md_{j,r}\,(\vecu_{r}-\vecnf_{r})\, \right] 
\proba_{\vecm}(\plat_{q-\ell-1}=\vecu)+\frac{\Oun}{K^{d/2}}\\
& = \frac{\Oun}{K^{d/2}}.
\end{align*}
Indeed, for all $1\leq r\leq d$, we have 
$\sum_{j=1}^d (\mb_{j,r}+\md_{j,r})=0$ since 
\[
\sum_{j=1}^d \frac{B_{j}(\vecx) +D_{j}(\vecx)}{\lambda(\vecx)}=1.
\]
We also have
\begin{align*}
\MoveEqLeft
\sum_{\ell=2q/3}^{q-1} \; \sum_{j=1}^d \; \sum_{r=1}^d K^{-1}\; \sum_{\vecu}
\proba_{\vecn} (\plat_\ell=\vecu)\; \times \\
& 
\left[
\mb_{j,r}\,(\vecu_{r}-\vecnf_{r})\,  \proba_{\vecm}(\plat_{q-\ell-1}=\vecu+\vece{j})
+
\md_{j,r}\,(\vecu_{r}-\vecnf_{r})\,  \proba_{\vecm}(\plat_{q-\ell-1}=\vecu-\vece{j})
\right]\\
\MoveEqLeft 
= \sum_{\ell=2q/3}^{q-1} \; \sum_{j=1}^d \; \sum_{r=1}^d K^{-1}\; \sum_{\vecv}
\proba_{\vecm} (\plat_{q-\ell-1}=\vecv)\; \times \\
&
\left[
\mb_{j,r}\,(\vecv_{r}-\vece{j}_r-\vecnf_{r})\,  \proba_{\vecn}(\plat_{\ell}=\vecv-\vece{j})
+
\md_{j,r}\,(\vecv_{r}+\vece{j}_r-\vecnf_{r})\,  \proba_{\vecn}(\plat_{\ell}=\vecv+\vece{j})
\right].
\end{align*}
Using again Sublemma \ref{lem:cancelation} and the same cancellation as before this is equal to
\begin{align*}
\MoveEqLeft
\frac{\Oun}{K^{d/2}} + 
\sum_{\ell=2q/3}^{q-1} \; \sum_{j=1}^d \; K^{-1}\, \sum_{\vecv}
\proba_{\vecm} (\plat_{q-\ell-1}=\vecv)\; \times\\
&
\left[
-\mb_{j,j}\,  \proba_{\vecn}(\plat_{\ell}=\vecv-\vece{j})
+
\md_{j,j}\,  \proba_{\vecn}(\plat_{\ell}=\vecv+\vece{j})
\right]=\frac{\Oun}{K^{d/2}}
\end{align*}
since
\[
\sup_{\vecu\in \zentiers^d}\sup_{\ell\geq q/3} \sup_{\vecn\in \Delta'} \proba_{\vecn}(\plat_\ell=\vecu)\leq
\frac{\Oun}{K^{d/2}},
\] using once again the local limit theorem.
\end{proof}

\begin{sublemma}\label{boulot3}
There exists a constant $\cte{boulot3}>0$ independent of $K$ such that for
$K$ large enough and all $q\in[\Lambda_{*}-\sqrt K,\Lambda_{*}+\sqrt K]$
\[
K^{d/2}\;q!\;\left|\;\sum_{\substack{\vg:\;\vg(0)
=\vecn,\;\vg(q)=\vecm\\ \sup_{1\le\ell\le q-1}|\vg(\ell)-\vecnf|\le \sqrt K\,\log K}}
\int\cdots\int_{t_{0}+\cdots + t_{q-1}<1}\;
\prod_{\ell=0}^{q-1}p^{*}\big(\vg(\ell),\vg(\ell+1)\big)\right.
\]
\[
\left.
\log \Pi_q^3\big(\vg,t_{0},\ldots,t_{q-1}\big)
\prod_{\ell=0}^{q-1}\dd t_{\ell}\vphantom{\sum_{\substack{\vg,\;\vg(0)=
\vecn,\;\vg(q)=\vecm\\ \sup_{1\le\ell\le q-1}|\vg(\ell)-\vecnf|\le \sqrt K\,\log K}}
\int\cdots\int_{t_{0}+\cdots
  t_{q-1}<1}}\;\right|\le \cte{boulot3}.
\]
\end{sublemma}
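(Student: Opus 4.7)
The plan is to exploit the fact that
\[
\log\Pi_q^3(\vg,t_0,\ldots,t_{q-1}) \;=\; -[\Lambda(\vecm)-\Lambda_*]\Big(1-\sum_{\ell=0}^{q-1}t_\ell\Big)
\]
depends on the path $\vg$ only through its (fixed) endpoint $\vecm$, and on the time variables only through $\sum_\ell t_\ell$. Unlike for $\Pi_q^1$ and $\Pi_q^2$, the sum over $\vg$ and the time integral therefore fully decouple and can be handled separately, which is why this contribution is the easiest of the three.

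First, recognizing $1-\sum_{\ell=0}^{q-1}t_\ell$ as the ``missing'' coordinate in a $(q+1)$-dimensional simplex yields
\[
\int\!\cdots\!\int_{t_0+\cdots+t_{q-1}<1}\Big(1-\sum_{\ell=0}^{q-1}t_\ell\Big)\prod_{\ell=0}^{q-1}\dd t_\ell \;=\; \frac{1}{(q+1)!}.
\]
Second, since $\vecm\in\Delta$ implies $\|\vecm-\vecnf\|\le 2\crho{tondeuse}\sqrt K$ and $\|\vecnf-K\vecxf\|\le\sqrt d$, the local Lipschitz continuity of $\lambda$ around $\vecxf$ (inherited from \eqref{lip}) yields
\[
|\Lambda(\vecm)-\Lambda_*| \;=\; K\,|\lambda(\vecm/K)-\lambda(\vecxf)| \;\le\; \Oun\sqrt K.
\]
Third, bounding the constrained sum over $\vg$ by the unconstrained one and invoking the local limit theorem for the symmetric random walk $(\plat_\ell)$ (as in Lemma \ref{Zq}), we get, uniformly in $\vecn\in\Delta'$ and $\vecm\in\Delta$,
\[
\sum_{\substack{\vg:\vg(0)=\vecn,\vg(q)=\vecm \\ \sup_{1\le\ell\le q-1}|\vg(\ell)-\vecnf|\le\sqrt K\log K}}\prod_{\ell=0}^{q-1}p^*(\vg(\ell),\vg(\ell+1)) \;\le\; \proba_{\vecn}(\plat_q=\vecm) \;\le\; \frac{\Oun}{K^{d/2}}.
\]

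Assembling the three estimates and using $q+1=\Oun K$ for $q\in[\Lambda_*-\sqrt K,\Lambda_*+\sqrt K]$, the quantity to be estimated is bounded by
\[
K^{d/2}\,q!\cdot\frac{\Oun}{K^{d/2}}\cdot\Oun\sqrt K\cdot\frac{1}{(q+1)!} \;=\; \Oun\,\frac{\sqrt K}{q+1} \;=\; \Oun\,K^{-1/2},
\]
which is bounded by a constant $\cte{boulot3}>0$ (and in fact vanishes as $K\to\infty$) uniformly in $q$, $\vecn$ and $\vecm$. I expect no serious obstacle: the $\vg$-independence of $\log\Pi_q^3$ kills the path sum at once, whereas the genuine work lies in the ratio/fluctuation estimates for $\Pi_q^2$ in Sublemma \ref{boulot2} and in the cancellation argument for $\Pi_q^1$ performed in Sublemma \ref{compense}.
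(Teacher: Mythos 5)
Your proposal is correct and follows essentially the same route as the paper: the time integral gives the factor $1/(q+1)!$, the Lipschitz bound gives $|\Lambda(\vecm)-\Lambda_*|=\Oun\sqrt K$, and the local limit theorem bounds the path sum by $\Oun K^{-d/2}$, yielding $\Oun K^{-1/2}$ overall. The only cosmetic difference is that you bound the constrained path sum directly by the unconstrained one, whereas the paper invokes \eqref{coupe} to control the discrepancy; for an upper bound your shortcut suffices.
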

\begin{proof}
By a similar computation as in the proof of Sublemma \ref{boulot2} we get
\[
q!\times \!\!\!\sum_{\substack{\vg:\;\vg(0)= \vecn,\;\vg(q)=\vecm\\ \sup_{1\le\ell\le q-1}|\vg(\ell)-\vecnf|\le \sqrt K\,\log K}}
\int\cdots\int_{t_{0}+\cdots+ t_{q-1}<1}\;
\prod_{\ell=0}^{q-1}p^{*}\big(\vg(\ell),\vg(\ell+1)\big)\quad\times
\]
\[
\big(\Lambda\big(\vecm\big)-\Lambda_{*}\big)\,\Big(1-\sum_{\ell=0}^{q-1}t_{\ell}\Big)
\prod_{\ell=0}^{q-1}\dd t_{\ell}
\]
\[
=\sum_{\substack{\vg:\;\vg(0)=\vecn,\;\vg(q)=\vecm\\ \sup_{1\le\ell\le q-1}|\vg(\ell)-\vecnf|\le \sqrt K\,\log K}}
\big(\Lambda\big(\vecm\big)-\Lambda_{*}\big)\,\left(1-q\, q!\frac{1}{(q+1)!}\right)
\;\prod_{\ell=0}^{q-1}p^{*}\big(\vg(\ell),\vg(\ell+1)\big)
\]
\[
=\sum_{\substack{\vg:\;\vg(0)=\vecn,\;\vg(q)=\vecm \\ \sup_{1\le\ell\le q-1}
|\vg(\ell)-\vecnf|\le \sqrt K\,\log K}}\;\big(\Lambda\big(\vecm\big)-\Lambda_{*}\big)\; \frac{1}{q+1}\;
\prod_{\ell=0}^{q-1}p^{*}\big(\vg(\ell),\vg(\ell+1)\big)
.
\]
We have since $\vecm\in\Delta$
\[
K^{-1}\;\big(\Lambda\big(\vecm\big)-\Lambda_{*}\big)
=\big[\lambda\big(\vecm/K\big)-
\lambda\big(\vecnf/K\big)\big]=\mathcal{O}\big(K^{-1/2}\big).
\]
Since $q$ is of order $K$ we get
\[
\left|\frac{\Lambda\big(\vecm\big)-\Lambda_{*}}{q}\right|\le
\mathcal{O}\big(K^{-1/2}\big). 
\]
The result follows from  \eqref{coupe} and from the local limit theorem applied to $\plat$.
\end{proof}

Let us finally estimate the contribution of $\Pi_q^1$. 
\begin{sublemma}\label{boulot1}
There exists a constant $\cte{boulot1}>0$ independent of $K$ such that for
$K$ large enough and all $q\in[\Lambda_{*}-\sqrt K,\Lambda_{*}+\sqrt K]$
\[
K^{d/2}\;q!\;\left|\;\sum_{\substack{\vg:\;\vg(0)=\vecn,\;\vg(q)=\vecm\\ \sup_{1\le\ell\le 
    q-1}|\vg(\ell)-\vecnf|\le \sqrt K\,\log K}}
\int\cdots\int_{t_{0}+\cdots + t_{q-1}<1}\; \prod_{\ell=0}^{q-1}p^{*}\big(\vg(\ell),\vg(\ell+1)\big)\right.
\]
\[
\left.
\log \Pi_q^1\big(\vg,t_{0},\ldots,t_{q-1}\big)
\prod_{\ell=0}^{q-1}\dd t_{\ell}\vphantom{\sum_{\substack{\vg,\;\vg(0)=\vecn,\;\vg(q)=\vecm\\ \sup_{1\le\ell\le 
    q-1}|\vg(\ell)-\vecnf|\le \sqrt K\,\log K}}
\int\cdots\int_{t_{0}+\cdots +t_{q-1}<1}}\;\right|
\le \cte{boulot1}.
\]
\end{sublemma}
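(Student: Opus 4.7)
The plan is to Taylor expand $\log\Pi_q^1$ to first order in $(\vg(\ell)-\vecnf)/K$, identify the leading contribution with the sum bounded in Sublemma \ref{compense}, and control the remainder via Sublemma \ref{moment}.

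First, I observe that $\log\Pi_q^1$ is independent of $(t_0,\ldots,t_{q-1})$, so the time integral equals $1/q!$ and cancels the $q!$ prefactor. It therefore suffices to bound
\[
K^{d/2}\,\bigg|\sum_{\substack{\vg\,:\,\vg(0)=\vecn,\,\vg(q)=\vecm\\ \sup_{1\le\ell\le q-1}|\vg(\ell)-\vecnf|\le\sqrt K\log K}}\prod_{\ell'=0}^{q-1}p^*\big(\vg(\ell'),\vg(\ell'+1)\big)\sum_{\ell=0}^{q-1}\log\frac{p(\vg(\ell),\vg(\ell+1))}{p^*(\vg(\ell),\vg(\ell+1))}\bigg|
\]
uniformly in $\vecn\in\Delta'$, $\vecm\in\Delta$, and $q\in[\Lambda_*-\sqrt K,\Lambda_*+\sqrt K]$.

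Next, on the constrained set, $\vg(\ell)/K$ lies within $\Oun\,\log K/\sqrt K$ of $\vecxf$, and $B_j(\vecxf)$, $D_j(\vecxf)$, $\lambda(\vecxf)$ are strictly positive by \eqref{cond:mort} and \eqref{cond:nais}. A second-order Taylor expansion of $\log(p/p^*)$ about $\vecxf$ yields, for a step $\vg(\ell)\to\vg(\ell)+\vece{j}$,
\[
\log\frac{p(\vg(\ell),\vg(\ell)+\vece{j})}{p^*(\vg(\ell),\vg(\ell)+\vece{j})}=\frac{1}{K\,r_j}\sum_{r=1}^d\mb_{j,r}\big(\vg(\ell)_{r}-\vecnf_{r}\big)+\mathcal{R}^+_{\ell,j},
\]
and an analogous identity with $\md$ for the $-\vece{j}$ step; the constants $\mb_{j,r}$ and $\md_{j,r}$ are precisely those fixed in Sublemma \ref{compense}. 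The remainder satisfies $|\mathcal{R}^{\pm}_{\ell,j}|\le C\big(|\vg(\ell)-\vecnf|^{2}+1\big)/K^{2}$ uniformly on the constrained set.

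Finally, I split the sum by linearity. The leading piece is exactly the quantity estimated by Sublemma \ref{compense}, contributing $K^{d/2}\cdot\Oun\,K^{-d/2}=\Oun$. For the Taylor remainder, Sublemma \ref{moment} gives $\sum\prod p^{*}\,|\vg(\ell)-\vecnf|^{2}\le\Oun\,K^{-d/2+1}$ for each $\ell$; summing over $q=\Oun(K)$ indices and dividing by $K^{2}$ yields $\Oun\,K^{-d/2}$. The constant part, combined with $\sum\prod p^{*}\le\Oun\,K^{-d/2}$ from the local central limit theorem applied to $\plat$, gives $\Oun\,q\,K^{-2}\cdot K^{-d/2}=\Oun\,K^{-d/2-1}$. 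After multiplication by $K^{d/2}$, all contributions are $\Oun$. The main technical obstacle is the bookkeeping: matching the linear Taylor coefficients with the constants $\mb_{j,r}$, $\md_{j,r}$ fixed in Sublemma \ref{compense}, and absorbing the shift $|K\vecxf-\vecnf|\le\sqrt d$ (which replaces $\vecxf$ by $\vecnf/K$ at the cost of $\Oun(1/K)$ per step, aggregating to $\Oun(1)$ per path). Once this identification is made, the bound follows directly from Sublemmas \ref{compense} and \ref{moment}.
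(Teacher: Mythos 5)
Your proposal is correct and follows essentially the same route as the paper: expand $\log\big(p/p^{*}\big)$ to first order around $\vecxf$, recognize the linear term as exactly the quantity bounded by Sublemma \ref{compense}, and absorb the $K^{-2}\,\mathcal{O}\big(|\vg(\ell)-\vecnf|^{2}\big)$ remainder via Sublemma \ref{moment}. Your bookkeeping of the time integral ($1/q!$), of the sum over $\ell$, and of the $\lfloor K\vecxf\rfloor$ versus $K\vecxf$ shift just makes explicit what the paper leaves implicit.
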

\begin{proof}
We have 
\begin{align*}
\MoveEqLeft
p\big(\vg(\ell),\vg(\ell)+\vece{j}\big) =\\ 
& p^{*}\big(\vg(\ell),\vg(\ell)+\vece{j}\big)
 +K^{-1}\;\sum_{r=1}^{d}\mb_{j,r}\,(\vg(\ell)_{r}-\vecnf_{r})
+K^{-2}\;\mathcal{O}\big(|\vg(\ell)-\vecnf|^{2}\big)
\end{align*}
where
\[
\mb_{j,r}=\partial_{x_r}\left(\frac{B_{j}(\vecx)}{\lambda(\vecx)}\right)\bigg|_{\vecx=\vecxf}.
\]
Similarly
\begin{align*}
\MoveEqLeft p\big(\vg(\ell),\vg(\ell)-\vece{j}\big)=\\
&p^{*}\big(\vg(\ell),\vg(\ell)+\vece{j}\big)
+K^{-1}\sum_{r=1}^{d}\md_{j,r}\,(\vg(\ell)_{r}-\vecnf_{r})
+K^{-2}\mathcal{O}\big(|\vg(\ell)-\vecnf|^{2}\big)
\end{align*}
where
\[
\md_{j,r}=\partial_{x_r}\left(\frac{D_{j}(\vecx)}{\lambda(\vecx)}\right)\bigg|_{\vecx=\vecxf}.
\]
We then have  
\[
\log\left( 
\frac{p\big(\vg(\ell),\vg(\ell)+\vece{j}\big)}
{p^{*}\big(\vg(\ell),\vg(\ell)+\vece{j}\big)}\right)=
\frac{1}{K\,r_{j}}\,
\sum_{r=1}^{d}\mb_{j,r}\,(\vg(\ell)_{r}-\vecnf_{r})
+K^{-2}\;\mathcal{O}\big(|\vg(\ell)-\vecnf|^{2}\big),
\]
with $r_{j}$ defined in \eqref{rj},
and
\[
\log\left( 
\frac{p\big(\vg(\ell),\vg(\ell)-\vece{j}\big)}
{p^{*}\big(\vg(\ell),\vg(\ell)-\vece{j}\big)}\right)=
\frac{1}{K\,r_{j}}\;
\sum_{r=1}^{d}\md_{j,r}\,(\vg(\ell)_{r}-\vecnf_{r})
+K^{-2}\;\mathcal{O}\big(|\vg(\ell)-\vecnf|^{2}\big).
\]

The result follows using Sublemma \ref{compense} and Sublemma \ref{moment}. 
\end{proof}

\begin{sublemma}\label{lem:qrobaproba}
For all $K$ large enough
\[
\sup_{\vecn \in\Delta'}\, \sup_{q\geq \ell> q/3} \;\sup_{\|\vecu-\vecnf\|<\sqrt{K}\log K}
|\qroba_{\vecn}(\plat_\ell=\vecu)-\proba_{\vecn}(\plat_\ell=\vecu)|\, \|\vecu-\vecnf\|
\leq \e^{-\Oun (\log K)^2}.
\]
\end{sublemma}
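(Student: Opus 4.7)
By the very definition of $\qroba$ as the expectation of the indicator that the walk stays in the tube $\{\|\vecy-\vecnf\|<\sqrt K\log K\}$ up to time $\ell-1$, one has
\[
\proba_{\vecn}(\plat_\ell=\vecu) - \qroba_{\vecn}(\plat_\ell=\vecu)
= \proba_{\vecn}\!\left(\plat_\ell=\vecu,\ \max_{0\le k\le \ell-1}\|\plat_k-\vecnf\|\ge \sqrt{K}\log K\right).
\]
Hence it suffices to prove
\[
\sup_{\vecn\in\Delta'}\ \sup_{\ell\le q}\ \proba_{\vecn}\!\left(\max_{0\le k\le \ell-1}\|\plat_k-\vecnf\|\ge \sqrt{K}\log K\right) \le \e^{-\Oun (\log K)^2},
\]
because the factor $\|\vecu-\vecnf\|<\sqrt K\log K$ in the statement is only polynomial in $K$ and is absorbed by the stretched exponential $\e^{-\Oun(\log K)^2}$.

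To control this maximal probability, I use that by \eqref{cond:fixedpt} one has $B_j(\vecxf)=D_j(\vecxf)$ for every $j$, so by the definition \eqref{rj} of $p^*$ the increment $\plat_{k+1}-\plat_k$ is centered, of $\ell^\infty$-norm bounded by $1$, and $(\plat_k-\vecn)_{k\ge 0}$ is a martingale. Since $\vecn\in\Delta'$ gives $\|\vecn-\vecnf\|\le \cte{restec}\sqrt K$, the event under consideration forces
\[
\max_{0\le k\le \ell-1}\|\plat_k-\vecn\|\ \ge\ \sqrt K\log K - \cte{restec}\sqrt K.
\]
Applying Hoeffding's (equivalently Azuma's) maximal inequality coordinatewise and taking a union bound over the $d$ coordinates, one obtains a bound of the form
\[
\proba_{\vecn}\!\left(\max_{0\le k\le \ell-1}\|\plat_k-\vecn\|\ge M\right) \ \le\ 2d\,\exp\!\left(-\Oun\,\frac{M^2}{\ell}\right).
\]
Plugging in $M=(1-o(1))\sqrt K\log K$ and using $\ell\le q\le \Lambda_{*}+\sqrt K=\Oun K$ yields exactly $\e^{-\Oun(\log K)^2}$, uniformly in $\vecn\in\Delta'$ and $\ell\le q$.

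The argument is essentially a one-shot application of a maximal concentration inequality for the symmetric random walk $\plat$, so there is no substantive obstacle. The only point to check is that the symmetry of $p^*$ (hence the martingale property of $\plat_k-\vecn$) follows from the fact that $\vecxf$ is a fixed point of $\vecB-\vecD$, and that the extra multiplicative factor $\sqrt K\log K$ is harmless against $\e^{-\Oun(\log K)^2}$. Both are immediate, and the restriction $\ell>q/3$ in the statement plays no role here; the bound holds for every $\ell\le q$.
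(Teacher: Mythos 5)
Your proposal is correct and follows essentially the same route as the paper: identify $\proba_{\vecn}(\plat_\ell=\vecu)-\qroba_{\vecn}(\plat_\ell=\vecu)$ with the probability that the walk leaves the tube of radius $\sqrt{K}\log K$ before time $\ell$, absorb the polynomial prefactor $\sqrt{K}\log K$, and conclude by sub-Gaussian concentration for the symmetric walk $\plat$ (symmetry coming from $\vecB(\vecxf)=\vecD(\vecxf)$). The only cosmetic difference is that you invoke the Azuma--Hoeffding maximal inequality in one shot, whereas the paper takes a union bound over the $s=1,\dots,q$ time steps and the $d$ coordinates and applies Hoeffding at each fixed time; both yield $\e^{-\Oun(\log K)^2}$ since $q=\Oun K$.
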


\begin{proof}
We have
\begin{align*}
\MoveEqLeft
\sup_{\vecn \in\Delta'} \sup_{q\geq \ell> q/3} \sup_{\|\vecu-\vecnf\|<\sqrt{K}\log K}
|\qroba_{\vecn}(\plat_\ell=\vecu)-\proba_{\vecn}(\plat_\ell=\vecu)|\, \|\vecu-\vecnf\| \\
& \leq 
\sqrt{K}\log K\, \sup_{\vecn \in\Delta'}\, \sup_{q\geq \ell> q/3}\,\, \sup_{\|\vecu-\vecnf\|<\sqrt{K}\log K}
|\qroba_{\vecn}(\plat_\ell=\vecu)-\proba_{\vecn}(\plat_\ell=\vecu)|.
\end{align*}
Using Bonferroni's inequality we have
\begin{align*}
\left|\qroba_{\vecn}(\plat_\ell=\vecu)-\proba_{\vecn}(\plat_\ell=\vecu)\right|
 & \leq \sum_{s=1}^\ell \esperance_{\vecn}
     \left[ \un_{\{|\plat_s-\vecnf|>\sqrt{K} \log K\}}\, \un_{\{\plat_\ell=\vecu\}}\right] \\
 & \leq  \sum_{s=1}^q \sum_{j=1}^d
     \, \proba_{\vecn}\bigg( |(\plat_s)_j-\vecnf_j|>\frac{\sqrt{K} \log K}{d}\bigg)\\
&=\sum_{s=1}^q \sum_{j=1}^d \, \proba_{\vecz}\bigg( |(\plat_s)_j+\vecn_j -\vecnf_j|>\frac{\sqrt{K} \log K}{d}\bigg)\\
& \leq \e^{-\Oun (\log K)^2}
\end{align*}
where the last inequality follows by applying Hoeffding's inequality.
\end{proof}

\begin{sublemma}\label{lem:erreurproba}
For all $K$ large enough
\[
\sup_{\vecn \in\Delta'} \sup_{q\geq \ell> q/3} 
\sum_{\|\vecu-\vecnf\| \geq \sqrt{K}\log K}
\proba_{\vecn}(\plat_\ell=\vecu)\, \|\vecu-\vecnf\|
\leq \e^{-\Oun (\log K)^2}.
\]
\end{sublemma}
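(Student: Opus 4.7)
The plan is to recognize the sum as an expectation and then separate a second moment from a tail probability via Cauchy--Schwarz. Writing $A_{K} = \{\vecu\in\zentiers^{d} : \|\vecu - \vecnf\| \geq \sqrt{K}\log K\}$, the quantity in the statement equals $\esperance_{\vecn}[\|\plat_{\ell} - \vecnf\|\, \un_{\{\plat_{\ell}\in A_{K}\}}]$, which the Cauchy--Schwarz inequality bounds by $\sqrt{\esperance_{\vecn}[\|\plat_{\ell} - \vecnf\|^{2}]}\cdot \sqrt{\proba_{\vecn}(\plat_{\ell}\in A_{K})}$.

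For the second-moment factor I would exploit the structure of $(\plat_\ell)$ under $\proba_{\vecn}$: namely $\plat_{\ell} = \vecn + \sum_{i=1}^{\ell} X_{i}$, where the $X_{i}$ are iid centered increments supported on $\{\pm \vece{j}\}_{j=1}^{d}$ with $\proba(X_i=\pm \vece{j})=r_j$. Independence and centering give $\esperance[\|\sum_{i=1}^{\ell} X_{i}\|^{2}] = \ell\, \esperance[\|X_1\|^2] = \ell = \Oun K$, since $\ell\in (q/3,q]$ and $q\in[\Lambda_{*}-\sqrt K,\Lambda_{*}+\sqrt K]$. Combined with $\|\vecn-\vecnf\|\le \cte{restec}\sqrt{K}$ on $\Delta'$ and the triangle inequality, this yields $\esperance_{\vecn}[\|\plat_{\ell}-\vecnf\|^{2}] = \Oun K$.

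For the tail factor I would recycle the Hoeffding step already carried out in Sublemma \ref{lem:qrobaproba}: the event $\{\|\plat_{\ell}-\vecnf\|\ge \sqrt{K}\log K\}$ is contained in $\bigcup_{j=1}^{d} \{|(\plat_{\ell})_{j}-\vecnf_{j}|\ge \sqrt{K}\log K/\sqrt{d}\}$, and since $|\vecn_{j}-\vecnf_{j}|\le \cte{restec}\sqrt{K}$ on $\Delta'$, this forces $\bigl|\bigl(\sum_{i=1}^{\ell} X_{i}\bigr)_{j}\bigr|\ge \tfrac12\sqrt{K}\log K/\sqrt{d}$ for $K$ large. Hoeffding's inequality applied to this bounded, iid, centered, real-valued sum (with $\ell=\Oun K$) yields probability $\le 2\e^{-\Oun(\log K)^{2}}$, and a union bound over the $d$ coordinates only costs a constant factor. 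Multiplying the two factors yields $\sqrt{\Oun K}\,\e^{-\Oun(\log K)^{2}}=\e^{-\Oun(\log K)^{2}}$, since the polynomial prefactor $\sqrt{K}$ is absorbed by halving the exponent for $K$ large enough. There is no substantive obstacle: the argument is a direct quantitative tail bound for the symmetric random walk $(\plat_\ell)$, essentially parallel to the one used in Sublemma \ref{lem:qrobaproba}.
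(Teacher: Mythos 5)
Your proof is correct. The only point at which it diverges from the paper's argument is in how the polynomial weight $\|\vecu-\vecnf\|$ is decoupled from the tail event: you apply Cauchy--Schwarz to split the sum into a second moment, $\esperance_{\vecn}[\|\plat_\ell-\vecnf\|^2]=\Oun K$, times the square root of the tail probability, whereas the paper bounds $\|\vecu-\vecnf\|$ by the $\ell^1$ norm $\sum_k|\vecu_k-\vecnf_k|$ and estimates each pair of coordinates $(j,k)$ directly via Hoeffding, ``distinguishing the cases $j=k$ and $j\neq k$''. Both routes rest on the same key estimate (a Hoeffding bound for the coordinates of the symmetric walk, after absorbing the initial offset $\|\vecn-\vecnf\|\le\cte{restec}\sqrt K$), and both absorb the resulting polynomial prefactor into the $\e^{-\Oun(\log K)^2}$ decay; your Cauchy--Schwarz version is arguably slightly cleaner, since it treats the weight uniformly and avoids the case distinction on coordinate pairs, at the harmless cost of halving the exponent.
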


\begin{proof}
We have 
\begin{align*}
\MoveEqLeft  \sup_{\vecn \in\Delta'} \sup_{q\geq \ell> q/3} 
\sum_{\|\vecu-\vecnf\| \geq \sqrt{K}\log K}
\proba_{\vecn}(\plat_\ell=\vecu)\, \|\vecu-\vecnf\|\\
& \leq
\sup_{\vecn \in\Delta'} \sup_{q\geq \ell> q/3} 
\sum_{j=1}^d\sum_{|\vecu_j-\vecnf_j| \geq \sqrt{K}\log K/\sqrt{d}}
\proba_{\vecn}(\plat_\ell=\vecu)\, \sum_{k=1}^d |\vecu_k-\vecnf_k |\\
&=
\sup_{\vecn \in\Delta'} \sup_{q\geq \ell> q/3} 
\sum_{j=1}^d \sum_{k=1}^d\sum_{|\vecu_j-\vecnf_j| \geq \sqrt{K}\log K/\sqrt{d}}
\proba_{\vecn}(\plat_\ell=\vecu)\,  |\vecu_k-\vecnf_k |\\
& \leq \e^{-\Oun (\log K)^2}
\end{align*}
where the last estimate can be easily obtained by using again Hoeffding's inequality since $\plat_\ell$ is a sum
of independent identically distributed random variables, distinguishing the cases $j=k$
and $j\neq k$.
\end{proof}

\appendix
\section{A difference between monotype and multitype birth-and-death processes}
\label{selfadjoint}

In the one-dimensional case \cite{ccm}, the method is based on the existence of a reference measure on
$\mathbb{N}^*$ and an associated $\ell^2$ space such that the generator $\gen$ is self-adjoint in the space. This is
not the case in dimension strictly larger than one for a generator $\gen$ defined by 
\[
\gen f(\vecn)= \sum_{j=1}^{d} \boldsymbol{\lambda}_j(\vecn)\big(f(\vecn+\vece{j})-f(\vecn)\big)+
\boldsymbol{\mu}_j(\vecn)\big(f(\vecn-\vece{j})-f(\vecn)\big),
\] 
where $\boldsymbol{\lambda}_j,\boldsymbol{\mu}_j:\domaine\to\real^+$, $\vecn\in \dintegers$ and $f:\dintegers\to\real$ has finite support. 

For a positive measure $\pi$ on $\domaine$ and two functions $f,g:\domaine\to\real$ with finite support, define
\[
\langle g,  f\rangle_{\pi} = \sum_{\vecn\in\domaine} f(\vecn)\, g(\vecn)\, \pi(\vecn).
\]
\begin{proposition}
A positive measure $\pi$ on $\domaine$ satisfies 
\begin{equation}
\label{auto}\langle g, \gen f\rangle_{\pi} = \langle \gen g, f\rangle_{\pi}
\end{equation}
for all $f, g$ with finite support, if and only if 
\[
\pi(\vecn)\,\boldsymbol{\lambda}_j(\vecn) = \boldsymbol{\mu}_j\big(\vecn+\vece{j}\big)\, \pi\big(\vecn+\vece{j}\big), \quad  \forall j\in \{1,\ldots,d\},
\forall \vecn \in \domaine.
\]
\end{proposition}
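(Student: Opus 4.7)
My plan is to prove both implications by direct computation, using indicator functions as test vectors for the ``only if'' direction and a change-of-summation-index argument for the ``if'' direction.

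First, I will expand the quadratic form $\langle g, \gen f\rangle_\pi$ by writing
\[
\langle g, \gen f\rangle_\pi = \sum_{\vecn\in\domaine}\sum_{j=1}^d \pi(\vecn) g(\vecn) \Bigl[\boldsymbol{\lambda}_j(\vecn)\bigl(f(\vecn+\vece{j})-f(\vecn)\bigr) + \boldsymbol{\mu}_j(\vecn)\bigl(f(\vecn-\vece{j})-f(\vecn)\bigr)\Bigr],
\]
with the convention (needed for the boundary) that $f$ is extended by $0$ outside $\domaine$; the sum is finite since $f, g$ have finite supports.

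For the ``only if'' direction, I will feed the identity \eqref{auto} with $g = \un_{\{\vecn\}}$ and $f = \un_{\{\vecn+\vece{j}\}}$, where $\vecn$ and $\vecn+\vece{j}$ both lie in $\domaine$. The left-hand side collapses to $\pi(\vecn)\,\boldsymbol{\lambda}_j(\vecn)$ (only the term raising the $j$-th coordinate survives), and the right-hand side collapses to $\pi(\vecn+\vece{j})\,\boldsymbol{\mu}_j(\vecn+\vece{j})$, which yields the detailed-balance identity in the statement. One subtlety to record is that when $\vecn$ is on the boundary of $\domaine$ (i.e.\ has some zero coordinate or $\vecn+\vece{j}$ would equal $\vecz$ after a death step), the corresponding rate is irrelevant to the identity because the other side also vanishes by the $\un_{\domaine}$ convention; but for the birth direction the formula always makes sense as both $\vecn$ and $\vecn+\vece{j}$ are in $\domaine$.

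For the ``if'' direction, I will split $\gen = \gen^+ + \gen^-$ according to birth and death parts, then use the detailed-balance hypothesis to rewrite $\pi(\vecn)\boldsymbol{\lambda}_j(\vecn) = \pi(\vecn+\vece{j})\boldsymbol{\mu}_j(\vecn+\vece{j})$ inside the double sum and perform the shift $\vecn \mapsto \vecn - \vece{j}$. Concretely, for each fixed $j$,
\[
\sum_{\vecn} \pi(\vecn) g(\vecn) \boldsymbol{\lambda}_j(\vecn) f(\vecn+\vece{j}) = \sum_{\vecn} \pi(\vecn+\vece{j}) \boldsymbol{\mu}_j(\vecn+\vece{j}) g(\vecn) f(\vecn+\vece{j}) = \sum_{\vecm} \pi(\vecm)\boldsymbol{\mu}_j(\vecm) g(\vecm-\vece{j}) f(\vecm),
\]
and a symmetric manipulation handles the $\boldsymbol{\lambda}_j(\vecn) f(\vecn)$ and $\boldsymbol{\mu}_j$ terms. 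After reassembling the four pieces, one recognises $\langle \gen g, f\rangle_\pi$.

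I do not anticipate any real obstacle: the main care is bookkeeping at the boundary of $\domaine$, which is handled cleanly by extending $f$ and $g$ by zero outside $\domaine$ and by recalling that $\boldsymbol{\lambda}_j, \boldsymbol{\mu}_j$ are defined on $\domaine$ only. The arguments are symmetric in $f$ and $g$, so no further verification is required once the two directions above are written out.
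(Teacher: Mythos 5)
Your proposal is correct and follows essentially the same route as the paper: the paper reduces \eqref{auto} to its validity on indicator functions $\un_{\ushort{p}},\un_{\ushort{q}}$ and leaves the direct computation to the reader, which is exactly the pair of computations (indicator test vectors for ``only if'', detailed balance plus index shift for ``if'') that you write out. The boundary convention of extending $f,g$ by zero outside $\domaine$ is the right bookkeeping and causes no issue since $\vecn+\vece{j}\in\domaine$ whenever $\vecn\in\domaine$.
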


\begin{proof}
Equation \eqref{auto} will be satisfied for all functions with finite support if and only if the equality is true for
$f =\un_{\ushort{p}}$ and $g =\un_{\ushort{q}}$ for all $\ushort{p}$ and all $\ushort{q}$ in $\domaine$. The result follows immediately by direct computations.
\end{proof}

This proposition has a consequence which can be cast in terms of circuits in $\domaine$. A circuit
$\mathcal{C}$ is a path of the form $\big(\vecn^{(1)}, \ldots, \vecn^{(k)}, \vecn^{(k+1)}\big)$, for some positive integer $k$, such that $\vecn^{(k+1)}=\vecn^{(1)}$ and
$\vecn^{(\ell+1)}= \vecn^{(\ell)} +  \ushort{\varepsilon}^{(j_{\ell})}$, for $\ell = 1, \ldots, k$,
where $\ushort{\varepsilon}^{j}=\pm\, \vece{j}$, with the constraint that
$\ushort{\varepsilon}^{(j_{1})}+\cdots + \ushort{\varepsilon}^{(j_{k})}=\vecz$.
Now define
\[
\rho(\ushort{\varepsilon}^{(j)}, \vecn)=
\begin{cases}
\frac{\boldsymbol{\lambda}_{j}(\vecn)}{\boldsymbol{\mu}_{j}(\vecn+\vece{j})} &
\hbox{if} \quad \ushort{\varepsilon}^{(j)} = \vece{j}\\ \\ 
\frac{\boldsymbol{\mu}_{j}(\vecn)}{\boldsymbol{\lambda}_{j}(\vecn-\vece{j})} & \hbox{if}
\quad \ushort{\varepsilon}^{(j)} = - \vece{j}.
\end{cases}
\]

\begin{corollary}
There exists a positive measure $\pi$ on $\domaine$ such that \eqref{auto} holds if and only if for all circuits $\mathcal{C}$ contained in $\domaine$ one has
\begin{equation}
\label{autolib}
\prod_{\ell=1}^k \rho\big(\ushort{\varepsilon}^{(j_{\ell})}, \vecn^{(\ell)}\big)= 1.\end{equation}
\end{corollary}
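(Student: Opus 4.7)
The plan is to read the corollary as a manifestation of Kolmogorov's cycle criterion for reversibility. By the preceding proposition, the existence of a positive $\pi$ satisfying \eqref{auto} is equivalent to the detailed balance relations
\[
\pi(\vecn)\,\boldsymbol{\lambda}_j(\vecn) = \boldsymbol{\mu}_j(\vecn+\vece{j})\, \pi(\vecn+\vece{j}),\quad \forall\, j,\, \forall\, \vecn\in\domaine,
\]
which, rearranged, become $\pi(\vecn+\vece{j})/\pi(\vecn) = \rho(\vece{j},\vecn)$, and symmetrically $\pi(\vecn-\vece{j})/\pi(\vecn) = \rho(-\vece{j},\vecn)$. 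In compact form, for every admissible unit step $\ushort{\varepsilon}^{(j)}\in\{\pm\vece{j}\}$,
\[
\frac{\pi\bigl(\vecn+\ushort{\varepsilon}^{(j)}\bigr)}{\pi(\vecn)} = \rho\bigl(\ushort{\varepsilon}^{(j)},\vecn\bigr).
\]

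The \emph{only if} direction is then immediate: assuming such a $\pi$ exists, along any circuit $\mathcal{C}=(\vecn^{(1)},\ldots,\vecn^{(k+1)})$ with $\vecn^{(k+1)}=\vecn^{(1)}$, telescoping yields
\[
\prod_{\ell=1}^{k}\rho\bigl(\ushort{\varepsilon}^{(j_\ell)},\vecn^{(\ell)}\bigr) = \prod_{\ell=1}^{k}\frac{\pi(\vecn^{(\ell+1)})}{\pi(\vecn^{(\ell)})} = \frac{\pi(\vecn^{(1)})}{\pi(\vecn^{(1)})} = 1.
\]

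For the \emph{if} direction, I would fix a reference point $\vecn^{(0)}\in\domaine$, normalize $\pi(\vecn^{(0)})=1$, and for each $\vecn\in\domaine$ select a lattice path $(\vecn^{(0)},\vecn^{(1)},\ldots,\vecn^{(m)}=\vecn)$ in $\domaine$ with unit increments, setting
\[
\pi(\vecn):=\prod_{\ell=0}^{m-1}\rho\bigl(\vecn^{(\ell+1)}-\vecn^{(\ell)},\vecn^{(\ell)}\bigr).
\]
The value is independent of the chosen path: any two such paths joined head-to-tail form a circuit in $\domaine$, and \eqref{autolib} forces the two corresponding products to agree. Since each factor $\rho$ is strictly positive, so is $\pi$. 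By construction $\pi(\vecn+\ushort{\varepsilon}^{(j)})/\pi(\vecn)=\rho(\ushort{\varepsilon}^{(j)},\vecn)$, which is precisely the detailed balance relation, so the previous proposition delivers \eqref{auto}.

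The only subtlety — and even this is routine — lies in verifying that $\domaine=\dintegers\setminus\{\vecz\}$ is path-connected by unit steps \emph{within} $\domaine$, so that each $\vecn$ can actually be reached from $\vecn^{(0)}$ without crossing the origin. For $d\ge 2$ this is clear since one can always detour around $\vecz$, and for $d=1$ the set $\domaine=\{1,2,\ldots\}$ is trivially connected. Implicit throughout is that the rates entering the denominators of $\rho$ are strictly positive along the paths used, which is part of the working assumption on $\boldsymbol{\lambda}_j,\boldsymbol{\mu}_j$.
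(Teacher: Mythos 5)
Your proof is correct and follows exactly the route the paper intends: the paper's proof consists precisely of the observation that detailed balance is equivalent to $\pi(\vecn+\ushort{\varepsilon}^{(j)})/\pi(\vecn)=\rho(\ushort{\varepsilon}^{(j)},\vecn)$, with the telescoping (only if) and path-product construction (if) left to the reader, which is what you have written out. Your remark on path-connectedness of $\domaine$ is a reasonable extra care that the paper glosses over.
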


\begin{proof}
The proof is elementary and left to the reader. It is enough to observe that for all $\vecn\in\domaine$ and all $1\leq j\leq d$,
\[
\frac{\pi\big(\vecn + \ushort{\varepsilon}^{(j)}\big)}{\pi(\vecn)} = \rho\big(\ushort{\varepsilon}^{(j)}, \vecn\big).
\]
\end{proof}

Condition \eqref{autolib} is always satisfied in dimension one, but it imposes very stringent conditions on the demographic parameters in higher dimensions. This is why we had to follow another route in the present work.
Let us illustrate this fact in dimension two with the following example modelling two populations with both intra- and inter-specific competition:
\begin{equation}
\label{case2}
\begin{cases}
\boldsymbol{\lambda}_1(n_{1}, n_{2}) = \lambda_{1} n_{1} ,& \boldsymbol{\mu}_1(n_{1}, n_{2}) = n_{1} (\mu_{1} + c_{11}n_{1}+ c_{12} n_{2})\\
\boldsymbol{\lambda}_2(n_{1}, n_{2}) = \lambda_{2} n_{2} ,& \boldsymbol{\mu}_2(n_{1}, n_{2}) = n_{2} (\mu_{2} + c_{21}n_{1}+ c_{22} n_{2})
\end{cases} \end{equation}
where $\lambda_{k}, \mu_{k}, c_{k\ell}$, $k, \ell = 1,2$, are nonnegative parameters. 
In this case, condition \eqref{autolib} reads
\[
\frac{\boldsymbol{\lambda}_{1}(n_{1}, n_{2})}{\boldsymbol{\mu}_{1}(n_{1}+1, n_{2})}
\frac{\boldsymbol{\lambda}_{2}(n_{1}+1, n_{2})}{\boldsymbol{\mu}_{2}(n_{1}+1, n_{2}+1) }
\frac{\boldsymbol{\mu}_{1}(n_{1}+1, n_{2}+1)}{\boldsymbol{\lambda}_{1}(n_{1}, n_{2}+1)}
\frac{\boldsymbol{\mu}_{2}(n_{1}, n_{2}+1)}{\boldsymbol{\lambda}_{2}(n_{1}, n_{2})} = 1
\]
for all $(n_{1},n_{2})\in \integers^2\backslash\{(0,0)\}$.
Using \eqref{case2} and assuming that $c_{21}\neq 0$ and $c_{12}\neq 0,$ (so we do have inter-specific interactions),  condition \eqref{autolib} is fulfilled if and only if
\[
c_{11} = c_{12}\,, c_{21} = c_{22}\,,  \mu_{1} c_{21} =  \mu_{2} c_{12}.
\]
If one of these conditions is violated, there is no positive measure satisfying \eqref{auto}.


\end{document}